\numberwithin{equation}{section}
\newtheorem{theorem}{Theorem}[section]
\newtheorem{corollary}[theorem]{Corollary}
\newtheorem{lemma}[theorem]{Lemma}
\newtheorem{proposition}[theorem]{Proposition}
\newtheorem{remark}[theorem]{Remark}
\definecolor{JungleGreen}{cmyk}{0.99,0,0.52,0}
\definecolor{RawSienna}{cmyk}{0,0.72,1,0.45}
\definecolor{bulgarianrose}{rgb}{0.28, 0.02, 0.03}
\definecolor{Magenta}{cmyk}{0,1,0,0}
\definecolor{airforceblue}{rgb}{0.36, 0.54, 0.66}
\definecolor{darkpastelgreen}{rgb}{0.01, 0.75, 0.24} 
\definecolor{brightgreen}{rgb}{0.4, 1.0, 0.0} 
\newcommand{\C}{\ensuremath{\mathbb{C}}}
\newcommand{\F}{\ensuremath{\mathcal{F}}}
\newcommand{\G}{\ensuremath{\mathcal{G}}}
\newcommand{\Oo}{\ensuremath{\mathcal{O}}}
\newcommand{\N}{\ensuremath{\mathbb{N}}}
\newcommand{\R}{\ensuremath{\mathbb{R}}}
\newcommand{\T}{\ensuremath{\mathbb{T}}}
\newcommand{\Z}{\ensuremath{\mathbb{Z}}}
\newcommand{\Lout}{\ensuremath{\mathcal{L}^{\mathrm{out}}}}
\newcommand{\Fout}{\ensuremath{\mathcal{F}^{\mathrm{out}}}}
\newcommand{\Lin}{\ensuremath{\mathcal{L}^{\mathrm{in}}}}
\newcommand{\Fin}{\ensuremath{\mathcal{F}^{\mathrm{in}}}}
\newcommand{\Lext}{\ensuremath{\mathcal{L}^{\mathrm{gf}}}}
\newcommand{\Fext}{\ensuremath{\mathcal{F}^{\mathrm{gf}}}}
\newcommand{\Lfl}{\ensuremath{\mathcal{L}^{\mathrm{fl}}}}
\newcommand{\Ffl}{\ensuremath{\mathcal{F}^{\mathrm{fl}}}}
\newcommand{\bs}{\mathrm{s}}
\newcommand{\bu}{\mathrm{u}}
\newcommand{\Delin}{\ensuremath{\Delta^{\mathrm{in}}}}
\newcommand{\Cin}{\ensuremath{c}}
\newcommand{\Cout}{\ensuremath{c}}
\newcommand{\Cmch}{\ensuremath{c}}
\newcommand{\slope}{\ensuremath{\vartheta_0}}
\newcommand{\hslope}{\ensuremath{\hat \vartheta_0}}
\newcommand{\slopee}{\ensuremath{\hat \vartheta_1}}
\renewcommand{\Re}{\ensuremath{\mathrm{Re}\,}}
\renewcommand{\Im}{\ensuremath{\mathrm{Im}\,}}
\renewcommand{\a}{\ensuremath{\mathbf{a}}}
\renewcommand{\b}{\ensuremath{\mathbf{b}}}
\newcommand{\gin}{\ensuremath{\mathbf{g}}}
\newcommand{\E}{E}
\newcommand{\diagonal}{\ensuremath{\mathbf{D}}}
\newcommand{\Lhat}{\widehat{\mathcal{L}}}
\newcommand{\Ghat}{\widehat{\mathcal{G}}} 
\newcommand{\Ggf}{\Gamma^{\mathrm{fl}}}
\newcommand{\Dgf}{D^{\mathrm{gf}}}
\begin{document}

\title{Chaotic phenomena in generic unfoldings of the Hamilton Hopf bifurcation with emphasis on the restricted planar circular 3-body problem beyond the Gascheau-Routh mass ratio}%
\author[1,3]{Inmaculada Baldomá}
\author[1,3]{Pau Martín}
\author[2]{Donato Scarcella}
\affil[1]{Departament de Matemàtiques, Universitat Politècnica de Catalunya, Diagonal, 647, 08028 Barcelona, Spain.}
\affil[2]{Departament de Matemàtiques i Informàtica, Universitat de Barcelona (UB), Gran Via, 585, 08007 Barcelona, Spain.}
\affil[3]{Centre de Recerca Matemàtica (CRM), Carrer de l'Albareda, 08193 Bellaterra, Spain}

\affil[ ]{\textit{Contributing authors:} immaculada.baldoma@upc.edu; p.martin@upc.edu; donato.scarcella@ub.edu}

\date{\null}%
\maketitle

\begin{abstract}
In this work, we prove that a generic unfolding of an analytic Hamiltonian Hopf singularity (in an open set with codimension 1 boundary) possesses transverse homoclinic orbits for subcritical values of the parameter close to the bifurcation parameter. As a consequence, these systems display chaotic dynamics with arbitrarily large topological entropy. We verify that the Hamiltonian of the restricted planar circular three-body problem (RPC3BP) close to the Lagrangian point $L_4$ falls within this open set. The generic condition ensuring the presence of transversal homoclinic intersections is subtle and involves the so-called Stokes constant. Thus, in the case of the RPC3BP close to $L_4$, our result holds conditionally on the value of this constant.

\end{abstract}

\tableofcontents

\section{Introduction}

\subsection{Generic unfoldings of an analytic Hamiltonian Hopf bifurcation}

In this paper, we consider generic unfoldings of the analytic Hamiltonian Hopf bifurcation, with special emphasis on the bifurcation that occurs at the equilibria $L_4$ and $L_5$ of the restricted planar circular 3-body problem (RPC3BP) at the Gascheau-Routh mass ratio. See Section~\ref{sec:RPC3BP} for a detailed description of the RPC3BP.

The Hamiltonian Hopf singularity occurs when the linearized system of a Hamiltonian vector field at an equilibrium has a non-semisimple 1:-1 resonance (see~\cite{Meer85}, \cite{BHH07}, and also Section~\ref{sec:normalform}, below). When considering unfoldings  of this singularity, in the supercritical case, when the eigenvalues are pure imaginary, it is well known that generically KAM theory applies, thus making the equilibrium stable~\cite{Deprit67,MS86}. This also holds for the particular case of $L_4$ and $L_5$ in RPC3BP. We are interested in the subcritical case, when the equilibrium becomes a complex saddle.
In this situation, the invariant manifolds of the equilibrium for the formal normal form coincide for a codimension 1 open set of analytic Hamiltonians.
Again, this is also the case of $L_4$ and $L_5$ in RPC3BP (see, for instance, \cite{Sch94} and also Section~\ref{sec:normalform}, below). Our main theorem is the following.

\begin{theorem}\label{thm:main:intro}
Consider $\mathcal{H}$, the set of analytic Hamiltonians with a Hamiltonian Hopf singularity.
There is an open subset $\mathcal{J} \subset \mathcal{H}$, whose boundary has codimension 1, with the following property.
Any analytic generic unfolding of a Hamiltonian Hopf singularity belonging to $\mathcal{J}$ has transversal homoclinic orbits for all values of the parameter in the subcritical region close to the bifurcation.
\end{theorem}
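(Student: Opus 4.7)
The plan is to reduce the existence of transverse homoclinic orbits to an exponentially small splitting analysis, controlled by an analytic invariant (a Stokes constant) attached to the singularity $H_0\in\mathcal{H}$. I would fix a generic unfolding $H_\mu$ of such an $H_0$ and, via a Birkhoff-type normal form procedure adapted to the non-semisimple $1{:}{-}1$ resonance, rewrite $H_\mu$ in coordinates where the truncation at any fixed finite order is integrable and possesses a one-parameter family of homoclinic loops to the subcritical complex saddle. Thus the splitting of the true stable and unstable manifolds is a purely beyond-all-orders effect, entirely driven by the non-normalizable analytic remainder.

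The next step is to parametrize the invariant manifolds as complex-time graphs $Z^{u}(\tau)$ and $Z^{s}(\tau)$ of the Hamiltonian flow, extending them by a contraction-mapping argument in weighted spaces of analytic functions over complex strips whose half-width approaches, as the subcritical parameter tends to the bifurcation value, the imaginary-time distance from the real axis to the nearest singularity of the unperturbed separatrix. Since in the Hamiltonian Hopf scaling this singularity lies at height $O(\mu^{1/2})$, the difference $Z^{u}(\tau)-Z^{s}(\tau)$ restricted to the real axis is of size $\mu^{\alpha}\,e^{-C/\sqrt{\mu}}$ for appropriate $\alpha$ and $C>0$. The central and, I expect, hardest step is then the matching of these outer parametrizations with solutions of an inner equation obtained by blowing up the system near a pair of complex-conjugate singularities. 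The inner equation is $\mu$-independent and intrinsic to $H_0$; a careful construction of its stable and unstable solutions (selected by asymptotic conditions at infinity in the inner variable) shows that they differ by an exponentially small quantity whose leading coefficient defines the Stokes constant $\Theta(H_0)$. Uniform estimates on overlapping complex domains, together with the variational character of the splitting function for a Hamiltonian system, then transfer this coefficient to the true problem and yield an asymptotic expansion
\[
\Delta(\tau;\mu)=\mu^{\alpha}\,e^{-C/\sqrt{\mu}}\,\bigl(\Theta(H_0)+o(1)\bigr)\sin\bigl(\omega(\mu)\,\tau\bigr)+(\text{exponentially smaller terms}).
\]

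Finally, I would define $\mathcal{J}\subset\mathcal{H}$ as the locus on which $\Theta$ is non-vanishing. Since $\Theta$ is constructed as the limit of an analytic functional and is not identically zero, its zero set is a real-analytic subvariety of codimension one inside $\mathcal{H}$, so $\mathcal{J}$ is open with codimension-one boundary as required. For every $H_0\in\mathcal{J}$ and every generic unfolding, the asymptotic formula above implies that $\Delta(\cdot;\mu)$ has simple zeros for all sufficiently small subcritical $\mu$, and each simple zero corresponds to a transversal homoclinic intersection of the stable and unstable manifolds of the complex saddle. The main technical obstacle I anticipate is promoting the formal matching and the definition of $\Theta$ to a genuine functional-analytic construction on the infinite-dimensional space $\mathcal{H}$, so that $\mathcal{J}$ is really open (not merely generic in a perturbative sense) and so that the error terms in the splitting formula are controlled uniformly in $H_0$ on compact subsets of $\mathcal{J}$.
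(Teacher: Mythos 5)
Your overall strategy---reduction to a versal normal form, complex extension of the invariant manifolds, blow-up to a parameterless inner equation near the complex singularities of the separatrix, matching, and identification of a Stokes constant as the leading coefficient of the exponentially small splitting---is the road the paper follows. (A technical difference: the paper parametrizes the invariant Lagrangian manifolds by generating functions solving a Hamilton--Jacobi equation rather than by complex-time orbit graphs; this exploits the Lagrangian structure and reduces the number of unknowns, but it is not a conceptual divergence.)

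There is, however, a genuine gap in your last paragraph, in the identification of $\mathcal{J}$. You define $\mathcal{J}$ as the locus $\{\Theta\neq 0\}$ of nonvanishing Stokes constant, and claim its boundary has codimension one because the zero set of $\Theta$ is a real-analytic subvariety. That set is not the $\mathcal{J}$ the theorem needs, and the claimed structure of its boundary is not justified. The paper's $\mathcal{J}$ is instead cut out by an explicit sign condition $\hat{\gamma}(0)>0$ on a single coefficient of the degree-4 part of the versal normal form of the singularity (Theorem~\ref{Thm:NormalForm} and condition~\eqref{def:condicio_homoclinica}). That condition has two features yours lacks. First, it determines whether the truncated normal form possesses a $2$-dimensional homoclinic loop at all: for $\hat{\gamma}(0)<0$ the stable and unstable manifolds of the normal form do not coincide, so there is no separatrix and no splitting problem, and $\Theta$ is not even defined there. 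Second, $\{\hat{\gamma}(0)=0\}$ is the zero level of an explicit analytic functional of a finite jet of the Hamiltonian, which is what makes the boundary genuinely codimension one. By contrast, the Stokes constant is a beyond-all-orders invariant with no finite-jet description, and you yourself flag the difficulty of showing $\{\Theta=0\}$ has the claimed structure. The correct resolution is not to sharpen that claim but to separate the two conditions: $\mathcal{J}$ should be $\{\hat{\gamma}(0)>0\}$, and the nonvanishing of the Stokes constant (equivalently, $a_1 a_2\neq 0$ in the paper's notation for the leading Fourier modes of the splitting) belongs in the ``generic unfolding'' hypothesis, where the paper invokes Gaiv\~ao's genericity result rather than attempting a codimension count. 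As written, the set you construct is a strict subset of the right one, it need not have codimension-one boundary, and it makes the ``generic unfolding'' qualifier in the statement redundant.
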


The proof of this theorem is placed in Section~\ref{subsec:hamilton-Jacobi}, where we will deduce it from Theorem~\ref{thm:difference_between_manifolds}.

Of course, in here transversal means transversal in the energy level.

We remark that the codimension 1 condition of the theorem is explicit and computable. It involves a single coefficient of the versal normal form of the singularity. The theorem applies to those singularities for which the coefficient has a prescribed sign. See Theorem~\ref{Thm:NormalForm} and~\eqref{def:condicio_homoclinica}. It holds in the case of $L_4$ and $L_5$ in RPC3BP. This condition ensures that the invariant manifolds of the versal normal form have a homoclinic connection. For Hamiltonian Hopf singularities outside $\mathcal{J}$, the invariant manifolds of the normal form do not form a homoclinic loop.

The generic condition that an unfolding has to satisfy in order to have transverse homoclinic orbits is more subtle and is related to what is often known as the Stokes constant.  Hence, when considering  $L_4$ and $L_5$ in RPC3BP, we have a conditional statement. See Theorem~\ref{thm:main:intro:L4}, below.

The dynamical consequences of Theorem~\ref{thm:main:intro} are enormous. It implies, in particular, that chaotic behavior and large topological entropy are generic phenomena in an open set in the subcritical Hamiltonian Hopf bifurcation. See Section~\ref{sec:dinamica_caotica}.

Gaiv\~{a}o, in this PhD thesis~\cite{Gai10}, has the same statement for generic unfoldings of the Hamiltonian Hopf bifurcation satisfying a certain symmetry condition (besides the aforementioned codimension 1 condition). In fact, this symmetry condition is not satisfied at $L_4$ and $L_5$ in RPC3BP. Furthermore, our techniques, although sharing some basic common features with the ones in~\cite{Gai10} --- since the problem under consideration is an \emph{exponentially small problem} in some parameter, see Section~\ref{sec:exponencialment_petit} --- differ strongly. In particular, we are able to get rid of the symmetry condition. 

It is important to remark that, although Theorem~\ref{thm:main:intro} is a statement on generic unfoldings, our techniques are constructive and can be applied to detect transversal intersections in a \emph{given} family of Hamiltonian systems with a Hamiltonian Hopf bifurcation. We will constantly  have in mind the case of the RPC3BP, described in the next section, as a paradigmatic example.

\subsection{The restricted circular planar three-body problem}
\label{sec:RPC3BP}

The restricted circular 3-body problem models the motion of a ``massless'' body under the influence of two bodies with mass, the \emph{primaries}, which evolve in Keplerian circles. The problem is called planar when the motion of the massless body is co-planar with the primaries. 

By choosing appropriate units of mass, distance and time, we can assume that the masses of the primaries are $m_1 = 1-\mu$ and $m_2 = \mu$, with $\mu \in (0, \frac12]$, their distance is 1 and that they make a revolution around their common center of mass, at the origin, in $2\pi$ units of time. We choose a rotating frame of coordinates that fixes their position in time.  So, the primaries are fixed at positions $(-\mu, 0)$ and $(1-\mu,0)$, respectively.  We denote by $(q,p) \in \R^2 \times \R^2$ the position and momenta of the third body in the rotating frame. The equations of motion of the massless body are Hamiltonian with  Hamiltonian
\begin{equation}
\label{H1}
H(q,p) = \frac{|p|^2}{2} - q^\top \begin{pmatrix} 0 & 1 \\ -1 & 0\end{pmatrix} p -\frac{1-\mu}{ |q + (\mu, 0)|}-\frac{\mu}{|q - (1- \mu, 0)|},
\end{equation}
where $|\cdot|$ stands for the Euclidean norm and $\top$ for the transpose.  It has 2 degrees of freedom. We point out that the $H$ is analytic far from $(-\mu, 0)$ and $(1-\mu, 0)$, that is, outside collisions with the primaries.

It is well known that Hamiltonian~\eqref{H1} has five critical points, $L_i$, with $i=1,\dots 5$. See Figure~\ref{fig:equilibris}. The first three are located in the line connecting the primaries. For all values of the mass parameter $\mu$, these three equilibria are of center-saddle type. The other two, $L_4$ and $L_5$, are at the vertices of the equilateral triangles, with one of the sides being the line joining the primaries. Unlike the collinear equilibria, the stability of $L_4$ and $L_5$ changes at 
\begin{equation}
\label{def:Routh}
\mu = \mu_1 = \frac12 \left(1 - \frac19\sqrt{69} \right), 
\end{equation}
the so-called Gascheau-Routh critical mass ratio \cite{Gascheau43}.
When $0 < \mu < \mu_1$, these points are linearly stable, while they become complex saddles when $\mu_1 < \mu \le 1/2$. At $\mu = \mu_1$, a Hamiltonian Hopf bifurcation occurs. In this particular case, after the bifurcation, a homoclinic loop appears. More concretely, the versal normal form of the Hamiltonian, truncated up to order three, is integrable and possesses a homoclinic loop (see~\cite{MM03} and the references therein, also Section~\ref{sec:normalform}), hence making way to possible interesting homoclinic phenomena if the homoclinic loop is not preserved for the complete Hamiltonian and the manifolds intersect transversely. However, it is well known that the versal normal form up to any order is integrable. Hence, to decide if the manifolds of $L_4$ intersect transversally is what is often called a ``beyond all orders'' problem. In this work, we obtain a formula for the difference between the invariant manifolds, asymptotic for $\mu - \mu_1>0$ tending to $0$. If some coefficient is different from zero --- the Stokes constant ---, our formula implies that the manifolds intersect transversally. More concretely, our theorem concerning $L_4$ is the following.

\begin{figure}
\begin{center}
\includegraphics[width=0.6\textwidth ]{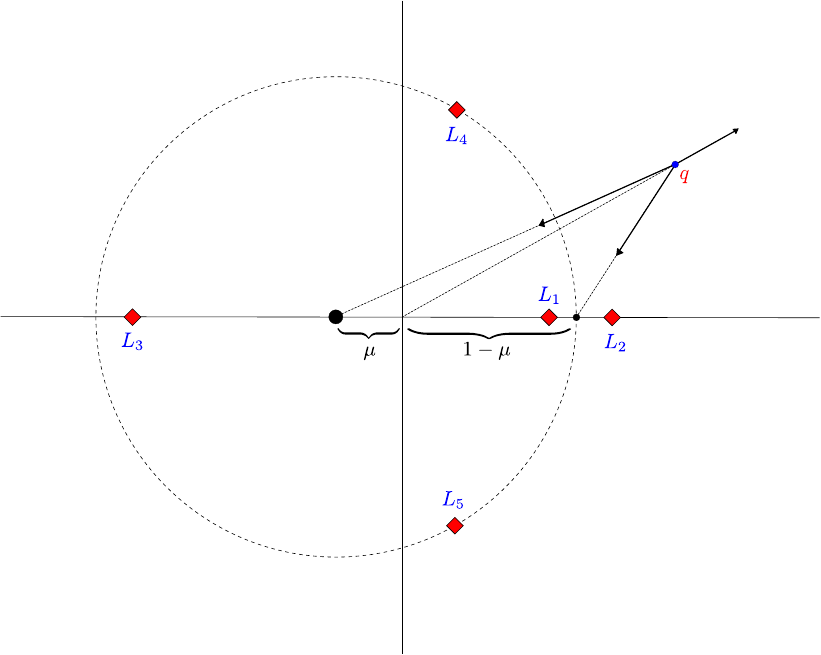}
    \end{center}
    \caption{The primaries, as black dots, and the equilibria, as red diamonds, of the RPC3BP in rotating coordinates. The black arrows represent the forces acting on the massless particle at $q$.}
    \label{fig:equilibris}
 \end{figure}

\begin{theorem}\label{thm:main:intro:L4}
There exist $\mu_0>\mu_1$ such that, for $\mu \in (\mu_1,\mu_0)$, the stable and unstable invariant manifolds of the Lagrangian point $L_4$ of the RPC3BP~\eqref{H1}, $W^{\bu,\bs}(L_4)$, intersect. In addition, there exists a constant $\Theta$, independent of $\mu$, such that at any homoclinic point $p\in W^{\bu}(L_4) \cap W^{\bs}(L_4)$ the angle $\varphi(p)$ between $W^{\bu}(L_4)$ and $W^{\bs}(L_4)$, measured at some section in the energy level $h(L_4)$,  
is
\begin{equation}
\label{teorema1:angle}
\varphi(p)= \frac{1}{(\mu-\mu_1)^{2}} e^{-\frac{2 \pi \sqrt 2  }{3 \sqrt{69} |\mu - \mu_1|^{1/2}}} \Big [\Theta + \mathcal{O}\left(\frac{1}{\log (\mu - \mu_1)}\right) \Big ] \cdot c(p)
\end{equation}
with $c(p)\neq 0$ a constant depending on the homoclinic point $p$.  
\end{theorem}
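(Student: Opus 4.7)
The strategy is to realize Theorem~\ref{thm:main:intro:L4} as a direct application of Theorem~\ref{thm:main:intro} (or rather of its quantitative version, Theorem~\ref{thm:difference_between_manifolds}) to the one-parameter family $H_\mu$ in~\eqref{H1} near $L_4$. First, I would check that $H_{\mu_1}$ belongs to the open set $\mathcal{J}\subset\mathcal{H}$. Translating $L_4$ to the origin and computing the quadratic part of $H_{\mu_1}$, the eigenvalues of the linearization coincide pairwise on the imaginary axis with a non-semisimple $1\!:\!-1$ resonance, which is exactly the classical Hamiltonian Hopf singularity (see~\cite{MM03} and Section~\ref{sec:normalform}). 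The sign of the coefficient in~\eqref{def:condicio_homoclinica} can then be read off the third-order terms of the Taylor expansion of $H_{\mu_1}$ at $L_4$; this computation is well documented and yields a sign compatible with the integrable truncated normal form having a homoclinic loop, placing $H_{\mu_1}$ in $\mathcal{J}$. Second, the family $\mu\mapsto H_\mu$ is a generic analytic unfolding: the two pairs of imaginary eigenvalues split off the imaginary axis transversally as $\mu$ crosses $\mu_1$, which is the classical Gascheau--Routh perturbation computation. These two verifications are the content of Section~\ref{sec:normalform}, and their outcome places the RPC3BP square in the hypotheses of Theorem~\ref{thm:difference_between_manifolds}.

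It then remains to extract the explicit expression~\eqref{teorema1:angle} from the asymptotic splitting formula supplied by that theorem. The abstract statement is written in terms of the scaled parameter $\delta$ produced by the standard rescaling of the subcritical Hopf bifurcation. Tracking the scaling constants, $\delta$ is proportional to $(\mu-\mu_1)^{1/2}$ with an explicit factor determined by the pair of imaginary eigenvalues at $\mu_1$, which for RPC3BP is controlled by $\sqrt{69}$. The exponential weight $e^{-A/\delta}$ of the splitting, where $A$ is the imaginary part of the nearest complex singularity of the integrable separatrix of the truncated normal form, translates after this rescaling precisely into the factor $e^{-\frac{2\pi\sqrt{2}}{3\sqrt{69}|\mu-\mu_1|^{1/2}}}$. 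The algebraic prefactor $(\mu-\mu_1)^{-2}$ then arises by combining the $\delta^{-\alpha}$ prefactor of the abstract formula with the Jacobians of the rescalings needed to pass from a scaled difference of generating functions to the geometric angle of the manifolds measured on a transverse section in the energy level $\{H=h(L_4)\}$; the Stokes constant $\Theta$ is the (assumed nonzero) coefficient defined via the inner equation attached to the versal normal form, and the remainder $\mathcal{O}(1/\log(\mu-\mu_1))$ is inherited from the outer-inner matching. The factor $c(p)\neq 0$ simply encodes the norm of the fixed tangent vector used to measure the angle at the homoclinic point $p$.

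The hard part will be the long bookkeeping chain linking the physical coordinates near $L_4$ to the scaled inner variables in which $\Theta$ is defined: one must control (i) the symplectic normalization bringing the quadratic part to the canonical $1\!:\!-1$ Jordan form, (ii) the blow-up $(q,p)\mapsto (\delta q,\delta p)$ turning the subcritical Hopf family into a singular perturbation of the integrable normal form, and (iii) the rescaling defining the inner equation. Each step changes the meaning of \emph{difference} and \emph{angle}, and any mishandled $\log\delta$ correction would be catastrophic since the genuine splitting is exponentially small. A further subtlety is that $H_\mu$ has no discrete symmetry adapted to the bifurcation (in contrast with~\cite{Gai10}), which forces the use of the non-symmetric version of Theorem~\ref{thm:difference_between_manifolds}; checking that all its non-degeneracy hypotheses are met by the RPC3BP --- and producing the precise numerical constants $\frac{2\pi\sqrt{2}}{3\sqrt{69}}$ and $(\mu-\mu_1)^{-2}$ --- is the core technical task.
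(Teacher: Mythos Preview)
Your proposal is correct and follows essentially the same route as the paper: verify via the Schmidt normal form (Theorem~\ref{Thm:NormalForm:L4}) that $\hat\gamma(0)>0$ so the RPC3BP at $L_4$ lies in $\mathcal{J}$, apply the rescaling of Proposition~\ref{prop:main_rescaling}, and then invoke Theorem~\ref{thm:difference_between_manifolds}, tracking the relations $\nu\sim -\tfrac{3\sqrt{69}}{8}(\mu-\mu_1)$, $\varepsilon^2=-2\nu/\hat\gamma(\nu)$ and $\omega=\sqrt{2/\hat\gamma(0)}\,\hat\omega(0)$ to produce the explicit exponent and prefactor in~\eqref{teorema1:angle}. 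One small slip: the coefficient in~\eqref{def:condicio_homoclinica} is read off the \emph{fourth}-order normal-form term $Q^2$, not the third-order Taylor terms.
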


\begin{corollary} If $\Theta\neq 0$, then the invariant manifolds of $L_4$ intersect transversely in the energy level of $L_4$. 
\end{corollary}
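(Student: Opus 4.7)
The plan is to read the corollary directly off the asymptotic formula~\eqref{teorema1:angle}. Assume $\Theta\neq 0$. Since $1/\log(\mu-\mu_1)\to 0$ as $\mu\to\mu_1^+$, by possibly replacing $\mu_0$ with a smaller value I can guarantee that the bracket $\bigl[\Theta+\mathcal{O}(1/\log(\mu-\mu_1))\bigr]$ stays bounded away from zero, for instance by $|\Theta|/2$, uniformly for $\mu\in(\mu_1,\mu_0)$. The prefactor $(\mu-\mu_1)^{-2}\,e^{-2\pi\sqrt{2}/(3\sqrt{69}|\mu-\mu_1|^{1/2})}$ is strictly positive, and $c(p)\neq 0$ by Theorem~\ref{thm:main:intro:L4}, so $\varphi(p)\neq 0$ at every homoclinic point $p$ and every $\mu\in(\mu_1,\mu_0)$.

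Next I would translate the nonvanishing of the angle into transversality. Because $H$ is a first integral, both $W^{\bu}(L_4)$ and $W^{\bs}(L_4)$ are contained in the three-dimensional energy level $\{H=h(L_4)\}$, so at any homoclinic point $p$ the tangent spaces $T_p W^{\bu,\bs}(L_4)$ both contain the Hamiltonian vector field $X_H(p)$; in particular one cannot hope for transversality in $\R^4$ (two planes lying inside a common three-dimensional submanifold). Transversality \emph{in the energy level} instead means $T_p W^{\bu}(L_4)+T_p W^{\bs}(L_4)=T_p\{H=h(L_4)\}$, equivalently that the intersection $T_p W^{\bu}(L_4)\cap T_p W^{\bs}(L_4)$ is exactly the one-dimensional flow line. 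Choose a two-dimensional section $\Sigma\subset\{H=h(L_4)\}$ transverse to $X_H(p)$ at $p$; each invariant manifold cuts $\Sigma$ in a smooth curve, and $\varphi(p)$ in~\eqref{teorema1:angle} is by construction the angle between the tangent lines of these two curves at $p$. The first paragraph shows $\varphi(p)$ is nonzero and, thanks to the exponentially small factor, arbitrarily close to $0$, hence certainly not a multiple of $\pi$; so the two tangent lines span $T_p\Sigma$ and transversality in the energy level follows.

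All the work is hidden in producing the asymptotic expansion of $\varphi(p)$ supplied by Theorem~\ref{thm:main:intro:L4}: once that is granted, the corollary is simply the observation that the leading coefficient $\Theta$, being nonzero by hypothesis, dominates the logarithmic error and keeps the angle strictly positive throughout the subcritical window $\mu\in(\mu_1,\mu_0)$. No additional obstacle arises beyond the bookkeeping of dimensions inside the energy level described above.
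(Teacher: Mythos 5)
Your argument is correct and is essentially the one the paper intends (the corollary is stated without a separate proof, being treated as an immediate reading of formula~\eqref{teorema1:angle}). Your first paragraph correctly shows that the bracket $\bigl[\Theta+\mathcal{O}(1/\log(\mu-\mu_1))\bigr]$ is bounded away from zero after shrinking $\mu_0$, and combined with the strictly positive exponential prefactor and $c(p)\neq 0$ this yields $\varphi(p)\neq 0$; your second paragraph is the standard dimension count inside the three-dimensional energy level that converts a nonzero angle between the traces in a two-dimensional transverse section into transversality of the two-dimensional invariant manifolds within the energy level.

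One minor remark: the clause about $\varphi(p)$ being ``certainly not a multiple of $\pi$'' is unnecessary and slightly distracting. What is needed is that the two tangent lines in $T_p\Sigma$ are distinct, i.e.\ $\varphi(p)\not\equiv 0 \pmod{\pi}$; the estimate already gives $0<|\varphi(p)|$ and, being exponentially small, $|\varphi(p)|<\pi$ for $\mu_0$ small enough, so nonvanishing alone closes the argument. This does not affect the correctness of your proof.
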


The RPC3BP is quite often viewed as a first approximation of the motion in a system with two primaries. In the solar system, the mass ratio of the subsystem formed by Pluto and Charon lies beyond the Gascheau-Routh critical value. Many binary stars fall into this category.


The RPC3BP, as a simplified model of the full 3-body problem, has been the object of many studies. Since it has only two degrees of freedom, taking suitable Poincaré sections, it reduces to a two dimensional symplectic map, which greatly simplifies the task of finding complex dynamics in the model. At the same time, the low dimensionality prescribes the occurrence of other phenomena, like Arnold diffusion. However, some of the structures in the model, namely, hyperbolic sets, can quite often be continued to higher-dimensional models and act as guidance for the dynamics there.

Chaotic behavior linked to homoclinic phenomena has been found in different parts of the phase space of the model. Following the work of Moser~\cite{Moser01} for the Sitnikov problem, Llibre and Simó~\cite{SimoL80} proved the existence of oscillatory solutions, i.e., solutions $q(t)$ such that $\limsup_{t\in \R} |q(t)| = \infty$ while $\liminf_{t\in \R} |q(t)| <\infty$, for small values of $\mu$. This result was later extended to all values of $\mu$ in~\cite{GuardiaMS16}. These solutions are related to the invariant manifolds of the ``parabolic infinity'', which coincide for $\mu=0$ and give rise to transversal homoclinic solutions for $\mu>0$. Oscillatory orbits were recently obtained using variational methods in~\cite{paradela2022oscillatorymotionsrestricted3body}. Also related to the parabolic manifolds of infinity, the existence of Newhouse domains for the RPC3BP is established in~\cite{GMP25}.

Close to $L_3$, for $\mu$ close to 0, chaotic motions and quadratic tangencies are proven to exist in~\cite{baldomá2023coorbitalhomoclinicchaoticdynamics}, related to the invariant manifolds of $L_3$ \cite{BALDOMA2022108562,BALDOMA2023109218}.

\subsection{Chaotic dynamics in the Hamilton Hopf bifurcation and near $L_4$ in the RPC3BP}

\label{sec:dinamica_caotica}

Studying the dynamics of the RPC3BP close to $L_4$ and $L_5$, Str\"omgren~\cite{Stroemgren33}, from numerical computations,  conjectured the existence of families of periodic orbits depending on a parameter that accumulate to a transversal homoclinic connection to $L_4$, thus ``vanishing in thin air''. This phenomenon is called \emph{blue sky catastrophe}. Henrard, in~\cite{Hen73}, proved that the transversal intersection of the invariant manifolds of a Hamiltonian complex saddle (in particular, $L_4$ beyond the Gascheau-Routh mass ration), if it occurs, gives rise to the existence of such orbits. Later on, Devaney~\cite{Devaney76} showed that, besides the blue sky catastrophe, the existence of a transversal homoclinic orbit to Hamiltonian complex saddle implies the existence of horseshoes of $N$ symbols, for any $N$, with the correspondent consequences on the topological entropy of the system and the number of periodic orbits.
Theorem~\ref{thm:main:intro} implies that such behavior happens for generic unfoldings of any Hamiltonian Hopf singularity (in the open set $\mathcal{J}$ in Theorem~\ref{thm:main:intro}).

In the particular setting of the RPC3BP, Theorem~\ref{thm:main:intro:L4} implies, if $\Theta \neq 0$, that transversal homoclinic orbits to $L_4$ exist for any $\mu >\mu_1$ such that $\mu -\mu_1$ is small enough. Furthermore, one can see from the proof of Theorem~\ref{thm:main:intro:L4} that the parametrizations of the manifold that lead to formula~\eqref{teorema1:angle} are analytic for a rather large interval of $\mu$. Hence, in that interval, the angle will be non-zero except, at most, a finite number of values of $\mu$.

The transversal intersection of the manifolds can be deduced from Theorem~\ref{thm:main:intro:L4} if the constant $\Theta \neq 0$.
This constant is defined through an independent of~$\mu$ problem (see Section~\ref{sec:inner}). It is beyond the scope of this work to check that $\Theta \neq 0$. However, some remarks should be made about its value. First, since the RPC3BP can be seen, close to $L_4$, as an integrable Hamiltonian $H_0$ plus a perturbation, $H_1$, the value of $\Theta$ depends ``analytically'' on $H_1$. Roughly speaking, hence,  for a generic~$H_1$, it does not vanish. Of course, it may happen that $\Theta$ is 0 for the particular case of the RPC3BP, but the numerical evidence (going back to Str\"omgen) suggests it does not. Second, it seems feasible to prove the non-vanishing character of $\Theta$ by extending the computer assisted methods in~\cite{BGG25} to the present situation.

\subsection{The Hamiltonian Hopf bifurcation and exponentially small phenomena}
\label{sec:exponencialment_petit}

When the Hamiltonian Hop bifurcation takes place in a family of Hamiltonians depending on a parameter $\nu$ at $\nu=0$, if the singularity belongs to the open set in Theorem~\ref{thm:main:intro}, the versal normal form up to order 4 for subcritical values of $\nu$ close to 0 possesses a 2-dimensional homoclinic loop to the origin. 
Furthermore, since, for subcritical values of $\nu$ close 0, the imaginary part of the eigenvalues is 1 while their real part is small, two time scales appear. In this situation, it is well known that the versal formal normal form of the family is integrable up to any order. This implies that the stable and unstable invariant manifolds of the singularity coincide up to any order in $\nu$. Hence, determining if the system possesses transversal homoclinic orbits is a ``beyond all orders'' problem. 

With the aid of the normal form, the family can be written as an integrable Hamiltonian plus a perturbation. One can then try to compute the associated Melnikov function. It is immediate to check that this function provides an exponentially small in $\nu$ prediction of the difference between the invariant manifolds. However, our work here shows that this prediction is generically wrong: unless some additional smallness condition is assumed, Melnikov method fails. In particular, Melnikov method fails to predict the true splitting of the invariant manifolds of $L_4$ and $L_5$ in the RPC3BP. This is in contrast to what happens to the invariant manifolds of the parabolic infinity in the RPC3BP (see~\cite{GuardiaMS16}), where, although the problem has also two time scales, the authors were able to prove that the Melnikov function indeed describes the difference between the manifolds.

After the seminal ideas in~\cite{Lazutkin84}, rigorously applied later in~\cite{Gelfreich99} to the splitting of separatrices in the Chirikov standard map, the most successful way to tackle with the exponentially small splitting of invariant manifolds requires the use of complex extensions of the manifolds and, depending on the application, the study of the \emph{inner equation}. There is wide amount of literature. In the present paper, we will use the approach in~\cite{Sauzin01,LochakMS03} to deal with the Hamilton-Jacobi equation. We refer to~\cite{BaldomaFGS11} and the references therein for the treatment in Hamiltonian systems and~\cite{Baldoma06} for the inner equation.

\subsection{Structure of the paper}

In Section~\ref{sec:homoclinic_loop}, we introduce the problem and present the main results. First we describe the versal normal form and perform some suitable scalings. This part is already proven~\cite{Meer85}, see also~\cite{BHH07}, but we include it for completeness. Next we compute the homoclinic connection of the normal form and claim our main theorem in its technical form, Theorem~\ref{thm:difference_between_manifolds}. Finally, in Section~\ref{sec:L4:normalform} we deduce Theorem~\ref{thm:main:intro:L4} as a corollary of Theorem~\ref{thm:difference_between_manifolds}.

The rest of the paper is devoted to the proof of Theorem~\ref{thm:difference_between_manifolds}, as follows. 
In Section~\ref{sec:prova_del_teorema_principal}, we collect the main technical results concerning the invariant manifolds and their difference and deduce Theorem~\ref{thm:difference_between_manifolds} from them.

Subsequent sections contain the proof of each step: outer approximation of the manifolds, extension, inner equation and, finally, difference of the manifolds.

\section*{Acknowledgements}
I.B. and D.S. have been partially supported by the grant PID-2021-122954NB-100 funded by
MCIN/AEI/10.13039/501100011033 and “ERDF A way of making Europe”.  P.M. has been partially supported by the Spanish MINECO-FEDER Grant PID2021-123968NB-100.
This work is also supported by the Spanish State Research Agency, through the Severo Ochoa and Mar\'ia de Maeztu Program for Centers and Units of Excellence in R\&D (CEX2020-001084-M).

\section{Homoclinic loop and transverse intersections}
\label{sec:homoclinic_loop}
In this section, we recall the versal normal form of the Hamiltonian Hopf bifurcation, which has been widely studied; see, for instance,~\cite{Meer85} and~\cite{Soko74}. Versal normal forms are used to address the regularity issues arising from bifurcations. Typically, in the study of dynamical systems near an equilibrium point, the classical approach involves finding a suitable linear change of coordinates to bring the associated linearized system into its Jordan canonical form. Subsequently, a sequence of nonlinear transformations is employed to put higher-order terms into their normal forms. 
In our case, as is often the case in parameter-dependent systems where the linearized system has multiple eigenvalues, the linear symplectic change of coordinates bringing the linear part into its Jordan canonical form is not continuous when the Hopf bifurcation family encounters a non-semisimple Hamiltonian with a $1:-1$ resonance. This occurs at a critical value of the bifurcation parameter, where the linear part at the origin becomes a non-diagonalizable matrix with a pair of purely imaginary conjugate eigenvalues $\pm i\varpi$.
To address this issue in parameter-dependent systems, Arnold~\cite{A71} introduced the versal normal form. At the critical value of the parameter, the versal normal form coincides with the Jordan canonical form. However, for values near the critical value, additional terms are introduced to ensure the continuity of the linear transformation. 

In the versal normal form variables, at first order, a small homoclinic loop appears when the bifurcation parameter crosses the critical value. Following the ideas  in~\cite{MM03}, we perform a scaling normalizing the size of the homoclinic loop (see Proposition \ref{prop:main_rescaling}) and provide a suitable parameterization of this homoclinic loop in Lemma \ref{lem:homoclinic}. We also rewrite Theorem~\ref{thm:main:intro} in the scaled normal form variables (see Theorem~\ref{thm:difference_between_manifolds} in Section~\ref{subsec:hamilton-Jacobi}). 

Finally, in Section~\ref{sec:L4:normalform} we rewrite the Hamiltonian of the RPC3BP near $L_4$ (see~\eqref{H1}) in a versal normal form (see Theorem~\ref{Thm:NormalForm:L4}) and check that it satisfies the conditions of the main result.

From now on in this work, we will omit the dependence on the variables if there is no danger of confusion. 

\subsection{Normal form and rescaled Hamiltonian} \label{sec:normalform}

We say that $\mathbf{H}_0$ is a non-semisimple Hamiltonian with a $1:-1$ resonance if the associated vector field $X_{\mathbf{H}_0}$ has an equilibrium point, which can be assumed to be the origin, with a non diagonalizable linear part and having eigenvalues $\pm i \varpi \in i \mathbb{R}$ of multiplicity $2$. In other words,   
\begin{equation}\label{difH0}
DX_{\mathbf{H}_0} (0) = \begin{pmatrix}
    0 & -\varpi & 0 & 0 \\ \varpi & 0 & 0 & 0 \\ -1 & 0& 0& -\varpi \\0 &-1 & \varpi & 0
\end{pmatrix}, \qquad \varpi>0.
\end{equation}
\begin{remark}
The usual canonical form for $DX_{\mathbf{H}_0}(0)$ (see~\cite{Meer85}) is 
\[
DX_{\mathbf{H}_0} (0) = \begin{pmatrix}
    0 & -\varpi & 0 & 0 \\ \varpi & 0 & 0 & 0 \\ -\iota & 0& 0& -\varpi \\0 &- \iota & \varpi & 0
\end{pmatrix}, \qquad \varpi>0, \;\iota=\pm 1.
\]  
Notice that, performing, if necessary the symplectic scaling 
$\big  (x_1,\iota x_2, y_1, \iota y_2 \big)$  
and changing time by $\iota t$, $DX_{\mathbf{H}_0} (0) $ is of the form~\eqref{difH0}.  
\end{remark}

Consider a one-parameter family of $2-$degrees of freedom Hamiltonians $\mathbf{H}_{\mu} : \R^2 \times \R^2 \to \R$ with the standard symplectic form $dx_1 \wedge dy_1 + dx_2 \wedge dy_2$. We assume that $\mathbf{H}_{\mu}$ has the origin as an equilibrium point for all the values of the parameter $\mu$, and that, when $\mu=0$, $\mathbf{H}_0$ is a non-semisimple Hamiltonian with $1:-1$ resonance. We point out that condition~\eqref{difH0} is equivalent to assuming that the quadratic part $\mathbf{H}^2_{0}$ of $\mathbf{H}_0$ has the following form 
\begin{equation*}
   \mathbf{H}^2_{0}(x_1, x_2, y_1, y_2) = \varpi(x_1y_2 - x_2y_1) + {1 \over 2} (x_1^2 + x_2^2), \quad \mbox{with $\varpi>0$}
\end{equation*}
 whose the semisimple part is $\varpi(x_1y_2 - x_2y_1)$ and the nilpotent part is ${1 \over 2} (x_1^2 + x_2^2) $. The Hamiltonian Hopf bifurcation occurs when the equilibrium point at the origin associated with $\mathbf{H}_\mu$ changes its linear stability when the parameter $\mu$ crosses the critical value $\mu=0$, see for instance in Figure~\ref{fig:vaps}. That is, the eigenvalues of $DX_{\mathbf{H}_\mu}(0)$ change from $\pm i \varpi_1, \pm i \varpi_2$, with $\varpi_1\neq \varpi_2$ to $\pm a(\mu) \pm i b(\mu)$ with $a(\mu) \neq 0$, $a(0)=0, b(0)=\varpi$ when $\mu$ evolves either from $\mu>0$
 to $\mu<0$ (as in Figure~\ref{fig:vaps}) or from $\mu<0$ to $\mu>0$. 

\begin{figure}[t]
\begin{center}
\begin{overpic}[width=0.6\textwidth ]{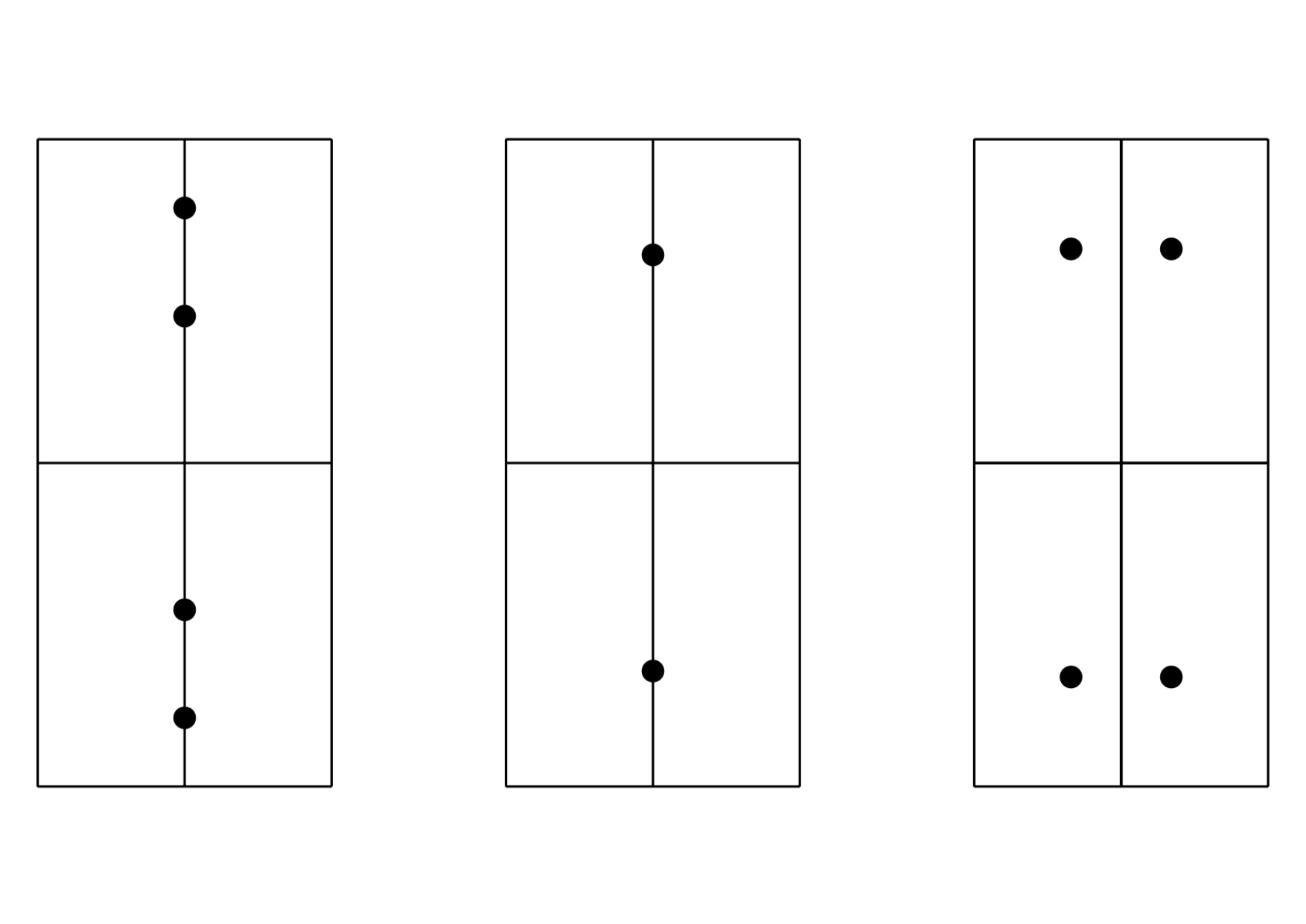}
		\put(45,4){\color{black} $\mu=0$ }
		\put(8,4){\color{black} $\mu>0$ } 
		\put(80,4){{\color{black} $\mu<0$ }}
	\end{overpic}
    \end{center}
    \caption{Evolution in the complex plane of the eigenvalues associated to the origin of $\mathbf{H}_{\mu}$ with respect to the bifurcation parameter $\mu$}
    \label{fig:vaps}
 \end{figure}

The following result has been proven in~\cite{Meer85}, see also~\cite{BHH07}. For a brief description of the proof, we refer to Section \ref{app:NF} of Appendix~\ref{HVNF}.  

\begin{theorem}\label{Thm:NormalForm} 
    Let $H$ be defined by $H(\cdot ;\mu)= \mathbf{H}_\mu$. 
    There exist $\mu_0>0$, a neighborhood $U$ of the origin in $\R^2 \times \R^2$ and an analytic family of analytic symplectic change of variables $\Phi:U \times [-\mu_0,\mu_0] \to  \R^2 \times \R^2$, such that 
$H\circ \Phi = \check{H}_0 + \check{H}_1$ with  
\[
\check{H}_0  =  \check{\omega}(\mu) S +  \frac{1}{2}  N + \frac{1}{2}  \nu_0(\mu) Q + {1 \over 4} \check{\gamma}(\mu)   Q^2+ {1 \over 4} \check{\alpha}(\mu)    S^2 + \frac{1}{2}\check{\beta}(\mu)    Q  \,  S  
\]
where $\check \omega = \varpi + \mathcal{O}(\mu)$, 
$
N  =  N(x_1,x_2)=x_1^2 + x_2^2$, $Q = Q(y_1,y_2)=y_1^2 + y_2^2$, $S  = S(x,y)=x_1 y_2-x_2y_1
$ 
and $\check{H}_1(x,y;\mu) = \mathcal{O}_6(x,y)$, uniformly for $\mu \in [-\mu_0,\mu_0]$.

In addition, {$H \circ \Phi$}, $\check \omega, \nu_0, \check \alpha$, $\check \beta$, and $\check \gamma$ are real analytic functions of $\mu \in (\mu_0,\mu_0)$.  

Finally, it is said that the family $H\circ \Phi$ undergoes a non degenerate Hopf bifurcation if $\nu_0(0)=0$, $\partial_\mu \nu_0(0)\neq 0$ and $\check \gamma(0)\neq 0$. 
\end{theorem}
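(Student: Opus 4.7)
The plan is to follow the classical two-stage procedure of van der Meer~\cite{Meer85}. In the first (linear) stage, one normalizes the quadratic part of $\mathbf{H}_\mu$ by appealing to a Hamiltonian analogue of Galin's versal deformation theorem for non-semisimple matrices. At $\mu=0$, the linear part is the sum of the semisimple generator $\varpi S$ and the nilpotent generator $\frac{1}{2} N$. A minimal symplectic versal unfolding of this Jordan block is two-dimensional and can be realized by adding a frequency shift along $S$ together with the ``dual'' nilpotent direction $\frac{1}{2}\nu_0 Q$. Thus one finds an analytic family $L_\mu$ of linear symplectic changes of variables such that
\[
L_\mu^* \mathbf{H}_\mu^2 = \check\omega(\mu)\, S + \frac{1}{2}\, N + \frac{1}{2}\, \nu_0(\mu)\, Q + \mathcal{O}_3,
\]
where $\check\omega, \nu_0$ are analytic in $\mu$, with $\check\omega(0)=\varpi$ and $\nu_0(0)=0$. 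The transversality hypothesis $\partial_\mu \nu_0(0)\neq 0$ encodes the eigenvalue crossing depicted in Figure~\ref{fig:vaps}.

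The second (nonlinear) stage is a Deprit-Lie-series normal form reduction with respect to the adjoint operator $\mathrm{ad}_L = \{\check\omega S + \frac{1}{2}N + \frac{1}{2}\nu_0 Q,\cdot\}$, truncated at degree $4$. The semisimple component $\{\check\omega S,\cdot\}$ generates an $SO(2)$ action whose polynomial invariants form a ring generated by $N, Q, S$ and $M = x_1 y_1 + x_2 y_2$, subject to the single syzygy $S^2 + M^2 = NQ$. Killing the $SO(2)$-non-invariant components at each order up to degree $4$ leaves an $S^1$-invariant polynomial. One then uses the nilpotent component $\{\frac{1}{2}N,\cdot\}$ to eliminate further terms: a direct computation (using the $\mathfrak{sl}_2$-triple structure generated by $\frac{1}{2}N, -\frac{1}{2}Q$ and $-M$ under the Poisson bracket) shows that, modulo the syzygy, the image of $\mathrm{ad}_{N/2}$ on the space of quartic $S^1$-invariants admits $\{Q^2, S^2, QS\}$ as a complement. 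Projecting the degree-$4$ Hamiltonian onto this complement yields the form of $\check H_0$ stated in the theorem, with $\check\alpha,\check\beta,\check\gamma$ given by the coefficients of the surviving invariants; the non-degeneracy $\check\gamma(0)\neq 0$ is just the requirement that the leading quartic invariant along the $Q$ direction does not vanish at the bifurcation.

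The analytic dependence of every step on $\mu \in (-\mu_0, \mu_0)$ is preserved because each cohomological equation is solved on a complementary subspace where $\mathrm{ad}_L$ is invertible uniformly in $\mu$: the semisimple step relative to $\{\check\omega S, \cdot\}$ is clean, and the additional nilpotent reduction reduces to a finite-dimensional linear-algebra computation whose structure varies analytically with $\mu$ and is non-degenerate on the chosen complement. The generating functions at each Lie-series step are therefore polynomial with coefficients analytic in $\mu$, so the composed symplectic transformation $\Phi$ and the normal form coefficients $\check\omega, \nu_0, \check\alpha, \check\beta, \check\gamma$ are analytic on $(-\mu_0,\mu_0)$. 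The remainder $\check H_1 = \mathcal{O}_6(x,y)$ is bounded uniformly for $\mu \in [-\mu_0, \mu_0]$ on a small neighborhood $U$ of the origin by the standard convergence lemma for truncated Lie series. The main obstacle I expect is the absence of direct small-divisor protection at $\mu = 0$, where the nilpotent part of $\mathrm{ad}_L$ prevents a naive inversion; this is precisely the reason the linear step must produce the versal form containing the term $\frac{1}{2}\nu_0 Q$, which together with the explicit nilpotent-cohomology calculation guarantees that the whole procedure remains analytic across the bifurcation value.
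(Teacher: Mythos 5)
Your proposal takes essentially the same two-stage route as the paper's proof (sketched in Appendix~\ref{app:NF}): a linear versal deformation of the quadratic part into $\check\omega(\mu)S + \tfrac12 N + \tfrac12\nu_0(\mu)Q$ followed by a Lie-series normalization up to degree~$4$, with the complement to $\mathrm{Im}\,\{\mathbf H^2_\mu,\cdot\}$ taken to be $\mathrm{Ker}\,\{S,\cdot\}\cap\mathrm{Ker}\,\{Q,\cdot\}=\mathrm{span}\{Q^2,QS,S^2\}$ in degree~$4$ (and zero in odd degrees). The algebraic content you spell out --- the Hilbert basis $N,Q,S,M$ with syzygy $S^2+M^2=NQ$, the $\mathfrak{sl}_2$-triple $(\tfrac12 N,-\tfrac12 Q,-M)$, and the identification of $\mathrm{Ker}\,\mathrm{ad}_Q$ as a complement to $\mathrm{Im}\,\mathrm{ad}_{N/2}$ on $S^1$-invariants --- is precisely what the paper delegates to van der Meer~\cite{Meer85}; your rendition is correct and a little more self-contained. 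Where you are briefest is exactly where the paper adds something new beyond~\cite{Meer85}: van der Meer only establishes smooth dependence on the parameter, and the paper upgrades this to analyticity (Lemma~\ref{NFLemmaNonLinearTermsReg}) by setting up, at each degree, a functional $\mathcal F(\mu, W^n)=0$ valued in $\mathcal P_n$ or $\mathcal P_n/\widehat{\mathcal P}_n$, checking that $\partial_{W^n}\mathcal F(0,0)=\{\mathbf H^2_0,\cdot\}$ has a right inverse on the chosen complement, and invoking the analytic implicit function theorem. Your statement that the cohomological equations are ``solved on a complementary subspace where $\mathrm{ad}_L$ is invertible uniformly in $\mu$'' is the right idea (and is justified: by the $\mathfrak{sl}_2$-decomposition $V_4\oplus V_2\oplus V_0$ of degree-$4$ $S^1$-invariants, $\mathrm{rank}(\mathrm{ad}_N+\nu_0\mathrm{ad}_Q)$ is constant in $\nu_0$ near~$0$), but it would be worth making the implicit-function-theorem step explicit, since that uniform-rank fact is precisely what prevents a loss of regularity at the nilpotent value $\mu=0$.
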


 We point out that the Hamiltonian $\check H_0$ is invariant under the simultaneous rotations of the planes $(x_1, x_2)$ and $(y_1, y_2)$ induced by the Hamiltonian flow generated by $S$, that is $\{\check H_0,  S\}=0$, where $\{\cdot, \cdot\}$ denotes the usual Poisson bracket associated with the symplectic form $dx_1 \wedge dy_1 + dx_2 \wedge dy_2$. Furthermore, $\check{H}_0$ is an integrable with first integral $S$.

The conditions $\nu_0(0)=0$, $\partial_\mu \nu_0(0)\neq 0$ and $\check \gamma(0)\neq 0$, are generic non-degeneracy condition  ensuring that $H\circ \Phi$ unfolds a Hopf bifurcation at the origin.

\begin{remark}\label{new:H0:nu}
Assuming that $\partial_\mu \nu_0 (0) \neq 0$, taking $\nu=\nu_0(\mu)$, there exists an analytic function $\mu_0(\nu)$ , $\mu_0(0)=0$, that $\nu=\nu_0(\mu_0(\nu))$ if $\nu$ is small enough. Then $H\circ \Phi$ can be rewritten as 
$H\circ \Phi = \check{H}_0 + \check{H}_1$ with  
\[
\check{H}_0 =  \hat{\omega}(\nu) S +  \frac{1}{2}  N + \frac{1}{2}  \nu  Q + {1 \over 4} \hat{\gamma}(\nu)   Q^2+ {1 \over 4} \hat {\alpha}(\nu)    S^2 + \frac{1}{2}\hat{\beta}(\nu)    Q  \,  S  
\]
where $\hat \omega (\nu) = \check{\omega}(\mu(\nu))$, $\hat{\alpha}(\nu) = \check{\alpha}(\mu_0(\nu))$, $\hat{\beta}(\nu)=\check{\beta}(\mu_0(\nu))$ and $\hat{\gamma}(\nu) = \check{\gamma}(\mu_0(\nu))$.  
 
Notice that the characteristic polynomial of $DX_{ {\check{H}_0}}(0)$ can be explicitly computed as $
\lambda^4 + 2 (\hat{\omega}^2 + \nu) \lambda^2 + (\hat{\omega}^2-\nu)^2 $ so that the eigenvalues of $DX_{\check{H}_{0}}(0)$
satisfy
\[
\lambda^2 = -\hat{\omega}^2 - \nu \pm 2 \hat{\omega} \sqrt{\nu}.
\]
Therefore for $\nu>0$, the eigenvalues are purely imaginary 
$\pm i \sqrt{\hat{\omega}^2 + \nu \mp 2 \hat \omega\sqrt{\nu} }$ and when $\nu<0$, the eigenvalues are complex conjugated of the form $\pm \sqrt{-\nu} \pm i \hat \omega$. 
\end{remark}

So, from now on, we will restrict the analysis to the case $\nu<0$.  
If $\nu >0$, the origin is an elliptic point.

As mentioned before, the truncated Hamiltonian $\check{H}_0$ is integrable because $S$ is a first integral in involution with $\check{H}_0$, see~\cite{MM03} for a detailed study. 
The origin is an equilibrium point with eigenvalues $\pm \sqrt{-\nu} \pm i \hat{\omega}$ and, for $\nu <0$, it has 2-dimensional invariant manifolds given by 
\[
\check{W}^{\bu,\bs}_0(0) = \{(x,y) \in \mathbb{R}^4\,:\, S (x,y)= \check{H}_0(x,y)=0\},
\]
that is
\begin{equation*}
\check{W}^{\bu,\bs}_0(0) 
= \left \{ (x,y) \in \mathbb{R}^4\,:\,S =0,\; 
N+\nu Q + \frac{1}{2} \hat \gamma(\nu) Q^2 =0 \right\}.
\end{equation*}

\begin{figure}[h]
    \subfloat{
         \includegraphics[width=0.45\textwidth]{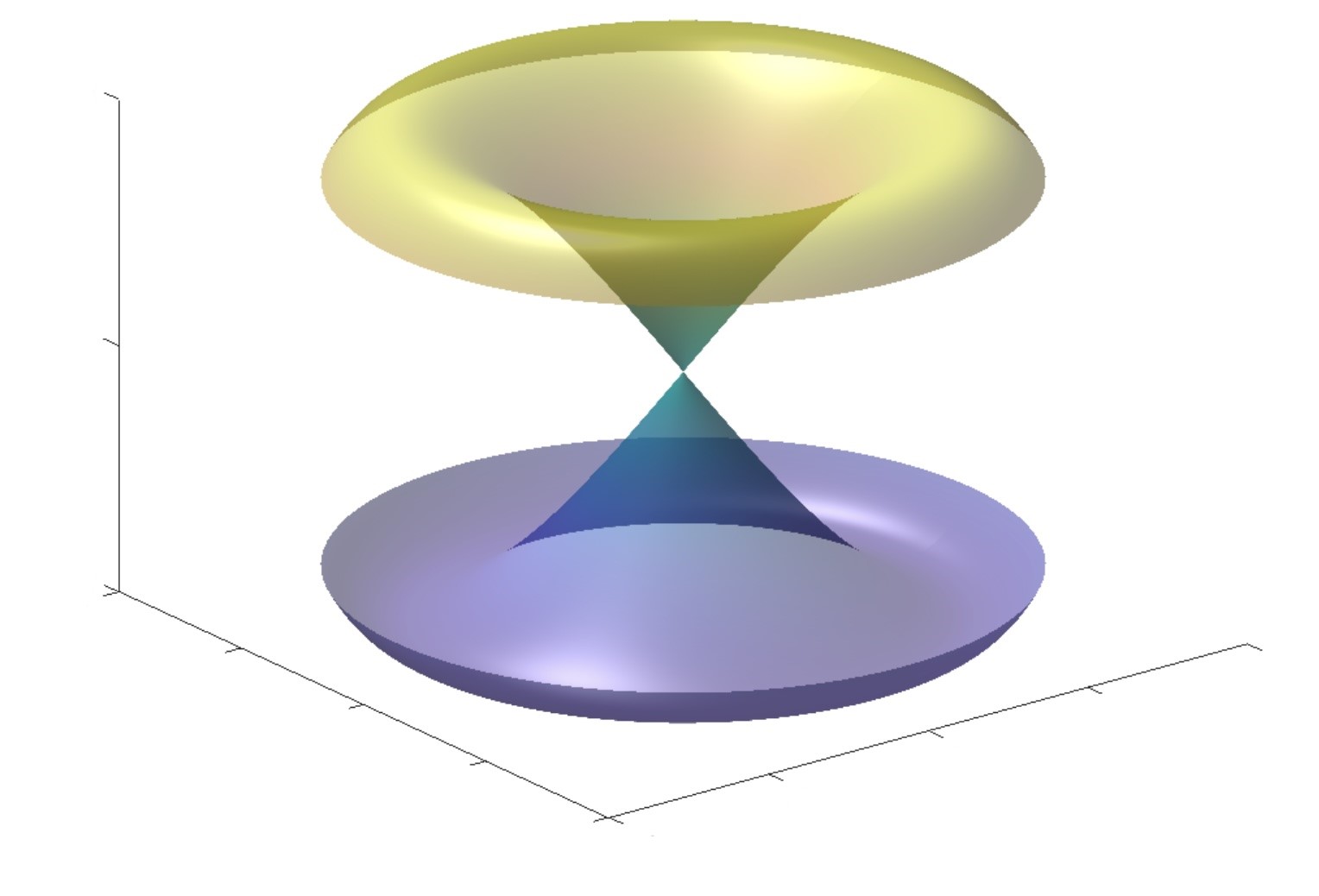}
         } \qquad 
          \subfloat{
         \includegraphics[width=0.45\textwidth]{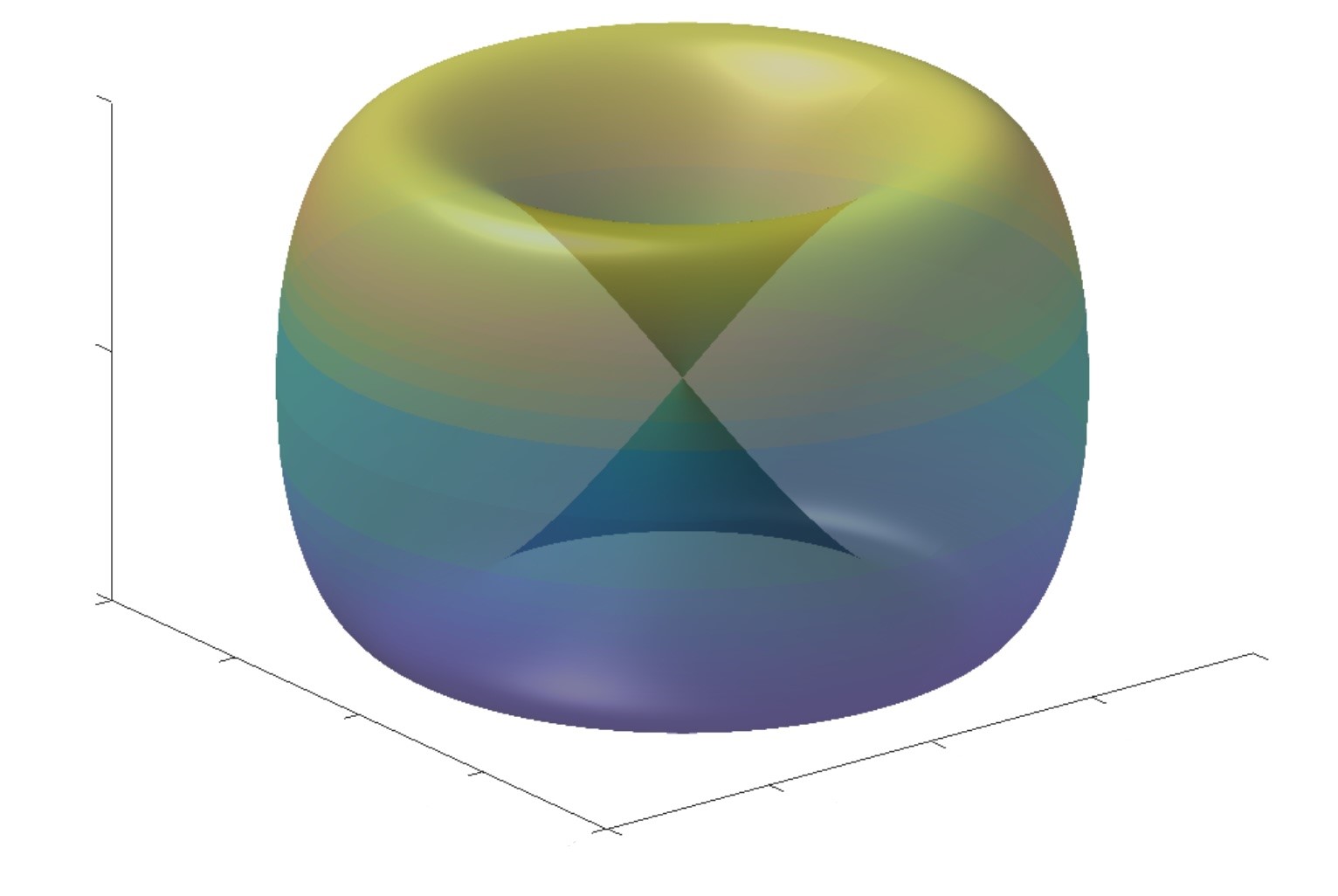}
         } 
\caption{The representation of the homoclinic surface $W^{\bu,\bs}_0(0)$. The axis are $(y_1,y_2, \sqrt{x^2_1+ x^2_2})$. On the left, it is represented by a piece of the homoclinic surface, just to clarify its geometry. On the right, the homoclinic surface is shown. }
\label{fig:homoclinica}
 \end{figure}

Clearly, a necessary condition for $\check{W}^{\bu,\bs}_0(0)$ to coincide, namely, to form a 2-dimensional homoclinic connection is (when $\nu<0$) 
\begin{equation}
\label{def:condicio_homoclinica}
\hat\gamma(0)>0
\end{equation}
(see Figure~\ref{fig:homoclinica}). Moreover, for $(x,y) \in \check W^{\bu,\bs}(0) $, 
$\nu  Q + \frac{1}{2} \hat \gamma(\nu)  Q^2 \leq 0$ and then, taking into account that $\hat \gamma(0)>0$, there exists a constant $c_1$ such that, if $\nu$ is small enough 
\[
0<Q = y_1^2 + y_2^2 \leq |\nu| 2 \hat \gamma^{-1} (\nu) \leq c_1^2 |\nu|
\]
that implies   
\[
N=x_1^2 + x_2^2 = |\nu |  Q - \frac{1}{2} \hat \gamma(\nu) Q^2 \leq   c_1^2 |\nu|^2.
\]
As a consequence, if $(x,y) \in \check{W}^{\bu,\bs}_0(0)$ the sizes of $x$ and $y$ are qualitative different. More precisely $\sqrt{x_1^2+ x_2^2} \leq {c_1} |\nu|$ and $\sqrt{y_1^2 + y_2^2}\leq {c}_1 |\nu|^{1/2}$.  
A suitable scaling, see~\cite{Meer85}, that normalizes the size of this homoclinic loop is done in order to study $\check{W}_0^{\bu,\bs}(0)$. This is summarized in the next proposition.
 

\begin{proposition}
\label{prop:main_rescaling} Assume that $\hat{\gamma}(0)>0$. 
Let $\varepsilon = \varepsilon(\nu)$, $a = a(\nu)$ be defined through
\begin{equation}
\label{def:epsilon}
\varepsilon^2 = -\frac{2 \nu}{\hat{\gamma}(\nu)}, \qquad a^2 =\frac{1}{2}\hat{\gamma}(\nu),
\end{equation}
with $\hat{\gamma}$ defined by Theorem~\ref{Thm:NormalForm} and Remark~\ref{new:H0:nu}. 
There exists $\varepsilon_0>0$ such that, for $\varepsilon\in (0,\varepsilon_0)$, the function $-\nu(\varepsilon)= \frac{1}{2} \varepsilon^2 \hat\gamma(0) + \mathcal{O}(\varepsilon^4)$ satisfying that $-\nu(\varepsilon) = {1 \over 2}\hat\gamma(\nu(\varepsilon)) \varepsilon^2$ is analytic\footnote{This fact follows straighforwardly using that $\hat\gamma(0)\neq 0$ and the implicit function theorem}. 

Consider the conformally symplectic scaling
\[
\widetilde \Phi(x,y) = (a(\nu(\varepsilon))\varepsilon^{2} x, \varepsilon y),
\]
where $a(\nu(\varepsilon))= \sqrt{{1 \over 2}\hat \gamma(\nu(\varepsilon))}$. Then

\begin{equation}
\label{def:rescaledHamiltonian}
H \circ \Phi \circ \widetilde \Phi =  \frac{1}{2}\hat{\gamma} (\nu(\varepsilon) )\varepsilon^4 \mathcal{H} := \frac{1}{2}\hat \gamma(\nu(\varepsilon) )\varepsilon^4 \big ( H_0 + H_1 \big ),
\end{equation}
with $H\circ \Phi$ the Hamiltonian in Remark \ref{new:H0:nu} (see also Theorem~\ref{Thm:NormalForm}), 
\begin{equation}
\label{def:H0H1}
H_0 ={\omega \over \varepsilon} S + {1 \over 2}\left(N -  Q +  Q^2\right),\qquad H_1 =  \alpha \varepsilon^2 S^2 +  \beta \varepsilon QS +  \widehat H_1,
\end{equation}
with  $N=N(x_1,x_2)$, $S=S(x,y)$, $Q=Q(y_1,y_2)$ defined as
\begin{equation}\label{def:NQS}
 N(x_1,x_2)=x_1^2 +x_2^2,\qquad S(x,y)=x_1y_2 - x_2 y_1,\qquad Q(y_1,y_2)=y_1^2+ y_2^2,
\end{equation}
the new values of $\omega = \omega(\varepsilon)$, $\alpha=\alpha(\varepsilon)$, $\beta=\beta(\varepsilon)$ are 
\begin{equation}\label{def:omegaalfabeta}
\begin{aligned}
\omega & = \omega(\varepsilon) = \sqrt{\frac{2}{\hat{\gamma}(\nu(\varepsilon))}} \hat \omega(\nu(\varepsilon)) = \sqrt{\frac{2}{\hat \gamma(0)}} \hat \omega (0) + \mathcal{O}(\varepsilon^2), \\
\alpha & =  \alpha(\varepsilon) = \frac{\hat{\alpha}(\nu(\varepsilon))}{4}  = \frac{\hat{\alpha}(0)}{4} + \mathcal{O}(\varepsilon^2),\\
\beta &=   \beta(\varepsilon)= \frac{\hat{\beta}(\nu(\varepsilon))}{\sqrt{2 \hat \gamma( \nu (\varepsilon))}} =  \frac{\hat{\beta}(0)}{\sqrt{2\hat{\gamma}(0)}} + \mathcal{O}(\varepsilon^2),
\end{aligned}
\end{equation}
and $
\widehat H_1 (x,y;\varepsilon) = \varepsilon^{-4}\tilde H_1 (\varepsilon^2 x, \varepsilon y;\varepsilon),
$
with $\tilde H_1$ analytic in a neighborhood of $(x,y,\varepsilon)=(0,0,0)$,
satisfying $\tilde H_1(x,y;\varepsilon) = \Oo_6(x,y)$ uniformly in $\varepsilon$.
\end{proposition}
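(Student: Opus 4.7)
The plan is to treat this statement as a direct verification, split into two independent pieces: first, invert the defining relation between $\varepsilon$ and $\nu$ to obtain an analytic reparametrization; second, substitute the conformally symplectic scaling into the normal form from Remark~\ref{new:H0:nu} and identify coefficients term by term.

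For the first piece, I start from $\varepsilon^2 = -2\nu/\hat\gamma(\nu)$ and rewrite it as $F(\varepsilon,\nu) := 2\nu + \hat\gamma(\nu)\varepsilon^2 = 0$. Since $\hat\gamma$ is analytic at $0$ with $\hat\gamma(0)>0$, we have $F(0,0)=0$ and $\partial_\nu F(0,0) = 2 + \hat\gamma'(0)\cdot 0 = 2 \neq 0$, so the analytic implicit function theorem supplies a unique analytic $\nu=\nu(\varepsilon)$ on some $(-\varepsilon_0,\varepsilon_0)$ with $\nu(0)=0$ and $-\nu(\varepsilon) = \tfrac12 \hat\gamma(\nu(\varepsilon))\varepsilon^2$. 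Expanding $\hat\gamma(\nu(\varepsilon)) = \hat\gamma(0) + \Oo(\varepsilon^2)$ then yields the claimed asymptotics $-\nu(\varepsilon)=\tfrac12 \hat\gamma(0)\varepsilon^2 + \Oo(\varepsilon^4)$. From this analytic $\nu(\varepsilon)$, the analyticity of $a(\nu(\varepsilon))=\sqrt{\hat\gamma(\nu(\varepsilon))/2}$ follows immediately (the argument stays bounded away from zero for small $\varepsilon$).

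For the second piece, I apply $\widetilde\Phi(x,y) = (a\varepsilon^2 x, \varepsilon y)$ (with $a=a(\nu(\varepsilon))$) to each monomial of $\check H_0+\check H_1$. The basic invariants transform as
\begin{equation*}
N \mapsto a^2\varepsilon^4 N = \tfrac12 \hat\gamma\,\varepsilon^4\, N,\qquad Q \mapsto \varepsilon^2 Q,\qquad S \mapsto a\varepsilon^3 S,
\end{equation*}
from which $Q^2\mapsto \varepsilon^4 Q^2$, $S^2\mapsto \tfrac12\hat\gamma\,\varepsilon^6 S^2$ and $QS\mapsto a\varepsilon^5 QS$. Collecting these in $\check H_0$ and dividing the result by the overall factor $\tfrac12\hat\gamma\,\varepsilon^4$, the $N$ and $Q^2$ terms become $\tfrac12 N$ and $\tfrac12 Q^2$; the $Q$ term becomes $(\nu/(\hat\gamma\varepsilon^2))Q = -\tfrac12 Q$ by the defining relation; the $S$ term becomes $(2\hat\omega a/(\hat\gamma\varepsilon))S = (\hat\omega/\varepsilon)\sqrt{2/\hat\gamma}\,S$, which is $(\omega/\varepsilon)S$ with $\omega$ as in~\eqref{def:omegaalfabeta}; and the $S^2$, $QS$ terms yield $(\hat\alpha/4)\varepsilon^2 S^2$ and $(\hat\beta/\sqrt{2\hat\gamma})\varepsilon QS$, matching $\alpha\varepsilon^2 S^2$ and $\beta\varepsilon QS$ in~\eqref{def:H0H1}. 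All expansions of $\omega,\alpha,\beta$ in $\varepsilon$ follow from substituting $\nu=\nu(\varepsilon)$ in the analytic functions $\hat\omega,\hat\alpha,\hat\beta,\hat\gamma$.

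The only nontrivial point — and what I would call the only real obstacle — is showing that the remainder $\widehat H_1$ is analytic at $\varepsilon=0$ despite being defined with a factor $\varepsilon^{-4}$. I would argue as follows: setting $\tilde H_1 := 2\hat\gamma(\nu(\varepsilon))^{-1}\check H_1$ (to absorb the global prefactor), each monomial $x^\alpha y^\beta$ appearing in $\tilde H_1$ has total degree $|\alpha|+|\beta|\geq 6$, and under $\widetilde\Phi$ it acquires the factor $\varepsilon^{2|\alpha|+|\beta|}$. Since $2|\alpha|+|\beta|\geq |\alpha|+|\beta|\geq 6$ whenever $\alpha,\beta$ are nonnegative integer multiindices with total degree at least $6$, the substitution gains at least $\varepsilon^6$, and division by $\varepsilon^4$ leaves a remainder of order at least $\varepsilon^2$. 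Combined with the uniform analyticity of $\tilde H_1$ in a neighborhood of the origin (inherited from Theorem~\ref{Thm:NormalForm} and the analyticity of $\nu(\varepsilon)$), the sum over monomials converges in an $\varepsilon$-independent polydisc, giving an analytic $\widehat H_1(x,y;\varepsilon)$ on a neighborhood of $(0,0,0)$ and completing the proof.
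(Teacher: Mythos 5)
Your proposal is correct and follows essentially the same route as the paper: the implicit function theorem applied to $F(\varepsilon,\nu)=2\nu+\hat\gamma(\nu)\varepsilon^2$ with $\partial_\nu F(0,0)=2$, followed by a direct substitution of $\widetilde\Phi$ into each monomial of the versal normal form and division by the overall factor $\tfrac12\hat\gamma(\nu(\varepsilon))\varepsilon^4 = a^2\varepsilon^4$, using the defining relation $\nu=-\tfrac12\hat\gamma\varepsilon^2$ to fix the coefficient of $Q$. One small remark: the proposition only asserts that $\tilde H_1$ is analytic near $(0,0,0)$ with $\tilde H_1=\Oo_6(x,y)$ uniformly in $\varepsilon$ (which is immediate once $\nu(\varepsilon)$ is analytic, since $\tilde H_1(X,Y;\varepsilon)=\tfrac{1}{\tilde a^2(\varepsilon)}\check H_1(\tilde a(\varepsilon)X,Y;\nu(\varepsilon))$), so your closing argument that $\widehat H_1=\varepsilon^{-4}\tilde H_1(\varepsilon^2 x,\varepsilon y;\varepsilon)$ itself extends analytically through $\varepsilon=0$, while correct (each monomial contributes $\varepsilon^{2|\alpha|+|\beta|-4}$ with $2|\alpha|+|\beta|\geq 6$), proves a bit more than the statement actually requires.
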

\begin{proof}
 Let $a,\varepsilon$ defined by~\eqref{def:epsilon}. We emphasize that 
\begin{equation}\label{def:a:varepsilon}
\widetilde{a}(\varepsilon) := a( \nu(\varepsilon)) = \sqrt{\frac{1}{2} \hat\gamma(\nu(\varepsilon))} = \sqrt{ \frac{1}{2} \hat\gamma(0)} + \mathcal{O}(\varepsilon^2).
\end{equation}

In addition, we point out that 
$\widetilde{\Phi}$ is a symplectic change of coordinates with multiplier $a^{-1}\varepsilon^{-3}$. In the new variables, the Hamiltonian $H\circ \Phi = \check{H}_0 + \check{H}_1$ in Remark~\ref{new:H0:nu} (see also Theorem~\ref{Thm:NormalForm}) takes the following form   
\begin{align*}
\widehat{H}(x,y;\varepsilon) =& H \circ \Phi \circ \widetilde{\Phi}(x, y ;\nu)= \widetilde{a}(\varepsilon) \varepsilon^3 \hat \omega S + \frac{1}{2} \widetilde{a}^2(\varepsilon)  \varepsilon^4 N +   \frac{1}{2} \varepsilon^2 \nu(\varepsilon)Q + {1 \over 4} \varepsilon^4 \widetilde{\gamma}(\varepsilon)  Q^2 \\
&+ {1 \over 4}\widetilde a^2(\varepsilon)\varepsilon^6 \widetilde{\alpha}(\varepsilon)S^2 + {1 \over 2}\varepsilon^5 \widetilde a(\varepsilon)  \widetilde{\beta}(\varepsilon)QS + \check{H}_1(\widetilde a(\varepsilon) \varepsilon^2 x, \varepsilon y; \nu(\varepsilon))
\end{align*}
where $N,Q,S$ are defined in~\eqref{def:NQS}, we have denoted by $\widetilde{\gamma}(\varepsilon) = \hat\gamma(\nu(\varepsilon))$ and analogously for $\widetilde{\alpha}(\varepsilon), \widetilde{\beta}(\varepsilon)$. 
By rescaling time and using that $  \widetilde{a}^2(\varepsilon) \varepsilon^4 =  -\varepsilon^2 \nu(\varepsilon)= \frac{1}{2} \varepsilon^4 {\widetilde{\gamma}(\varepsilon)}$ (see~\eqref{def:a:varepsilon}) the previous Hamiltonian can be written as follows
\begin{align*}
\mathcal{H}(x,y;\varepsilon) =&  \frac{\hat \omega}{\widetilde{a}(\varepsilon) \varepsilon} S + \frac{1}{2} N - \frac{1}{2}Q + \frac{1}{2} Q^2 + \varepsilon^2 \frac{\widetilde{\alpha}(\varepsilon)}{4}S^2+ \varepsilon \frac{\widetilde{\beta}(\varepsilon)}{2\widetilde{a}(\varepsilon)}QS \\ & + \frac{1}{ \widetilde{a}^{2}(\varepsilon)} \varepsilon^{-4} \check{H}_1(\widetilde{a}(\varepsilon) \varepsilon^2 x, \varepsilon y; \nu(\varepsilon)  ).
\end{align*}
Notice that by~\eqref{def:a:varepsilon}, it is clear that the new parameters $\omega(\varepsilon)$, $\alpha(\varepsilon)$
and $\beta(\varepsilon)$ are defined by~\eqref{def:omegaalfabeta}. The proof is complete  taking 
\[
\widehat{H}_1(x,y;\varepsilon)=\varepsilon^{-4} \tilde{H}_1({x}, {y};\varepsilon) = \frac{\varepsilon^{-4}}{\widetilde{a}^{2}(\varepsilon) }\check{H}_1 (\widetilde{a}(\varepsilon) {x}, {y};\varepsilon) 
\]
and recalling that by Theorem~\ref{Thm:NormalForm}, $\check{H}_1(x,\eta;\nu) = \mathcal{O}_6 (x,\eta)$ uniformly in $\nu$. 
\end{proof}

\subsection{Transverse homoclinic points in the Hopf bifurcation. } \label{subsec:hamilton-Jacobi}

By Proposition~\ref{prop:main_rescaling}, if $\varepsilon \neq 0$ is small enough, the origin is a hyperbolic equilibrium of $\mathcal{H}$
in~\eqref{def:rescaledHamiltonian} with non real multipliers.  The rescaling given by Proposition~\ref{prop:main_rescaling} provides an integrable first-order Hamiltonian, $H_0$. 
The next lemma provides a suitable parametrization of the invariant manifolds of the origin of $H_0$.
\begin{lemma}
\label{lem:homoclinic}
The origin is a hyperbolic equilibrium of $H_0$ with 2-dimensional stable and unstable manifolds expressed as
\begin{equation}\label{defWusH0}
{W}^{\bu,\bs}_0(0) 
= \left \{ (x,y) \in \mathbb{R}^4\,:\, {S} =0,\; 
 {N} - {Q} + {Q}^2 =0 \right\}
\end{equation}
which coincide. The function $\Gamma_0:\R \times \T \to \R^4$ defined by
\[
\Gamma_0 = \begin{pmatrix} \delta_0 \\\gamma_0
\end{pmatrix},
\]
with
\[
\gamma_0(u,\theta) = \begin{pmatrix} r(u) \cos \theta \\ r(u)\sin \theta \end{pmatrix}, \qquad 
\delta_0(u,\theta) = \begin{pmatrix} R(u) \cos \theta\\ R(u) \sin \theta\end{pmatrix},
\]
and
\[
r(u) = \frac{1}{\cosh u}, \qquad R(u) = \frac{\sinh u}{\cosh^2 u}  = - \dot r(u),
\]
satisfies ${W}^{\bu,\bs}_0(0) = \Gamma_0(\mathbb{R} \times \mathbb{T})$.
\end{lemma}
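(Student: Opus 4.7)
The plan is a direct computation in three steps, exploiting the rotational symmetry of $H_0$.

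First, I would verify hyperbolicity. The quadratic part of $H_0$ is $(\omega/\varepsilon)S+\tfrac12(N-Q)$, which matches $\hat\omega S+\tfrac12 N+\tfrac12\nu Q$ of Remark~\ref{new:H0:nu} with $\hat\omega=\omega/\varepsilon$ and $\nu=-1$. The eigenvalue formula there then gives $DX_{H_0}(0)$ the four eigenvalues $\pm 1\pm i\omega/\varepsilon$, all with nonzero real part, so the origin is hyperbolic with two-dimensional local stable and unstable manifolds.

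Second, I would identify the common algebraic locus. Since $N,Q$ are rotationally invariant in $(x_1,x_2)$ and $(y_1,y_2)$ respectively, $\{S,N\}=\{S,Q\}=0$ and hence $\{H_0,S\}=0$: $S$ is a first integral of $H_0$ in involution. As $H_0(0)=S(0)=0$, every orbit in $W^{\bu}_0(0)\cup W^{\bs}_0(0)$ lies in $\{S=0\}\cap\{H_0=0\}$. On $\{S=0\}$ the Hamiltonian reduces to $H_0=\tfrac12(N-Q+Q^2)$, so this common level set is exactly the set of \eqref{defWusH0}.

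Third, I would check that $\Gamma_0(\R\times\T)$ is an $X_{H_0}$-invariant subset of this locus on which every orbit is biasymptotic to the origin; together with the dimension count of the first step this forces equality with both $W^{\bu}_0(0)$ and $W^{\bs}_0(0)$. Direct substitution gives $S\circ\Gamma_0\equiv 0$ and $(N-Q+Q^2)\circ\Gamma_0=R^2-r^2+r^4=(\sinh^2 u-\cosh^2 u+1)/\cosh^4 u=0$. Plugging the polar ansatz into Hamilton's equations for $H_0$ and matching the coefficients of $\cos\theta$ and $\sin\theta$ yields the decoupled system $\dot\theta=\omega/\varepsilon$, $\dot r=-R$ and $\dot R=r(2r^2-1)$; the first two hold by the very choice of $R$ and of time, and the last is an elementary identity for $r=1/\cosh u$. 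Hence the flow acts by $(u,\theta)\mapsto(u+t,\theta+(\omega/\varepsilon)t)$, and $r(u),R(u)\to 0$ as $|u|\to\infty$ gives biasymptoticity to the origin in both directions.

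No essential obstacle arises; the lemma is really a transparent computation, made so by the integrability of $H_0$, whose restriction to $\{S=0\}$ reduces to a one-degree-of-freedom radial problem in $Q$ with the standard $1/\cosh$-type homoclinic. The only bookkeeping care needed is the block ordering: in $\Gamma_0=(\delta_0,\gamma_0)^\top$ the block $\delta_0$ carries the $(x_1,x_2)$-coordinates with amplitude $R=-\dot r$, while $\gamma_0$ carries the $(y_1,y_2)$-coordinates with amplitude $r$, so that the Hamiltonian relation $\dot y_i=-x_i+\mathcal{O}(Q)$ along the loop becomes precisely $\dot r=-R$.
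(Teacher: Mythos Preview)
Your proof is correct and follows essentially the same route as the paper: both exploit the $S$-symmetry to reduce to the radial one-degree-of-freedom problem $\dot r=-R$, $\dot R=r(2r^2-1)$ with homoclinic $r=1/\cosh u$. The only cosmetic difference is that the paper introduces the polar symplectic change $(R,G,r,\theta)$ explicitly and derives the parametrization from the reduced Hamiltonian, whereas you verify the given $\Gamma_0$ directly in Cartesian coordinates; either way the computation is the same.
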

\begin{proof}
The proof of this lemma is elementary by considering the polar symplectic change of coordinates 
\begin{equation}\label{changepolarsymplectic}
\begin{aligned}
&\mathrm{Pol} : \R^2 \times \R^+_* \times \T \to \R^2 \times \left(\R^2 \setminus \{(0,0)\}\right)\\
&\mathrm{Pol}(R,G,r, \theta) =  \left (R\cos \theta - {G\over r} \sin \theta, R \sin \theta + {G \over r}\cos\theta, r \cos \theta, r \sin \theta \right ).
\end{aligned}
\end{equation}
This change transforms the symplectic form $dx \wedge dy$ into $dR \wedge  dr + dG \wedge d\theta$. Moreover, $S=-G$ and in these new variables, the Hamiltonian $H_0$ is rewritten as
\begin{equation*}
\mathrm{H}_0(R, G, r, \theta)=- {\omega \over \varepsilon}G + {1\over 2} \left(R^2 + {G^2 \over r^2} -   r^2 +  r^4\right),
\end{equation*}
with equations of motion given by
\begin{equation*}
\dot R = \partial_r \mathrm{H}_0, \qquad \dot G = \partial_\theta \mathrm{H}_0=0, \qquad \dot r = - \partial_R \mathrm{H}_0, \qquad \dot \theta = - \partial_G \mathrm{H}_0.
\end{equation*}
We can describe $W^{\bu,\bs}_0(0)$ as 
\begin{equation}
\label{Homo}
W^{\bu,\bs}_0(0) = \{ G=0,\;R^2 -r^2 + r^4 =0, \; \theta \in \mathbb{T}\}.
\end{equation}
We point out that, because $G$ is an integral and we set $G=0$, the motion associated with the variable $\theta$ is linear. This means that for fixed $r$ and $R$ satisfying~\eqref{Homo}, we have a circle.  
Using that 
\begin{equation*}
\dot r = -R, \qquad R = \pm  r \sqrt{1 - r^2}
\end{equation*}
and fixing $r(0)=1$, we obtain $r(u)= (\cosh u)^{-1}$ and the result follows trivially undoing the change~\eqref{changepolarsymplectic}.
\end{proof}
\begin{remark}\label{rmk:homoclinic}
    We note that $r(u)$ is the solution of $\ddot{r} = r-2r^3$ satisfying $r(0)=1$, $\dot{r}(0)=0$.
\end{remark}
It is convenient to use $\Gamma_0$ as a starting approximation of the invariant manifolds of the origin associated with the full Hamiltonian $\mathcal{H}$. We remark that, since the manifolds are Lagrangian, there exist $\mathcal{S}^{\bu,\bs}(y)$, satisfying $\nabla \mathcal{S}(0) = 0$, such that $x = \nabla \mathcal{S}^{\bu,\bs}(y)$ are parametrizations of the unstable and stable manifolds whenever they can be expressed as graphs\footnote{Here $\nabla f$ denotes the gradient of a scalar function $f$.}. The functions $\mathcal{S}^{\bu,\bs}$ are characterized as the analytic solutions of the Hamilton-Jacobi equation
\begin{equation}
    \label{HJcartesiancoordinates}
    \mathcal{H}(\nabla \mathcal{S}(y), y;\varepsilon) = 0,
\end{equation}
such that $x= \nabla \mathcal{S}^{\bu}(y)$ and $x=\nabla \mathcal{S}^{\bs}(y)$ are tangent to the unstable and stable, respectively, subspaces at the origin. 

Since we want to find solutions of~\eqref{HJcartesiancoordinates} close to $\Gamma_0$, the parameterization of the invariant manifolds of the origin for $H_0$ ($W^{\bu,\bs}_0(0)$ in~\eqref{defWusH0}) provided in Lemma~\ref{lem:homoclinic}, we introduce
\begin{equation}
    \label{def:T}
    T^{\bu,\bs} = \mathcal{S}^{\bu,\bs} \circ \gamma_0.
\end{equation}
Then the Hamilton-Jacobi equation~\eqref{HJcartesiancoordinates} becomes
\begin{equation}
    \label{HJpolarcoordinates}
    \mathcal{H}((D\, \gamma_0)^{-\top}\nabla T, \gamma_0;\varepsilon) = 0.
\end{equation}
We are interested in solutions of~\eqref{HJpolarcoordinates} satisfying the boundary conditions
\begin{equation}\label{boundaryHJpolarcoordinates}
    \lim_{\Re u \to -\infty} \nabla T^{\bu}(u,\theta)=0, \qquad \lim_{\Re u \to +\infty} \nabla T^{\bs}(u,\theta)=0.
\end{equation}
Notice that a simple computation shows that the function
\begin{equation}
\label{def:T0}
    T_0(u,\theta) = \rho_0(u),
\end{equation}
where
$
\dot \rho_0 = - R^2
$,
satisfies that 
\[
\delta_0 = (D\, \gamma_0)^{-\top}\nabla T_0,
\]
that is, $\mathcal{S}_0 = T_0 \circ \gamma_0^{-1}$, provides through $x = \nabla \mathcal{S}_0(y)$ the parametrization of $W^{\bu,\bs}_0(0)$. The fact that $T_0$ (or $\mathcal{S}_0$) generates the invariant manifolds of the origin of $H_0$ implies that
\begin{equation}
    \label{HJpolarcoordinates_integrable}
    H_0((D\, \gamma_0)^{-\top}\nabla T_0, \gamma_0) = H_0(\delta_0,\gamma_0)=0.
\end{equation}

\begin{theorem}
    \label{thm:difference_between_manifolds}
    For any $0<u_0<u_0^*$, there exists $\varepsilon_0>0$ such that, for any $0 < \varepsilon  < \varepsilon_0$, equation~\eqref{HJpolarcoordinates} admits two real analytic solutions $T^{\bu,\bs}:[u_0,u_0^*]\times \T \to \R$ such that
    \[
 \Gamma^{\bu,\bs}(u, \theta) = \begin{pmatrix} (D\, \gamma_0)^{-\top}(u,\theta)\nabla T^{\bu,\bs}(u,\theta) \\ \gamma_0(u,\theta) \end{pmatrix}, \qquad (u,\theta) \in [u_0,u_0^*]\times \T,
 \]
 are parametrizations of the unstable and stable manifolds of the origin for the Hamiltonian $\mathcal{H}$.

 In addition, there exist a smooth function $a_0(\varepsilon) \in \R$ defined for $0<\varepsilon<\varepsilon_0$ and constants $a_1 , a_2  \in \R$ such that, for all $(u,\theta) \in [u_0,u_0^*]\times \T$, $0\le k, j \le 2$
    \[
    \partial_u^k \partial_\theta^j (T^\bu(u,\theta)- T^\bs(u,\theta) -a_0(\varepsilon))
    =  \varepsilon^{-3-k} e^{-{{\omega} \pi \over 2\varepsilon}} \left(f_1^{(k+j)} \left(\theta - {\omega \over\varepsilon} u\right)  + \Oo\left ( \frac{1}{|\log \varepsilon|} \right )\right)
    \]
where
\[
f_1(\sigma) = a_1  \cos \sigma + a_2 \sin \sigma.
\]
In particular,
letting $\nabla T^{\bu,\bs} = (\partial_u T^{\bu,\bs} , \partial_\theta T^{\bu,\bs} )^\top$, 
   \begin{multline*}
    \nabla T^\bu(u,\theta)- \nabla T^\bs(u,\theta) \\
    = \begin{pmatrix}
    \omega \varepsilon^{-4} e^{-{{\omega} \pi \over 2\varepsilon}} \left(a_1 \sin \left(\theta - {\omega \over\varepsilon} u\right) - a_2  \cos \left(\theta - {\omega \over\varepsilon} u\right) + \Oo\left ( {|\log \varepsilon|^{-1}} \right )\right) \\ \\
    \varepsilon^{-3} e^{-{{\omega} \pi \over 2\varepsilon}} \left(-a_1  \sin \left(\theta - {\omega \over\varepsilon} u\right) + a_2 \cos \left(\theta - {\omega \over\varepsilon} u\right) + \Oo\left (  {|\log \varepsilon|^{-1}} \right )\right)
    \end{pmatrix}.
    \end{multline*} 
\end{theorem}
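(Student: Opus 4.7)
The plan is to follow the Hamilton--Jacobi strategy of~\cite{Sauzin01,LochakMS03}, combined with the complex-extension and inner-equation technique standard for exponentially small splitting problems. Writing $T^{\bu,\bs}=T_0+T_1^{\bu,\bs}$ with $T_0$ as in~\eqref{def:T0} and using~\eqref{HJpolarcoordinates_integrable}, equation~\eqref{HJpolarcoordinates} becomes a quasilinear PDE of the form
\[
\mathcal{L}\, T_1 \;=\; \mathcal{F}(T_1,\nabla T_1;\varepsilon),
\]
where $\mathcal{L}$ is the linearisation of $H_0$ at the unperturbed separatrix $\Gamma_0$. Because $S$ is a first integral of $H_0$, in the coordinates $(u,\theta)$ the operator $\mathcal{L}$ is essentially the transport $\partial_u-(\omega/\varepsilon)\partial_\theta$ plus a lower-order correction coming from the $Q-Q^2$ part of $H_0$. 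The right-hand side $\mathcal{F}$ collects the $\Oo(\varepsilon)$ perturbation $H_1$ and the quadratic self-interaction of $\nabla T_1$.

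First I would solve this equation for $T_1^{\bu}$ on a complex domain $\mathcal{D}^{\bu}$ in $u$ that reaches from $\Re u=-\infty$ up to a distance $\Oo(\varepsilon|\log\varepsilon|)$ of the singularity $u=i\pi/2$ of $r(u)=1/\cosh u$, and symmetrically for $T_1^{\bs}$ on $\mathcal{D}^{\bs}$ reaching from $+\infty$. This is a contraction argument in weighted Banach spaces of functions analytic in $u\in\mathcal{D}^{\bu,\bs}$ and $2\pi$-periodic in $\theta$, with weights modelled on powers of $u\mp i\pi/2$. Inversion of $\mathcal{L}$ amounts to integrating along the characteristics $\theta-(\omega/\varepsilon)u=\text{const}$; the fast rotation is what eventually forces the difference to be exponentially small. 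Specialising to real $u\in[u_0,u_0^*]$ then yields the two real-analytic generating functions $T^{\bu,\bs}$ whose existence is claimed by the theorem.

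Next, inside the matching region, I would rescale by $v=(u-i\pi/2)/\varepsilon$ and pass to the limit $\varepsilon\to 0$ in the rescaled Hamilton--Jacobi equation to obtain an $\varepsilon$-independent \emph{inner equation}. For it one constructs two distinguished analytic solutions $T^{\bu,\bs}_{\mathrm{in}}(v,\theta)$ that match, in inner variables, the outer solutions from the previous step; their difference $\Delta T_{\mathrm{in}}=T^{\bu}_{\mathrm{in}}-T^{\bs}_{\mathrm{in}}$ satisfies the linearised inner equation, and its first Fourier harmonic in $\theta$ defines the Stokes constants $a_1,a_2$. I expect this inner analysis and the matching to be the genuine obstacle: this is the ``beyond all orders'' step, where uniform control of the inner fixed-point problem, together with sharp matching estimates at the boundary of $\mathcal{D}^{\bu}\cap\mathcal{D}^{\bs}$, is required to show that the outer Borel-summed expansion picks up precisely the distinguished inner solutions.

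For the final formula, the difference $\Delta T=T^{\bu}-T^{\bs}$ satisfies the linear homogeneous PDE obtained by subtracting the two Hamilton--Jacobi equations, which, modulo exponentially small errors already controlled by the preceding steps, is the transport equation $\partial_u\Delta T-(\omega/\varepsilon)\partial_\theta\Delta T=0$. Hence at leading order $\Delta T$ depends only on $\sigma=\theta-(\omega/\varepsilon)u$; expanding in Fourier series $\Delta T=\sum_k c_k(\varepsilon)e^{ik\sigma}$, and evaluating $c_k$ by moving the $u$-contour up to $\Im u=\pi/2-\Oo(\varepsilon|\log\varepsilon|)$, where the outer solutions are matched to the inner ones, gives $|c_k|\lesssim \varepsilon^{-3}e^{-|k|\omega\pi/(2\varepsilon)}$ with $c_{\pm 1}$ governed by $a_1\pm i a_2$. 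The harmonics $|k|\ge 2$ are thus absorbed into the $\Oo(|\log\varepsilon|^{-1})$ error. The additive constant $a_0(\varepsilon)$ comes from the normalisation freedom of $T^{\bu,\bs}$ at $\pm\infty$; the claimed derivative estimates follow by differentiating the Fourier series and noting that $\partial_u e^{i(\theta-(\omega/\varepsilon)u)}=-i(\omega/\varepsilon)e^{i(\theta-(\omega/\varepsilon)u)}$, which is precisely what produces the extra factor $\omega\varepsilon^{-4}$ in the first component of $\nabla T^{\bu}-\nabla T^{\bs}$ against $\varepsilon^{-3}$ in the second.
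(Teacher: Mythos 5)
Your overall strategy — outer fixed-point solution for $T_1^{\bu,\bs}$ in complex domains, rescaling to an $\varepsilon$-independent inner equation near $i\pi/2$ to capture the Stokes constants, matching, and a Fourier analysis of the difference $\Delta T$ evaluated near the singularity — is the same as the paper's. However, there are two points in your proposal that, as written, leave genuine gaps.

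The first is the step ``Specialising to real $u\in[u_0,u_0^*]$ then yields the two real-analytic generating functions.'' The Hamilton--Jacobi equation in the $(u,\theta)$ variables is \emph{singular at $u=0$}: $(D\gamma_0)^{-\top}$ involves $1/R(u)=\cosh^2u/\sinh u$, which has a pole there. Consequently, a simply connected outer domain $\mathcal{D}^{\bu}$ stretching from $\Re u=-\infty$, approaching $i\pi/2$, and also containing the positive real segment $[u_0,u_0^*]$ does not exist: your outer contraction is only set up away from the real line (say, for $|\Im u|\ge d\,\pi/2$), exactly as in Theorem~\ref{thm:outer}, and $D^{\mathrm{out},\bu}_\kappa\cap D^{\mathrm{out},\bs}_\kappa\cap\R=\emptyset$. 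To make $T^{\bu}$ land on $[u_0,u_0^*]$ one must first circumvent $u=0$, which the paper does by switching from the graph parametrization $x=\nabla\mathcal{S}^\bu(y)$ to a flow parametrization $\Ggf$, analytically continuing $\Ggf$ through a not-simply-connected domain around $u=0$, and then converting back to a graph (Propositions~\ref{prop:ext1}--\ref{prop:ext3} and Theorem~\ref{thm:ext}). This is not a cosmetic adjustment: without it the difference $T^\bu-T^\bs$ is simply not defined on $[u_0,u_0^*]$, so the statement you are trying to prove has no meaning.

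The second is your treatment of the equation for $\Delta T$. You assert that ``modulo exponentially small errors already controlled by the preceding steps'' $\Delta T$ solves the pure transport equation, hence depends only on $\sigma=\theta-\omega u/\varepsilon$. This is not what the subtraction gives. One obtains the \emph{exact} linear equation $(1+\mathbf{a})\partial_u\Delta+(\omega/\varepsilon+\mathbf{b})\partial_\theta\Delta=0$ where $\mathbf{a},\mathbf{b}$ are $\mathcal{O}(\varepsilon^2)$ away from $i\pi/2$ and only $\mathcal{O}(\varepsilon/|\log\varepsilon|)$ at a distance $\varepsilon|\log\varepsilon|$ from it — \emph{not} exponentially small. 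When the exponential $e^{ik\sigma}$ is evaluated near the singularity, as it must be to extract the leading asymptotics, the phase correction $k\,\omega\mathcal{C}/\varepsilon$ is of order $1/|\log\varepsilon|$, so if ignored it produces a multiplicative error competitive with the one you claim. One therefore needs to \emph{straighten} the characteristics exactly, by constructing the change of variable $\mathcal{C}$ in Proposition~\ref{prop:C} so that $\Delta$ is a genuine $2\pi$-periodic function of $\omega(u+\mathcal{C}(u,\theta))/\varepsilon-\theta$; the correction is then controlled multiplicatively inside the exponentials rather than discarded. This is precisely the point that distinguishes the present problem from cases where the naive Melnikov prediction is valid, so the hand-wave ``modulo exponentially small errors'' is the place where the argument would fail as stated. (A minor slip: you wrote $\partial_u-(\omega/\varepsilon)\partial_\theta$ for the transport operator while using characteristics $\theta-\omega u/\varepsilon=\text{const}$; the sign should be $+$ to be consistent.)
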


\begin{proof}[Proof of Theorem~\ref{thm:main:intro}]
Assume that $a_1 a_2 \neq 0$. Gaiv\~ao, in~\cite{Gai10} showed that this happens generically. Now, since the unstable and stable invariant manifolds of the origin lie in the same energy level, if one of the components of $\nabla T^\bu(u,\theta)- \nabla T^\bs(u,\theta)$ vanishes, the other also does. It is clear that, if $a_1 a_2 \neq 0$, the leading term of the first component has two non-degenerate zeros for $\sigma = \theta-\omega u/\varepsilon$. An immediate application of the standard implicit function theorem implies that, for $\varepsilon$ small enough, the invariant manifolds must have transversal homoclinic orbits.
\end{proof}

\subsection{Homoclinic intersections around $L_4$ in the RPC3BP} \label{sec:L4:normalform}

This section is devoted to proving Theorem~\ref{thm:main:intro:L4} as a corollary of Theorem~\ref{thm:difference_between_manifolds}. 
We recall that the Hamiltonian of the RPC3BP near $L_4$ in a rotating framework is given by
\begin{equation}
\label{H1App}
H(q,p;\mu) = {|p|^2 \over 2} - q^{\top} \begin{pmatrix} 0 & 1 \\ -1 & 0\end{pmatrix} p -\left({1 - \mu \over |q + (\mu, 0)|}+{\mu \over |q - (1- \mu, 0)|}\right)
\end{equation}
where $(q,p) \in \R^2 \times \R^2$ are the position and momenta of the third body, $|\cdot|$ stands for the Euclidean norm and the symbol $\top$ for the transpose.

It is well known that the Lagrangian point $L_4$, at the top vertex of an equilateral triangle with base the segment between the primaries, has coordinates
\[
L_4= \left (\frac{1}{2} (1-2\mu), \frac{\sqrt{3}}{2}, -\frac{\sqrt{3}}{2}, \frac{1}{2} (1-2\mu )\right ).
\]
It is convenient to move $L_4$ to the origin. For this reason, we introduce the following symplectic change of coordinates defined in a small neighborhood of the origin and taking values in a small neighborhood of $L_4$
\begin{equation*}
 (Q, P) \longrightarrow (q,p)
\end{equation*}
such that 
\begin{equation*}
Q_1 = q_1 - {1 \over 2} (1 - 2\mu), \quad Q_2 = q_2 - {\sqrt{3} \over 2}, \quad P_1 = p_1 + {\sqrt{3} \over 2}, \quad P_2 = p_2 - {1 \over 2} (1 - 2\mu).
\end{equation*}
In these new variables, we can rewrite the Hamiltonian~\eqref{H1App} in the following form 
\begin{equation}
\label{H2}
\begin{aligned}
H(Q,P; \mu) =&   {P_1^2 + P_2^2 \over 2} - Q_1P_2 + Q_2P_1\\
&+{1 \over 8} Q_1^2 - {3 \sqrt{3} \over 4}(1 - 2\mu) Q_1 Q_2 - {5 \over 8} Q_2^2 + \mathcal{O}_3(Q,P;\mu),
\end{aligned}
\end{equation}
where the constant term is omitted.  We point out that $\mathcal{O}_3(Q,P;\mu)$ stands for terms,  depending on $\mu$,  of order at least $3$ in the new variables $(Q,P)$.  One can see that the linear part of the Hamiltonian system associated with the Hamiltonian~\eqref{H2} is given by
\begin{equation}
\label{A}
\begin{pmatrix}
0&1&1&0\\ 
-1&0&0&1\\ 
-{1 \over 4}&{3 \over 4}\left(\sqrt{3}  - 2\sqrt{3} \mu\right)&0&1\\
{3 \over 4}\left(\sqrt{3}  - 2\sqrt{3} \mu\right)&{5 \over 4}&-1&0
\end{pmatrix},
\end{equation}
with characteristic polynomial
\[
\lambda^4 + \lambda^2 + \frac{27}{4}\mu(1-\mu)=0.
\]

Let $\mu_1 = {1 \over 2} \left(1 - {1 \over 9}\sqrt{69} \right)$ be the Gascheau-Routh critical mass ratio, that is, the value such that $\frac{27}{4}\mu(1-\mu)=1$.  It is well known that at $\mu=\mu_1$, $L_4$ undergoes a Hamiltonian Hopf-zero bifurcation (see Figure~\ref{fig:vaps}). In other words, the matrix~\eqref{A} has purely imaginary eigenvalues for values of the mass ratio $\mu$ in the interval $0< \mu < \mu_1$. When $\mu = \mu_1$ the eigenvalues of the matrix are $\pm i {\sqrt{2} \over 2} $ and have multiplicity two and for $\mu_1 < \mu \le {1 \over 2}$, the eigenvalues are complex conjugated of the form $\pm a \pm ib$ with
\[
a=\frac{1}{2}\sqrt{-1 + \sqrt{27\mu(1-\mu)}}, \qquad b={1 \over 2} \sqrt{1 + \sqrt{27\mu(1-\mu)}}.
\]

We introduce the new parameters
\begin{equation*}
\nu = \nu_0(\mu):={1 \over 4} \left(1 - \sqrt{27\mu(1-\mu)}\right), \quad \hat{\omega} = \hat{\omega}(\nu):= \sqrt{{1 \over 2} -\nu}
\end{equation*}
so that the eigenvalues are $\pm a \pm i b$ with
\[
a=\sqrt{-\nu}, \qquad b= \hat{\omega}.
\]
We remark that $\nu_0(\mu_1) = 0$ and that $\mu >\mu_1$ corresponds to $\nu<0$, since 
\[
\partial_\mu \nu_0(\mu)  = -\frac{\sqrt{27}}{8 \sqrt{\mu(1-\mu)}} (1-2\mu)< 0, \qquad 0<\mu < \frac{1}{2}.  
\]
In addition, it is clear that 
\begin{equation*}
\nu =  -\frac{3\sqrt{69}}{8} (\mu- \mu_1) + \mathcal{O}((\mu-\mu_1)^2), \qquad \hat{\omega} = \frac{\sqrt {2}}{2} + \mathcal{O}(\mu- \mu_1).
\end{equation*}

The following proposition can be found in Section~$4$ of~\cite{Sch94}. For a brief idea of the proof, we refer to Section \ref{app:NF_L4} of Appendix~\ref{HVNF}.  


\begin{theorem}\label{Thm:NormalForm:L4} 
There exist $\nu_0<0$, a neighborhood $U$ of the origin in $\R^2 \times \R^2$ and an analytic family of conformally analytic symplectic change of variables $\Phi:U \times [\nu_0,0] \to  \R^2 \times \R^2$, with symplectic form $\frac{1}{2} d x \wedge dy$, such that $\Phi(0,\nu) = L_4$ and 
$H\circ \Phi = \check{H}_0 + \check{H}_1$ with  
\[
\check{H}_0 \circ \Phi = 2 \hat{\omega} S +   N + \nu   Q + {1 \over 2} \hat{\gamma}(\nu)   Q^2+ {1 \over 2} \hat{\alpha}(\nu)    S^2 + \hat{\beta}(\nu)   Q  \,  S  
\]
where 
$
N  =  N(x_1,x_2)=x_1^2 + x_2^2$, $Q= Q(y_1,y_2)=y_1^2 + y_2^2$, $S  = S(x,y)=x_1 y_2-x_2y_1
$ 
and $\check{H}_1(x,y;\nu) = \mathcal{O}_6(x,y;\nu)$, uniformly for $\nu \in [\nu_0,0]$.

In addition, {$H \circ \Phi$}, $\hat \alpha$, $\hat \beta$, and $\hat \gamma$ are real analytic functions of $\nu \in (\nu_0,0)$ and
\begin{equation*}
\begin{aligned}
\hat{\alpha}(\nu)&={-655+10\nu + 6496\nu^2-4960\nu^3 \over 216(1-2 \nu)(1-20\nu)(9-20\nu)}\\
\hat{\beta}(\nu)&={\sqrt{2-4\nu}(-515+6712\nu-13424\nu^2)\over 144(1-2 \nu)(1-20\nu)(9-20\nu)}\\
\hat{\gamma}(\nu)&={531-4586\nu+6932\nu^2+3776\nu^3-9920\nu^4 \over 216(1-2 \nu)(1-20\nu)(9-20\nu)}.
\end{aligned}
\end{equation*}
Therefore
\begin{equation*}
\hat{\alpha}(0) = -\frac{655}{1944} < 0, \qquad {\hat{\beta}(0) =-\frac{515 \sqrt{2}}{1944}< 0}, \qquad \hat{\gamma}(0) = \frac{531}{1944} > 0.
\end{equation*}
\end{theorem}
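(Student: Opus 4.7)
The strategy is to apply Theorem~\ref{Thm:NormalForm} to the concrete Hamiltonian of the RPC3BP near $L_4$ written in~\eqref{H2}, verify its hypotheses, and then compute the coefficients $\hat\alpha(\nu),\hat\beta(\nu),\hat\gamma(\nu)$ explicitly. The frequency $\hat\omega(\nu)=\sqrt{\tfrac12-\nu}$ and the bifurcation parameter $\nu=\nu_0(\mu)=\tfrac14(1-\sqrt{27\mu(1-\mu)})$ are already identified from the characteristic polynomial of the linear part~\eqref{A}, and the Hamiltonian Hopf character of the bifurcation at $\mu=\mu_1$ is known, so the remainder of the proof is essentially algorithmic.

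\textbf{Step 1: linear normal form.} First I would construct an analytic family (in $\nu$) of symplectic linear transformations that brings~\eqref{A} into the canonical form~\eqref{difH0} with $\varpi=\hat\omega(\nu)$, accompanied by the versal deformation $\nu(y_1^2+y_2^2)$ in the quadratic Hamiltonian. The crucial point is to avoid diagonalization, which becomes singular at $\nu=0$, and instead to build the basis as an analytic deformation of a Jordan basis at $\nu=0$. The prescribed symplectic form $\tfrac{1}{2}dx\wedge dy$ is then reached by a conformal rescaling, producing a Hamiltonian of the form $2\hat\omega S + N + \nu Q + \mathcal{O}_3(x,y)$.

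\textbf{Step 2: nonlinear normalization.} Next I would apply a finite sequence of symplectic Lie-series transformations generated by homogeneous polynomials of degrees three and four, to eliminate all non-resonant monomials up to degree four. The resonant ones are the $SO(2)$-invariants of the flow of $S$, generated by $N$, $Q$, $M=x_1y_1+x_2y_2$, $S$ subject to the relation $NQ-M^2=S^2$; since all these generators are quadratic, there are no cubic resonant monomials (so the entire cubic part is removed), while at degree four the resonant part reduces, after further normalizations absorbing $N$-dependent contributions, to a linear combination of $S^2$, $QS$, $Q^2$, precisely as prescribed.

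\textbf{Step 3: explicit coefficients.} The main computational work is the evaluation of $\hat\alpha,\hat\beta,\hat\gamma$. Taylor-expanding the two potentials $(1-\mu)/|q+(\mu,0)|$ and $\mu/|q-(1-\mu,0)|$ around $L_4$ to degree four in $(Q,P)$, substituting the Step~1 linear change, and composing with the cubic Lie generator yields a quartic $SO(2)$-invariant polynomial whose $S^2,QS,Q^2$-coefficients are $\hat\alpha(\nu),\hat\beta(\nu),\hat\gamma(\nu)$. The factors $(1-2\nu)(1-20\nu)(9-20\nu)$ appearing in the denominators arise from the norm of the cohomological operator $\mathrm{ad}\,\check H_0^{(2)}$ restricted to the non-resonant cubic subspace, whose inverse is required to build the cubic generator; these factors are nonzero at $\nu=0$ and hence in a neighborhood thereof. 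Evaluating the resulting rational expressions at $\nu=0$ yields the numerical values stated, and in particular the key sign $\hat\gamma(0)=531/1944>0$, which places the RPC3BP near $L_4$ inside the open set $\mathcal{J}$ of Theorem~\ref{thm:main:intro}. The non-degeneracy $\partial_\mu\nu_0(\mu_1)\neq 0$ follows at once from $\partial_\mu\nu_0(\mu)=-\tfrac{\sqrt{27}(1-2\mu)}{8\sqrt{\mu(1-\mu)}}$ evaluated at $\mu=\mu_1\neq\tfrac12$.

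\textbf{Main obstacle.} The bottleneck is the bookkeeping in Step~3: carrying the parameter $\nu$ through several successive Lie transforms while tracking all quartic $SO(2)$-invariants and isolating the coefficients of $S^2$, $QS$, $Q^2$. In practice this is best handled by computer algebra, and indeed the corresponding calculation is precisely the one carried out in Section~4 of~\cite{Sch94}; the role of the present proof is to verify that Schmidt's computation fits the hypotheses of the general normal form statement Theorem~\ref{Thm:NormalForm}. No further analytic difficulty arises, since the hard analytic content has already been absorbed into that theorem.
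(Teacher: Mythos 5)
Your proposal takes essentially the same route as the paper's, which likewise defers the explicit coefficients to Schmidt's computation in Section~4 of~\cite{Sch94} and sketches the structural verification in Appendix~\ref{HVNF} (analytic linear versal normalization in complex coordinates, conformal rescaling to the symplectic form $\tfrac12\,dx\wedge dy$, then Lie-series normalization of the nonlinear part). One small gap: you stop the Lie normalization at degree four, which only yields $\check H_1=\mathcal{O}_5(x,y)$, whereas the statement requires $\mathcal{O}_6(x,y)$ --- and this matters quantitatively, since Proposition~\ref{prop:main_rescaling} needs $\widehat H_1=\varepsilon^{-4}\tilde H_1(\varepsilon^2 x,\varepsilon y;\varepsilon)=\mathcal{O}(\varepsilon^2)$, which fails if $\tilde H_1$ has nontrivial degree-five terms. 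You must therefore add a degree-five Lie step; it is cost-free because the resonant subspace $\mathcal{N}(\mathbf{H}_\nu^2)\subset\mathcal{P}_5$ is trivial at odd degree (as recalled in the appendix), so the normalized degree-five part vanishes identically.
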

 
By Theorem~\ref{Thm:NormalForm:L4}, $\hat{\gamma}(0)>0$ and therefore we are on the conditions of Proposition~\ref{prop:main_rescaling}. The new Hamiltonian $H\circ\Phi \circ \widetilde{\Phi}$ is given by 
\begin{equation*}
H \circ \Phi \circ \widetilde \Phi =  \frac{1}{2} \hat{\gamma} (\nu(\varepsilon) )\varepsilon^4 \mathcal{H} := \frac{1}{2} \hat \gamma(\nu(\varepsilon) )\varepsilon^4 \big ( H_0 + H_1 \big ),
\end{equation*}
with $H_0, H_1$ as in~\eqref{def:H0H1}
with  $N=N(x_1,x_2)=x_1^2 +x_2^2$, $S=S(x,y)=x_1y_2 - x_2 y_1$, $Q(y_1,y_2)=y_1^2+ y_2^2$, the reminder 
$\widehat H_1 (x,y;\varepsilon) = \varepsilon^{-4}\tilde H_1 (\varepsilon^2 x, \varepsilon y;\varepsilon).
$
In addition,   
the new values of $\omega = \omega(\varepsilon)$, $\alpha=\alpha(\varepsilon)$, $\beta=\beta(\varepsilon)$ are 
\begin{equation}\label{def:omegaalfabeta:L4}
\begin{aligned}
\omega & = \sqrt{\frac{2}{\hat{\gamma}(\nu(\varepsilon))}} \hat \omega (\nu(\varepsilon))= \sqrt{\frac{2}{\hat \gamma(0)}} \hat \omega(0) + \mathcal{O}(\varepsilon^2) = \sqrt{\frac{1944}{531}}   + \mathcal{O}(\varepsilon^2), \\
\alpha & =  \frac{\hat{\alpha}(\nu(\varepsilon))}{4}  = \frac{\hat{\alpha}(0)}{4} + \mathcal{O}(\varepsilon^2)= - \frac{655}{4\cdot 1944} + \mathcal{O}(\varepsilon)<0,\\
\beta &=   \frac{\hat{\beta}(\nu(\varepsilon))}{\sqrt{2 \hat \gamma( \nu (\varepsilon))}} =  \frac{\hat{\beta}(0)}{\sqrt{2\hat{\gamma}(0)}} + \mathcal{O}(\varepsilon^2) =-\frac{515 \sqrt{2}}{4 \cdot 1944} +\mathcal{O}(\varepsilon^2) <0. \\
\end{aligned}
\end{equation}

Hence, we can apply Theorem~\ref{thm:difference_between_manifolds} to $L_4$ in the RPC3BP to obtain Theorem~\ref{thm:main:intro:L4}.


\section{Proof of the main result}
\label{sec:prova_del_teorema_principal}


In this section, we prove Theorem~\ref{thm:difference_between_manifolds}. The strategy of the proof, which follows the seminal ideas by Lazutkin~\cite{Laz84} (see also~\cite{Laz05}), consists of several steps developed in the subsequent sections as follows:
\begin{itemize}
    \item The Hamilton-Jacobi equation~\eqref{HJpolarcoordinates} has meaning whenever $\delta_0,\gamma_0$ are defined and the stable and unstable manifolds are expressed locally as graphs of the gradient of a function. However, since from Lemma~\ref{lem:homoclinic}
    \begin{equation*}
(D\, \gamma_0)^{-\top}(u,\theta) = \begin{pmatrix}
    - \frac{1}{R(u)} \cos \theta & - \frac{1}{r(u)} \sin \theta \\
    - \frac{1}{R(u)} \sin \theta &  \frac{1}{r(u)} \cos \theta
\end{pmatrix},
\end{equation*}
    the Hamilton-Jacobi equation~\eqref{HJpolarcoordinates} is not defined for $u=0$. We first prove the existence of solutions of equation~\eqref{HJpolarcoordinates}$, T^{\bu,\bs}$, in simply connect complex domains avoiding $u=0$ and reaching $\mathcal{O}(\varepsilon)$-neighborhoods of the complex singularities $\pm i \frac{\pi}{2}$ of $\delta_0,\gamma_0$. In this first step, $T^{\bu,\bs}$ are not defined in a common real domain. This is the content of Section~\ref{sec:hamiltonJacobi}. 
    \item In Section~\ref{sec:furtherextension}, we extend the parameterizations obtained in the first step to a common complex domain that contains values of $u$ belonging to a real segment and that are $\mathcal{O}(\varepsilon)$-close to the singularities $\pm i \frac{\pi}{2}$.
    \item After that, we focus on the behavior of the parameterizations of the invariant manifold close to the singularities $\pm i \frac{\pi}{2}$, studying special solutions of the \textit{inner equation}, 
    a parameterless equation, which, eventually, will provide the first-order for the difference $T^{\bu}(u,\theta)- T^{\bs}(u,\theta)$, for $u$ real. 
    This study is performed in Section~\ref{sec:inner}. 
    \item In Section~\ref{sec:matching}, it is proven how well the special solutions of the inner equation approximate the parameterization $T^{\bu,\bs}(u,\theta)$ in the \textit{matching domains} which, with respect to $u$, contain a neighborhood $O(\varepsilon^\gamma)$-close to $\pm i \frac{\pi}{2}$.
    \item Finally, in Section~\ref{thm:distance}, we provide the asymptotic formula  of $T^{\bu}(u,\theta)- T^{\bs}(u,\theta)$ mainly following the strategy in~\cite{B12}.
\end{itemize}

From now on, we denote, for $\sigma >0$,
\begin{equation}\label{def:complex_torus}
\T_\sigma = \{\theta \in \C/ \Z : | \mathrm{Im} \,\theta| \le \sigma \}.
\end{equation}

\subsection{Hamilton-Jacobi equation close to the unperturbed homoclinic}\label{sec:hamiltonJacobi}

We recall that (see~\eqref{HJpolarcoordinates_integrable})
\[
    H_0((D\, \gamma_0)^{-\top}\nabla T_0, \gamma_0) = 0 
\]
with $T_0$ given in~\eqref{def:T0}, describing the unperturbed invariant manifold. Then, we introduce $T_1$ through $T = T_0+T_1$ and we first claim that $T$ is a solution of~\eqref{HJpolarcoordinates} if and only if $T_1$ is a solution of
\begin{equation}
    \label{HJpolarcoordinatesT1}
   \Lout T_1 = \Fout (T_1),
\end{equation}
where
\begin{equation}
    \label{def:operator_L}
   \Lout T_1 = \partial_u T_1 + \frac{\omega}{\varepsilon} \partial_\theta T_1,
\end{equation}
and
\begin{equation}
    \label{def:operator_F}
    \begin{aligned}
   \Fout( T_1)  = &\frac{1}{2R^2}(\partial_u T_1)^2 +  \frac{1}{2r^2}(\partial_\theta T_1)^2 + \alpha \varepsilon^2   (\partial_\theta T_1)^2 - \beta \varepsilon r^2 \partial_\theta T_1 \\ &+ \varepsilon^{-4} \tilde H_1 (\varepsilon^2( \delta_0 +(D\, \gamma_0)^{-\top}\nabla T_1), \varepsilon \gamma_0;\varepsilon).
\end{aligned}
\end{equation}

The fact that $T_1$ satisfies~\eqref{HJpolarcoordinatesT1} follows from a straightforward computation, as shown below. First,
we observe that, since Hamiltonian $H_0$ in~\eqref{def:H0H1} is quadratic in $x$, 
\begin{equation}
\label{eq:derivationHJ_H0}
\begin{aligned}
H_0 ( \delta_0 +(D\, \gamma_0)^{-\top}\nabla T),  \gamma_0)   =  &H_0(\delta_0,\gamma_0)
+ \partial_x H_0 ( \delta_0 ,  \gamma_0)   (D\, \gamma_0)^{-\top}\nabla T \\  &+
\frac{1}{2} D\, T (D\, \gamma_0)^{-1} \partial_x^2 N (\delta_0,\gamma_0) (D\, \gamma_0)^{-\top}\nabla T,
\end{aligned}
\end{equation}
where, we use $T$ as a variable instead of $T_1$.
Next, we recall the definition of $S$, $Q$, and $N$ in~\eqref{def:NQS}. In particular, $Q$ only depends on $y$. Taking into account that
\begin{equation}
\label{eq:Dgamma0_inverse_and_transpose}
(D\, \gamma_0)^{-\top}(u,\theta) = \begin{pmatrix}
    - \frac{1}{R(u)} \cos \theta & - \frac{1}{r(u)} \sin \theta \\
    - \frac{1}{R(u)} \sin \theta &  \frac{1}{r(u)} \cos \theta
\end{pmatrix},
\end{equation}
we obtain
\begin{equation}
\label{eq:Qondelta0gamma0}
Q \circ ( \delta_0 +(D\, \gamma_0)^{-\top}\nabla T),  \gamma_0) = Q\circ (\delta_0,\gamma_0)=r^2.
\end{equation}
Moreover, 
\begin{equation}
\label{eq:Sondelta0gamma0}
\begin{aligned}
S \circ (\  \delta_0 +(D\, \gamma_0)^{-\top}\nabla T),  \gamma_0) & = S \circ (  \delta_0 ,   \gamma_0) + \partial_x S ( \delta_0 ,  \gamma_0)   (D\, \gamma_0)^{-\top}\nabla T \\
& =  -   \partial_\theta T,
\end{aligned}
\end{equation}
where we used that $S \circ (\delta_0 ,   \gamma_0) = 0$ and $\partial_x^2 S = 0$.
As for $N$, we have that 
\[
\begin{aligned}
N \circ (  \delta_0 +(D\, \gamma_0)^{-\top}\nabla T),   \gamma_0)  = & N \circ ( \delta_0 ,  \gamma_0) + \partial_x N (  \delta_0 ,  \gamma_0)   (D\, \gamma_0)^{-\top}\nabla T \\
& 
+ \frac{1}{2} ((D\, \gamma_0)^{-\top}\nabla T)^\top \partial_x^2 N (\delta_0,\gamma_0) (D\, \gamma_0)^{-\top}\nabla T \\
 = & N \circ ( \delta_0 ,  \gamma_0) - 2\partial_u T
 + \frac{1}{R^2} (\partial_u T)^2 + \frac{1}{r^2} (\partial_\theta T)^2.
\end{aligned}
\]
Then, using that $H_0(\delta_0,\gamma_0) = 0$, from~\eqref{eq:derivationHJ_H0}, \eqref{eq:Qondelta0gamma0}, \eqref{eq:Sondelta0gamma0} and the expressions of  $N$ above, we deduce
\[
H_0 ( \delta_0 +(D\, \gamma_0)^{-\top}\nabla T),  \gamma_0) = 
- \frac{\omega}{\varepsilon} \partial_\theta T - \partial_u T
 + \frac{1}{2R^2} (\partial_u T)^2 + \frac{1}{2r^2} (\partial_\theta T)^2.
\]
Finally, from the definition of $H_1$ in~\eqref{def:H0H1}, \eqref{eq:Qondelta0gamma0} and~\eqref{eq:Sondelta0gamma0}, we obtain
\begin{multline*}
H_1(  \delta_0 +(D\, \gamma_0)^{-\top}\nabla T),   \gamma_0;\varepsilon)  \\ =
\alpha \varepsilon^2 (\partial_\theta T)^2  -\beta \varepsilon r^2 \partial_\theta T
+ \varepsilon^{-4} \tilde H_1 (\varepsilon^2( \delta_0 +(D\, \gamma_0)^{-\top}\nabla T), \varepsilon \gamma_0),
\end{multline*}
which proves the claim.

The unperturbed homoclinic $(\delta_0,\gamma_0)$ is, as a function of $u$, meromorphic with poles at $i\pi/2+j\pi$, $j\in \Z$. We will look for solutions $T_1^{u,s}$ of~\eqref{HJpolarcoordinatesT1} defined in domains which are $\Oo(\varepsilon)$-close to $\pm i \pi/2$, the closest singularities of $(\delta_0,\gamma_0)$ to the real line. 
More concretely, given $d \in \left({1 \over 4}, {1 \over 2}\right)$,  $\kappa >0$,  $\slope$, $\hslope \in \left(0, {\pi \over 2}\right)$, and $\varsigma>0$, we introduce the following complex domains:
\begin{equation}
\label{def:domains_outer_s}
\begin{aligned}
D^{\mathrm{out}, \bu}_{\kappa} &= \left\{ u \in \C : d {\pi \over 2} +  \tan \hslope\, \mathrm{Re}(u) \le |\mathrm{Im}(u)| \le {\pi \over 2} - \kappa \varepsilon - \tan \slope \,\mathrm{Re}(u) \right\},\\
D^{\mathrm{out}, \bu}_{\kappa, \varsigma} &= \left\{ u \in D^{\mathrm{out}, \bu}_{\kappa} : \mathrm{Re}(u)  \ge- \varsigma\right\},\\
D^{\mathrm{out}, \bu}_{\kappa, \infty} & = \left\{ u \in D^{\mathrm{out}, \bu}_{\kappa} : \mathrm{Re}(u) < -\varsigma\right\},
\end{aligned}
\end{equation}
(see Figure~\ref{fig:outer_domain}), where the superscript $-\bu-$ stands for unstable, and the corresponding stable ones
\begin{equation}
\label{def:domains_outer_u}
\begin{aligned}
D^{\mathrm{out}, \bs}_{\kappa} &= \left\{ u \in \C : -u \in  D^{\mathrm{out}, \bu}_{\kappa} \right\},\\
D^{\mathrm{out}, \bs}_{\kappa, \varsigma} &= \left\{ u \in \C : -u \in D^{\mathrm{out}, \bu}_{\kappa, \varsigma}\right\},\\
D^{\mathrm{out}, \bs}_{ \kappa, \infty} & = \left\{  u \in \C : -u \in D^{\mathrm{out}, \bu}_{ \kappa, \infty} \right\}.
\end{aligned}
\end{equation}

\begin{figure}[t]
\centering
\begin{overpic}[scale=0.043]{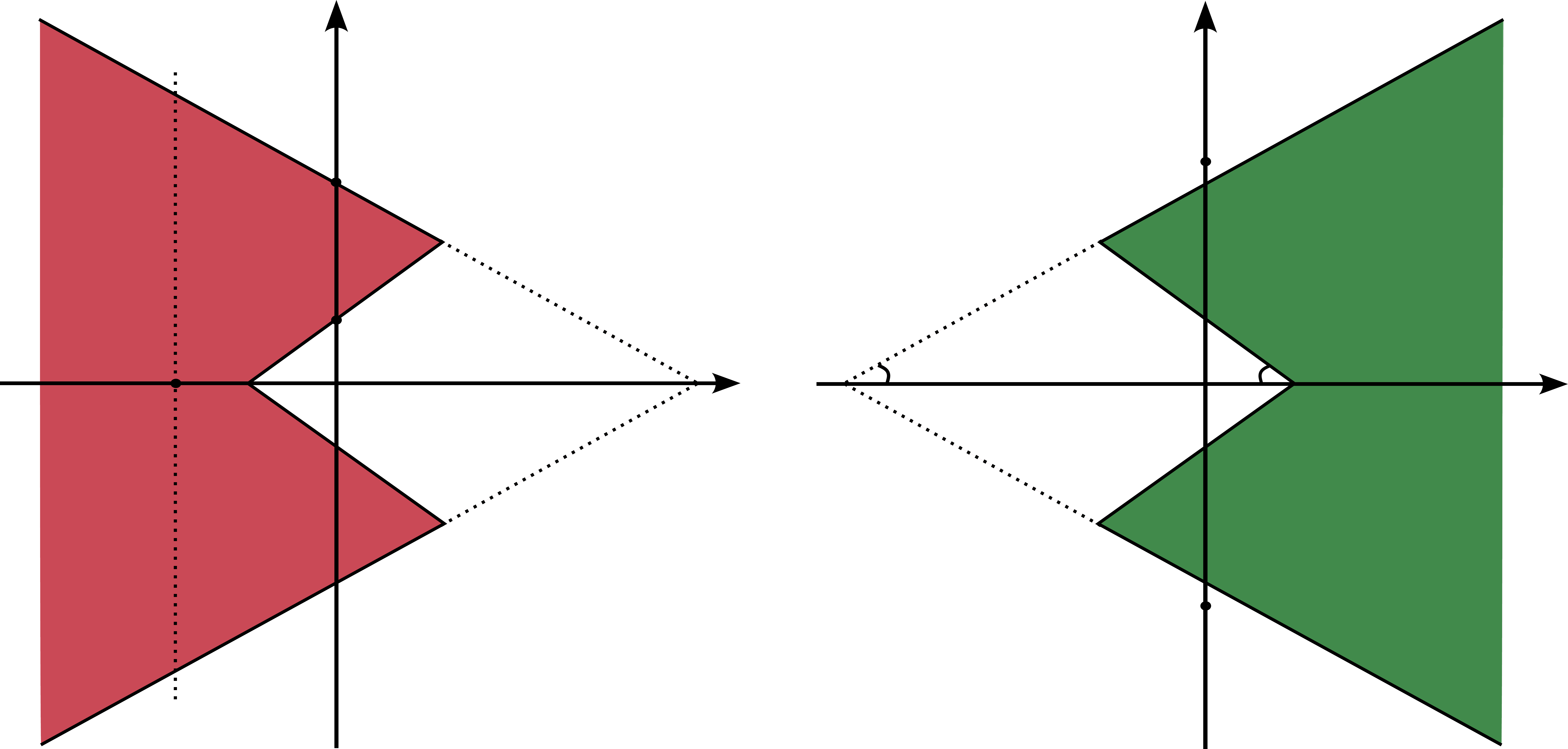}
    \put(23,48){$\mathrm{Im(u)}$}
    \put(78,48){$\mathrm{Im(u)}$}
    \put(99,20){$\mathrm{Re(u)}$}
    \put(59,24){$\slope$}
    \put(73.4,24){$\hslope$}
    \put(73.7,36.6){${\pi \over 2}$}
    \put(71,8.4){$-{\pi \over 2}$}
    \put(23.5,26,8){$ d{\pi \over 2}$}
    \put(24.5,35.3){${\pi \over 2} - \kappa \varepsilon$}
    \put(12,20.3){$-\varsigma$}
    \put(12,30){$D^{\mathrm{out}, \bu}_{\kappa, \varsigma}$}
    \put(2.7,8){$D^{\mathrm{out}, \bu}_{\kappa, \infty}$}
    \put(83,30){$D^{\mathrm{out}, \bs}_{\kappa}$}
\end{overpic}
\caption{{The domains $D^{\mathrm{out}, *}_{\kappa}$, $D^{\mathrm{out}, *}_{\kappa, \varsigma}$, and
$D^{\mathrm{out}, *}_{\kappa, \infty}$ with $*=\bu, \bs$ defined by~\eqref{def:domains_outer_s} and~\eqref{def:domains_outer_u}.}}
\label{fig:outer_domain}
\end{figure}

We remark that we do not write explicitly that these domains depend on $d$, $\slope$ and $\hslope$, because these values will remain fixed along the proof. The only \emph{true} parameter will be $\kappa$.
 
The following result states the existence of solutions for the Hamiltonian-Jacobi equation~\eqref{HJpolarcoordinatesT1} with boundary conditions~\eqref{boundaryHJpolarcoordinates}. Its proof is postponed to Section~\ref{sec:proof:outer}.
\begin{theorem}
    \label{thm:outer} 
     Let $\sigma >0$,  
     $d \in \left({1 \over 4}, {1 \over 2}\right)$,  $\slope$, $\hslope \in \left(0, {\pi \over 2}\right)$, and $\varsigma>0$ be fixed.      
     There exist $\kappa_{\mathrm{out}} > 0$, $\varepsilon_{\mathrm{out}}>0$ and $\Cout_{\mathrm{out}}>0$ such that,
for any  $\kappa \ge \kappa_{\mathrm{out}}$ and $0< \varepsilon <\varepsilon_{\mathrm{out}}$, the Hamilton-Jacobi equation~\eqref{HJpolarcoordinates} admits two analytic solutions $T^* : D^{\mathrm{out}, *}_{\kappa}  {\times \T_\sigma} \to \C$,  $* = \bu,\bs$, satisfying
\begin{align*}
\sup_{(u,\theta) \in D^{\mathrm{out}, *}_{\kappa, \infty}  {\times \T_\sigma}} \left| e^{6|\Re u|}  T_1^*(u,\theta)\right| + \sup_{(u,\theta) \in D^{\mathrm{out}, *}_{\kappa,\varsigma}  {\times \T_\sigma}} \left| \left(u^2+\frac{\pi^2}{4}\right)^5 T_1^*(u,\theta)\right|\le & \Cout_{\mathrm{out}} \varepsilon^2, \\ 
\sup_{(u,\theta) \in D^{\mathrm{out}, *}_{\kappa, \infty}  {\times \T_\sigma}} \left| e^{6|\Re u|}  \partial_u T_1^*(u,\theta)\right| + \sup_{(u,\theta) \in D^{\mathrm{out}, *}_{\kappa,\varsigma}  {\times \T_\sigma}} \left| \left(u^2+\frac{\pi^2}{4}\right)^6 \partial_u T_1^*(u,\theta)\right|\le & \Cout_{\mathrm{out}} \varepsilon^2,\\
\sup_{(u,\theta) \in D^{\mathrm{out}, *}_{\kappa, \infty}  {\times \T_\sigma}} \left| e^{6|\Re u|}  \partial_\theta T_1^*(u,\theta)\right| + \sup_{(u,\theta) \in D^{\mathrm{out}, *}_{\kappa,\varsigma}  {\times \T_\sigma}} \left| \left(u^2+\frac{\pi^2}{4}\right)^6 \partial_\theta T_1^*(u,\theta)\right|\le & \Cout_{\mathrm{out}} \varepsilon^3.
\end{align*}
\end{theorem}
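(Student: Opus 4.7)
The plan is to reformulate~\eqref{HJpolarcoordinatesT1} as a fixed-point problem. The operator $\Lout$ defined in~\eqref{def:operator_L} is a linear transport operator whose characteristics are the lines $\theta - \omega u/\varepsilon = \mathrm{const}$. In view of the boundary condition $\nabla T_1^\bu \to 0$ as $\Re u \to -\infty$, a natural right inverse for the unstable case is
\[
\mathcal{G}^{\bu} h(u,\theta) = \int_{-\infty}^0 h\!\left(u+s,\, \theta + \frac{\omega s}{\varepsilon}\right) ds,
\]
with an analogous definition integrating from $+\infty$ to $0$ for the stable case. The domains $D^{\mathrm{out},\bu}_\kappa$ in~\eqref{def:domains_outer_s} are shaped precisely so that the characteristic ray from any of their points towards $-\infty$ stays inside the domain. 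The problem then becomes the fixed-point equation $T_1 = \mathcal{G}^{\bu} \Fout(T_1)$.

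I would next introduce weighted Banach spaces reflecting the bounds in the statement. For an exponent $n$, define
\[
\|f\|_n = \sup_{D^{\mathrm{out},\bu}_{\kappa,\infty}\times\T_\sigma} e^{6|\Re u|}|f| + \sup_{D^{\mathrm{out},\bu}_{\kappa,\varsigma}\times\T_\sigma} \left|u^2+\tfrac{\pi^2}{4}\right|^{n} |f|,
\]
so that the announced bounds for $T_1^*$, $\partial_u T_1^*$, $\partial_\theta T_1^*$ correspond to $n=5,6,6$ (with an extra factor of $\varepsilon$ in the last case). The key linear estimate is that $\mathcal{G}^{\bu}$ maps $\|\cdot\|_{n+1}$ boundedly into $\|\cdot\|_n$: on the exponential region, $\int_{-\infty}^0 e^{-6|\Re u + s|}\,ds \le C e^{-6|\Re u|}$; near the singularity at $u = i\pi/2$, since the characteristic from $u$ stays at distance $\gtrsim \kappa\varepsilon$ from it, an elementary estimate yields $\int_{-\infty}^0 \bigl|(u+s)^2+\pi^2/4\bigr|^{-n-1}\,ds \le C \bigl|u^2+\pi^2/4\bigr|^{-n}$. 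Then I would estimate the seed term $\mathcal{G}^{\bu}\Fout(0)$: the only contribution to $\Fout(0)$ is $\varepsilon^{-4}\tilde H_1(\varepsilon^2\delta_0,\, \varepsilon\gamma_0;\varepsilon)$, and $\tilde H_1 = \Oo_6(x,y)$ combined with the pole structure of $\gamma_0, \delta_0$ at $\pm i\pi/2$ (orders $1$ and $2$ respectively) yields a term of size $\varepsilon^2$ in the $\|\cdot\|_6$ norm, hence of size $\varepsilon^2$ in $\|\cdot\|_5$ after applying $\mathcal{G}^{\bu}$. On a ball of radius $C\varepsilon^2$ in the appropriate norm (with compatible control of derivatives), the map $T_1 \mapsto \mathcal{G}^{\bu}\Fout(T_1)$ is a contraction: the quadratic terms $(\partial_u T_1)^2/(2R^2)$ and $(\partial_\theta T_1)^2/(2r^2)$ acquire a compensating factor from $R^{-2}, r^{-2}$ which vanish at $\pm i\pi/2$, and the higher-order $\tilde H_1$ terms contribute a small Lipschitz constant.

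The main obstacle is proving the linear estimate for $\mathcal{G}^{\bu}$ uniformly in $\kappa \ge \kappa_{\mathrm{out}}$ and $\varepsilon$, and matching the exponential-decay region ($\Re u < -\varsigma$) with the polynomial-blow-up region near the singularities in a single functional setting. A naive integration along the characteristic from $u$ down to $-\infty$ would seem to lose an $\varepsilon^{-1}$ factor near $i\pi/2$; the careful accounting, in the spirit of~\cite{Sauzin01,LochakMS03}, is what produces the announced polynomial weights with no $\varepsilon^{-1}$ loss. The bounds on $\partial_u T_1^*$ and $\partial_\theta T_1^*$ can then be obtained either by working in slightly reduced domains and applying Cauchy estimates, or by incorporating them into the fixed-point scheme with adjusted norms. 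The additional factor of $\varepsilon$ in the bound for $\partial_\theta T_1^*$ compared to $\partial_u T_1^*$ reflects the fact that $T_1$ depends on $\theta$ only through the perturbation, so decomposing $T_1$ into its $\theta$-average and oscillatory Fourier modes and exploiting the small divisor-free gain $\varepsilon/(ik\omega)$ on each nonzero mode captures this gain.
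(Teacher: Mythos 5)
Your proposal follows the same strategy as the paper's proof in Section~\ref{sec:proof:outer}: rewrite~\eqref{HJpolarcoordinatesT1} as the fixed-point equation $T_1=\G^*\circ\Fout(T_1)$, set up weighted Banach spaces that match the near-singularity polynomial blow-up and the far-field exponential decay (the paper uses $\cosh^n u$ weights, which are interchangeable with your $|u^2+\pi^2/4|^n$ and $e^{6|\Re u|}$), bound the seed $\G\circ\Fout(0)$ by $O(\varepsilon^2)$, and show $\Fout$ is Lipschitz with constant $O(\kappa^{-2})$. The paper realizes the two refinements you flag as necessary at the end exactly as you suggest: the right inverse $\G^*$ is defined Fourier-mode by Fourier-mode, producing the $\varepsilon/|\ell|$ gain on nonzero modes that yields the extra factor of $\varepsilon$ in the $\partial_\theta$ bound, and the derivative estimates are built into the working norm $\lfloor\,\cdot\,\rfloor^*_{5,6,\sigma}=|\cdot|^*_{5,6,\sigma}+|\partial_u\cdot|^*_{6,6,\sigma}+\varepsilon^{-1}|\partial_\theta\cdot|^*_{6,6,\sigma}$ rather than recovered by Cauchy estimates on shrunk domains.
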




As a consequence of Theorem~\ref{thm:outer}, notice that
\begin{equation}\label{thm:def_Gamma}
\Gamma^* = (\delta_0 + (D\gamma_0)^{-\top}\nabla T_1^*,\gamma_0)^{\top}, \qquad * = \bu,\bs
\end{equation}
are parametrizations of the invariant manifolds, defined in the domains $D^{\mathrm{out}, *}_{\kappa} {\times \T_\sigma}$.

\subsection{Further extension of the invariant manifolds}\label{sec:furtherextension}

Theorem~\ref{thm:outer} ensures the existence of generating functions, $T^{\bu}$ and $T^{\bs}$ for the unstable and stable manifolds defined on $D^{\mathrm{out}, \bu}_{\kappa} \times \T_\sigma $ and $D^{\mathrm{out}, \bs}_{\kappa} \times \T_\sigma$, respectively (see~\eqref{def:domains_outer_s}, \eqref{def:domains_outer_u} and Figure~\ref{fig:outer_domain}). We observe that 
\[
D^{\mathrm{out}, \bu}_{\kappa} \cap D^{\mathrm{out}, \bs}_{\kappa} \cap \R = \emptyset. 
\]
It is not possible to extend $T^{\bu}$ and $T^{\bs}$ through $u=0$ because equation~\eqref{HJpolarcoordinatesT1} is not defined for $u=0$. For this reason, we look for extensions of $T^{\bu}$ in~\eqref{thm:def_Gamma} defined in the following domain (see also Figure~\ref{fig:ext_domain}) 
\begin{equation}\label{def:domains_ext} 
\begin{aligned}
D_{\kappa}^{\mathrm{ext}} = \Big\{ u \in \C : &|\mathrm{Im}(u)| \le {\pi \over 2} - \kappa \varepsilon - \tan \tilde \slope \mathrm{Re}(u),\\
&|\mathrm{Im}(u)| \le {\pi \over 2} - \kappa \varepsilon + \tan \slope \mathrm{Re}(u),\\
&|\mathrm{Im}(u)| \ge d_0{\pi \over 2} - \tan \tilde\vartheta_1 \mathrm{Re}(u) \Big\}
\end{aligned}
\end{equation}    
with $\tilde \slope \in \left(\slope, {\pi \over 2}\right)$, $\tilde\vartheta_1 \in \left(0, {\pi \over 2}\right)$, $d_0 \in \left({1 \over 4}, {1 \over 2}\right)$ and $\slope$ the parameter  defined by~\eqref{def:domains_outer_s}.
\begin{figure}[t]
\centering
\begin{overpic}[scale=0.05]{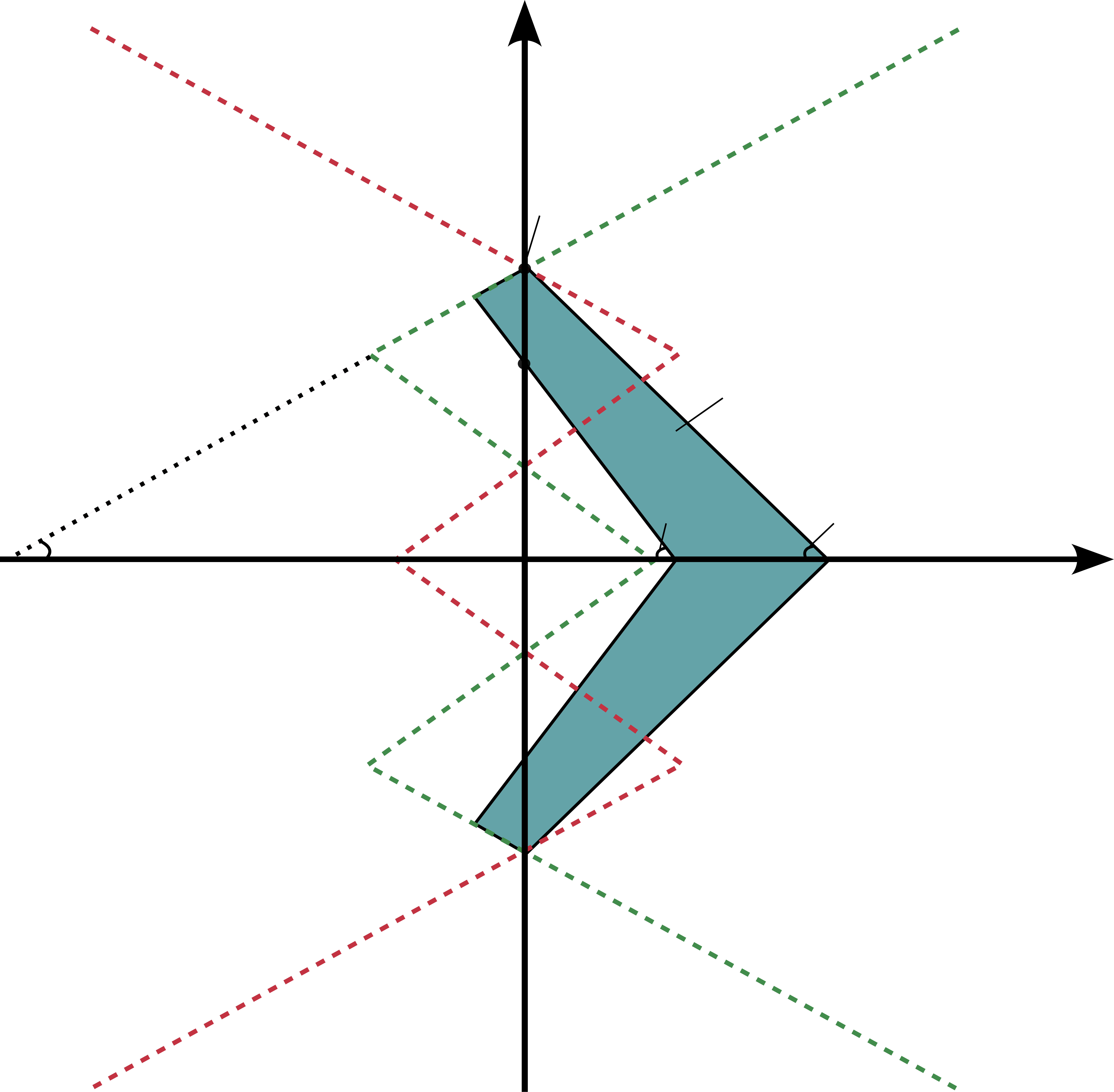}
    \put(48.7,99.5){$\mathrm{Im(u)}$}
    \put(101, 42){$\mathrm{Re(u)}$}
    \put(74.5, 51.5){$\tilde \slope$}
    \put(59.5, 51.5){$\tilde\vartheta_1$}
    \put(65.5, 63){$D_{\kappa}^{\mathrm{ext}}$}
    \put(37, 64){$d_0{\pi \over 2}$}
    \put(48, 82){${\pi \over 2} - \kappa \varepsilon$}
    \put(10, 49.3){$\slope$}
\end{overpic}
\caption{The dashed red-colored domain is $D^{\mathrm{out}, \bu}_{\kappa}$, while the dashed green-colored one is $D^{\mathrm{out}, \bs}_{\kappa}$. The domain in blue represents $D_{\kappa}^{\mathrm{ext}}$.}
\label{fig:ext_domain}
\end{figure}

The following result provides the analytic extension to $D_{\kappa}^{\mathrm{ext}} \times \T_{\sigma \over 2}$ of $T^{\bu}$. 
\begin{theorem}\label{thm:ext} 
     Let $\sigma >0$,  
     $d \in \left({1 \over 4}, {1 \over 2}\right)$,  $\slope$, $\hslope \in \left(0, {\pi \over 2}\right)$ and   $\varsigma>0$ be fixed and consider $\kappa_{\mathrm{out}},\varepsilon_{\mathrm{out}}$ provided in Theorem~\ref{thm:outer}. 
     
There exist $d_0\in \left({1 \over 4}, {1 \over 2}\right)$, $\tilde \slope \in \left(\slope, {\pi \over 2}\right)$, $\tilde\vartheta_1 \in \left(0, {\pi \over 2}\right)$,  $0<\varepsilon_{\mathrm{ext}} \leq \varepsilon_{\mathrm{out}}$, $\kappa_{\mathrm{ext}}\geq \kappa_\mathrm{out}$ and a constant $c_{\mathrm{ext}}$ such that, for $\kappa\geq \kappa_{\mathrm{ext}}$ and $0<\varepsilon < \varepsilon_{\mathrm{ext}}$, 
\[
D_{\kappa}^{\mathrm{ext}} \subset D^{\mathrm{out}, \bs}_{\kappa} \cap D^{\mathrm{out}, \bu}_\kappa
\]
and $T_1^{\bu}$ in Theorem~\ref{thm:outer}, can be extended analytically on the domain $D_{\kappa}^{\mathrm{ext}} \times \T_{\sigma \over 2}$. Moreover, 
\begin{equation}\label{est:thm_ext}
\begin{aligned}
&\sup_{(u,\theta) \in D_{\kappa}^{\mathrm{ext}} \times \T_{\sigma \over 2}} \left| \left(u^2+\frac{\pi^2}{4}\right)^5 T_1^*(u,\theta)\right|  \le c_{\mathrm{ext}} {\varepsilon^2}\\
&\sup_{(u,\theta) \in D_{\kappa}^{\mathrm{ext}} \times \T_{\sigma \over 2}} \left| \left(u^2+\frac{\pi^2}{4}\right)^6 \partial_uT_1^*(u,\theta)\right|  \le c_{\mathrm{ext}} {\varepsilon^2}\\
&\sup_{(u,\theta) \in D_{\kappa}^{\mathrm{ext}} \times \T_{\sigma \over 2}} \left| \left(u^2+\frac{\pi^2}{4}\right)^6 \partial_\theta T_1^*(u,\theta)\right|  \le c_{\mathrm{ext}} {\varepsilon^3}
\end{aligned}
\end{equation}
for $* = \bu,\bs$.
\end{theorem}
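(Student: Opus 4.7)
The plan is to extend $T_1^{\bu}$ by re-solving the Hamilton--Jacobi equation $\Lout T_1 = \Fout(T_1)$ on the larger domain $D^{\mathrm{ext}}_{\kappa} \times \T_{\sigma/2}$ via a contraction argument, using the function $T_1^{\bu}$ produced by Theorem~\ref{thm:outer} as boundary data on the overlap $D^{\mathrm{ext}}_{\kappa} \cap D^{\mathrm{out},\bu}_{\kappa}$. First I would fix the geometric parameters $d_0 \in (1/4,1/2)$, $\tilde\vartheta_0 \in (\vartheta_0,\pi/2)$, $\tilde\vartheta_1 \in (0,\pi/2)$ so that $D^{\mathrm{ext}}_{\kappa} \subset D^{\mathrm{out},\bu}_{\kappa} \cap D^{\mathrm{out},\bs}_{\kappa}$ for every $\kappa \ge \kappa_\mathrm{ext}$ and $\varepsilon$ small enough; this is a direct verification from the definitions~\eqref{def:domains_outer_s}, \eqref{def:domains_outer_u} and \eqref{def:domains_ext} and fixes the relative apertures of the cones.

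The key analytic step is to invert the transport operator $\Lout = \partial_u + (\omega/\varepsilon)\partial_\theta$ along its characteristics $s\mapsto (u_0+s,\theta_0+(\omega/\varepsilon)s)$. For each $(u,\theta)\in D^{\mathrm{ext}}_\kappa\times \T_{\sigma/2}$, I would pick an entry point $u^\bu=u^\bu(u,\theta)$ on the left boundary of $D^{\mathrm{ext}}_\kappa$, which lies inside $D^{\mathrm{out},\bu}_\kappa$, and set
\[
\mathcal{G}(T_1)(u,\theta) = T_1^{\bu}(u^\bu,\theta^\bu) + \int_0^{u-u^\bu} \Fout(T_1)\!\left(u^\bu+s,\theta^\bu+\tfrac{\omega}{\varepsilon} s\right)ds,
\]
where $\theta^\bu = \theta - (\omega/\varepsilon)(u-u^\bu)$. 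Because $\mathrm{Im}(u-u^\bu)$ is bounded by roughly $\pi/2$ while $\omega/\varepsilon$ is large, the argument $\theta^\bu$ may leave $\T_\sigma$ unless one shrinks the strip in $\theta$: this is precisely what forces the loss from $\T_\sigma$ to $\T_{\sigma/2}$. I would then look for the extension as a fixed point of $\mathcal{G}$ in the weighted Banach space of analytic functions on $D^{\mathrm{ext}}_\kappa\times \T_{\sigma/2}$ with norm
\[
\|T_1\| = \sup \left| \bigl(u^2+\tfrac{\pi^2}{4}\bigr)^{5} T_1(u,\theta) \right|,
\]
and companion norms with weights $(u^2+\pi^2/4)^6$ for $\partial_u T_1$ and $\partial_\theta T_1$ (with a small extra factor $\varepsilon$ for the latter, consistently with Theorem~\ref{thm:outer}), obtained from the previous via Cauchy estimates.

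The hard part is to show that $\mathcal{G}$ is a contraction with the correct weight. The quadratic terms $(\partial_u T_1)^2/(2R^2)$ and $(\partial_\theta T_1)^2/(2r^2)$ in $\Fout$ (see~\eqref{def:operator_F}) are the dangerous ones: since $D^{\mathrm{ext}}_\kappa$ comes within $\kappa\varepsilon$ of the singularities $\pm i\pi/2$, the factors $1/R^2$ and $1/r^2$ can be of size $(\kappa\varepsilon)^{-2}$ there, and one must check that these are compensated by the weight together with the $\varepsilon^2$-smallness of $T_1$ inherited from the boundary datum $T_1^{\bu}$ and from the nonlinear terms. This is precisely what dictates the order $5$ of the weight: it matches the order of the pole of the Hamiltonian data at $\pm i \pi/2$ and produces a contraction constant of the form $C\kappa^{-\gamma}$ for some $\gamma>0$, so the contraction is effective for $\kappa\ge\kappa_\mathrm{ext}$ large enough, independently of $\varepsilon$. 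The remaining perturbative term $\varepsilon^{-4}\tilde H_1$ is handled exactly as in Theorem~\ref{thm:outer}, using that $\tilde H_1 = \Oo_6$ uniformly in $\varepsilon$. Once the fixed point is established, uniqueness along the characteristics entering through the overlap forces it to coincide with $T_1^{\bu}$ on $D^{\mathrm{ext}}_\kappa \cap D^{\mathrm{out},\bu}_\kappa$, providing the desired analytic extension; the estimates in~\eqref{est:thm_ext} on $T_1^\bu$, $\partial_u T_1^\bu$, $\partial_\theta T_1^\bu$ follow by reading off the weighted norm and applying Cauchy's formula in $(u,\theta)$.
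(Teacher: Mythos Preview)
Your approach has a concrete gap that makes the direct Hamilton--Jacobi fixed point fail on $D^{\mathrm{ext}}_\kappa$. You write $\theta^\bu = \theta - (\omega/\varepsilon)(u-u^\bu)$ and argue that since $\mathrm{Im}(u-u^\bu)$ is bounded by roughly $\pi/2$, the argument $\theta^\bu$ only leaves $\T_\sigma$ by a finite amount, which a reduction to $\T_{\sigma/2}$ absorbs. But $(\omega/\varepsilon)\cdot\mathrm{Im}(u-u^\bu)$ is of order $1/\varepsilon$, not of order $\sigma$: if $u$ sits on the real segment of $D^{\mathrm{ext}}_\kappa$ and $u^\bu$ is an entry point in $D^{\mathrm{out},\bu}_\kappa$ (where $|\mathrm{Im}\,u^\bu|\gtrsim d\pi/2$), then $\theta^\bu$ escapes every fixed strip in $\mathrm{Im}\,\theta$, the boundary term $T_1^\bu(u^\bu,\theta^\bu)$ is undefined, and your characteristic formula collapses. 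No finite shrinking of the $\theta$-strip can compensate an $O(1/\varepsilon)$ shift. A related confusion: the factors $1/R^2$ and $1/r^2$ are \emph{small} near $\pm i\pi/2$ (both $r$ and $R$ have poles there); they are large near $u=0$, where $R(0)=0$. It is this singularity, not the one at $\pm i\pi/2$, that obstructs the Hamilton--Jacobi equation on any domain that tries to link the two half-planes.

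The paper does not attempt a fixed point for $\Lout T_1=\Fout(T_1)$ on $D^{\mathrm{ext}}_\kappa$ at all. Because the graph parametrization $x=\nabla\mathcal{S}(y)$ degenerates at the turning point $u=0$, the paper switches to a \emph{flow} parametrization, which is regular there. The proof has four steps: (i) on a subdomain of $D^{\mathrm{out},\bu}_\kappa$ at $O(1)$ distance from $\pm i\pi/2$, conjugate $\Gamma^\bu$ to a solution $\Gamma^{\mathrm{fl}}$ of the invariance equation $\Lfl\Gamma=X_{\mathcal H}(\Gamma)$ (Proposition~\ref{prop:ext1}); (ii) extend $\Gamma^{\mathrm{fl}}$ across $u=0$ by a fixed point for this non-singular equation (Proposition~\ref{prop:ext2}); (iii) reconjugate to a graph parametrization on the far side (Proposition~\ref{prop:ext3}); (iv) only once $T_1^\bu$ is known on all of $D^{\mathrm{ext}}_\kappa$, plug it into the right-hand side of $\Lout T=\Fout(T_1^\bu)$ as a \emph{linear} representation formula (with mode-dependent base points $u_1,\bar u_1$) to upgrade the crude bound $|\nabla T_1^\bu|\lesssim\varepsilon$ from steps (i)--(iii) to the weighted estimates~\eqref{est:thm_ext}. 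The reduction $\T_\sigma\to\T_{\sigma/2}$ in the paper comes from Cauchy estimates in the conjugation steps, not from characteristic integration.
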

Since, in order to have uniform bounds, we have to avoid a neighborhood of $u=0$, we are forced to work in 
not simply connected complex domains. This fact leads us to split the proof of Theorem~\ref{thm:ext} into the following four steps, which are proved in Section~\ref{ext:proof}.
\begin{enumerate}
    \item From graph to flow parameterization. 
Let $\Gamma^{\bu}$, defined as in~\eqref{thm:def_Gamma}, the parameterization of the unstable manifold provided by Theorem~\ref{thm:outer}. 
We look for a change of variables 
\begin{equation*}
(u, \theta) = (v + f_1(v, \varphi), \varphi + f_2(v, \varphi))
\end{equation*}
such that  
\begin{equation}
\label{def:ext_tilde_Gamma}
    \Ggf(v, \varphi) = \Gamma^{\bu}(v + f_1(v, \varphi), \varphi + f_2(v, \varphi))
\end{equation}
satisfies that $\zeta(t):=\Ggf\left (v+t,\varphi+\frac{\omega}{\varepsilon} t\right )$ is a solution of $\dot{\zeta} = X_{\mathcal{H}} (\zeta)$, where $X_{\mathcal{H}}$ is the vector field associated to the Hamiltonian $\mathcal{H}$ in~\eqref{def:rescaledHamiltonian}. In other words, we look for an analytic solution $f$ of the equation
\begin{equation}
\label{ext1:inv_eq}
\Lext  \big (\Gamma^{\bu} \circ (\mathrm{id} + f) \big )= X_\mathcal{H}\circ  \big  (\Gamma^\bu \circ (\mathrm{id} +f) \big )
\end{equation}
where $\Lext \Gamma = \partial_v   \Gamma + {\omega \over \varepsilon} \partial_\varphi  \Gamma$.

More specifically, let $\slope,\hat \slope$ be defined by~\eqref{def:domains_outer_s}. We define the domain (see Figure~\ref{fig:ext_domain_step1and2})
\begin{equation}\label{def:domains_ext_step1}
\begin{aligned}
\Dgf_{2\kappa} = \Big\{ u \in \C : &|\mathrm{Im}(u)| \le {\pi \over 2} - 2\kappa \varepsilon - \tan \slope \mathrm{Re}(u),\\
&|\mathrm{Im}(u)| \ge d_2{\pi \over 2} + \tan \hat \slope \mathrm{Re}(u),\\
&|\mathrm{Im}(u)| \le d_3{\pi \over 2} + \tan \hat \slope \mathrm{Re}(u) \Big\}
\end{aligned}
\end{equation}   
where $d < d_2 < d_3 <{1 \over 2}$, and ${\pi \over 2} - 2\kappa \varepsilon >0$. Note that $\Dgf_{2\kappa} \subset D^{\mathrm{out}, \bu}_{\kappa}$. In Section~\ref{sec:ext1:proof} we prove the following proposition. 
\begin{proposition}\label{prop:ext1}
Let $\sigma >0$,  
     $d \in \left({1 \over 4}, {1 \over 2}\right)$,  $\slope$, $\hslope \in \left(0, {\pi \over 2}\right)$ and   $d<d_2<d_3< \frac{1}{2}$ be fixed and consider $\kappa_{\mathrm{out}},\varepsilon_{\mathrm{out}}$ provided in Theorem~\ref{thm:outer}. 
     
    There exist $\kappa_1 \geq \kappa_{\mathrm{out}}$, $0<\varepsilon_1\leq \varepsilon_{\mathrm{out}}$ and constants $c_0, c_1,c >0$ such that, for all $\kappa\geq \kappa_1$ and $0 < \varepsilon <\varepsilon_1$, equation~\eqref{ext1:inv_eq} has an analytic solution $f = (f_1, f_2)^\top: \Dgf_{2\kappa} \times \T_{3 \sigma \over 4} \to \C^2$ satisfying 
    \begin{equation}\label{bound:f:ext1}
        \sup_{(v, \varphi) \in \Dgf_{2\kappa} \times \T_{3 \sigma \over 4}} \left|f_1(v, \varphi)\right| \le c_0 \varepsilon^2, \quad \sup_{(v, \varphi) \in \Dgf_{2\kappa} \times \T_{3 \sigma \over 4}} \left|f_2(v, \varphi)\right| \le c_1\varepsilon.
    \end{equation}
    Moreover, letting
\begin{equation}
\label{def:ext_tilde_Gamma:bis}
    \Ggf(v, \varphi) = \Gamma^{\bu}(v + f_1(v, \varphi), \varphi + f_2(v, \varphi)),
\end{equation}
the following bound holds
\begin{equation}\label{pro:ext1_TildeGamma1Est}
        \sup_{(v, \varphi) \in \Dgf_{2\kappa} \times \T_{3 \sigma \over 4}} |\Ggf(v, \varphi) - \Gamma_0(v, \varphi)|\le c  \varepsilon.
    \end{equation}
    where $\Gamma_0$ is the unperturbed homoclinic defined by Lemma \ref{lem:homoclinic}. 
\end{proposition}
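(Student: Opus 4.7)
The plan is to derive the invariance equation \eqref{ext1:inv_eq} explicitly for $f=(f_1,f_2)$, project onto the 2-dimensional tangent plane of the unstable manifold parametrized by $\Gamma^\bu$, and solve the resulting scalar system by a contraction argument in a Banach space of bounded analytic functions on the (disconnected, symmetric) domain $\Dgf_{2\kappa}\times\T_{3\sigma/4}$.

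Applying the chain rule, denoting $(\hat u,\hat\theta):=(v+f_1,\varphi+f_2)$, one finds
\[
\Lext\bigl(\Gamma^\bu(\hat u,\hat\theta)\bigr) = (1+\Lext f_1)\,\partial_u\Gamma^\bu(\hat u,\hat\theta) + \bigl(\tfrac{\omega}{\varepsilon}+\Lext f_2\bigr)\,\partial_\theta\Gamma^\bu(\hat u,\hat\theta).
\]
Since $\Gamma^\bu$ parametrizes an invariant manifold, $X_\mathcal{H}\circ\Gamma^\bu$ is tangent to it, and one can uniquely decompose $X_\mathcal{H}\circ\Gamma^\bu = A\,\partial_u\Gamma^\bu + B\,\partial_\theta\Gamma^\bu$ for scalar analytic functions $A,B$ on $D^{\mathrm{out},\bu}_{\kappa}\times\T_\sigma$; equation \eqref{ext1:inv_eq} then decouples into
\[
\Lext f_1 = (A-1)\circ(\mathrm{id}+f), \qquad \Lext f_2 = \bigl(B-\tfrac{\omega}{\varepsilon}\bigr)\circ(\mathrm{id}+f).
\]
By Lemma~\ref{lem:homoclinic} the unperturbed flow in the $(u,\theta)$ variables has velocity exactly $(1,\omega/\varepsilon)$, so $A\equiv 1$ and $B\equiv \omega/\varepsilon$ when $\mathcal{H},\Gamma^\bu$ are replaced by $H_0,\Gamma_0$. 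Combined with Theorem~\ref{thm:outer} and the fact that $\Dgf_{2\kappa}$ stays at $O(1)$ distance from the singularities $\pm i\pi/2$ of $\gamma_0$ (so that the weights $(u^2+\pi^2/4)^{-k}$ in Theorem~\ref{thm:outer} are harmless), I would verify that $A-1=O(\varepsilon^2)$ and $B-\omega/\varepsilon=O(\varepsilon)$ uniformly on $\Dgf_{2\kappa}\times\T_{3\sigma/4}$; the weaker bound on $B-\omega/\varepsilon$ originates from the explicit $\beta\varepsilon\, r^2\partial_\theta T$ term of $\Fout$ in \eqref{def:operator_F}.

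Next, I would construct a right inverse $\Lext^{-1}$ on each of the two symmetric components of $\Dgf_{2\kappa}\times\T_{3\sigma/4}$ via Fourier expansion in $\varphi\in\T$: the $k=0$ mode solves $\partial_v h_0=g_0$ by integration from the inflow boundary of $\Dgf_{2\kappa}$ (with respect to the direction $\partial_v$), while nonzero modes $(\partial_v+ik\omega/\varepsilon)h_k=g_k$ are inverted using the integrating factor $e^{ik\omega v/\varepsilon}$, with boundary data prescribed on the inflow part of $\partial\Dgf_{2\kappa}$ for the characteristic direction $(1,\omega/\varepsilon)$. Once $\Lext^{-1}$ is shown to be uniformly bounded in $\varepsilon$ on bounded analytic functions (with a possible gain of $\varepsilon$ on nonzero modes), I would run the standard contraction
\[
\mathcal{T}(f)=\Bigl(\Lext^{-1}\bigl[(A-1)\circ(\mathrm{id}+f)\bigr],\;\Lext^{-1}\bigl[(B-\tfrac{\omega}{\varepsilon})\circ(\mathrm{id}+f)\bigr]\Bigr)
\]
on the product ball $\{\|f_1\|\le c_0\varepsilon^2\}\times\{\|f_2\|\le c_1\varepsilon\}$, with Lipschitz constants controlled by Cauchy estimates applied to Theorem~\ref{thm:outer}. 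The unique fixed point yields $f$ with the bounds \eqref{bound:f:ext1}, and real analyticity is obtained by imposing the Cauchy data symmetrically on the two components.

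For \eqref{pro:ext1_TildeGamma1Est}, I would split
\[
\Ggf-\Gamma_0 = (\Gamma^\bu-\Gamma_0)\circ(\mathrm{id}+f) + \bigl(\Gamma_0\circ(\mathrm{id}+f)-\Gamma_0\bigr).
\]
The first summand is $O(\varepsilon^2)$ from Theorem~\ref{thm:outer} together with \eqref{eq:Dgamma0_inverse_and_transpose} and the $O(1)$-boundedness of $r,R,1/r,1/R$ on $\Dgf_{2\kappa}$, while the second is bounded by $\|f\|\cdot\sup_{\Dgf_{2\kappa}\times\T_{3\sigma/4}}|D\Gamma_0|=O(\varepsilon)$ via the mean value theorem and the bound $\|f_2\|\le c_1\varepsilon$, which is the dominant contribution. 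The principal obstacle is the construction of $\Lext^{-1}$ on the bounded, non-simply-connected, $\varepsilon$-dependent domain $\Dgf_{2\kappa}$ with the correct quantitative bounds: identifying an inflow boundary compatible on both symmetric components, showing analyticity of the resulting primitive, and closing the fixed point at the \emph{different} sizes $\varepsilon^2$ and $\varepsilon$ required for $f_1$ and $f_2$, which forces a careful separation of the zero and nonzero Fourier modes.
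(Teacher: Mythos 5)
Your proposal follows essentially the same path as the paper's: you reduce the four-component invariance equation to a two-component fixed point for $f=(f_1,f_2)$, recognize the size discrepancy $|f_1|=O(\varepsilon^2)$ versus $|f_2|=O(\varepsilon)$ as coming from the $\beta\varepsilon QS$ term of $H_1$, build a Fourier-wise right inverse of $\Lext$ with base points at the extreme points of $\Dgf_{2\kappa}$, close a contraction, and estimate $\Ggf-\Gamma_0$ by the same two-summand split. The one stylistic difference is in how the $4\to 2$ reduction is justified: you decompose $X_{\mathcal H}\circ\Gamma^{\bu}$ uniquely in the tangent frame $(\partial_u\Gamma^{\bu},\partial_\theta\Gamma^{\bu})$ and match coefficients, whereas the paper invokes the Lagrangian character of $\Gamma^{\bu}$ together with the symplecticity of $X_{\mathcal H}$ and then projects onto the $y$-components; the two are equivalent once one notes that $\pi_y\Gamma^{\bu}=\gamma_0$ with $D\gamma_0$ invertible on $\Dgf_{2\kappa}$, and your $(A-1,B-\omega/\varepsilon)$ is precisely the paper's $\Fext$ from~\eqref{defLF_extension}. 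One cosmetic remark: the $\varepsilon$-gain on nonzero Fourier modes of $\mathcal G$ that you allow for plays no role here --- the dominant $O(\varepsilon)$ contribution to $\mathcal F_2(0)$ is $-\beta\varepsilon r^2$, a pure zero mode, so only the uniform bound $|\mathcal G(g)|_\infty\le C|g|_\infty$ of Lemma~\ref{lem:ext1_right_inv} is needed.
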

\item Extension of the flow parameterization. Let $D^{\mathrm{fl}}_{3 \kappa}$ be the following domain (see also Figure \ref{fig:ext_domain_step1and2}).
\begin{figure}[t]
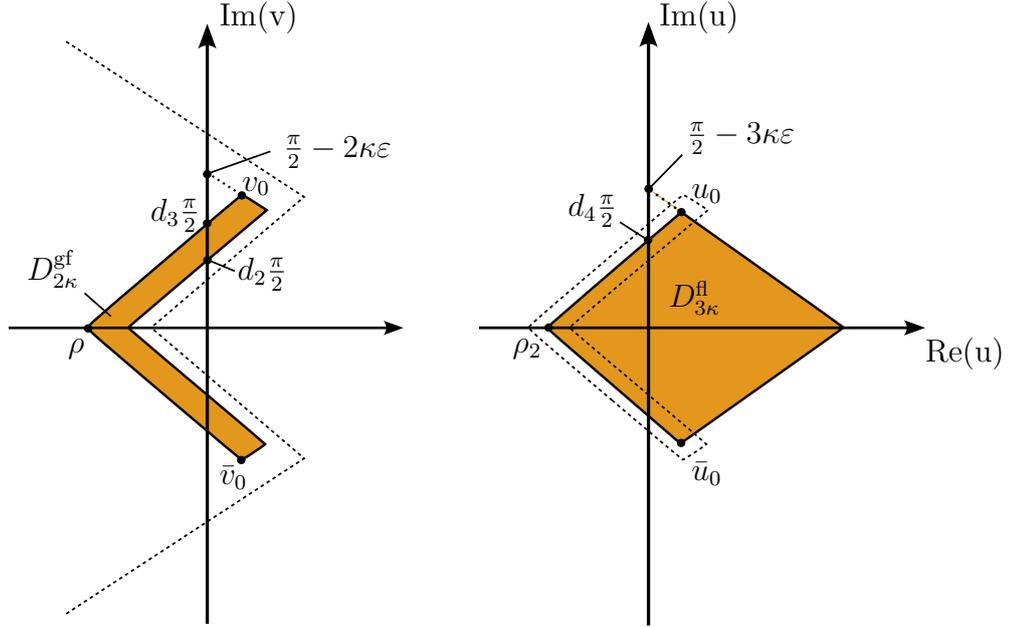

\centering
\begin{overpic}[scale=0.035]{DomainsExt1.pdf}
    \put(23,66){$\mathrm{Im(v)}$}
    \put(71,66){$\mathrm{Im(u)}$}
    \put(100,29){$\mathrm{Re(u)}$}
    \put(30,51.5){${\pi \over 2} - 2 \kappa \varepsilon$}
    \put(15.5,44.5){$d_3{\pi \over 2}$}
    \put(25,38){$d_2{\pi \over 2}$}
    \put(25.5,47.8){$v_0$}
    \put(23,15.5){$\bar v_0$}
    \put(6.5,30){$\rho$}
    \put(2,38){$\Dgf_{2\kappa}$}
    \put(72,35){$D^{\mathrm{fl}}_{3 \kappa}$}
    \put(74,53){${\pi \over 2} - 3 \kappa \varepsilon$}
    \put(61,45){$d_4{\pi \over 2}$}
    \put(74.5,47){$u_0$}
    \put(74.5,16){$\bar u_0$}
     \put(55,30){$\rho_2$}
\end{overpic}
\caption{In the figure on the left, the dashed domain is $D^{\mathrm{out}, \bu}_{\kappa}$ and the colored one is $\Dgf_{2\kappa}$. On the right, the dashed one is $\Dgf_{2\kappa}$, while the orange-colored one is $D^{\mathrm{fl}}_{3 \kappa }$.}
\label{fig:ext_domain_step1and2}
\end{figure}
\begin{equation}\label{def:domains_ext_step2}
\begin{aligned}
D^{\mathrm{fl}}_{3 \kappa} = \Big\{ u \in \C : &|\mathrm{Im}(u)| \le {\pi \over 2} - 3\kappa \varepsilon - \tan \slope \mathrm{Re}(u),\\
&|\mathrm{Im}(u)| \ge d_4{\pi \over 2} + \tan \hat \slope \mathrm{Re}(u)\Big\}
\end{aligned}
\end{equation}  
where $d < d_2 < d_4 < d_3 <{1 \over 2}$ and ${\pi \over 2} - 3 \kappa  \varepsilon >0$. We note that $\Dgf_{2 \kappa} \cap D^{\mathrm{fl}}_{3 \kappa} \ne \emptyset$.  

We extend $\Ggf$ in~\eqref{def:ext_tilde_Gamma:bis} into $D^{\mathrm{fl}}_{3\kappa}\times \T_{3 \sigma \over 4}$ analyzing the invariant equation
\begin{equation}\label{equation:flow}
    \Lfl \Gamma = X_{\mathcal{H}} (\Gamma), \qquad \Lfl \, \Gamma = \partial_u \Gamma + \frac{\omega}{\varepsilon} \partial_\theta \Gamma
\end{equation}
where $X_{\mathcal{H}}$ is the vector field associated to $\mathcal{H}$ (see~\eqref{def:rescaledHamiltonian}). 

\begin{proposition}\label{prop:ext2}
Let $\sigma >0$,  
     $d \in \left({1 \over 4}, {1 \over 2}\right)$,  $\slope$, $\hslope \in \left(0, {\pi \over 2}\right)$ and   $d<d_2<d_4<d_3< \frac{1}{2}$ be fixed. Consider $\kappa_1,\varepsilon_1$ provided in Proposition~\ref{prop:ext1} and $\Ggf$ be defined by~\eqref{def:ext_tilde_Gamma:bis}.

    There exist $0<\varepsilon_2\leq \varepsilon_1$, $\kappa_2 \geq \kappa_1$ and a constant $c_2>0$ such that, for all $0<\varepsilon<\varepsilon_2$ and $\kappa \geq \kappa_2$, $\Ggf$ can be analytically extended to $D^{\mathrm{fl}}_{3 \kappa} \times \T_{3 \sigma \over 4}$. Moreover, it is a solution   of~\eqref{equation:flow} satisfying
    \begin{equation}\label{pro:ext2_TildeGamma1Est}
        \sup_{(u, \theta) \in D^{\mathrm{fl}}_{3 \kappa } \times \T_{3 \sigma \over 4}} |\Ggf(u, \theta) - \Gamma_0(u, \theta)|\le c_2 \varepsilon
    \end{equation}
    where $\Gamma_0$ the unperturbed homoclinic in Lemma \ref{lem:homoclinic}. 
\end{proposition}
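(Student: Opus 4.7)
The plan is to extend $\Ggf$ from $\Dgf_{2\kappa}\times\T_{3\sigma/4}$ to $D^{\mathrm{fl}}_{3\kappa}\times\T_{3\sigma/4}$ by solving the invariance equation~\eqref{equation:flow} on the larger domain, treating the already-known values of $\Ggf$ on the overlap $\Dgf_{2\kappa}\cap D^{\mathrm{fl}}_{3\kappa}$ as anchoring data. The overlap is the intermediate strip $d_4\pi/2\le|\mathrm{Im}\,u|\le d_3\pi/2$, and the genuinely new part of $D^{\mathrm{fl}}_{3\kappa}$ to cover is the region $d_3\pi/2\le|\mathrm{Im}\,u|\le\pi/2-3\kappa\varepsilon$, which stretches $\kappa\varepsilon$-close to the complex singularities $\pm i\pi/2$ of the unperturbed homoclinic $\Gamma_0$ (Lemma~\ref{lem:homoclinic}). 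The task is therefore to propagate $\Ggf$ upward toward these singularities while preserving $\Oo(\varepsilon)$-closeness to $\Gamma_0$.

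I would write $\Ggf=\Gamma_0+\eta$. Since $\Gamma_0$ is itself a flow parametrization for $H_0$ (by~\eqref{HJpolarcoordinates_integrable} and Lemma~\ref{lem:homoclinic}, $\Lfl\Gamma_0=X_{H_0}(\Gamma_0)$), the perturbation obeys
\[
\Lfl\eta=DX_{H_0}(\Gamma_0)\,\eta+\mathcal R(u,\theta,\eta;\varepsilon),
\]
where $\mathcal R$ gathers the quadratic-in-$\eta$ remainders and the perturbation $X_{\mathcal H}-X_{H_0}$ evaluated on $\Gamma_0+\eta$; by~\eqref{def:H0H1} this last piece is of size $\Oo(\varepsilon)$. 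A naive characteristic integration is unavailable: to reach $\mathrm{Im}\,u\approx\pi/2-3\kappa\varepsilon$ from the overlap (where $|\mathrm{Im}\,u|\le d_3\pi/2$) one must traverse a distance of order $1$ in imaginary $u$, but the corresponding imaginary shift in $\theta$ along a characteristic of $\Lfl$ is of order $1/\varepsilon$, which overflows the strip $\T_{3\sigma/4}$. Instead I expand $\eta(u,\theta)=\sum_{k\in\Z}\eta_k(u)\,e^{ik\theta}$, obtaining the decoupled family of scalar ODEs $\partial_u\eta_k+(ik\omega/\varepsilon)\eta_k=G_k(u;\eta)$ with $G_k$ the $k$-th Fourier coefficient of the right-hand side, and set the system up as a fixed point in a Banach space of analytic functions on $D^{\mathrm{fl}}_{3\kappa}\times\T_{3\sigma/4}$ weighted to absorb the polynomial blow-up toward $\pm i\pi/2$. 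For each mode $\eta_k$ I choose the integration path in $u$ so that the integrating factor $e^{-ik\omega(u-u_1)/\varepsilon}$ stays bounded, anchoring $u_1$ in the overlap where Proposition~\ref{prop:ext1} already controls $\eta$.

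The main obstacle is that $DX_{H_0}(\Gamma_0)$ has poles at $\pm i\pi/2$ inherited from $\Gamma_0$, so its norm blows up like $(\kappa\varepsilon)^{-N}$ for some integer $N$ as $u$ approaches the upper boundary of $D^{\mathrm{fl}}_{3\kappa}$. Closing the contraction requires that the $\Oo(\varepsilon)$ smallness from $\mathcal R$ and from the initial size of $\eta$ on the overlap beat this $(\kappa\varepsilon)^{-N}$ growth of the linear propagator, which forces $\kappa$ large ($\kappa\ge\kappa_2$) and $\varepsilon$ small ($\varepsilon<\varepsilon_2$), exactly as stated. Once the fixed-point argument closes, uniqueness of analytic solutions of the invariance equation identifies the constructed object with the original $\Ggf$ on the overlap, so the result is genuinely an analytic extension on $D^{\mathrm{fl}}_{3\kappa}\times\T_{3\sigma/4}$, solves~\eqref{equation:flow} there, and satisfies~\eqref{pro:ext2_TildeGamma1Est}.
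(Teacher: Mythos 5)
Your setup is correct as far as it goes: you write $\Ggf=\Gamma_0+\eta$, observe that $\eta$ satisfies $\Lfl\eta=DX_{H_0}(\Gamma_0)\eta+\mathcal R(\eta)$, expand in Fourier modes, and anchor the integration in the overlap. But the fixed point you propose does not close, and the reason is precisely the term $DX_{H_0}(\Gamma_0)\,\eta$ that you have folded into the right-hand side. The Lipschitz constant of the map $\eta\mapsto\mathcal G\bigl(DX_{H_0}(\Gamma_0)\,\eta+\cdots\bigr)$ is governed by $\|\mathcal G\|\cdot\|DX_{H_0}(\Gamma_0)\|$, and this does not become small however you choose $\kappa$ or $\varepsilon$: the matrix $DX_{H_0}(\Gamma_0)$ contains the $O(1)$ block $-\mathrm{id}$ in its lower-left corner and an $O(1)$ block $A(u,\theta)$ in its upper-right corner (worse near $\pm i\pi/2$), and the integration path has $O(1)$ length. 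Shrinking $\varepsilon$ controls the size of $\mathcal R$ and of the initial data, hence the \emph{size} of a putative solution, but not the \emph{contraction constant} in $\eta$. These are different quantities, and the paragraph where you argue that the $\Oo(\varepsilon)$ smallness ``beats'' the $(\kappa\varepsilon)^{-N}$ blow-up conflates them. A further issue: your Fourier-mode ODEs are not decoupled, because $DX_{H_0}(\Gamma_0)$ depends on $\theta$ (through $\cos^2\theta$, $\sin\theta\cos\theta$, etc.) and so mixes modes.

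The paper closes exactly this gap by constructing an explicit fundamental matrix $M$ of the homogeneous variational equation $\Lhat\Gamma:=\Lfl\Gamma-\bigl(DX_{H_0}\circ\Gamma_0\bigr)\Gamma=0$ (Proposition~\ref{prop:ext2_RI1}). Two columns of $M$ are free ($\partial_u\Gamma_0$ and $\partial_\theta\Gamma_0$, both solutions of $\Lhat\Gamma=0$); the other two are built by Liouville-type reduction of order, solving the scalar equations $\ddot\varrho=(1-6r^2)\varrho$ and $\ddot\chi=(1-2r^2)\chi$ via $\varrho=\dot r\int\dot r^{-2}$, $\chi=r\int r^{-2}$, with an integration path chosen to avoid $u=0$ (a simple zero of $\dot r$, so no residue is picked up). With $M$ and $M^{-1}$ bounded on the domain, one has the right inverse $\Ghat_0(g)=M\,\mathcal G(M^{-1}g)$ of $\Lhat$, and the remaining nonlinearity $\Ffl(\Gamma)=X_{H_0}(\Gamma_0+\Gamma)-X_{H_0}(\Gamma_0)-DX_{H_0}(\Gamma_0)\Gamma+X_{H_1}(\Gamma_0+\Gamma)$ is genuinely small — quadratic in $\Gamma$ plus the $\Oo(\varepsilon)$ perturbation $X_{H_1}$ — so the fixed point contracts with Lipschitz constant $O(\varepsilon)$ on the ball $B_{2c_0\varepsilon}$. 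The homogeneous piece $M\phi$ in~\eqref{def:ext2_RightInverseHatL} then fixes the initial data at $u_0,\rho_2,\bar u_0\in\Dgf_{2\kappa}$ to guarantee the constructed solution agrees with $\Ggf$ on the overlap, which gives the claimed analytic extension. Your argument would have to incorporate an analogous conjugation — absent that, there is no contraction to close.
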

This proposition is proved in Section \ref{sec:ext2:proof}.
\item From flow to graph parameterization. Finally, on a suitable domain, we look for a change of coordinates $\mathrm{id} + h$ such that $\Ggf \circ \left(\mathrm{id} + h\right)$ is an invariant graph for the Hamiltonian $\mathcal{H}$. That is, denoting by $\pi_y$ the projection on the $y-$ component, we ask $h$ to satisfy
\begin{equation}\label{cond:ext:3}
\pi_y \left (\Ggf \circ \left(\mathrm{id} + h\right) \right ) = \gamma_0
\end{equation}
where we recall that $\Gamma_0=(\delta_0, \gamma_0)^\top$ (see Lemma~\ref{lem:homoclinic}).

The suitable domains are defined as
\begin{equation}\label{def:domains_ext_final}
\begin{aligned}
\tilde D_{4 \kappa,  d_5 } = \Big\{ u \in \C : &|\mathrm{Im}(u)| \le {\pi \over 2} - 4 \kappa \varepsilon - \tan \slope \mathrm{Re}(u),\\
&|\mathrm{Im}(u)| \le  d_5 {\pi \over 2} + \tan \hat \slope \mathrm{Re}(u),\\
&|\mathrm{Im}(u)| \ge d_5 {\pi \over 2} - \tan \slopee \mathrm{Re}(u) \Big\}
\end{aligned}
\end{equation}   
where the parameters $\slope$ and $\hat \slope$ are introduced by~\eqref{def:domains_outer_s}, while $\slopee \in \left(0, {\pi \over 2}\right)$.  Notice that, when $d_5 \in (d_2, d_4)$, $ \tilde D_{4\kappa,d_5} \cap D^{\mathrm{fl}}_{3\kappa} \neq \emptyset$. 
In Section~\ref{sec:ext3:proof}, we establish the following result. 
\begin{figure}[t]
\centering
\begin{overpic}[scale=0.035]{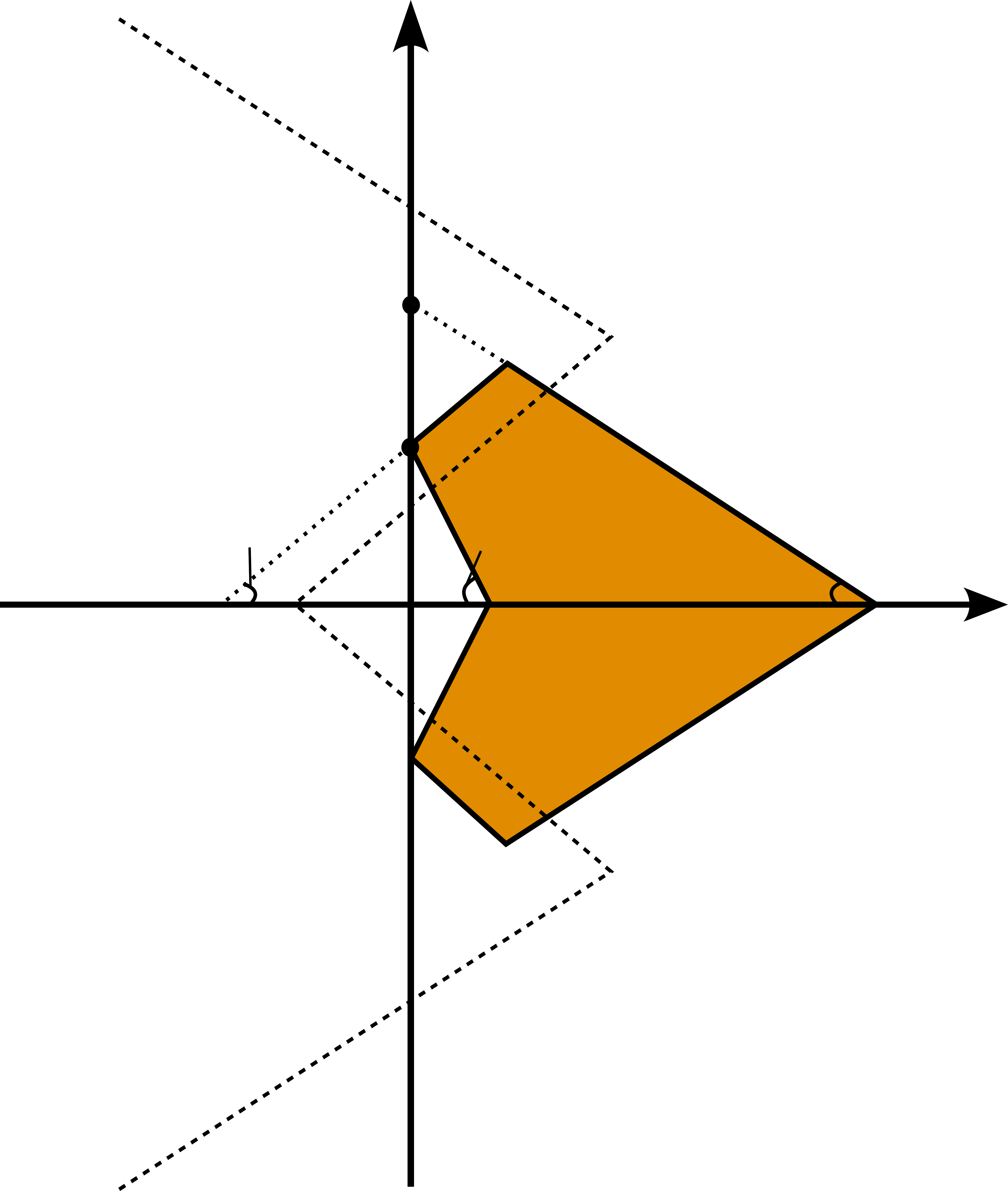}
    \put(35,101){$\mathrm{Im(u)}$}
    \put(85,44){$\mathrm{Re(u)}$}
    \put(61,50.8){$\slope$}
    \put(40.5,54){$\slopee $}
    \put(19,55.5){$\hat\slope $}
    \put(25,65){$d_5 {\pi \over 2} $}
    \put(12,75){${\pi \over 2} - 4 \kappa \varepsilon $}
    \put(43,41){$\tilde D_{4 \kappa, d_5}$}
\end{overpic}
\caption{The dashed domain is $D^{\mathrm{out}, \bu}_{\kappa}$, while the orange-colored one represents $ \tilde D_{4 \kappa,  d_5 }$.}
\label{fig:ext_domain_final}
\end{figure}
\begin{proposition}\label{prop:ext3}
Let $\sigma >0$,  
     $d \in \left({1 \over 4}, {1 \over 2}\right)$,  $\slope$, $\hslope \in \left(0, {\pi \over 2}\right)$, $\slopee \in \left(0, {\pi \over 2}\right)$ and $d<d_2<d_5<d_4<d_3< \frac{1}{2}$ be fixed. Consider $\kappa_2,\varepsilon_2$ and $\Ggf$ provided in Proposition~\ref{prop:ext2}.

    There exist $0<\varepsilon_3 \leq \varepsilon_2$, $\kappa_3 \geq \kappa_2$ and a constant $c_3>0$ such that for all $\varepsilon <\varepsilon_3$ and $\kappa \geq \kappa_3$, there exists $h : \tilde D_{4 \kappa, d_5} \times \T_{\sigma \over 2} \to \C^2$ satisfying condition~\eqref{cond:ext:3} and
        \begin{equation}\label{eq:ext3_est_g}
        \sup_{(u, \theta)\in \tilde D_{4\kappa, d_5} \times \T_{\sigma \over 2}} \big | h(u, \theta)\big| \le c_3 \varepsilon
    \end{equation}
\end{proposition}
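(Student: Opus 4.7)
The plan is to treat~\eqref{cond:ext:3} as an implicit equation for $h$ and solve it by a contractive fixed point on a ball of analytic functions of size $\Oo(\varepsilon)$. Writing $\Ggf=(\delta^{\mathrm{fl}},\gamma^{\mathrm{fl}})^\top$ and $\gamma^{\mathrm{fl}}=\gamma_0+\varepsilon g$, where by~\eqref{pro:ext2_TildeGamma1Est} the function $g:D^{\mathrm{fl}}_{3\kappa}\times\T_{3\sigma/4}\to\C^2$ is analytic with $\|g\|_\infty\le c_2$, condition~\eqref{cond:ext:3} rearranges to
\[
\gamma_0(u+h_1,\theta+h_2)-\gamma_0(u,\theta)=-\varepsilon\, g(u+h_1,\theta+h_2).
\]
Splitting off the linear part of the Taylor expansion of $\gamma_0$ in $h$ and inverting $D\gamma_0(u,\theta)$ (wherever possible), this becomes $h=\mathcal{T}(h)$ with
\[
\mathcal{T}(h):=-(D\gamma_0)^{-1}(u,\theta)\Bigl[\varepsilon\, g\circ(\mathrm{id}+h)+N(h)\Bigr],\qquad N(h):=\gamma_0\circ(\mathrm{id}+h)-\gamma_0-(D\gamma_0)\,h=\Oo(h^2).
\]

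The next task is to bound $(D\gamma_0)^{-1}$ on $\tilde D_{4\kappa,d_5}\times\T_{\sigma/2}$. From~\eqref{eq:Dgamma0_inverse_and_transpose}, its entries are of the form $\cos\theta/\dot r(u)$, $\sin\theta/\dot r(u)$, $\pm\sin\theta/r(u)$, $\cos\theta/r(u)$. The factor $1/\dot r(u)$ is singular at $u=0$ and bounded at $\pm i\pi/2$, while $1/r(u)$ is entire in $u$ and bounded at $\pm i\pi/2$. The third inequality defining $\tilde D_{4\kappa,d_5}$ in~\eqref{def:domains_ext_final} enforces $|\Im u|\ge d_5\pi/2-\tan\slopee\,\Re u$, keeping the domain at a fixed distance from $u=0$; the first inequality keeps it at distance $\ge 4\kappa\varepsilon$ from $\pm i\pi/2$. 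Together these yield $\|(D\gamma_0)^{-1}\|_\infty\le C$ uniformly in $\varepsilon$ and $\kappa\ge\kappa_2$.

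I would then set $\mathcal{T}$ on the Banach space of analytic $h:\tilde D_{4\kappa,d_5}\times\T_{\sigma/2}\to\C^2$ with the sup norm, in the ball $\mathcal{B}=\{\|h\|_\infty\le M\varepsilon\}$ for a suitable $M$. The geometric margin of width $\kappa\varepsilon$ in $u$ between $\tilde D_{4\kappa,d_5}$ and $D^{\mathrm{fl}}_{3\kappa}$, and of width $\sigma/4$ in $\theta$ between $\T_{\sigma/2}$ and $\T_{3\sigma/4}$, guarantees that for $h\in\mathcal{B}$ and $\varepsilon$ small, $(u+h_1,\theta+h_2)$ lies in $D^{\mathrm{fl}}_{3\kappa}\times\T_{3\sigma/4}$, so $\mathcal{T}(h)$ is well defined. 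Invariance $\mathcal{T}(\mathcal{B})\subset\mathcal{B}$ follows from $\|\varepsilon g\circ(\mathrm{id}+h)\|\le c_2\varepsilon$ and $\|N(h)\|=\Oo(\|h\|^2)$, choosing $M$ large compared to $Cc_2$ and $\varepsilon$ small. For the contraction, Cauchy estimates on the $\kappa\varepsilon$-margin give $\|Dg\|_\infty\le c_2/(\kappa\varepsilon)$, so
\[
\|\mathcal{T}(h)-\mathcal{T}(\tilde h)\|_\infty\le C\Bigl(\tfrac{c_2}{\kappa}+C'M\varepsilon\Bigr)\|h-\tilde h\|_\infty,
\]
and picking $\kappa_3$ large, $\varepsilon_3$ small yields a contraction factor below $1/2$. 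Banach's theorem then delivers a unique fixed point $h\in\mathcal{B}$ satisfying~\eqref{eq:ext3_est_g} with $c_3:=M$.

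The main obstacle I expect is purely geometric bookkeeping: tracking the constants $d<d_2<d_5<d_4<d_3<1/2$ and the slopes $\slope,\hslope,\slopee$ so that, simultaneously, (i) the shifted argument $u+h_1$ remains at distance $\ge 3\kappa\varepsilon$ from $\pm i\pi/2$ to stay in $D^{\mathrm{fl}}_{3\kappa}$, (ii) $(u,\theta)\in\tilde D_{4\kappa,d_5}\times\T_{\sigma/2}$ stays a uniform distance from $u=0$ so that $(D\gamma_0)^{-1}$ is uniformly bounded, and (iii) the Cauchy-type loss $1/(\kappa\varepsilon)$ is absorbed by the explicit factor $\varepsilon$ to produce a $\kappa$-uniform contraction, which forces $\kappa_3$ to be chosen only after the geometry is fixed.
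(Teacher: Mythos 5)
Your proposal is correct and follows essentially the same route as the paper's proof in Section~\ref{sec:ext3:proof}: Taylor expand $\gamma_0\circ(\mathrm{id}+h)$ to second order, invert $D\gamma_0 = R_\theta\diagonal_{\dot r, r}$ (bounded on $\tilde D_{4\kappa,d_5}$ since the domain avoids $u=0$ and the residual factors are finite near $\pm i\pi/2$), treat the $\Oo(\varepsilon)$ perturbation $\Ggf_{1,y}=\pi_y(\Ggf-\Gamma_0)$ via the bound of Proposition~\ref{prop:ext2}, and close the contraction with a Cauchy estimate on the $\kappa\varepsilon$ margin giving a Lipschitz constant $\Oo(\kappa^{-1})$. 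The only differences are presentational: you normalize $\Ggf_{1,y}=\varepsilon g$ and write the quadratic remainder abstractly as $N(h)=\Oo(h^2)$, while the paper keeps the explicit integral form of the Taylor remainder in the definition of $\mathcal{F}^{\mathrm{gr}}$.
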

\item We choose $\tilde \slope \in \left(0, {\pi \over 2}\right)$ and $d_0 \in \left({1 \over 4}, {1 \over 2}\right)$ in the definition~\eqref{def:domains_ext} of $D_{\kappa}^{\mathrm{ext}} $ in such a way that 
\begin{equation*}
    D_{\kappa}^{\mathrm{ext}} \subset D^{\mathrm{out}, \bs}_{\kappa} \cup D^{\mathrm{out}, \bs}_{\kappa} \cup \tilde D_{4 \kappa, d_5}
\end{equation*}
where we recall that $\tilde D_{4\kappa, d_5}$ is defined by~\eqref{def:domains_ext_final}. 
Therefore 
\[
\Gamma^{\bu}(u,\theta)=\Ggf \circ (\mathrm{id} + h) (u,\theta), \qquad (u,\theta) \in D_{\kappa}^{\mathrm{ext}} \times \T_{\sigma \over 2}, 
\]
where $\Gamma^{\bu}$ is the parameterization~\eqref{thm:def_Gamma} provided by Theorem~\ref{thm:outer}.


We claim that Proposition \ref{prop:ext3} proves the existence of an analytic extension of $T_1^\bu$ in Theorem~\ref{thm:outer} on the domain $D_{\kappa}^{\mathrm{ext}}$. Indeed, we define $\tilde{T}_1^{\bu}$ through
\[
\nabla \tilde T^{\bu}_1 = (D\gamma_0)^\top  \big (\pi_x \Ggf \circ (\mathrm{id}+h ) \big )- \nabla T_0
\]
where $\pi_x$ stands for the projection on the $x-$component and $T_0$ is the generating function for the unperturbed homoclinic. Since the unstable manifold is Lagrangian, the existence of $\tilde T^{\bu}_1$ is guaranteed. Let $\tilde{T}^\bu = T_0 + \tilde{T}^{\bu}_1$. It is well known that the expression of an invariant manifold as a graph is unique. Then, since $\partial_\theta \tilde{T}^{\bu} (u,\theta)= \partial_\theta T^{\bu}(u,\theta)=\gamma_0(u,\theta)$ for $(u,\theta) \in D^{\mathrm{out},u}_{\kappa} \cap D^{\mathrm{ext}}_{\kappa,d_0}$, $\tilde{T}_1^{\bu}$ is the analytic extension of $T^{\bu}_1$ in Theorem~\ref{thm:outer} into $D^{\mathrm{ext}}_{\kappa}$.

However, based on the previous analysis, we can only guarantee the existence of a constant $c>0$ such that
\begin{equation}\label{firstbound:T1:ext}
\sup_{(u,\theta) \in \big (D^{\mathrm{ext}}_{\kappa} \backslash D^{\mathrm{out},\bu}_{\kappa,\varsigma} \big )\times \T_{\sigma \over 2}} |\nabla  T_1^\bu (u,\theta)| \leq c \varepsilon 
\end{equation}

In Section~\ref{sec:ext4:proof}, we use a uniqueness argument to establish the estimates~\eqref{est:thm_ext}, thereby concluding the proof of Theorem~\ref{thm:ext}.
\end{enumerate}
 
\subsection{The inner equation} \label{sec:inner}
In this section, we aim to analyze the behavior of the parameterization of the invariant manifolds close to the singularities $\pm i \frac{\pi}{2}$, studying special solutions of the \textit{inner equation}, a parameterless equation, which, eventually, will provide the first-order for the difference $T^{\bu}(u,\theta)- T^{\bs}(u,\theta)$, for $u$ real. 

For this purpose, we consider the following change of coordinates
\begin{equation}
\label{uz}
u(z) = \varepsilon z + i{\pi \over 2}.
\end{equation}
It is the usual change of coordinate used in order to provide the inner equation associated with our problem (we refer to~\cite{B06}).  Furthermore,  for $*=\bu, \bs$, we introduce the following scaling
\begin{equation}
\label{phieta12}
K^*(z,\theta)= \varepsilon^3 T^*_1\left (\varepsilon z+ i \frac{\pi}{2}, \theta \right ), \qquad *= \bu, \bs. 
\end{equation}
Since the derivatives of $K^*$ are related to those of $T^*$ by 
\[
\partial_z K^*(z,\theta) = \varepsilon^4 \partial_u T^*_1\left (\varepsilon z+ i \frac{\pi}{2}, \theta \right ), \qquad \partial_\theta  K^*(z,\theta) = \varepsilon^3 \partial_\theta T^*_1\left (\varepsilon z+ i \frac{\pi}{2}, \theta \right )
\]
and $T^*_1$ satisfies $\Lout T^*_1 = \Fout (T^*_1)$, we deduce that $K^*$ is a solution of 
\begin{equation}\label{defLin}  
\Lin K^*  = \varepsilon^4 \Fout ( \varepsilon^{-3} K^*) , \quad \mbox{with} \quad \Lin K = \partial_z K +  \omega\partial_\theta K. 
\end{equation}
It is straightforward to verify that
\begin{equation}\label{hom:poles}
\frac{1}{r(u(z))} = i \varepsilon z + \mathcal{O}(\varepsilon^2 z^2), \qquad 
\frac{1}{R(u(z))} = i \varepsilon^2 z^2 + \mathcal{O}(\varepsilon^3 z^3).
\end{equation}
Moreover, recalling the definitions of $\gamma_0$ and $\delta_0$, we introduce the rescaled functions
\begin{equation}\label{def:gamma:delta:hat:inner} 
\begin{aligned}
\hat{\gamma}_0 (z,\theta) := \varepsilon \gamma_0(u(z),\theta) = & \frac{1} {i z } (1+ \mathcal{O}(\varepsilon z)) \left (\begin{array}{c} \cos \theta \\ \sin \theta \end{array} \right ),
\\
\hat{\delta}_0(z,\theta):= \varepsilon^2 \delta_0(u(z),\theta) =& \frac{1} {i z^2} (1+ \mathcal{O}(\varepsilon z)) \left (\begin{array}{c} \cos \theta \\ \sin \theta \end{array} \right ).
\end{aligned}
\end{equation}
Let $\diagonal_{\alpha,\beta}$ denote the $2\times 2$ diagonal matrix with entries $\alpha$ and $\beta$, and define 
\begin{equation}\label{def:in:Rtheta} 
R_\theta = \left (\begin{array}{cc} \cos \theta & - \sin \theta \\ \sin \theta & \cos \theta \end{array} \right), \hspace{2mm}
P(z,\theta;\varepsilon) = \left (\begin{array}{cc}  -\frac{1}{\varepsilon^2 R(u(z))} + iz^2 & 0 \\ 0 &   \frac{1}{\varepsilon r(u(z))} - iz \end{array} \right ).
\end{equation}
Then, using the expressions above, we obtain that 
\begin{align*}
  D\gamma_0(u(z),\theta) ^{-\top} =&   R_\theta \left (\begin{array}{cc} -\frac{1}{R(u(z))} & 0 \\ 0 & \frac{1}{r(u(z))}
  \end{array}\right )
  =  R_\theta \left [  \left ( \begin{array}{cc} -iz^2 & 0 \\ 0 & iz \end{array} \right ) + P(z,\theta;\varepsilon)\right ]\diagonal_{\varepsilon^2, \varepsilon}
\end{align*}
and as a consequence
\begin{align*}
\varepsilon^2 (D\gamma_0(u(z),\theta))^{-\top}  \nabla T(u(z),\theta)   =&  \varepsilon^2      (D\gamma_0(u(z),\theta))^{-\top}
 \left (\begin{array}{c} \varepsilon^{-4} \partial_z K  \\ \varepsilon^{-3} \partial_\theta K  \end{array} \right ) \\
 =& R_\theta \left[ \left ( \begin{array}{cc} -iz^2 & 0 \\ 0 & iz \end{array} \right )+ P(z,\theta;\varepsilon) \right ]
\nabla K.
\end{align*}
Observing that
\begin{equation}\label{dhatgamma0inner}
(D\hat \gamma_0(z,\theta))^{-\top}= R_\theta \left[ \left ( \begin{array}{cc} -iz^2 & 0 \\ 0 & iz \end{array} \right )+ P(z,\theta;\varepsilon) \right ]
\end{equation}
and recalling the definition~\eqref{def:operator_F} of $\Fout(T)$, we obtain, after the above computations,
\begin{equation}
\label{defFouter_innervariables}
\begin{aligned}
 \varepsilon^4 \Fout( \varepsilon^{-3} K) = & \frac{(\partial_z K)^2}{\varepsilon^{4}R^2(u(z))}   + \frac{(\partial_\theta K)^2}{\varepsilon^{2} r^2(u(z))}  +
 \alpha (\partial_\theta K)^2 - \beta \varepsilon^2 r^2(u(z)) \partial_\theta K \\ & + 
 \tilde{H}_1 ( \hat \delta_0 + (D\hat \gamma_0)^{-\top} \nabla K , \hat \gamma_0; \varepsilon).
\end{aligned}
\end{equation}
Taking $\varepsilon=0$ in the right hand side of~\eqref{defFouter_innervariables}
we obtain 
\begin{equation}\label{def:Fin} 
\begin{aligned}
\Fin(K) = & -(z^2\partial_z K)^2   -  (z\partial_\theta K)^2  +
 \alpha (\partial_\theta K)^2 + \beta  \frac{1}{z^2} \partial_\theta K  \\ & + 
 \tilde{H}_1 ( -iz^{-2} R_\theta e_1 + R_\theta \diagonal_{-iz^2, iz} \nabla K ,  -iz^{-1} R_\theta e_1; 0)
\end{aligned}
\end{equation}
where $e_1=(1,0)^\top$ and  we recall that $\tilde{H}_1(z)= \mathcal{O}(|z|^6)$.

We refer to 
\begin{equation}
\label{InnerEq}
\Lin K = \Fin (K)
\end{equation}
as the inner equation, and we are interested in analytic solutions of the latter satisfying the asymptotic conditions
\begin{equation}
\label{AsymInner}
\lim_{\Re z \to + \infty} K_0^{\bs}(z,\theta) =0, \quad \lim_{\Re z \to - \infty} K_0^{\bu}(z,\theta) =0.
\end{equation}

\begin{figure}[t]
\centering
\begin{overpic}[scale=0.045]{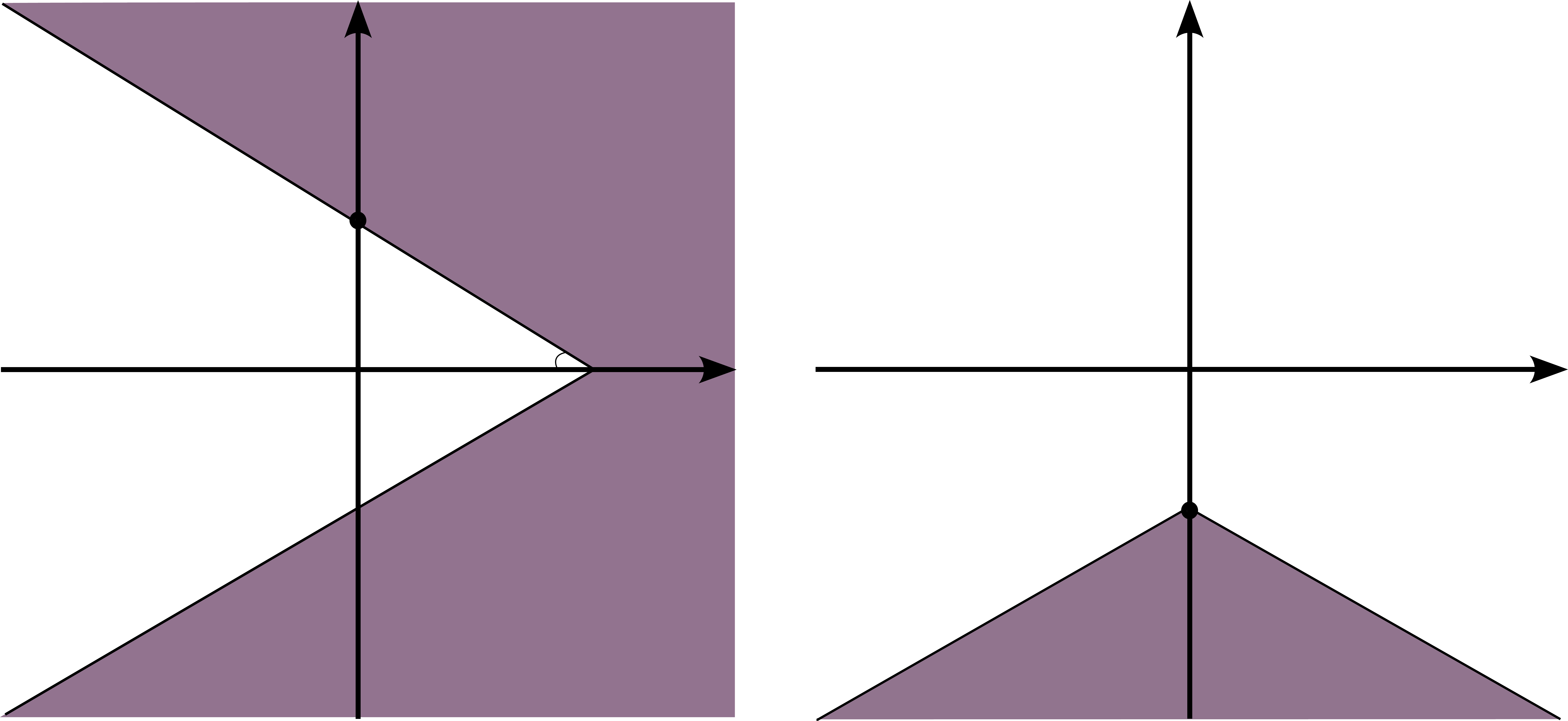}
    \put(23.6,47){$\mathrm{Im(z)}$}
    \put(76.5,47){$\mathrm{Im(z)}$}
    \put(101,20){$\mathrm{Re(z)}$}
    \put(23.5,32){$\kappa$}
    \put(77,13){$-\kappa$}
    \put(30,23.5){$\slope$}
    \put(31,10){$D^{\mathrm{in}, \bs}_{   \kappa}$}
    \put(67,3){$E^{\mathrm{in} }_{   \kappa}$}
\end{overpic}
\caption{The domain on the left is $D^{\mathrm{in}, \bs}_{   \kappa}$, whereas the one on the right is $E^{\mathrm{in} }_{   \kappa}$.}
\label{fig:inner_domain}
\end{figure}
Let us introduce the following domains.  Let $\vartheta_0 \in \left(0, {\pi \over 2}\right)$ and $\kappa>0$ as in definition~\eqref{def:domains_outer_s} of the outer domains $D^{\mathrm{out},*}_{\kappa}$. We define  
\begin{equation}
\label{DomainInner}
D^{\mathrm{in}, \bu}_{\kappa} = \{z \in \C : |\mathrm{Im}z| \ge  \tan \vartheta_0 \mathrm{Re} z +  \kappa\}, \quad D^{\mathrm{in}, \bs}_{   \kappa}  = - D^{\mathrm{in}, \bu}_{ \kappa} 
\end{equation}
{and we denote by}
\begin{equation} \label{DomainDiffInner}
    E^{\mathrm{in} }_{   \kappa} = D^{\mathrm{in},\bu}_{    \kappa} \cap D^{\mathrm{in},\bs}_{   \kappa} \cap \{w\in \mathbb{C}\,:\,\mathrm{Im} w <0\}
\end{equation}
the complex domain where $ K^{\bu}_0-K^{\bs}_0$ is defined. 
{We refer to Figure \ref{fig:inner_domain}.}

\begin{theorem}\label{thm:inner} Fix $\vartheta_0\in \left (0,\frac{\pi}{2}\right )$ and $\sigma>0$. 
There exists $\kappa_{\mathrm{in}}>0$ and $\Cin_{\mathrm{in}}>0$ such that for any $\kappa\geq \kappa_{\mathrm{in}}$, equation~\eqref{InnerEq} has two analytic solutions $K^{*}_0: D^{\mathrm{in},*}_{\kappa} \times \T_\sigma\to \mathbb{C}$, $*=\bu,\bs$, satisfying
\begin{align*}
\sup_{(z,\theta) \in D^{\mathrm{in}, *}_{  \kappa} \times \mathbb{T}_\sigma} |z^5 K^{*}_0(z,\theta)| &\leq \Cin_{\mathrm{in}},\\ \sup_{(z,\theta) \in D^{\mathrm{in}, *}_{  \kappa} \times \mathbb{T}_\sigma} |z^6 \partial_u K^{*}_0(z,\theta)|&\leq \Cin_{\mathrm{in}},\\ \sup_{(z,\theta) \in D^{\mathrm{in}, *}_{  \kappa} \times \mathbb{T}_\sigma} |z^6 \partial_{\theta} K^{*}_0(z,\theta)| &\leq \Cin_{\mathrm{in}} . 
\end{align*}

In addition,  
there exists an analytic function $ \gin :E^{\mathrm{in} }_{ \kappa} \times \T_\sigma\to \mathbb{C}$ 
such that 
\[
\sup_{(z,\theta) \in E^{\mathrm{in}}_{  \kappa} \times \mathbb{T}_\sigma} |z \gin(z,\theta)| \leq \Cin_{\mathrm{in}}
\]
and 
\begin{equation}\label{formula:diff:in}
K^{\bu}_0(z,\theta) - K^{\bs}_0(z,\theta) = \sum_{k<0} \chi^{[k]} e^{ik(\omega z-\theta + \gin(z,\theta))}, 
\end{equation}
with $\chi^{[k]}\in \mathbb{C}$ and bounded uniformly for $k<0$.
\end{theorem}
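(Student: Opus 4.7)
\textbf{Existence of $K_0^{\bu,\bs}$.} My plan is to rewrite~\eqref{InnerEq}--\eqref{AsymInner} as a fixed point $K = \mathcal{G}^*[\Fin(K)]$, where $\mathcal{G}^*$ is the right inverse of $\Lin = \partial_z + \omega\partial_\theta$ adapted to the relevant asymptotic condition. In the Fourier decomposition $K(z,\theta) = \sum_{k\in\Z} K^{[k]}(z) e^{ik\theta}$, the operator $\Lin$ reduces to $\partial_z + i\omega k$ on the $k$-th mode, which has a natural right inverse by integration from $-\infty$ along a ray inside $D^{\mathrm{in},\bu}_\kappa$ (and symmetrically from $+\infty$ for the stable branch; these paths exist because the sectors extend unboundedly along the real axis). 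The key quantitative input is that $\Fin(0)$ reduces essentially to $\tilde H_1(\hat\delta_0,\hat\gamma_0;0)$, and since $\tilde H_1 = \mathcal{O}_6$ with arguments of size $|z|^{-2}$ and $|z|^{-1}$, one gets $\Fin(0) = \mathcal{O}(|z|^{-6})$; after applying $\mathcal{G}^*$, which typically gains one power of $|z|$, this produces the $|z|^{-5}$ decay claimed for $K_0^*$.

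\textbf{Fixed point in weighted spaces.} I would work in the Banach space $\mathcal{X}^*_\nu$ of analytic functions $K:D^{\mathrm{in},*}_\kappa \times \T_\sigma\to \C$ endowed with $\|K\|_\nu = \sup\bigl|z^\nu K\bigr|$, augmented with weighted norms for $\partial_z K$ and $\partial_\theta K$, which appear quadratically in $\Fin$ and inside the $\tilde H_1$ composition. The sectorial geometry of $D^{\mathrm{in},*}_\kappa$ yields $\mathcal{G}^*: \mathcal{X}^*_{\nu+1} \to \mathcal{X}^*_\nu$ with operator norm bounded uniformly in $\kappa$. A careful inspection of $\Fin$ shows that, on a ball of radius $\mathcal{O}(1)$ in $\mathcal{X}^*_5$ (with compatible weighted norms on the derivatives), the composed map $\mathcal{G}^*\circ\Fin$ is a contraction with constant $\mathcal{O}(\kappa^{-1})$. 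The Banach fixed-point theorem then produces $K_0^*$ with $\|K_0^*\|_5 \le \Cin_{\mathrm{in}}$, and the derivative estimates follow from the weighted derivative norms (or, alternatively, from Cauchy estimates on a slightly shrunken subdomain, each losing a factor of $|z|^{-1}$).

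\textbf{Difference formula.} For $(z,\theta)\in E^{\mathrm{in}}_\kappa\times\T_\sigma$ both $K_0^\bu$ and $K_0^\bs$ solve~\eqref{InnerEq}, hence $\Delta K := K_0^\bu - K_0^\bs$ satisfies a homogeneous linear PDE
\[
(1-A(z,\theta))\,\partial_z\Delta K + (\omega - B(z,\theta))\,\partial_\theta\Delta K = 0,
\]
where $A$ and $B$ arise by integrating $\partial\Fin/\partial(\nabla K)$ along the segment joining $K_0^\bs$ and $K_0^\bu$; the previous estimates imply $A,B = \mathcal{O}(|z|^{-1})$. To straighten the characteristic field to $(1,\omega)$ I would look for a first integral of the form $\Sigma(z,\theta) = \omega z - \theta + \gin(z,\theta)$; substitution reduces the problem to the scalar transport equation
\[
(1-A)\,\partial_z\gin + (\omega - B)\,\partial_\theta\gin = A\omega - B,
\]
whose right-hand side is $\mathcal{O}(|z|^{-1})$. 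Inverting the leading operator $\Lin$ by integration along characteristics tending to $-i\infty$ inside $E^{\mathrm{in}}_\kappa$ and using the smallness of $A,B$, a Banach fixed point delivers $\gin$ analytic on $E^{\mathrm{in}}_\kappa\times\T_\sigma$ with $\sup|z\gin|\le \Cin_{\mathrm{in}}$. Then $\Delta K$ is constant along the characteristics and $2\pi$-periodic in $\theta$, hence a $2\pi$-periodic function of $\Sigma$, giving a Fourier series $\sum_k \chi^{[k]} e^{ik\Sigma}$. The modes $k\ge 0$ must vanish: for $\mathrm{Im}z\to-\infty$ inside $E^{\mathrm{in}}_\kappa$ we have $|e^{ik\Sigma}| \sim e^{-k\omega\,\mathrm{Im}z}$, which grows for $k>0$ and equals $1$ for $k=0$, contradicting $\Delta K\to 0$ inherited from the $|z|^{-5}$ decay of $K_0^{\bu,\bs}$.

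\textbf{Main obstacle.} The delicate part is the straightening step: the fixed point for $\gin$ must be controlled uniformly up to the apex of the sector $E^{\mathrm{in}}_\kappa$ near $z=-i\kappa$, where the decay rate $|z|^{-1}$ of $A,B$ is only marginal and where the characteristics of $\Lin$ start to exit $E^{\mathrm{in}}_\kappa$. Simultaneously, extracting the uniform bound on $\chi^{[k]}$ requires integrating $\Delta K(z,\theta)\,e^{-ik\Sigma(z,\theta)}$ over $\theta\in\T$ at a suitable $z$ deep inside $E^{\mathrm{in}}_\kappa$, and then exploiting the analyticity in $\theta\in\T_\sigma$ together with the constancy of $\Delta K$ along characteristics to deform the contour and convert the exponential smallness of $|e^{ik\omega z}|$ for $\mathrm{Im}z\le -\kappa$ and $k<0$ into a uniform bound; balancing the loss at the apex against the required gain from the exponential factor is the principal technical difficulty.
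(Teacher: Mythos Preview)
Your approach is essentially the paper's: rewrite~\eqref{InnerEq} as $K=\mathcal{G}^*\circ\Fin(K)$ with $\mathcal{G}^*$ the Fourier-mode right inverse of $\Lin$, work in a weighted space $\widetilde{\mathcal{X}}^*_{5,\sigma}$ carrying derivative norms, use $\Fin(0)=\mathcal{O}(|z|^{-6})$ from $\tilde H_1=\mathcal{O}_6$, and obtain a contraction (the paper gets Lipschitz constant $\mathcal{O}(\kappa^{-2})$, slightly sharper than your $\mathcal{O}(\kappa^{-1})$). For the difference, the paper likewise derives the homogeneous linear equation $(1+\a)\partial_z\Delin+(\omega+\b)\partial_\theta\Delin=0$, rescales, and then cites~\cite{B06} for the straightening that yields $\gin\in\mathcal{Y}_{1,\sigma}$ and the representation $\sum_{k<0}\chi^{[k]}e^{ik(\omega z-\theta+\gin)}$; your sketch of how that straightening is actually carried out is consistent with that reference.

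One quantitative correction dissolves the ``main obstacle'' you flag: the coefficients decay like $|z|^{-2}$, not $|z|^{-1}$. The dominant contribution to $\a$ is $-2z^4\partial_z K_0^*=\mathcal{O}(z^4\cdot z^{-6})=\mathcal{O}(|z|^{-2})$, and analogously for $\b$ (the $\tilde H_1$ terms decay faster still); this is Lemma~\ref{lem:abdif} in the paper. With $|z|^{-1}$ the source term $A\omega-B$ in your transport equation for $\gin$ would only be $\mathcal{O}(|z|^{-1})$, and integrating along a characteristic to $-i\infty$ would diverge logarithmically, which is precisely the ``marginal'' situation you worry about. With the correct $\mathcal{O}(|z|^{-2})$, integration yields $\gin=\mathcal{O}(|z|^{-1})$ directly and uniformly up to the apex $z=-i\kappa$, so there is no delicate balance to strike; the uniform bound on $\chi^{[k]}$ then follows by reading off Fourier coefficients of $\Delin$ at any fixed $z$ in $E^{\mathrm{in}}_\kappa$.
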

The proof of this result is deferred to Section~\ref{sec:inner:proof}.

\subsection{The matching errors}\label{sec:matching}

In this section, we verify that the functions $K^{\bu}_0$ and $K^{\bs}_0$ defined by Theorem~\ref{thm:inner}, approximate the functions $T_1^{\bu}$ and $T_1^{\bs}$ in Theorem~\ref{thm:outer} in complex subdomains of $D^{\mathrm{in}, \bu}_{ \kappa}$ and $D^{\mathrm{in}, \bs}_{ \kappa}$, respectively (see Figure~\ref{fig:mhc_domain}). 

We recall that $K^{*}_0$ are analytic functions defined in the inner domains $D_{  \kappa}^{\mathrm{in},*}$, with $*=\bu, \bs$ and $T_1^*$ are defined in the outer domains $D^{\mathrm{out},*}_{ \kappa,\varsigma} \subset D^{\mathrm{out},*}_{\kappa}$. 
One can see that, by rewriting the domains~\eqref{DomainInner} in terms of the $u$ variable using the change of coordinate~\eqref{uz}, then there exists  a suitable $\varsigma>0$ such that, for all $\vartheta_0$, $\hat \vartheta_0 \in  \left(0, {\pi \over 2}\right)$, we have the inclusion $D^{\mathrm{out}, *}_{\kappa,  \varsigma} \subset D^{\mathrm{in}, *}_{\kappa/2} $ for $*=\bu, \bs$. 

Take $\vartheta_0\in \left (0,\frac{\pi}{2}\right )$ and $\kappa>0$.  We fix $\vartheta_1$ and $\vartheta_2$ in such a way that
\begin{equation}\label{choicevarthetamatchin}
0 < \vartheta_1 < \vartheta_0 <\vartheta_2 < {\pi \over 2}
\end{equation}
and a parameter $\gamma \in (0,1)$.  For $j =1,2$, we fix $u_j \in \C$ in order to satisfy
\begin{equation}\label{defu1u2}
\begin{aligned}
& \mathrm{Im} u_j = - \tan \vartheta_j \mathrm{Re} u_j + {\pi \over 2} - \varepsilon \kappa, \quad \mathrm{Re} u_1 <0, \quad  \mathrm{Re} u_2 >0\\
&\left| u_j - i \left({\pi \over 2} - \varepsilon \kappa\right)\right| = \varepsilon^\gamma.
\end{aligned}
\end{equation}
We then introduce the domain (see Figure~\ref{fig:mhc_domain})
\begin{equation}
\label{Dmhcu}
\begin{aligned}
D^{\mathrm{mch}, \bu}_{\kappa} =& \Big\{ u \in \C : \mathrm{Im} u \le - \tan \vartheta_1 \mathrm{Re} u + {\pi \over 2} - \varepsilon \kappa, \\
& \mathrm{Im} u \le - \tan \vartheta_2 \mathrm{Re} u + {\pi \over 2} - \varepsilon \kappa \\
& \mathrm{Im} u \ge \mathrm{Im}u_1 - \tan \left({\vartheta_1 + \vartheta_2 \over 2}\right) (\mathrm{Re}u - \mathrm{Re} u_1) \Big\}. 
\end{aligned}
\end{equation}
\begin{figure}[t]
\centering
\begin{overpic}[scale=0.055]{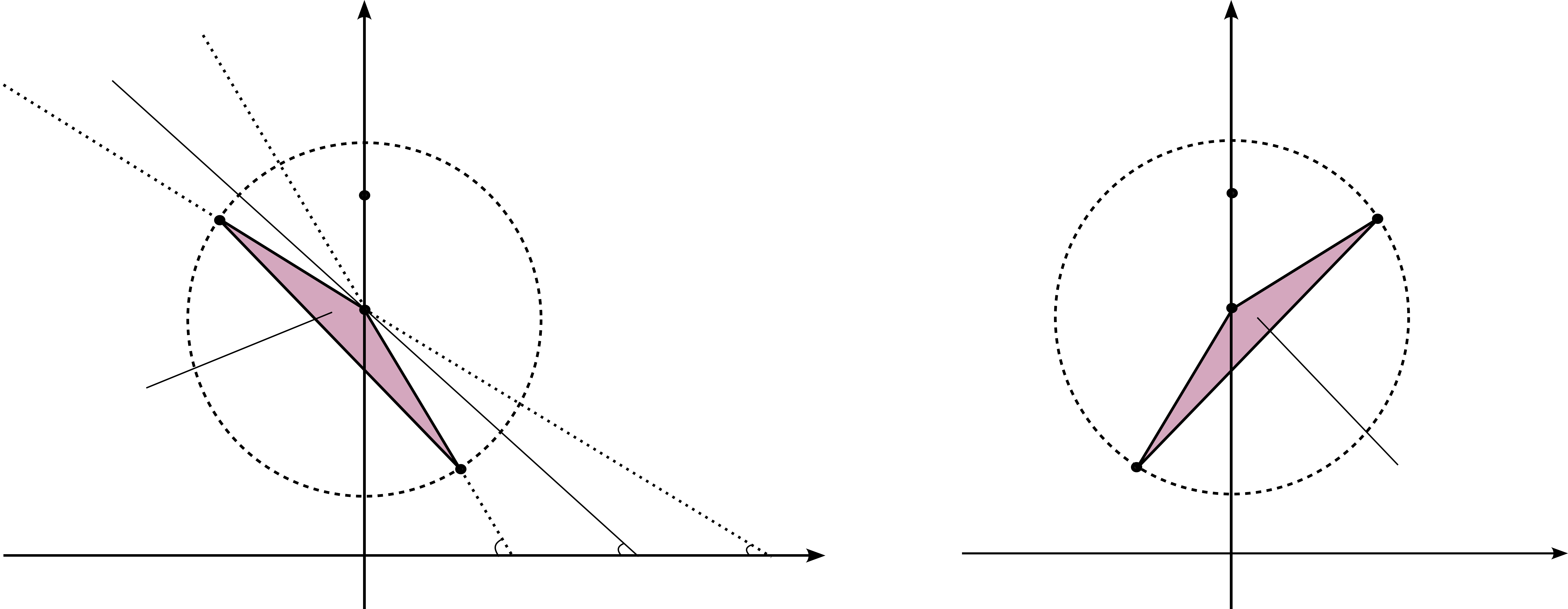}
    \put(23.6,40){$\mathrm{Im(u)}$}
    \put(78.5,40){$\mathrm{Im(u)}$}
    \put(99,0){$\mathrm{Re(u)}$}
    \put(10,23.7){$u_1$}
    \put(25,9){$u_2$}
    \put(28,4.3){$\vartheta_2$}
    \put(35,4.3){$\vartheta_0$}
    \put(42,4.3){$\vartheta_1$}
    \put(24,25.5){${\pi \over 2}$}
    \put(79.5,25.7){${\pi \over 2}$}
    \put(24,20){${\pi \over 2} - \kappa \varepsilon$}
    \put(68.5,19){${\pi \over 2} - \kappa \varepsilon$}
    \put(88,25.5){$\bar u_1$}
    \put(70,7){$\bar u_2$}
    \put(3,11){$D^{\mathrm{mch}, \bu}_{\kappa}$}
    \put(89,6){$D^{\mathrm{mch}, \bs}_{\kappa}$}
\end{overpic}
\caption{The domains $D^{\mathrm{mch}, *}_{\kappa}$ with $*=\bu, \bs$.}
\label{fig:mhc_domain}
\end{figure}

We point out that $D^{\mathrm{mch}, u}_{\kappa}$ is a triangular domain, having its vertices at $u_1$, $u_2$ and $i{\pi \over 2} - \varepsilon \kappa$.  Moreover, we define
\begin{equation}
\label{Dmhcs}
D^{\mathrm{mch}, \bs}_{\kappa}  = \{ u \in \C : -\bar u \in D^{\mathrm{mch}, \bu}_{\kappa} \}.
\end{equation}
With this choice of $\vartheta_1,\vartheta_2$ with respect to $\vartheta_0$ (see~\eqref{choicevarthetamatchin}), it is straightforward to verify that, for $\bar \kappa = {\kappa \over 2}$ and for a suitable $\hat \vartheta_0$, one has
\begin{equation}\label{inclusion_domains}
D^{\mathrm{mch}, *}_{\kappa} \subset D^{\mathrm{out}, *}_{\kappa, \varsigma} \subset D^{\mathrm{in}, *}_{\bar \kappa}  \qquad 
\mbox{ with $*=\bu,\bs$}.
\end{equation}

%
%
\begin{theorem}\label{thm:matching}
Fix $\vartheta_0\in \left (0,\frac{\pi}{2} \right )$, $\vartheta_1,\vartheta_2$ as in~\eqref
{choicevarthetamatchin}, $\gamma\in (0,1)$ and $\sigma>0$. 
Then there exists $\kappa_{\mathrm{mch}}>0$, $\varepsilon_{\mathrm{mch}}>0$ and $\Cmch_{\mathrm{mch}}$ such that for $\kappa \geq \kappa_{\mathrm{mch}}$ and $0<\varepsilon <\varepsilon_{\mathrm{mch}}$,
\[
\sup_{(u,\theta) \in {D}^{\mathrm{mch},*}_\kappa \times \mathbb{T}_\sigma} \left |\left (u-\frac{i\pi}{2} \right )^4 \left (T_1^*(u,\theta) - \frac{1}{\varepsilon^3} K^*_0\left (\frac{u- i\frac{\pi}{2}}{\varepsilon}  ,\theta \right ) \right  ) \right | \leq \Cmch_{\mathrm{mch}} \varepsilon^{2-\gamma}, 
\]
with $*=\bu,\bs$, $T_1^*$ defined by Theorem~\ref{thm:outer} and $K^*_0$ are the solutions of the inner equation provided by Theorem~\ref{thm:inner}. 
\end{theorem}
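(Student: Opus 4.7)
I will work in inner variables $z = (u-i\pi/2)/\varepsilon$ and introduce the matching error
\[
\Delta^*(z,\theta) := \varepsilon^3 T_1^*(\varepsilon z + i\pi/2,\theta) - K_0^*(z,\theta),
\]
which is defined on the $z$-image of $D^{\mathrm{mch},*}_\kappa$ by~\eqref{inclusion_domains} together with Theorems~\ref{thm:outer} and~\ref{thm:inner}. Subtracting equation~\eqref{defLin} (pulled back via~\eqref{phieta12}) from~\eqref{InnerEq}, one sees that $\Delta^*$ satisfies
\[
\Lin \Delta^* = \mathcal{A}^*[\Delta^*] + \mathcal{R}^*,
\]
with $\mathcal{R}^* := \varepsilon^4\Fout(\varepsilon^{-3}K_0^*)-\Fin(K_0^*)$ the defect between the full and the inner problem at the reference $K_0^*$, and $\mathcal{A}^*[\Delta] := \varepsilon^4\Fout(\varepsilon^{-3}(K_0^*+\Delta))-\varepsilon^4\Fout(\varepsilon^{-3}K_0^*)$ a Lipschitz remainder that vanishes at $\Delta=0$. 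Since $(u-i\pi/2)^4(T_1^*-\varepsilon^{-3}K_0^*)=\varepsilon\,z^4\Delta^*$, the theorem is equivalent to the bound $|z^4\Delta^*|\le C\varepsilon^{1-\gamma}$ on the matching domain.

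The next step is to estimate $\mathcal{R}^*$ and the boundary data. By construction, $\Fin$ is the $\varepsilon\to 0$ limit of $\varepsilon^4\Fout(\varepsilon^{-3}\cdot)$ after the rescalings~\eqref{hom:poles} and~\eqref{def:gamma:delta:hat:inner} are inserted, so every summand of $\mathcal{R}^*$ carries an explicit factor of $\varepsilon$ multiplied by a positive power of $z$: for instance $\varepsilon^{-4}R^{-2}(u(z))+z^4 = \mathcal{O}(\varepsilon z^5)$, $\varepsilon^{-2}r^{-2}(u(z))+z^2 = \mathcal{O}(\varepsilon z^3)$, $-\beta\varepsilon^2 r^2(u(z))-\beta/z^2 = \mathcal{O}(\varepsilon/z)$, together with the corresponding Taylor expansions of $\tilde H_1$ both in its arguments and in $\varepsilon$. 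Combined with the inner bounds $|K_0^*|\le C/|z|^5$, $|\nabla K_0^*|\le C/|z|^6$ of Theorem~\ref{thm:inner}, a term-by-term count produces $|\mathcal{R}^*(z,\theta)|\le C\varepsilon/|z|^5$ on $D^{\mathrm{mch},*}_\kappa$, the dominant contribution coming from the $\tilde H_1$ piece. On the "far" edge of the triangle $D^{\mathrm{mch},*}_\kappa$, one has $|u-i\pi/2|\ge c\varepsilon^\gamma$ and hence $|z|\ge c\varepsilon^{\gamma-1}$; combining $|T_1^*|\le C\varepsilon^2/|u^2+\pi^2/4|^5$ from Theorem~\ref{thm:outer} (with $|u^2+\pi^2/4|\sim\pi|u-i\pi/2|$) and $|K_0^*|\le C/|z|^5$ yields $|\Delta^*|\le C/|z|^5$, so $|z^4\Delta^*|\le C/|z|\le C\varepsilon^{1-\gamma}$ there.

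Finally, I set up a fixed-point argument. Expanding in Fourier modes in $\theta$, on the $k$-th harmonic $\Lin$ acts as $\partial_z+ik\omega$, with Green kernel $e^{-ik\omega(z-w)}$. I build a right inverse $\mathcal{G}^*$ by integrating each harmonic along a complex contour lying inside $D^{\mathrm{mch},*}_\kappa$, from a point on the far edge down to the evaluation point $z$; the orientation is chosen mode by mode so that $|e^{-ik\omega(z-w)}|\le 1$ along the contour, which is possible because the triangular shape of $D^{\mathrm{mch},*}_\kappa$ provides starting segments on both sides of $z$ in the imaginary direction. Any such contour has length $\mathcal{O}(|z|+\varepsilon^{\gamma-1})$, so $\mathcal{G}^*$ gains one power of $|z|$, mapping a source of size $\varepsilon/|z|^5$ into a function of size $\varepsilon/|z|^4$. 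Working in the Banach space with weighted norm $\|f\|:=\sup_{D^{\mathrm{mch},*}_\kappa\times\T_\sigma}|z^4 f|$, the integral equation
\[
\Delta^* = \Delta^*_{\mathrm{bd}} + \mathcal{G}^*\bigl(\mathcal{A}^*[\Delta^*]+\mathcal{R}^*\bigr)
\]
has boundary contribution of size $\mathcal{O}(\varepsilon^{1-\gamma})$ (by the previous step), source contribution of size $\mathcal{O}(\varepsilon)$, and Lipschitz constant $\mathcal{O}(\kappa^{-1}+\varepsilon)$ for $\mathcal{G}^*\circ\mathcal{A}^*$ after using Cauchy estimates on a slightly shrunken subdomain to control the gradient of $\Delta^*$ appearing inside $\mathcal{A}^*$. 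For $\kappa$ large enough this is a contraction; its unique fixed point obeys $\|\Delta^*\|\le C\varepsilon^{1-\gamma}$, and by uniqueness it coincides with the $\Delta^*$ defined at the start, giving the claimed bound.

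\textbf{Main obstacle.} The delicate step is the construction of $\mathcal{G}^*$: the integration contour must remain inside the small triangular matching domain \emph{and} produce a bounded exponential factor $|e^{-ik\omega(z-w)}|$ for every Fourier mode $k$, which forces mode-dependent orientations that are only compatible with the geometry because the matching triangle has far-boundary segments on both imaginary sides of any evaluation point. Closing the loop with Cauchy estimates for $\nabla\Delta^*$ on a subdomain whose diameter shrinks with $\varepsilon$ is the standard but technical part that pins down the exponent $\varepsilon^{1-\gamma}$.
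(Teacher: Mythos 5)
Your plan is correct and follows the same strategy as the paper's proof: pass to inner variables, set $K_1^*=\varepsilon^3 T_1^*(\varepsilon z+i\pi/2,\theta)-K_0^*(z,\theta)$, derive a linear-plus-forcing equation for this error, invert $\Lin$ on each Fourier mode with base points $z_1,z_2$ on the far edge of the matching triangle chosen so that the kernel $e^{i\omega k(\tau-z)}$ stays bounded, and close in a weighted space to obtain $|z^4K_1^*|\lesssim\varepsilon^{1-\gamma}$ (hence the stated $\varepsilon^{2-\gamma}$ after undoing the $\varepsilon^3$ and $(u-i\pi/2)^4=\varepsilon^4 z^4$ rescalings). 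Two deviations are worth flagging. You put the defect $\mathcal{R}^*$ at $K_0^*$ and the remainder $\mathcal{A}^*$ through the $\varepsilon$-dependent outer functional $\varepsilon^4\Fout(\varepsilon^{-3}\cdot)$; the paper instead evaluates the defect $\mathcal{B}=\varepsilon^4\Fout(\varepsilon^{-3}K^\bu)-\Fin(K^\bu)$ at the outer function and expresses the remainder through the $\varepsilon$-independent inner Jacobian $\int_0^1 D\mathcal{P}^{\mathrm{mch}}(\cdot)\,d\lambda$, so that $K_1^\bu$ solves a genuinely \emph{linear} equation $\Lin K_1^\bu=\mathcal{A}\nabla K_1^\bu+\mathcal{B}$; the final step is then an $(\mathrm{Id}-\widetilde{\mathcal{G}})^{-1}$ inversion rather than a nonlinear fixed point. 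Second, for the gradient appearing inside the remainder, the paper works in the norm $\lfloor\cdot\rfloor^\bu_{n,\sigma}$ that bundles $\phi$ with $\partial_z\phi,\partial_\theta\phi$, and the right inverse $\mathcal{G}$ naturally gains one derivative (Lemma~\ref{lem:propGAmatching}); this sidesteps entirely the domain-shrinking Cauchy-estimate loop that you yourself identify as the delicate step, and is why the paper's contraction closes on a single fixed matching domain.
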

The proof of this result is deferred to Section~\ref{sec:proof:matching}.

\subsection{Measuring the distance between the invariant manifolds} \label{thm:distance}

This section is devoted to completing the proof of Theorem~\ref{thm:difference_between_manifolds}.
We first fix the constants that define the domains {$\T_\sigma$}, $D^{\mathrm{out},*}_{\kappa}$, $D^{\mathrm{ext}}_{\kappa}$, $D^{\mathrm{in},*}_{\kappa}$, and $D^{\mathrm{mch},*}_{\kappa}$, with $* =\bu,\bs$, except for $\kappa$. 
Throughout, we set
$$
\kappa = s\,|\log \varepsilon|
$$
with $s$ to be determined later.
Notice that, for a given $s>0$ and taking $\varepsilon$ small enough, $\kappa \geq \max\{\kappa_{\mathrm{out}}, \kappa_{\mathrm{ext}}, \kappa_{\mathrm{in}}, \kappa_{\mathrm{mch}}\}$. Therefore Theorems~\ref{thm:outer}, \ref{thm:ext}, \ref{thm:inner} and \ref{thm:matching} hold true with this choice of $\kappa$. 

To shorten the notation, we introduce
\begin{equation}\label{def:E:final}
\E = D_{s |\log \varepsilon|}^{\mathrm{ext}},
\end{equation}
%
for a given $s>0$, the domain where $T^{\bu}$ and $T^{\bs}$ are defined (we refer to~\eqref{def:domains_ext} for the definition of $D_{\kappa}^{\mathrm{ext}}$). Observe that $\E$ contains a real segment in the $u$ variable and an open set which is $s \varepsilon |\log \varepsilon| $- close to $\pm i \frac{\pi}{2}$. To simplify the notation and avoid a flood of constants, in the rest of this section, we write $u \lesssim v$ to indicate that there exists a constant $C > 0$ such that  $u \le C v$.


For $(u,\theta)\in \E \times \mathbb{T}_\sigma$, we define the difference
$$
\Delta (u,\theta) = T_1^{\bu}(u,\theta) - T_1^{\bs}(u,\theta) 
$$
and we notice that, by Theorems~\ref{thm:outer} and~\ref{thm:ext}, $T_1^{\bu}$ and $ T_1^{\bs}$ are solutions of the same equation~\eqref{HJpolarcoordinatesT1}, namely they satisfy $\Lout T^{\bu,\bs} = \Fout(T^{\bu,\bs})$ with $\Fout$ defined in~\eqref{def:operator_F}. Therefore 
$\Delta$ is a solution of the linear homogeneous equation
\begin{equation} \label{eq:diff:out}
\Lout \Delta = A \nabla \Delta  
\end{equation}
where, $\nabla \Delta = \big (\partial_u \Delta, \partial_\theta \Delta)^{\top}$ and, denoting $T_\lambda = T_1^{\bs} + \lambda (T_1^{\bu}- T_1^{\bs})$, $A$ is the row vector defined by
\begin{equation}\label{def:A:diff}
    \begin{aligned}
A(u,\theta) =&  (\mathbf{a}(u,\theta),\mathbf{b}(u,\theta)) \\ =&\left (\frac{1}{2R^2} \big (\partial_u T_1^{\bu} + \partial_u T_1^{\bs} \big ) , \left [\frac{1}{2r^2} + \alpha \varepsilon^2 \right ]\big (\partial_\theta T_1^{\bu} + \partial_\theta 
T_1^{\bs} \big ) -\beta \varepsilon r^2 
 \right ) \\&
+{\varepsilon^{-2}} \int_{0}^1 {\partial_x} \tilde{H}_1 \left(\varepsilon^2\left(\delta_0+ \big (D\gamma_0 \big )^{-\top} \nabla T_\lambda\right), \varepsilon \gamma_0; \varepsilon\right) \big (D\gamma_0 \big )^{-\top}\, d\lambda.
    \end{aligned}
\end{equation}
{Thanks to the latter, one can rewrite equation~\eqref{eq:diff:out} as
\begin{equation} \label{eq:diff:out:ab}
    \left(1 + \mathbf{a}\right)\partial_u \Delta + \left({\omega \over \varepsilon} + \mathbf{b}\right) \partial_\theta \Delta = 0.
\end{equation}

The following result characterizes $\Delta$ as a one-variable periodic function by means of a change of variable. 
\begin{proposition}\label{prop:C}
Let $s>0$ and consider $E$ defined by~\eqref{def:E:final}. Then there exist $\varepsilon_{4}, c_{4}>0$ and $\mathcal{C}:\E \times \T_\sigma \to \C$, a {real} analytic function, such that, for $0<\varepsilon < \varepsilon_{4}$, 
$$
\sup_{(u,\theta) \in \E \times \T_\sigma} \left | \left (u^2 + \frac{\pi^2}{4}\right )\mathcal{C}(u,\theta) \right | \leq c_{4}   {\varepsilon^2}
$$
and 
$$
\Delta (u,\theta) = \sum_{k\in \mathbb{Z}} \Upsilon^{[k]} (\varepsilon) e^{{ik\left ( {\omega \over \varepsilon}\left(u + \mathcal{C}(u, \theta)\right) -\theta\right)}}
$$
with $\Upsilon^{[k]} (\varepsilon) \in \mathbb{C}$ satisfying that $\Upsilon^{[-k]} (\varepsilon)= \overline{\Upsilon^{[k]}} (\varepsilon)$.

As a consequence {$(w,\theta) = \Xi (u,\theta):=(u+  \mathcal{C}(u,\theta),\theta)$} defines a change of variables onto $\E$ and $\Xi(\E)$. 
\end{proposition}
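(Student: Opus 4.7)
The strategy is to find $\mathcal{C}$ as the function that straightens the characteristics of~\eqref{eq:diff:out:ab}, so that in the new coordinate $w=u+\mathcal{C}(u,\theta)$ the characteristics become the straight lines $\omega w/\varepsilon - \theta = \mathrm{const}$. Imposing that $\Phi(u,\theta):=\omega(u+\mathcal{C}(u,\theta))/\varepsilon - \theta$ be a first integral of the vector field $(1+\mathbf{a})\partial_u + (\omega/\varepsilon+\mathbf{b})\partial_\theta$ and rearranging yields the quasilinear equation
\begin{equation}\label{eq:Csolve}
\Lout \mathcal{C} \;=\; -\mathbf{a} + \frac{\varepsilon}{\omega}\mathbf{b} \;-\; \mathbf{a}\,\partial_u \mathcal{C} \;-\; \mathbf{b}\,\partial_\theta \mathcal{C}.
\end{equation}
Combining the estimates of Theorem~\ref{thm:ext} on $\nabla T_1^{\bu,\bs}$ with the explicit structure~\eqref{def:A:diff} of $\mathbf{a},\mathbf{b}$, and noting that the factors $1/R^2$ and $1/r^2$ vanish at $\pm i\pi/2$ at precisely the order that offsets the weights $(u^2+\pi^2/4)^{-k}$ appearing in those bounds, the source $-\mathbf{a}+(\varepsilon/\omega)\mathbf{b}$ is controlled in $\E\times\T_\sigma$ by $\varepsilon^2/(u^2+\pi^2/4)$, which is exactly the size expected for $\mathcal{C}$.

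I would solve~\eqref{eq:Csolve} by a Banach fixed-point argument in the weighted space $\mathcal{X}=\{\mathcal{C}\in C^\omega(\E\times\T_\sigma,\C):\|(u^2+\pi^2/4)\mathcal{C}\|_\infty<\infty\}$. The key ingredient is a right inverse of $\Lout$: writing $\mathcal{C}=\sum_{k\in\Z} \mathcal{C}^{[k]}(u)e^{ik\theta}$ and $G=\sum_k G^{[k]}(u)e^{ik\theta}$ reduces~\eqref{eq:Csolve} mode by mode to $(\mathcal{C}^{[k]})'+ ik\omega\varepsilon^{-1}\mathcal{C}^{[k]}=G^{[k]}$, which I would integrate along a path in $\E$ chosen so that the factor $e^{-ik\omega u/\varepsilon}$ decays along the path (towards the upper part of $\E$ for $k>0$, towards the lower part for $k<0$, and from a fixed base point for $k=0$); here the non-simply-connected shape of $\E$ must be accommodated by splitting the path when necessary. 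Analyticity in $\T_\sigma$ gives geometric decay of $G^{[k]}$ in $|k|$, so summation produces an analytic $\Lout^{-1}G$ on a slightly smaller strip $\T_{\sigma'}$ with the estimate $\|(u^2+\pi^2/4)\Lout^{-1}G\|_\infty\lesssim \|(u^2+\pi^2/4)G\|_\infty$. The quadratic terms $\mathbf{a}\partial_u\mathcal{C}+\mathbf{b}\partial_\theta\mathcal{C}$ are then controlled via Cauchy estimates for $\nabla \mathcal{C}$ and, for $\varepsilon$ small, make the operator a strict contraction on the ball of radius $c_4\varepsilon^2$ in $\mathcal{X}$.

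Once $\mathcal{C}$ is constructed, the Fourier expansion of $\Delta$ is a direct consequence: by~\eqref{eq:Csolve}, $\Phi$ is a first integral of the characteristics of~\eqref{eq:diff:out:ab}, so $\Delta(u,\theta)=F(\Phi(u,\theta))$ for some analytic $F$. Periodicity of $\Delta$ in $\theta$ with period $2\pi$ forces $F$ itself to be $2\pi$-periodic, hence $F(s)=\sum_{k\in\Z}\Upsilon^{[k]}(\varepsilon)e^{iks}$, yielding the stated formula. Reality of $T_1^{\bu,\bs}$ on real arguments (Theorem~\ref{thm:outer}) propagates to $\mathbf{a},\mathbf{b},\mathcal{C}$ and $F$ being real analytic, which gives $\Upsilon^{[-k]}=\overline{\Upsilon^{[k]}}$. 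Finally, $\|(u^2+\pi^2/4)\mathcal{C}\|_\infty\lesssim\varepsilon^2$ together with Cauchy's estimate yields $|\partial_u\mathcal{C}|\ll 1$ on $\E$ (after shrinking the domain negligibly), so the Jacobian $1+\partial_u\mathcal{C}$ of $\Xi$ does not vanish and $\Xi$ is a biholomorphism of $\E$ onto $\Xi(\E)$.

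The main obstacle is the construction of the right inverse of $\Lout$ on the non-simply-connected domain $\E$: one must select integration paths that remain inside $\E$ and behave uniformly for all Fourier modes while preserving the weight $(u^2+\pi^2/4)^{-1}$. The delicate interplay between the smallness parameter $\kappa=s|\log\varepsilon|$ that defines $\E$, the oscillatory factors $e^{-ik\omega u/\varepsilon}$ (whose sizes near the top of $\E$ are controlled by $e^{-|k|\omega(\pi/2-s\varepsilon|\log\varepsilon|)/\varepsilon}$), and the geometric decay in $|k|$ inherited from $\T_\sigma$ is the point that fixes the admissible loss in the width of the strip and propagates through the fixed-point iteration.
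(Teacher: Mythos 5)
Your strategy---set $\Phi(u,\theta)=\frac{\omega}{\varepsilon}(u+\mathcal{C}(u,\theta))-\theta$, impose that $\Phi$ is a first integral of the characteristic field of~\eqref{eq:diff:out:ab}, invert $\Lout$ Fourier mode by mode with base points adapted to the sign of $k$, and close a fixed point in a weighted Banach space---is essentially the paper's. The paper first divides through by $1+\varepsilon\omega^{-1}\mathbf{b}$ and rescales $\theta$, so that the unknown appears on the right-hand side only through $\partial_u\mathcal{C}$, but this is cosmetic.

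There is a genuine gap in your weight accounting. You assert that the source $-\mathbf{a}+\frac{\varepsilon}{\omega}\mathbf{b}$ is bounded by $\varepsilon^2/(u^2+\pi^2/4)$, arguing that the singular factors $1/R^2$ and $1/r^2$ exactly offset the weights coming from $\nabla T_1^{\bu,\bs}$. But $1/R^2\sim(u^2+\pi^2/4)^4$ near $\pm i\pi/2$ while Theorems~\ref{thm:outer} and \ref{thm:ext} carry a weight-$6$ bound on $\partial_u T_1^*$, so $\mathbf{a}=O\bigl(\varepsilon^2/(u^2+\pi^2/4)^{2}\bigr)$, and the term $\beta\varepsilon r^2$ in $\mathbf{b}$ has the same weight: the source is a weight-$2$ object, not weight-$1$. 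Near the singularities, where $|u^2+\pi^2/4|\sim\kappa\varepsilon$ with $\kappa=s|\log\varepsilon|$, the true source has size $\sim 1/\kappa^2$, which is \emph{larger} than your claimed $\varepsilon/\kappa$; the claim is false. Feeding the true weight-$2$ source into your stated weight-\emph{preserving} inverse estimate only yields $\sup|(u^2+\pi^2/4)\mathcal{C}|\lesssim\varepsilon/\kappa$, which is not enough: Lemma~\ref{lem:bound:tildeDelta} needs $\varepsilon^{-1}|\mathcal{C}|\lesssim 1/(s|\log\varepsilon|)$ on $\E^{\mathrm{mch}}$, which requires the full $\varepsilon^2$ estimate. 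The missing ingredient is that $\mathcal{G}_\varepsilon$ \emph{gains} one power of the weight, $\lfloor\mathcal{G}_\varepsilon(g)\rfloor_{n-1,\sigma}\lesssim |g|_{n,\sigma}$ (the paper cites this from~\cite{B12}): applied to the weight-$2$ source of size $\varepsilon^2$, it gives a weight-$1$ solution of size $\varepsilon^2$. You should redo the source bound at weight $2$ and replace your weight-preserving inverse estimate with this gaining one.

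Two further points: you propose controlling the quadratic terms via Cauchy estimates for $\nabla\mathcal{C}$, but those require a domain (or strip) shrinkage that is awkward to iterate; the paper instead works in the norm $\lfloor\cdot\rfloor_{n,\sigma}$, which already includes $\partial_u\mathcal{C}$ at weight $n+1$, so the contraction closes with no domain loss and no Cauchy estimates. Also, $\E$ is simply connected---its two lobes join near the real axis on the right---so the path-splitting you worry about is not needed.
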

The proof of this proposition is delayed to Section~\ref{sec:proof:difference}. 
%


We follow the strategy proposed in~\cite{B12} by computing a first order of $\Upsilon^{[k]}(\varepsilon)$ by means of the coefficients $\chi^{[k]}$ related with the difference $K^{\bu}_0-K^{\bs}_0$ of the solutions of the inner equation (see Theorem~\ref{thm:inner}). To this end, we consider
\begin{equation}\label{def:Delta0+}
\begin{aligned}
\Delta_0^+(u,\theta) & = \sum_{k<0} \Upsilon^{[k]}_0(\varepsilon) e^{ik\left ( \frac{\omega}{\varepsilon} (u+\mathcal{C}(u,\theta))  -\theta\right )}\\ & :=\sum_{k<0} \frac{1}{\varepsilon^3 } \chi^{[k]} e^{-\frac{|k|\omega \pi}{2\varepsilon}} e^{ik\left (\frac{\omega}{\varepsilon}( u+ \mathcal{C}(u,\theta)) -\theta \right )}
, \\
\Delta_0^{-}(u,\theta) & = \sum_{k>0} \Upsilon^{[k]}_0(\varepsilon) e^{ik\left ( \frac{\omega}{\varepsilon}( u+ \mathcal{C}(u,\theta)) -\theta \right )}
\\& :=\sum_{k>0} \frac{1}{\varepsilon^3 } \overline{\chi^{[-k]}} e^{-\frac{|k|\omega \pi}{2\varepsilon}} e^{ik\left (\frac{\omega}{\varepsilon}( u+ \mathcal{C}(u,\theta)) -\theta \right )}
\end{aligned}
\end{equation}
and the function
\begin{equation}\label{def:Delta0}
\Delta_0 = \Delta_0^+ + \Delta_0^-.
\end{equation}
%

To justify the definition of $\Upsilon_0^{[k]}$, we recall the inner change of variables in~\eqref{phieta12}, so that $T_1^* (u,\theta) = \varepsilon^{-3} K^* \left (\varepsilon^{-1} \left (u-i\frac{\pi}{2} \right ), \theta\right )$. 
With this choice of $\Delta_0^-$, 
\[
\Delta_0(\bar{u},\theta) = \overline{\Delta_0(u,\theta)}, \qquad \mbox{for $\theta \in \T$.}
\]
We control $\widetilde{\Delta}:=\Delta - \Delta_0 $ by bounding $| \Upsilon^{[k]} (\varepsilon) - \Upsilon^{[k]}_0 (\varepsilon)|$. The key observation is as follows. On the one hand, considering 
$$
\Theta(U,\theta) = \sum_{k\neq 0} \big (\Upsilon^{[k]}(\varepsilon)- \Upsilon^{[k]}_0(\varepsilon)\big ) e^{ik\left (\frac{\omega}{\varepsilon} U - \theta\right )} + \Upsilon^{[0]}(\varepsilon)
$$
and the change of variables $(U,\theta) = \Xi (u,\theta):=(u+ \mathcal{C}(u,\theta),\theta)$ given in Proposition~\ref{prop:C}, we have that 
\begin{equation}\label{def:rel:Theta:Xi:Delta}
\Theta (U,\theta) = \widetilde{\Delta}  \circ \Xi^{-1} (U,\theta).
\end{equation}

On the other hand, since $\Theta$ is $2\pi-$periodic with respect to $\theta$, we have the relation
\begin{equation}\label{rel:Theta:Upsilon}
\Upsilon^{[k]}(\varepsilon)- \Upsilon^{[k]}_0(\varepsilon)=  \Theta^{[-k]} (U) e^{-ik \omega \frac{U}{\varepsilon}},
\end{equation}
where $\Theta^{[k]}(U)$ is the $k-$Fourier coefficient of $\Theta$. Then,  since $\Upsilon^{[k]}(\varepsilon)$ is independent on $U$, we can evaluated~\eqref{rel:Theta:Upsilon} at  
$$
U_* = u_* + \mathcal{C}(u_*,0), \qquad \text{with}\; u_*=i\frac{\pi}{2} -is\varepsilon |\log \varepsilon| \in \E.
$$ 
{This choice is seen to be optimal, as it leads to the most accurate estimates.}
Using Proposition~\ref{prop:C} to bound $|\mathcal{C}(u_*,0)|$, we obtain, for $k< 0$
$$
|\Upsilon^{[k]}(\varepsilon)- \Upsilon^{[k]}_0(\varepsilon) |\lesssim \big |\Theta^{[-k]} (U_*)\big | e^{- \frac{|k|\omega \pi}{2\varepsilon} + |k| \omega s |\log \varepsilon| } 
e^{ \frac{c_{4}|k| \omega}{s |\log \varepsilon|}}.
$$

As a result, one can obtain an exponentially small bound for $\big |\Upsilon^{[k]} (\varepsilon)- \Upsilon^{[k]}_0 (\varepsilon) \big |$ by bounding $\big |\Theta^{[-k]} (U_* ) \big |$ when $k<0$. The relation~\eqref{def:rel:Theta:Xi:Delta} clearly implies that 
\begin{equation}\label{bound:Thetak}
\big |\Theta^{[-k]} (U_* )\big | \lesssim \sup_{\theta \in \T} \big |\Theta (U_*  ,\theta) \big | =
\sup_{\theta \in \T} \big | \widetilde{\Delta} (\Xi^{-1} (U_*, \theta)) \big | .
\end{equation}
Since $\Xi$ satisfies $|\Xi (u,\theta) - (u,\theta)|\lesssim  \varepsilon s^{-1}|\log \varepsilon|^{-1}$, one easily deduces that
\begin{equation}\label{defEmach}
\big |\Theta^{[-k]} (U_* )\big |  \lesssim \sup_{(u,\theta) \in \E^{\mathrm{mch}} \times \T} \big |\widetilde{\Delta} (u,\theta)\big |, \qquad \text{with}\; \E^{\mathrm{mch}} = \E\cap D^{\mathrm{mch}, \bu}_{s|\log \varepsilon|} \cap D^{\mathrm{mch},\bs}_{s|\log \varepsilon|}.
\end{equation}
We recall that $\E, D^{\mathrm{mch},\bu}_{s|\log \varepsilon|},D^{\mathrm{mch},\bs}_{s|\log \varepsilon|}$ were defined in~\eqref{def:E:final}, \eqref{Dmhcu} and~\eqref{Dmhcs} respectively. 

Now, we use Theorems~\ref{thm:inner} and~\ref{thm:matching} to bound $|\widetilde{\Delta}(u,\theta)|$ for $u\in \E^{\mathrm{mch}}$ and $\theta \in \T$. More concretely, we prove the following result:
\begin{lemma} \label{lem:bound:tildeDelta}
Let $s>0$ be such that $\omega s\leq 1-\gamma$. There exist $\varepsilon_5,c_5$ such that, for $0<\varepsilon < \varepsilon_5$
$$
\sup_{(u,\theta) \in \E^{\mathrm{mch}} \times \T} \big |\widetilde{\Delta} (u,\theta)\big | \leq c_5\frac{\varepsilon^{ {\omega}s}}{ \varepsilon^3 |\log \varepsilon|},\qquad \widetilde{\Delta} = \Delta- \Delta_0.
$$
\end{lemma}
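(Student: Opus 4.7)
The plan is to bound $\widetilde{\Delta} = \Delta - \Delta_0$ pointwise on $\E^{\mathrm{mch}}$ by interpolating through the ``inner difference''
\[
\Delta^{\mathrm{in}}(u,\theta) := \frac{1}{\varepsilon^3}\,\bigl(K^{\bu}_0 - K^{\bs}_0\bigr)\!\left(\frac{u - i\pi/2}{\varepsilon},\theta\right),
\]
and applying the triangle inequality $|\widetilde{\Delta}|\le |\Delta - \Delta^{\mathrm{in}}| + |\Delta^{\mathrm{in}} - \Delta_0|$. The two terms are controlled by different mechanisms: the first through the matching error, the second through a term-by-term comparison of two Fourier expansions that share the same coefficients but use two distinct ``phase correctors''. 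Throughout I will use the lower bound $|u - i\pi/2|\ge s\varepsilon|\log\varepsilon|$, valid on $\E \supset \E^{\mathrm{mch}}$ by~\eqref{def:domains_ext}.

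For the first term, since $\E^{\mathrm{mch}} \subset D^{\mathrm{mch},\bu}_\kappa \cap D^{\mathrm{mch},\bs}_\kappa$ by~\eqref{inclusion_domains}, Theorem~\ref{thm:matching} applies simultaneously to $T^{\bu}_1$ and $T^{\bs}_1$, and taking the difference yields $|\Delta - \Delta^{\mathrm{in}}|\lesssim \varepsilon^{2-\gamma}/|u-i\pi/2|^4 \le \varepsilon^{-2-\gamma}/(s|\log\varepsilon|)^4$. The hypothesis $\omega s\le 1-\gamma$ gives $\varepsilon^{1-\gamma-\omega s}\le 1$ for small $\varepsilon$, so this piece is dominated by $\varepsilon^{\omega s - 3}/|\log\varepsilon|$ with several logarithmic factors to spare.

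For the second term, I substitute $z = (u - i\pi/2)/\varepsilon$ into formula~\eqref{formula:diff:in} of Theorem~\ref{thm:inner}; this exhibits $\Delta^{\mathrm{in}}$ as a Fourier series in $e^{ik(\omega u/\varepsilon - \theta)}$ with amplitudes $\chi^{[k]}\varepsilon^{-3}e^{-|k|\omega\pi/(2\varepsilon)}e^{ikg((u-i\pi/2)/\varepsilon,\theta)}$ for $k<0$, and the conjugate analog for $k>0$. Comparing with~\eqref{def:Delta0+}, whose $k$-th amplitude carries $e^{ik\omega\mathcal{C}(u,\theta)/\varepsilon}$ in place of $e^{ikg(\cdot)}$, the $k$-th contribution to $\Delta^{\mathrm{in}} - \Delta_0$ equals the common prefactor times the discrepancy $e^{ikg} - e^{ik\omega\mathcal{C}/\varepsilon}$. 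On $\E^{\mathrm{mch}}$ the bound $|zg|\lesssim 1$ from Theorem~\ref{thm:inner} together with $|z|\ge s|\log\varepsilon|$ gives $|g|\lesssim 1/(s|\log\varepsilon|)$, and the bound $|(u^2+\pi^2/4)\mathcal{C}|\lesssim \varepsilon^2$ from Proposition~\ref{prop:C} combined with $|u^2+\pi^2/4|\gtrsim |u-i\pi/2|$ gives $|\omega\mathcal{C}/\varepsilon|\lesssim 1/(s|\log\varepsilon|)$, so the discrepancy is $\lesssim |k|/(s|\log\varepsilon|)$. Multiplying by the exponential decay $e^{-|k|\omega(\pi/2 - \Im u)/\varepsilon}\le \varepsilon^{|k|\omega s}$ and by the uniform bound on $|\chi^{[k]}|$, and summing the resulting convergent series in $k$ (dominated by the $|k|=1$ mode), I obtain $|\Delta^{\mathrm{in}} - \Delta_0|\lesssim \varepsilon^{\omega s - 3}/|\log\varepsilon|$.

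The main obstacle is precisely this term-by-term comparison of $\omega\mathcal{C}(u,\theta)/\varepsilon$ and $g((u-i\pi/2)/\varepsilon,\theta)$: the former arises from the straightening of the outer transport equation~\eqref{eq:diff:out:ab} in Proposition~\ref{prop:C}, the latter from the intrinsic characteristic of the inner equation in Theorem~\ref{thm:inner}. A priori neither is small, but both inherit size $O(1/(s|\log\varepsilon|))$ from their respective uniform bounds on the overlap region, and this is exactly what supplies the extra $|\log\varepsilon|^{-1}$ gain over the natural amplitude $\varepsilon^{\omega s - 3}$ shared by $\Delta$, $\Delta_0$ and $\Delta^{\mathrm{in}}$ near the inner vertex of $\E^{\mathrm{mch}}$.
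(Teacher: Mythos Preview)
Your proof is correct and follows essentially the same decomposition as the paper: writing $\widetilde{\Delta} = (\Delta - \Delta^{\mathrm{in}}) + (\Delta^{\mathrm{in}} - \Delta_0)$, the first piece is controlled by the matching error (Theorem~\ref{thm:matching}) and the second by comparing the phase correctors $\gin$ and $\omega\mathcal{C}/\varepsilon$, both of size $O(1/|\log\varepsilon|)$ on $\E^{\mathrm{mch}}$.

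One correction is needed. Formula~\eqref{formula:diff:in} gives $\Delta^{\mathrm{in}}$ as a sum over $k<0$ \emph{only}; there is no ``conjugate analog for $k>0$''. Consequently, for $k>0$ the $k$-th contribution to $\Delta^{\mathrm{in}} - \Delta_0$ is not a phase discrepancy but simply $-\Delta_0^-$. This does not spoil the bound: on $\E^{\mathrm{mch}}$ one has $\Im u$ close to $\pi/2$, so $|e^{ik\omega u/\varepsilon}| = e^{-k\omega\Im u/\varepsilon}$ combines with the prefactor $e^{-k\omega\pi/(2\varepsilon)}$ to make each $k>0$ term of $\Delta_0^-$ of order $\varepsilon^{-3}e^{-k\omega\pi/\varepsilon}$, hence exponentially small and absorbed by the stated estimate. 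The paper makes this separation explicit, writing $\widetilde{\Delta} = \widetilde{\Delta}_1^{\bu} - \widetilde{\Delta}_1^{\bs} + \widetilde{\Delta}_2 - \Delta_0^-$ with $\widetilde{\Delta}_2 = \Delta^{\mathrm{in}} - \Delta_0^+$ and bounding $|\Delta_0^-|\lesssim e^{-\pi\omega/(2\varepsilon)}$ separately.
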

The proof of this lemma is postponed to the end of this section. 
Putting together the results in~\eqref{rel:Theta:Upsilon}, \eqref{bound:Thetak} and Lemma~\ref{lem:bound:tildeDelta}, we obtain
$$
|\Upsilon^{[k]}(\varepsilon)- \Upsilon^{[k]}_0(\varepsilon) | \lesssim \frac{\varepsilon^{\omega s}}{\varepsilon^3 |\log \varepsilon|}e^{- \omega \frac{|k|\pi}{2\varepsilon} + |k| \omega s |\log \varepsilon| } 
e^{ \frac{c_{4}|k| \omega }{s |\log \varepsilon|}}, \qquad k<0.
$$

Notice that, since $\Upsilon^{[-k]}(\varepsilon)- \Upsilon^{[-k]}_0(\varepsilon) = \overline{\Upsilon^{[k]}(\varepsilon)}- \overline{\Upsilon^{[k]}_0(\varepsilon)}$ the same bound holds for $k>0$. Then, since $c_{4} \omega s^{-1}|\log \varepsilon|^{-1} \leq 1$ if $\varepsilon$ is small enough,
we deduce 
$$
|\Upsilon^{[k]}(\varepsilon)- \Upsilon^{[k]}_0(\varepsilon) | \lesssim \begin{cases}
 {\displaystyle{\frac{1}{\varepsilon^3 |\log \varepsilon| } e^{-\frac{\pi \omega}{2\varepsilon}}}}   & k=-1,1 \\ \\
  {\displaystyle{\frac{1}{\varepsilon^{3-\omega s (1- |k|)} |\log \varepsilon| }
 e^{-\frac{|k|\omega \pi}{2\varepsilon}}} e^{ {c_{4}|k| \omega \over s|\log \varepsilon|}}}
 & |k|\geq 2
\end{cases}
$$
that implies, if $\varepsilon$ is small enough
$$
|\Upsilon^{[k]}(\varepsilon)- \Upsilon^{[k]}_0(\varepsilon) | \lesssim \begin{cases}
 {\displaystyle{\frac{1}{\varepsilon^3 |\log \varepsilon| } e^{-\frac{\pi \omega}{2\varepsilon}}  }} & k=-1,1 \\ \\
  {\displaystyle{\frac{1}{\varepsilon^{3} |\log \varepsilon| }
 e^{-\frac{|k|\omega \pi}{3\varepsilon}}}}
 & |k|\geq  2.
 \end{cases}
$$
Therefore, for $u\in \E\cap \mathbb{R}$ and $\theta\in \T$,
\begin{align*}
\big |\widetilde{\Delta} (u,\theta) - \Upsilon^{[0]}(\varepsilon)\big | & \lesssim \frac{1}{\varepsilon^{3} |\log \varepsilon| } \left [e^{-\frac{\pi \omega}{2\varepsilon}}  + \sum_{k\in \mathbb{Z}, |k|\geq 2}  e^{-\frac{|k| \omega \pi}{3\varepsilon}}\right ] \lesssim \frac{1}{\varepsilon^{3} |\log \varepsilon| }  e^{-\frac{\pi \omega}{2\varepsilon}} \\
\big |\partial_u \widetilde{\Delta} (u,\theta) \big | & \lesssim \frac{1}{\varepsilon^{4} |\log \varepsilon| }  e^{-\frac{\pi \omega}{2\varepsilon}} \\ 
\big |\partial_\theta \widetilde{\Delta} (u,\theta) \big | & \lesssim \frac{1}{\varepsilon^{3} |\log \varepsilon| }  e^{-\frac{\pi\omega }{2\varepsilon}}
\end{align*}
and Theorem~\ref{thm:difference_between_manifolds} follows trivially 
by definition~\eqref{def:Delta0} of $\Delta_0$.

\subsubsection{Proof of Lemma~\ref{lem:bound:tildeDelta}}
Using the notation $*=\bu,\bs$, we introduce 
\begin{equation*}
\begin{aligned}
\widetilde{\Delta}_1^{*} (u,\theta)= &T_1^{*} (u,\theta) - \frac{1}{\varepsilon^3} K_0^* \left (
\frac{u-i\frac{\pi}{2}}{\varepsilon},  \theta \right ) 
\\ 
\widetilde{\Delta}_2 (u,\theta) = &\frac{1}{\varepsilon^3} \left [ 
K_0^{\bu} \left (
\frac{u-i\frac{\pi}{2}}{\varepsilon},  \theta \right ) - K_0^{\bs} \left (
\frac{u-i\frac{\pi}{2}}{\varepsilon},  \theta \right ) \right ]  - \Delta_0^+(u,\theta)
\end{aligned}
\end{equation*}
and we decompose $\widetilde{\Delta}$ as
\begin{equation}\label{proof:def_tilde_delta}
\widetilde{\Delta} = \widetilde{\Delta}^{\bu}_1 - \widetilde{\Delta}^{\bs}_1 + \widetilde{\Delta}_2 - \Delta_0^-,
\end{equation}
we refer to~\eqref{def:Delta0+} for the definition of $\Delta_0^+$ and $\Delta_0^-$.
By Theorem~\ref{thm:matching}, we have that 
\begin{equation}\label{proof:stime_tilde_delta1}
\sup_{(u,\theta) \in E^{\mathrm{mch}} \times \T} |\widetilde{\Delta}_1^*(u,\theta)| \lesssim \frac{\varepsilon^{1-\gamma}}{\varepsilon^3 s^4|\log \varepsilon|^4}
\end{equation}
with $\E^{\mathrm{mch}}$ defined in~\eqref{defEmach}. 
We observe that, by Theorem~\ref{thm:inner} and definition~\eqref{def:Delta0+} of $\Delta_0^+$,
\begin{align*}
\widetilde{\Delta}_2(u,\theta) &= \frac{1}{\varepsilon^3}\sum_{k<0} \chi^{[k]} e^{-\frac{|k|\omega \pi}{2\varepsilon}}
\left [e^{ik \left (\frac{\omega}{\varepsilon}u - \theta + \textbf{g}\left(\varepsilon^{-1} \left (u-i\frac{\pi}{2}\right ), \theta\right )\right )} - 
e^{ik \left ( \frac{\omega}{\varepsilon}(u+\mathcal{C}(u,\theta))-\theta\right )}
\right ]\\ &= 
\frac{1}{\varepsilon^3}\sum_{k<0} \chi^{[k]} e^{-\frac{|k|\omega \pi}{2\varepsilon} }
e^{ik \left (\frac{\omega}{\varepsilon}u - \theta \right )}\left [ e^{ik \textbf{g}\left (\varepsilon^{-1} \left (u-i\frac{\pi}{2}\right ), \theta\right )} - 
e^{ik \left ( \frac{\omega}{\varepsilon}\mathcal{C}(u,\theta)\right )}
\right ]
\end{align*}

From Theorem~\ref{thm:inner} and Proposition~\ref{prop:C}, there exists a constant $c>0$, such that
$$
\frac{1}{\omega}\left |\textbf{g}\left (\varepsilon^{-1} \left (u-i\frac{\pi}{2}\right ), \theta\right ) \right |, \, \frac{1}{\varepsilon}|\mathcal{C}(u,\theta)| \leq c \frac{1}{s|\log \varepsilon|}, \qquad (u,\theta) \in \E^{\mathrm{mch}}\times \T.
$$
Then,  
\begin{align*} 
\big |\widetilde{\Delta}_2(u,\theta)\big | &\lesssim \frac{1}{\varepsilon^3 s |\log\varepsilon|}
\sum_{k<0} \big | \chi^{[k]} \big | e^{-\frac{|k|\omega \pi}{2\varepsilon} }  \big |e^{ik \omega \frac{u}{\varepsilon}} \big |
e^{\frac{|k| \omega c}{s|\log \varepsilon|} } \\ 
&\lesssim \frac{1}{ \varepsilon^3 s |\log\varepsilon|} \sum_{k<0} \big | \chi^{[k]} \big | \varepsilon^{|k| \omega s}e^{\frac{|k| \omega c}{s|\log \varepsilon|} },
\end{align*}
where in the last inequality we have evaluated at $u= i \frac{\pi}{2} - i \varepsilon s |\log \varepsilon|$. 

Therefore, since by Theorem~\ref{thm:inner}, $\chi^{[k]}$ are bounded uniformly for $k<0$ and $\varepsilon^{\omega s} e^{s^{-1} \omega c |\log \varepsilon|^{-1}} \leq 2 \varepsilon^{\omega s} \leq \frac{1}{2}$ if $\varepsilon$ small enough, we conclude that 
\begin{equation}\label{proof:stime_tilde_delta2}
\sup_{(u,\theta) \in E^{\mathrm{mch}} \times \T} \big |\widetilde{\Delta}_2(u,\theta)\big | \lesssim \frac{1}{\varepsilon^{3-\omega s}s |\log \varepsilon| }.
\end{equation}
We bound $\widetilde{\Delta}_0^-$ straightforwardly as
\begin{equation}\label{proof:stime_delta0-}
\sup_{(u,\theta) \in E^{\mathrm{mch}} \times \T} \big | {\Delta}_0^-(u,\theta)\big |\lesssim e^{-\frac{\pi \omega}{2\varepsilon}}.
\end{equation}
Therefore, since $\omega s\leq 1- \gamma$, combining~\eqref{proof:def_tilde_delta},~\eqref{proof:stime_tilde_delta1},~\eqref{proof:stime_tilde_delta2} and~\eqref{proof:stime_delta0-} we obtain that
$$
\sup_{(u,\theta) \in E^{\mathrm{mch}} \times \T} \big |\widetilde{\Delta}(u,\theta)\big | \lesssim \frac{\varepsilon^{\omega s}}{\varepsilon^3 |\log \varepsilon|}.
$$
This concludes the proof of Lemma~\ref{lem:bound:tildeDelta}.
  
\section{The outer scale}\label{sec:proof:outer}

The strategy of the proof is the following. We look for two solutions of~\eqref{HJpolarcoordinates} close to the unperturbed homoclinic given by Lemma~\ref{lem:homoclinic}, which in turn is given by the function $T_0$ in~\eqref{def:T0}. These solutions are obtained by solving a suitable fixed-point equation. The key point is that these solutions are defined in some complex domains ($D^{\mathrm{out}, \bu}_{\kappa} \times \T_\sigma$ and $D^{\mathrm{out}, \bs}_{\kappa}\times \T_\sigma$ see~\eqref{def:domains_outer_u} and~\eqref{def:domains_outer_s}) in the variables $(u,\theta)$ and, within these domains, we have optimal bounds.

For the rest of this section, we fix $\sigma >0$, $d \in \left({1 \over 4}, {1 \over 2}\right)$,  $\slope$, $\hslope \in \left(0, {\pi \over 2}\right)$, and $\varsigma>0$. These are the parameters we used in the definition of the domains $\T_\sigma$, $D^{\mathrm{out}, *}_{\kappa}$, $D^{\mathrm{out}, *}_{\kappa, \varsigma}$ and $D^{\mathrm{out}, *}_{\kappa, \infty}$ for $*=\bu, \bs$ (see~\eqref{def:complex_torus},~\eqref{def:domains_outer_s} and~\eqref{def:domains_outer_u}). Given $\varrho>0$, let $B_\varrho$ be the ball of radius $\varrho$ centered at the origin.
Furthermore, we denote by $\varepsilon_0$ and $\kappa_0$ some threshold values. We will take them to be suitably small and large, respectively, throughout this section.
 
For convenience, we introduce the following notation to avoid a proliferation of constants. We will write $u \lesssim v$ if there exists a constant $C>0$ independent of $\varepsilon_0$ and $\kappa_0$ such that $u \le Cv$. 

The rest of this section is divided into three parts. First, we introduce a series of Banach spaces satisfying some properties that we will use to solve the Hamilton-Jacobi equation~\eqref{HJpolarcoordinatesT1}. This is the content of Section \ref{sec:domains_and_Banach_spaces}. In Section \ref{sec:Inverse_of_Lout}, we look for a right inverse of the operator $\Lout$ (see~\eqref{def:operator_L}) and we use it to rewrite equation~\eqref{HJpolarcoordinatesT1} as a fixed-point equation. A solution of the latter is established in Section \ref{sec:Fixed_point_outer}.

\subsection{Functional set up}
\label{sec:domains_and_Banach_spaces}

We introduce several spaces of analytic functions defined in the domains $D^{\mathrm{out}, \bu}_{\kappa}$ and $D^{\mathrm{out}, \bs}_{\kappa}$, introduced in~\eqref{def:domains_outer_s} and~\eqref{def:domains_outer_u}.

First, given a function $g:D^{\mathrm{out}, *}_{\kappa}\to \C$, we define the norm
\begin{equation*}
\label{norm1}
|g|^*_{n,m} = \sup_{u \in D^{\mathrm{out}, *}_{\kappa, \varsigma}} \left| g(u) \cosh^n u\right| + \sup_{u \in D^{\mathrm{out}, *}_{\kappa,  \infty}} \left| g(u) \cosh^m u\right|,
\end{equation*}
and, given a function $\phi : D^{\mathrm{out}, *}_{\kappa} \times \T_\sigma \to \C$ with Fourier series $ \sum_{\ell \in\Z} \phi^{[\ell]}(u) e^{i\ell \theta}$,  
\begin{align}
\label{norm2}
|\phi|^*_{n,m, \sigma} &= \sum_{\ell \in \Z} |\phi^{[\ell]}|^*_{n,m} e^{|\ell|\sigma},\\
\label{norm3}
\lfloor \phi \rfloor_{n, m, \sigma}^* &= |\phi|^*_{n,m, \sigma} + |\partial_u \phi|^*_{n+1,m, \sigma} + {1 \over \varepsilon} |\partial_\theta \phi|^*_{n+1,m, \sigma},
\end{align}
where $*= \bu,\bs$,  and $n$, $m\in \R$. 

We will also use the same notation for vector-valued functions
 $\phi : D^{\mathrm{out}, *}_{\kappa} \times \T_\sigma \to \C^a$ and matrices $M= \{M_{ij}\}_{1 \le i \le a, 1 \le j \le b } : D^{\mathrm{out}, *}_{\kappa} \times \T_\sigma \to \mathcal{M}_{a\times b}(\C)$, where $\mathcal{M}_{a\times b}(\C)$ denotes the space of $a\times b$ matrices with complex coefficients. In these cases, we set
\[
|\phi|^*_{n,m, \sigma} = \max_{1\le i \le a} |\phi_i|^*_{n,m, \sigma}, \qquad |M|^*_{n,m, \sigma} = \max_{1\le i \le a, \, 1\le j \le b} |M_{ij}|^*_{n,m, \sigma}.
\]
We define analogously the norm $\lfloor \cdot \rfloor_{n, m, \sigma}^*$.

Using these norms, we consider the Banach spaces
\begin{eqnarray*}
\mathcal{X}_{n,m}^* &=&  \left\{ g : D^{\mathrm{out}, *}_{\kappa} \to \C : \mbox{$g$ is analytic and $|g|^*_{n,m} <\infty$ }\right\},\\
\mathcal{X}_{n,m, \sigma}^* &= &\left\{ \phi : D^{\mathrm{out}, *}_{\kappa} \times \T_\sigma \to \C : \mbox{$\phi$ is analytic and $|\phi|^*_{n,m, \sigma} <\infty$ }\right\},\\
\mathcal{\tilde X}_{n,m, \sigma}^* &= &\left\{ \phi : D^{\mathrm{out}, *}_{\kappa} \times \T_\sigma \to \C : \mbox{$\phi$ is analytic and $\lfloor\phi\rfloor^*_{n,m, \sigma} <\infty$ }\right\},
\end{eqnarray*}
with $*= \bu,\bs$.  We use the same notation for vector-valued functions and matrices. A vector-valued function or a matrix is said to belong to the above spaces whenever each of its components does.

In the following, we state several lemmas that summarize some properties of the previous norms 
\begin{lemma}\label{LemmaPropNormOut1}
We fix $n,m\in \mathbb{N}$. Then, there exists $\varepsilon_0>0$ and a constant $C>0$ such that for all 
$\phi \in \mathcal{X}_{n,m, \sigma}^*$, $*= \bu,\bs$, $0<\varepsilon \le \varepsilon_0$ and $\kappa\geq 1$,
\begin{enumerate}
\item For all $n_+ > n$, and $\phi \in \mathcal{X}_{n_+,m, \sigma}^*$,
\begin{equation*}
|\phi|^*_{n_+, m, \sigma} \le C |\phi|^*_{n, m, \sigma}.
\end{equation*}
\item For all $n_- < n$, and $ \phi \in \mathcal{X}_{n_-,m, \sigma}^*$,
\begin{equation*}
|\phi|^*_{n_-, m, \sigma} \le {C \over (\varepsilon \kappa)^{n-n_-}} |\phi|^*_{n, m, \sigma}.
\end{equation*}
\item For all $m_- < m$, and $ \phi \in \mathcal{X}_{n,m_-, \sigma}^*$,
\begin{equation*}
|\phi|^*_{n, m_-, \sigma} \le C |\phi|^*_{n, m, \sigma}.
\end{equation*}
\end{enumerate}
Let $\phi_1 \in \mathcal{X}_{n_1,m_1, \sigma}^*$ and $\phi_2 \in \mathcal{X}_{n_2,m_2, \sigma}^*$ for $*= \bu,\bs$. Then, for all $m \le m_1 + m_2$, the product $\phi_1 \phi_2 \in \mathcal{X}_{n_1 + n_2,m, \sigma}^*$ and 
\begin{equation*}
|\phi_1 \phi_2|^*_{n_1 + n_2, m, \sigma} \le C |\phi_1|^*_{n_1, m_1, \sigma} |\phi_2|^*_{n_2, m_2, \sigma},
\end{equation*}
for $*= \bu,\bs$. This property holds true also if $\phi_1$ and $\phi_2$ are matrices whose elements belong to $\mathcal{X}_{n_1,m_1, \sigma}^*$ and $\mathcal{X}_{n_2,m_2, \sigma}^*$, respectively.
\end{lemma}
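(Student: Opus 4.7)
The plan is to reduce every bound to two elementary estimates on the size of $\cosh u$ on the two subdomains $D^{\mathrm{out},*}_{\kappa,\varsigma}$ and $D^{\mathrm{out},*}_{\kappa,\infty}$, and then handle the Fourier--periodic norm by absorbing the $e^{|\ell|\sigma}$ weights through the inequality $e^{|\ell|\sigma}\le e^{|k|\sigma}e^{|\ell-k|\sigma}$ in the convolution formula for products.

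First I would establish the two geometric facts about $\cosh u$. On $D^{\mathrm{out},*}_{\kappa,\varsigma}$ the real part of $u$ ranges in a bounded interval and the imaginary part lies in $[-\pi/2,\pi/2]$, so $|\cosh u|$ is bounded above by a constant depending only on $\varsigma$; moreover, since $\cosh$ vanishes only at $\pm i\pi/2$ and the definition of the domain excludes discs of radius $\kappa\varepsilon$ around these points, the Taylor expansion $\cosh u=-i\sinh(u\mp i\pi/2)=\mp i(u\mp i\pi/2)+\mathcal O((u\mp i\pi/2)^3)$ yields a lower bound $|\cosh u|\gtrsim \kappa\varepsilon$ uniformly on $D^{\mathrm{out},*}_{\kappa,\varsigma}$ (here one uses $\kappa\varepsilon\le 1$, which holds for $\varepsilon\le \varepsilon_0$ small and $\kappa\ge 1$, to keep the higher order terms under control). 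On $D^{\mathrm{out},*}_{\kappa,\infty}$, writing $u=x+iy$ with $|x|\ge\varsigma$ and $|y|\le\pi/2$, the identity $|\cosh u|^2=\cosh^2 x-\sin^2 y$ gives $|\cosh u|\ge |\sinh\varsigma|$, a positive constant independent of $\varepsilon$ and $\kappa$; moreover $|\cosh u|$ can be arbitrarily large there.

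Given these bounds, each item of the first list follows by multiplying $|g(u)\cosh^{n}u|$ (resp. $|g(u)\cosh^{m}u|$) by a suitable power of $|\cosh u|$. For item 1, $|\cosh u|^{n_+-n}$ is bounded on $D^{\mathrm{out},*}_{\kappa,\varsigma}$, and the infinite piece of the norm is left unchanged. For item 2, the factor $|\cosh u|^{-(n-n_-)}$ is bounded by $C(\kappa\varepsilon)^{-(n-n_-)}$ on $D^{\mathrm{out},*}_{\kappa,\varsigma}$, directly yielding the stated exponent in $\varepsilon\kappa$, while again the infinite piece is untouched. For item 3, on $D^{\mathrm{out},*}_{\kappa,\infty}$ the factor $|\cosh u|^{m_--m}$ with $m_-<m$ is bounded above (using the uniform lower bound $|\cosh u|\ge|\sinh\varsigma|$), and the finite piece is unchanged.

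For the product estimate, I would first prove it pointwise for each Fourier mode using the pointwise splitting: on $D^{\mathrm{out},*}_{\kappa,\varsigma}$ one has $|\phi_1\phi_2|\,|\cosh u|^{n_1+n_2}\le (|\phi_1|\,|\cosh u|^{n_1})(|\phi_2|\,|\cosh u|^{n_2})$, and on $D^{\mathrm{out},*}_{\kappa,\infty}$ one writes $|\cosh u|^{m}=|\cosh u|^{m_1+m_2}\cdot|\cosh u|^{m-m_1-m_2}$ and uses $m-m_1-m_2\le 0$ together with $|\cosh u|\ge|\sinh\varsigma|$ to bound the last factor by a constant. This gives $|\phi_1^{[k]}\phi_2^{[\ell-k]}|^*_{n_1+n_2,m}\le C|\phi_1^{[k]}|^*_{n_1,m_1}|\phi_2^{[\ell-k]}|^*_{n_2,m_2}$. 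Combining with $(\phi_1\phi_2)^{[\ell]}=\sum_k \phi_1^{[k]}\phi_2^{[\ell-k]}$, the subadditivity $|\ell|\sigma\le |k|\sigma+|\ell-k|\sigma$ and summing in $\ell$ and $k$ yields the stated product bound for $|\cdot|^*_{n_1+n_2,m,\sigma}$; the vector/matrix case is immediate by taking componentwise maxima. The only point requiring any care is the lower bound $|\cosh u|\gtrsim\kappa\varepsilon$ on $D^{\mathrm{out},*}_{\kappa,\varsigma}$, and once it is in hand the whole lemma is routine; I expect no further obstacle.
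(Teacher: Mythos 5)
Your strategy is the natural direct verification, and it does yield the lemma; the paper itself only cites~\cite{Cast15}, so this supplies the details. However, three intermediate claims need tightening. First, the parenthetical ``$\kappa\varepsilon\le 1$, which holds for $\varepsilon\le\varepsilon_0$ small and $\kappa\ge 1$'' is false as stated: $\varepsilon\le\varepsilon_0$ and $\kappa\ge 1$ do not bound $\kappa\varepsilon$. The correct observation is that $D^{\mathrm{out},*}_{\kappa}$ is empty unless $\kappa\varepsilon<(1-d)\tfrac{\pi}{2}$, so on the nonempty case $\kappa\varepsilon$ is automatically bounded; alternatively, work directly with the perpendicular distance from a point of the domain to the boundary line $|\Im u|=\tfrac{\pi}{2}-\kappa\varepsilon-\tan\slope\,\Re u$, which equals $\kappa\varepsilon\cos\slope$ and does not require any a priori smallness of $\kappa\varepsilon$. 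Second, on $D^{\mathrm{out},*}_{\kappa,\varsigma}$ the claim ``$\Im u\in[-\pi/2,\pi/2]$'' is not exact: when $\Re u<0$ one only has $|\Im u|\le\tfrac{\pi}{2}+\tan\slope\,\varsigma$. The set is still bounded (in both coordinates, with bounds depending only on the geometric parameters $d,\slope,\hslope,\varsigma$), so the upper bound on $|\cosh u|$ survives, but the stated range of $\Im u$ should be corrected. Third, on $D^{\mathrm{out},*}_{\kappa,\infty}$ your hypothesis ``$|y|\le\pi/2$'' is likewise not part of the definition and in fact can fail, since $|\Im u|\le\tfrac{\pi}{2}-\kappa\varepsilon+\tan\slope\,|\Re u|$ grows with $|\Re u|$. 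Fortunately it is also unnecessary: $|\cosh(x+iy)|^2=\cosh^2x-\sin^2y\ge\cosh^2x-1=\sinh^2x$ holds for every $y$, so the bound $|\cosh u|\ge\sinh\varsigma$ still follows directly from $|\Re u|\ge\varsigma$. (A purely cosmetic point: $\cosh u=i\sinh(u-i\pi/2)$ near $+i\pi/2$, so your leading Taylor coefficient carries the opposite sign to what you wrote; irrelevant since only $|\cosh u|$ enters.) With these corrections, items~1--3 and the product estimate via the Cauchy product and $e^{|\ell|\sigma}\le e^{|k|\sigma}e^{|\ell-k|\sigma}$ go through exactly as you outline.
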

\begin{proof}
We refer to~\cite{Cast15} for the proof.
\end{proof}

\begin{lemma}\label{LemmaPropNormOut2}
Let $\sigma_0 > \sigma$. Then there exists a constant $C>0$ such that for all $\phi \in \mathcal{X}^*_{n, m, \sigma_0}$, with $*=\bu, \bs$,
\begin{equation*}
|\phi|^*_{n,m,\sigma} \le C\left(\sup_{(u, \theta) \in D^{\mathrm{out}, *}_{\kappa, \varsigma}\times \T_{\sigma_0}} \left| \phi(u, \theta) \cosh^n u\right|   +   \sup_{(u, \theta) \in D^{\mathrm{out}, *}_{\kappa, \infty} \times \T_{\sigma_0}} \left| \phi(u, \theta) \cosh^m u\right|      \right).
\end{equation*}
\end{lemma}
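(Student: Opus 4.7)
The plan is a standard Fourier-analytic argument. Given $\phi \in \mathcal{X}^*_{n,m,\sigma_0}$ with Fourier expansion $\phi(u,\theta) = \sum_{\ell \in \Z} \phi^{[\ell]}(u) e^{i\ell\theta}$, I would first estimate each Fourier coefficient $\phi^{[\ell]}(u)$ by exploiting analyticity in $\theta$ on $\T_{\sigma_0}$. Writing
\[
\phi^{[\ell]}(u) = \frac{1}{2\pi} \int_0^{2\pi} \phi(u, \theta) e^{-i\ell\theta}\, d\theta
\]
and shifting the contour of integration to $\theta - i\,\mathrm{sgn}(\ell)\sigma_0$ (which is allowed since $\phi(u,\cdot)$ is analytic and $2\pi$-periodic on $\T_{\sigma_0}$), I get the standard Cauchy-type bound
\[
|\phi^{[\ell]}(u)| \le e^{-|\ell|\sigma_0} \sup_{\theta \in \T_{\sigma_0}} |\phi(u,\theta)|,
\]
uniformly in $u \in D^{\mathrm{out},*}_{\kappa}$.

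Multiplying by $\cosh^n u$ and taking the supremum over $u \in D^{\mathrm{out},*}_{\kappa,\varsigma}$, and analogously with $\cosh^m u$ on $D^{\mathrm{out},*}_{\kappa,\infty}$, I obtain
\[
|\phi^{[\ell]}|^*_{n,m} \le e^{-|\ell|\sigma_0}\Bigl(\, \sup_{D^{\mathrm{out},*}_{\kappa,\varsigma} \times \T_{\sigma_0}} |\phi\,\cosh^n u| \; + \; \sup_{D^{\mathrm{out},*}_{\kappa,\infty} \times \T_{\sigma_0}} |\phi\,\cosh^m u|\Bigr).
\]
Summing the definition~\eqref{norm2} of $|\phi|^*_{n,m,\sigma}$ and pulling the bracket out,
\[
|\phi|^*_{n,m,\sigma} \le \Bigl(\sum_{\ell \in \Z} e^{-|\ell|(\sigma_0-\sigma)}\Bigr)\cdot \Bigl(\,\sup_{D^{\mathrm{out},*}_{\kappa,\varsigma}\times \T_{\sigma_0}} |\phi\,\cosh^n u| + \sup_{D^{\mathrm{out},*}_{\kappa,\infty}\times \T_{\sigma_0}} |\phi\,\cosh^m u|\Bigr),
\]
and since $\sigma_0 > \sigma$ the geometric sum converges, giving the desired constant $C = C(\sigma_0-\sigma)$.

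There is no real obstacle: the argument is a routine application of Cauchy's theorem on the strip $\T_{\sigma_0}$ plus summation of a geometric series, and the only thing to check is that the contour shift is legal, which follows from analyticity and $2\pi$-periodicity of $\phi$ in $\theta$ together with the uniform integrability implicit in $\phi \in \mathcal{X}^*_{n,m,\sigma_0}$.
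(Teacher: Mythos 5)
Your argument is correct and is exactly the standard Paley--Wiener type estimate for passing from a sup norm on the wider strip $\T_{\sigma_0}$ to the $\ell^1$-weighted Fourier norm on the narrower strip $\T_\sigma$. The paper itself only cites \cite{Cast15} for the proof, so there is no written argument to compare against, but your contour-shift bound $|\phi^{[\ell]}(u)| \le e^{-|\ell|\sigma_0}\sup_{\theta\in\T_{\sigma_0}}|\phi(u,\theta)|$ followed by summation of the geometric series $\sum_\ell e^{-|\ell|(\sigma_0-\sigma)}$ (yielding $C=\coth\!\bigl(\tfrac{\sigma_0-\sigma}{2}\bigr)$) is precisely the mechanism one expects the cited reference to use, and all the steps --- periodicity to justify the contour deformation, multiplying by $\cosh^n u$ (resp.\ $\cosh^m u$) before taking the supremum over the appropriate subdomain, and applying the two bounds to the two pieces of the norm $|\cdot|^*_{n,m}$ --- are handled correctly.
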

\begin{proof}
We refer to~\cite{Cast15} for the proof.
\end{proof}
We will widely use the claims in Lemmas~\ref{LemmaPropNormOut1} and~\ref{LemmaPropNormOut2} without further mention. 

\subsection{Inverting the operator $\Lout$}\label{sec:Inverse_of_Lout}

In order to transform equation~\eqref{HJpolarcoordinatesT1} into a fixed point equation, it is necessary to choose a right inverse of the operator $\Lout$ in~\eqref{def:operator_L},
acting in the spaces introduced in Section~\ref{sec:domains_and_Banach_spaces}.
This is rather standard.
Given $g^*: D^{\mathrm{out}, *}_{\kappa}  \times \T_\sigma \to \C$,  with $*=\bu, \bs$,
we consider its Fourier serie $g^*(u,\theta) = \sum_{\ell\in \Z} g^{*,[\ell]}(u) e^{i\ell \theta}$.
Then, a formal solution of the equation 
\begin{equation*}
\Lout(\phi^*) = g^*
\end{equation*}
 is given by $\G^*(g^*)$, where
\begin{equation}
\label{def:Gouter}
\begin{aligned}
\G^{\bs,[\ell]} (g) (u) & = - \int_u^{+\infty} e^{i{\omega \over \varepsilon}\ell(\tau - u)} g^{\bs, [\ell]}(\tau)d\tau,  \\
\G^{\bu,[\ell]} (g) (u) & = - \int_u^{-\infty} e^{i{\omega \over \varepsilon}\ell(\tau - u)} g^{\bu, [\ell]}(\tau)d\tau.
\end{aligned}
\end{equation}

\begin{lemma}\label{PropG}
Let $n$, $m \in \Z$ with $n \ge 0$ and $m \ge 0$ and $g \in \mathcal{X}^*_{n,m, \sigma}$ with $*=\bu, \bs$. There exists $0 < \varepsilon_0 <1$ and a constant $C >0$ independent of $g$ such that for all $\ell \in \Z$, $\kappa \ge 1$ and $0 < \varepsilon \le \varepsilon_0$
\begin{enumerate}
\item If $n > 1$,  then $|\G^{*, [\ell]}(g)|^*_{n-1, m} \le C |g^{[\ell]}|^*_{n, m}$.
\item If $\ell \ne 0$ and $n \ge 0$, then $|\G^{*,[\ell]}(g)|^*_{n, m} \le C\displaystyle{\varepsilon \over |l|} |g^{[\ell]}|^*_{n, m}$.
\item If $n > 1$, then $|\G^*(g)|^*_{n-1, m, \sigma} \le C |g|^*_{n, m, \sigma}$.
Moreover, if $g^{[0]}(u) =0$ for all $u \in D^{\mathrm{out}, *}_{\kappa}$, then for all $n > 0$ one has that $|\G^*(g)|^*_{n, m, \sigma} \le C\varepsilon |g|^*_{n, m, \sigma}$.
\item If $g \in \mathcal{X}_{n,m,\sigma}^*$,  and $n > 1$, then $\mathcal{G}(g) \in \mathcal{\tilde X}^*_{n-1, m,\sigma}$ and
\begin{equation*}
\lfloor \G^*(g)\rfloor_{n-1, m, \sigma}^* \le C |g|_{n,m,\sigma}^*.
\end{equation*}
\end{enumerate}
\end{lemma}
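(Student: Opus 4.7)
The plan is to treat the unstable case, the stable one following by the symmetry $u\mapsto -u$ that exchanges the two outer domains and the two integrals in~\eqref{def:Gouter}. For each Fourier mode I would fix $u\in D^{\mathrm{out},\bu}_\kappa$ and parametrize the integration path as the horizontal ray $\tau=u-s$, $s\in[0,\infty)$: the geometry of~\eqref{def:domains_outer_s} guarantees this ray stays in $D^{\mathrm{out},\bu}_\kappa$, and the exponent $i\omega\ell(\tau-u)/\varepsilon$ is purely imaginary along it, so $|e^{i\omega\ell(\tau-u)/\varepsilon}|=1$. Item $(1)$ will then follow by estimating $|g^{[\ell]}(u-s)|$ with the definition of $|\cdot|^*_{n,m}$, splitting according to whether $u-s$ lies in $D^{\mathrm{out},\bu}_{\kappa,\varsigma}$ or in $D^{\mathrm{out},\bu}_{\kappa,\infty}$, and invoking the elementary one-dimensional bound $\int_0^{\infty}|\cosh(u-s)|^{-n}\,ds\leq C|\cosh u|^{-(n-1)}$, valid for $n>1$, which is precisely the loss of one power of $\cosh$ stated in the lemma.

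For item $(2)$, with $\ell\neq 0$, the plan is to integrate by parts, trading $e^{i\omega\ell(\tau-u)/\varepsilon}$ against its primitive $\tfrac{\varepsilon}{i\omega\ell}e^{i\omega\ell(\tau-u)/\varepsilon}$. The boundary term at $-\infty$ will vanish by the decay of $g^{[\ell]}$; the one at $\tau=u$ produces $\tfrac{\varepsilon}{i\omega\ell}g^{[\ell]}(u)$, already of the desired size $\tfrac{C\varepsilon}{|\ell|}|g^{[\ell]}|^*_{n,m}|\cosh u|^{-n}$ with no loss of weight; and the remaining integral has $(g^{[\ell]})'(\tau)$ in place of $g^{[\ell]}(\tau)$. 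Iterating the procedure produces an alternating series whose general term is $\bigl(\tfrac{\varepsilon}{i\omega\ell}\bigr)^{j+1}(g^{[\ell]})^{(j)}(u)$; Cauchy's estimates on disks whose radius is the distance from $u$ to $\partial D^{\mathrm{out},\bu}_\kappa$---bounded below by a fixed positive constant away from $\pm i\pi/2$ and by a multiple of $\varepsilon\kappa$ close to them---will show that for $\kappa$ large enough this series converges geometrically to a quantity of size $\tfrac{C\varepsilon}{|\ell|}|g^{[\ell]}|^*_{n,m}|\cosh u|^{-n}$, as required.

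Item $(3)$ is then the Fourier summation of these mode-by-mode bounds weighted by $e^{|\ell|\sigma}$ as in~\eqref{norm2}: use item $(1)$ on the mean and items $(1)$--$(2)$ on the nonzero modes for the first claim, and when $g^{[0]}\equiv 0$ use item $(2)$ on every mode so that the extra $\varepsilon/|\ell|$ factor produces the improved $\mathcal{O}(\varepsilon)$ bound after summation. For item $(4)$ the strategy is to exploit the identity $\mathcal{L}^{\mathrm{out}}\mathcal{G}(g)=g$, which yields $\partial_u\mathcal{G}(g)=g-\tfrac{\omega}{\varepsilon}\partial_\theta\mathcal{G}(g)$. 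Since $\partial_\theta$ acts as multiplication by $i\ell$ on the $\ell$-th Fourier mode, the factor $\varepsilon/|\ell|$ from item $(2)$ exactly cancels the $\ell/\varepsilon$ coming from $\tfrac{1}{\varepsilon}\partial_\theta$, and combined with item $(3)$ this controls all three pieces of $\lfloor \mathcal{G}^*(g)\rfloor^*_{n-1,m,\sigma}$ by $|g|^*_{n,m,\sigma}$.

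The hard part will be item $(2)$: extracting the sharp $\varepsilon/|\ell|$ factor \emph{without} losing a power of $\cosh$ is what forces the iterative integration by parts and the largeness of $\kappa$, since a single application of integration by parts combined with a naive Cauchy estimate would cost one power of $\cosh$ near the singularities $\pm i\pi/2$. Once item $(2)$ is in hand, everything else reduces to Fourier summation and to the first-order identity satisfied by $\mathcal{G}$.
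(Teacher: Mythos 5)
Your plan for items (1), (3), and (4) is essentially correct: integrating $\G^{\bu,[\ell]}(g)$ along the horizontal ray $\tau=u-s$, $s\ge 0$ (which does remain in $D^{\mathrm{out},\bu}_{\kappa}$, since both inequalities defining the domain relax as $\Re u$ decreases), and invoking the one-variable bound $\int_0^\infty |\cosh(u-s)|^{-n}\,ds\lesssim|\cosh u|^{-(n-1)}$ for $n>1$, together with the $\cosh^{-m}$ weight on the far part, gives item (1); Fourier summation as in~\eqref{norm2} gives item (3); and using $\Lout\G^*(g)=g$ together with $\partial_\theta\mapsto i\ell$ to write $\partial_u\G^{*,[\ell]}(g)=g^{[\ell]}-\tfrac{i\omega\ell}{\varepsilon}\G^{*,[\ell]}(g)$, so that the $\varepsilon/|\ell|$ of item (2) cancels the $\ell/\varepsilon$ coming from $\tfrac1\varepsilon\partial_\theta$, is exactly the clean way to close item (4). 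Note that the paper itself gives no argument here; it simply cites~\cite{B06}.

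The genuine gap is in item (2), and it is the load-bearing step (both the ``moreover'' clause of (3) and all of (4) rest on it). The iterated integration by parts you propose produces a series that does not converge, and it is not merely a question of taking $\kappa$ large. Cauchy's inequality gives $|(g^{[\ell]})^{(j)}(u)|\le \tfrac{j!}{r^{j}}\sup_{|v-u|\le r}|g^{[\ell]}(v)|$, so the ratio of the $(j+1)$-th to the $j$-th term of your series is $\tfrac{\varepsilon(j+1)}{\omega|\ell|\,r}$. Near $\pm i\pi/2$ the best available radius is $r\sim\kappa\varepsilon$, making the ratio $\sim\tfrac{j+1}{|\ell|\kappa}$, which exceeds $1$ as soon as $j\gtrsim|\ell|\kappa$; there is no geometric convergence for any fixed $\kappa$. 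More seriously, $r$ is the distance from $u$ to $\partial D^{\mathrm{out},\bu}_{\kappa}$, which tends to zero along the entire \emph{straight} part of the boundary, not only at the corners, so already the bound for $(g^{[\ell]})'$ is not uniform over the domain, and even a single integration by parts cannot be closed this way. Finally, the statement is claimed for every $\kappa\ge 1$, so ``choose $\kappa$ large enough'' is not available in this lemma.

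The way item (2) is actually obtained in the references the paper relies on is by contour rotation rather than by differentiating $g$. Since for $\ell\neq 0$ the exponent $i\omega\ell(\tau-u)/\varepsilon$ acquires a strictly negative real part along a ray $\tau=u+\rho e^{i\phi}$ with $\ell\sin\phi>0$, and since $D^{\mathrm{out},\bu}_{\kappa}$ is a wedge opening to the left with apertures $\slope$ above and $\hslope$ below, one may fix $\phi\in(\pi-\slope,\pi)$ when $u$ is in the upper component and $\ell>0$ (and $\phi\in(\pi,\pi+\hslope)$ when $\ell<0$, swapping roles in the lower component). This ray stays inside the domain, and along it $|e^{i\omega\ell(\tau-u)/\varepsilon}|=e^{-\omega|\ell|\rho|\sin\phi|/\varepsilon}$, so $\int_0^\infty e^{-\omega|\ell|\rho|\sin\phi|/\varepsilon}\,d\rho=\tfrac{\varepsilon}{\omega|\ell|\,|\sin\phi|}$ delivers the sharp $\varepsilon/|\ell|$ factor without ever taking a derivative of $g$; moreover an elementary computation shows $\min_{\rho\ge 0}|\tau-i\tfrac\pi2|\ge c\,|u-i\tfrac\pi2|$ for a fixed $c>0$ depending only on $\slope,\hslope$, so $|\cosh\tau|\gtrsim|\cosh u|$ along the ray and no power of $\cosh$ is lost, giving $|\G^{\bu,[\ell]}(g)|^*_{n,m}\lesssim\tfrac{\varepsilon}{|\ell|}|g^{[\ell]}|^*_{n,m}$ with a constant independent of $\kappa$ and $\varepsilon$, exactly as stated.
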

\begin{proof}
We refer to~\cite{B06} for the proof. 
\end{proof}

\subsection{Hamilton-Jacobi equation as a fixed point}\label{sec:Fixed_point_outer}

Using the operator $\G^*$ in~\eqref{def:Gouter}, we rewrite the Hamilton-Jacobi equation~\eqref{HJpolarcoordinatesT1} as
\begin{equation}
    \label{eq:HJ_fixed_point}
T_1 = \G^* \circ \Fout (T_1).
\end{equation}
with $*=\bu, \bs$. In this section, we verify that the right-hand side of the latter is a contraction in a ball of radius $\Oo(\varepsilon^2)$ of $\mathcal{\tilde X}_{5,6,\sigma}^*$,  giving rise to two solutions, $T_1^*$, with $* = \bu, \bs$. We split the proof of this claim into several technical lemmas.
\begin{lemma}
\label{lem:F_at_0} There exist $\kappa_0 \ge 1$, $0 < \varepsilon_0<1$ and a constant $c_0>0$ such that,
for any  $\kappa \ge \kappa_0$ and $0<\varepsilon\le\varepsilon_0$, 
\begin{equation*}
\left|\G \circ \Fout(0) \right|^*_{5,6,\sigma} \le c_0 \varepsilon^2
\end{equation*}
with $* = \bu, \bs$.
\end{lemma}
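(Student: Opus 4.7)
The plan is to compute $\Fout(0)$ explicitly, bound it in an appropriate $\mathcal{X}^*_{n,m,\sigma}$ norm, and then apply Lemma~\ref{PropG}. Setting $T_1=0$ in the definition~\eqref{def:operator_F} kills every term that carries a derivative of $T_1$, leaving
\[
\Fout(0)(u,\theta)=\varepsilon^{-4}\,\tilde H_1\bigl(\varepsilon^2\delta_0(u,\theta),\,\varepsilon\gamma_0(u,\theta);\varepsilon\bigr).
\]
The explicit expressions $r(u)=1/\cosh u$ and $R(u)=\sinh u/\cosh^2 u$ from Lemma~\ref{lem:homoclinic} show that $r$ has a simple pole and $R$ a double pole at $u=\pm i\pi/2$, and both decay like $e^{-|\Re u|}$ at real infinity. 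This gives $|\gamma_0|^*_{1,1,\sigma}\le C$ and $|\delta_0|^*_{2,1,\sigma}\le C$ uniformly in $\varepsilon$.

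Since $\tilde H_1(x,y;\varepsilon)=\Oo_6(x,y)$ is jointly analytic at the origin (Proposition~\ref{prop:main_rescaling}), Cauchy estimates yield a Taylor expansion
\[
\tilde H_1(x,y;\varepsilon)=\sum_{|a|+|b|\ge 6}h_{a,b}(\varepsilon)\,x^a y^b,\qquad |h_{a,b}(\varepsilon)|\le C\rho^{-(|a|+|b|)},
\]
for some $\rho>0$. On the domains defining the norms, $|\varepsilon^2\delta_0|\lesssim\kappa^{-2}$ and $|\varepsilon\gamma_0|\lesssim\kappa^{-1}$ near $\pm i\pi/2$, while both are $\Oo(\varepsilon)$ at real infinity, so for $\kappa$ large and $\varepsilon$ small, $(\varepsilon^2\delta_0,\varepsilon\gamma_0)$ stays uniformly inside the polydisk of convergence.

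Each monomial is then estimated in $\mathcal{X}^*_{6,6,\sigma}$. By the algebra property of Lemma~\ref{LemmaPropNormOut1}, $\delta_0^a\gamma_0^b\in\mathcal{X}^*_{2|a|+|b|,\,|a|+|b|,\,\sigma}$ with norm bounded by $C^{|a|+|b|}$, while the prefactor $\varepsilon^{-4}(\varepsilon^2)^{|a|}\varepsilon^{|b|}=\varepsilon^{2|a|+|b|-4}$ is $\varepsilon^2$ at minimum, attained at $(|a|,|b|)=(0,6)$. Reducing $m$ down to $6$ is free (claim~3 of Lemma~\ref{LemmaPropNormOut1}); reducing $n$ from $2|a|+|b|$ down to $6$ costs a factor $(\varepsilon\kappa)^{-(2|a|+|b|-6)}$ (claim~2). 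The net contribution of each term to the $\mathcal{X}^*_{6,6,\sigma}$ norm is therefore
\[
\varepsilon^{2|a|+|b|-4}\cdot(\varepsilon\kappa)^{-(2|a|+|b|-6)}=\varepsilon^2\,\kappa^{-(2|a|+|b|-6)},
\]
and the geometric decay of the $h_{a,b}$ renders the sum convergent for $\kappa\ge\kappa_0$. This yields $|\Fout(0)|^*_{6,6,\sigma}\le c\,\varepsilon^2$, and finally Lemma~\ref{PropG}(3) (applicable since $n=6>1$) gives $|\G^*\circ\Fout(0)|^*_{5,6,\sigma}\le C\varepsilon^2$, which is the desired bound.

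The step is essentially a bookkeeping exercise. The only genuinely delicate point is the precise matching between the $\varepsilon$-prefactor and the cost of reducing the $n$-index: the exponent $2|a|+|b|-6$ paid to lower $n$ to $6$ is always exactly two less than the exponent $2|a|+|b|-4$ of $\varepsilon$, leaving the desired $\varepsilon^2$ together with positive powers of $\kappa^{-1}$ that absorb the combinatorial growth of the Taylor coefficients. This balance is what singles out $(n,m)=(6,6)$ as the natural target space, exactly one unit above the $(5,6)$ threshold at which $\G^*$ gains a degree of decay and at which the fixed-point scheme of Section~\ref{sec:Fixed_point_outer} is set up.
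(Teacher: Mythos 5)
Your proof is correct and follows essentially the same route as the paper's: you identify $\Fout(0)=\varepsilon^{-4}\tilde H_1(\varepsilon^2\delta_0,\varepsilon\gamma_0;\varepsilon)$, bound the homoclinic in the weighted norms, exploit $\tilde H_1=\Oo_6$, and close with Lemma~\ref{PropG}; the paper simply compresses the monomial bookkeeping you work out into the phrase ``the claim follows immediately.'' One small observation: the paper's proof writes $|\varepsilon^2\delta_0|^*_{2,2,\sigma}\lesssim\varepsilon^2$, but since $|R(u)\cosh^2u|=|\sinh u|$ is unbounded at real infinity this should be the $(2,1)$-norm, exactly as you use; the discrepancy is harmless because the $m$-index only needs to reach $6$ after taking sixth-and-higher powers.
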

\begin{proof}
By Lemma~\ref{lem:homoclinic}, each component of $\gamma_0$ and $\delta_0$ satisfy
\[
| \varepsilon \gamma_0|_{1,1,\sigma}^* \lesssim \varepsilon, \qquad | \varepsilon^2 \delta_0 |_{2,2,\sigma}^* \lesssim \varepsilon^2.
\]
Then, since, in view of~\eqref{def:operator_F},
\[
\Fout(0) = \varepsilon^{-4} \tilde H_1 (\varepsilon^2\delta_0, \varepsilon \gamma_0; \varepsilon),
\]
using the fact that $\tilde H_1(x,y; \varepsilon) = \Oo_6(x,y)$ uniformly in $\varepsilon$ (see Proposition \ref{prop:main_rescaling}), and Lemma \ref{PropG}, the claim follows immediately.
\end{proof}

\begin{lemma}
\label{lem:d_gamma_0_inverse_transpose_times_gradient_T}
For any $\kappa >0$, if $T \in \mathcal{\tilde X}_{5,6,\sigma}^*$, with $*= \bu$, $\bs$,
\[
| (D\, \gamma_0)^{-\top}\nabla T|_{4,5,\sigma}^*  \lesssim (1+\kappa^{-1}) \lfloor T \rfloor_{5,6,\sigma}^*.
\]
\end{lemma}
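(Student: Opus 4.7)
The plan is a direct bookkeeping argument using the product and inclusion rules of Lemma~\ref{LemmaPropNormOut1}, applied to the explicit matrix in~\eqref{eq:Dgamma0_inverse_and_transpose}. Each component of $(D\gamma_0)^{-\top}\nabla T$ is a linear combination of the four scalar products
\[
\frac{\cos\theta}{R(u)}\partial_u T,\quad \frac{\sin\theta}{R(u)}\partial_u T,\quad \frac{\cos\theta}{r(u)}\partial_\theta T,\quad \frac{\sin\theta}{r(u)}\partial_\theta T,
\]
so it is enough to control each of them in $\mathcal{X}^*_{4,5,\sigma}$.

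The first step is to record the behaviour of the scalar factors $1/r$ and $1/R$ in the norms~\eqref{norm1}. From $r(u)=1/\cosh u$ and $R(u)=\sinh u/\cosh^2 u$, a direct inspection near $u=\pm i\pi/2$ (where $\cosh u$ has simple zeros and $\sinh u\ne 0$) and at $\Re u\to-\infty$ (where both behave like $\cosh u$, up to a bounded factor) gives
\[
\bigl|{1}/{r}\bigr|^*_{-1,-1}\lesssim 1,\qquad \bigl|{1}/{R}\bigr|^*_{-2,-1}\lesssim 1,
\]
the exponent $-2$ in $1/R$ reflecting the double zero at $\pm i\pi/2$. Simultaneously, $\cos\theta,\sin\theta$ are bounded in $\T_\sigma$, so they belong to $\mathcal{X}^*_{0,0,\sigma}$ with norm $\le \cosh\sigma$.

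Next, from the definition~\eqref{norm3} of $\lfloor\cdot\rfloor^*_{5,6,\sigma}$, if $T\in\mathcal{\tilde X}^*_{5,6,\sigma}$ then
\[
|\partial_u T|^*_{6,6,\sigma}\le \lfloor T\rfloor^*_{5,6,\sigma},\qquad |\partial_\theta T|^*_{6,6,\sigma}\le \varepsilon\lfloor T\rfloor^*_{5,6,\sigma}.
\]
Applying the product bound in Lemma~\ref{LemmaPropNormOut1} to each of the four pieces yields
\[
\Bigl|\tfrac{\cos\theta}{R}\partial_u T\Bigr|^*_{4,5,\sigma},\ \Bigl|\tfrac{\sin\theta}{R}\partial_u T\Bigr|^*_{4,5,\sigma}\lesssim \lfloor T\rfloor^*_{5,6,\sigma},
\]
landing exactly at the target indices $(4,5)$. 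For the $1/r$ pieces, however, the product bound only gives
\[
\Bigl|\tfrac{\cos\theta}{r}\partial_\theta T\Bigr|^*_{5,5,\sigma},\ \Bigl|\tfrac{\sin\theta}{r}\partial_\theta T\Bigr|^*_{5,5,\sigma}\lesssim \varepsilon\lfloor T\rfloor^*_{5,6,\sigma},
\]
so we lie in $\mathcal{X}^*_{5,5,\sigma}$ rather than $\mathcal{X}^*_{4,5,\sigma}$.

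The one nontrivial manoeuvre is to pass from $(5,5)$ to $(4,5)$: here property~2 of Lemma~\ref{LemmaPropNormOut1} applies and costs a factor $1/(\varepsilon\kappa)$, which is precisely cancelled against the prefactor $\varepsilon$ above and produces the announced $\kappa^{-1}$. Collecting the two types of contributions gives
\[
\bigl|(D\gamma_0)^{-\top}\nabla T\bigr|^*_{4,5,\sigma}\lesssim (1+\kappa^{-1})\,\lfloor T\rfloor^*_{5,6,\sigma},
\]
as required. There is no real obstacle beyond keeping the weights $(n,m)$ aligned and remembering that the $\partial_\theta$-term in $\lfloor\cdot\rfloor$ carries a built-in $\varepsilon$; this is exactly what makes the $\kappa^{-1}$ gain possible, and it will be crucial in the forthcoming contraction argument where this lemma is combined with Lemma~\ref{lem:F_at_0} to close the fixed point~\eqref{eq:HJ_fixed_point}.
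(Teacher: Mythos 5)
Your proof is correct and follows essentially the same route as the paper's: both decompose $(D\gamma_0)^{-\top}\nabla T$ into the four scalar products, read off $|1/R|^*_{-2,-1}\lesssim 1$ and $|1/r|^*_{-1,-1}\lesssim 1$ from Lemma~\ref{lem:homoclinic}, land the $1/R\cdot\partial_u T$ pieces directly at indices $(4,5)$ via the product rule, and for the $1/r\cdot\partial_\theta T$ pieces trade the built-in $\varepsilon$ in the $\lfloor\cdot\rfloor$-norm against the $(\varepsilon\kappa)^{-1}$ cost of the index-lowering property to produce the $\kappa^{-1}$ factor. The only difference is cosmetic: the paper lowers the index on $|\partial_\theta T|$ from $(6,6)$ to $(5,6)$ before taking the product, whereas you take the product first and then lower the combined index from $(5,5)$ to $(4,5)$ — these are trivially equivalent.
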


\begin{proof}
By Lemma~\ref{lem:homoclinic},
\[
\left| \frac{1}{R} \cos \theta  \right|_{-2,-1,\sigma}^*, \; \left| \frac{1}{r} \sin \theta  \right|_{-1,-1,\sigma}^* \lesssim 1.
\]
Hence, in view of~\eqref{eq:Dgamma0_inverse_and_transpose}, the first component of 
$(D\, \gamma_0)^{-\top}\nabla T$ satisfies
\[
\begin{aligned}
\left| \frac{1}{R} \cos \theta \, \partial_u T + \frac{1}{r} \sin \theta \, \partial_\theta T\right|_{4,5,\sigma}^* & \lesssim
\left| \partial_u T \right|_{6,6,\sigma}^* + \left| \partial_\theta T\right|_{5,6,\sigma}^*   \le \lfloor T \rfloor_{5,6,\sigma}^* + \frac{1}{\kappa\varepsilon}\left| \partial_\theta T\right|_{6,6,\sigma}^* \\
& \le (1+\kappa^{-1}) \lfloor T \rfloor_{5,6,\sigma}^*.
\end{aligned}
\]
The second component is bounded analogously.
\end{proof}

\begin{lemma}\label{lem:Lipschitz_constant_of_Fout}
Given $\Cout_1 >0$ and $*=\bu, \bs$, there exist $\kappa_0\ge 1, 0 <\varepsilon_0 <1$ and a constant $\Cout_2>0$ such that, for any  $\kappa \ge \kappa_0$ and $0<\varepsilon \le \varepsilon_0$, the functional $\Fout: B_{\Cout_1 \varepsilon^2}\subset \mathcal{\tilde X}_{5,6,\sigma}^* \to \mathcal{X}_{6,6,\sigma}^*$ is Lipschitz with
$\mathrm{lip}\, \Fout \le {\Cout}_2 / \kappa^2$.
\end{lemma}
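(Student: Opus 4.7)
The plan is to estimate, summand by summand in the definition~\eqref{def:operator_F} of $\Fout$, the Lipschitz constant between $T_1,\tilde T_1\in B_{\Cout_1\varepsilon^2}\subset\mathcal{\tilde X}^*_{5,6,\sigma}$ in the target $|\cdot|^*_{6,6,\sigma}$-norm. Three ingredients do all the work: (i) the closed formulas of Lemma~\ref{lem:homoclinic}, which place $r^2\in\mathcal{X}^*_{2,2,\sigma}$, $1/r^2\in\mathcal{X}^*_{-2,-2,\sigma}$ and $1/R^2\in\mathcal{X}^*_{-4,-4,\sigma}$ with norms bounded by absolute constants; (ii) Lemma~\ref{lem:d_gamma_0_inverse_transpose_times_gradient_T}, which controls $(D\gamma_0)^{-\top}\nabla T$ in $\mathcal{X}^*_{4,5,\sigma}$ by $\lfloor T\rfloor^*_{5,6,\sigma}$; and (iii) the product, inclusion and index-lowering rules of Lemma~\ref{LemmaPropNormOut1}, in particular the fact that descending the first index of the norm by $k$ units costs a factor $(\varepsilon\kappa)^{-k}$ whereas descending the second index is free.

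For the three quadratic summands I factorise each difference of squares as $(\partial_\bullet(T_1+\tilde T_1))\,\partial_\bullet(T_1-\tilde T_1)$ and multiply by the corresponding singular factor. A direct count shows that $(1/2R^2)(\partial_u T_1)^2$ lands in $\mathcal{X}^*_{8,8,\sigma}$ with norm $\lesssim\varepsilon^2\lfloor T_1-\tilde T_1\rfloor^*_{5,6,\sigma}$, so the descent to $\mathcal{X}^*_{6,6,\sigma}$ costs $(\varepsilon\kappa)^{-2}$ and produces precisely $O(\kappa^{-2})$; the other two quadratic terms pick up an extra $\varepsilon$ from the inequality $|\partial_\theta T|^*_{6,6,\sigma}\le\varepsilon\lfloor T\rfloor^*_{5,6,\sigma}$ and yield respectively $O(\kappa^{-4})$ and $O(\kappa^{-6})$. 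The linear summand $-\beta\varepsilon r^2\partial_\theta T_1$, which is the least flexible piece, also lands in $\mathcal{X}^*_{8,8,\sigma}$ with norm $\lesssim\varepsilon^2\lfloor T_1-\tilde T_1\rfloor^*_{5,6,\sigma}$ and, after the same descent, contributes again $O(\kappa^{-2})$: this is the term that fixes the final Lipschitz rate.

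For the nonlinear remainder $\varepsilon^{-4}\tilde H_1(\varepsilon^2(\delta_0+(D\gamma_0)^{-\top}\nabla T_1),\varepsilon\gamma_0;\varepsilon)$ I write the difference as
\[
\varepsilon^{-2}\int_0^1 \partial_x\tilde H_1\bigl(\varepsilon^2(\delta_0+(D\gamma_0)^{-\top}\nabla T_s),\varepsilon\gamma_0;\varepsilon\bigr)\,(D\gamma_0)^{-\top}\nabla(T_1-\tilde T_1)\,ds,
\]
with $T_s=\tilde T_1+s(T_1-\tilde T_1)$, and use that $\partial_x\tilde H_1=\mathcal{O}_5(x,y)$ is analytic. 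Each monomial $c_{ab}\,x^a y^b$ of total degree $|a|+|b|\ge 5$ is evaluated through $|\varepsilon^2(\delta_0+(D\gamma_0)^{-\top}\nabla T_s)|^*_{2,2,\sigma}\lesssim\varepsilon^2$ and $|\varepsilon\gamma_0|^*_{1,1,\sigma}\lesssim\varepsilon$; the corresponding contribution lives in $\mathcal{X}^*_{2|a|+|b|+4,2|a|+|b|+5,\sigma}$ with norm $\lesssim\varepsilon^{2|a|+|b|-2}\lfloor T_1-\tilde T_1\rfloor^*_{5,6,\sigma}$, and the descent to $\mathcal{X}^*_{6,6,\sigma}$ kills exactly $\varepsilon^{2|a|+|b|-2}$, leaving $\kappa^{-(2|a|+|b|-2)}\le\kappa^{-3}$. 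Analyticity of $\tilde H_1$ gives Cauchy bounds on the $c_{ab}$, so the series in $(a,b)$ converges and this piece is strictly subleading with respect to the $O(\kappa^{-2})$ contribution identified above.

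The point to watch, and the only real technicality, is the exact bookkeeping between the positive powers of $\varepsilon$ generated by the smallness of $T_i$, by the explicit $\varepsilon$-prefactors and by the $\mathcal{O}_6$-character of $\tilde H_1$, and the negative powers $(\varepsilon\kappa)^{-1}$ paid in Lemma~\ref{LemmaPropNormOut1}(2) when descending the first index of the norm down to $6$. The weights $(5,6)$ chosen for $\lfloor\cdot\rfloor^*_{5,6,\sigma}$ and the radius $\Cout_1\varepsilon^2$ of the ball are calibrated precisely so that, after these cancellations, a bare $\kappa^{-2}$ survives with no residual negative power of $\varepsilon$; this is what lets the threshold $\kappa_0$ be chosen depending only on $\Cout_1$, independently of $\varepsilon$.
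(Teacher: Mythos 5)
Your proposal is correct and follows essentially the same approach as the paper: both proceed term by term through the five summands of $\Fout$, use the product and index-lowering rules of Lemma~\ref{LemmaPropNormOut1} together with Lemma~\ref{lem:d_gamma_0_inverse_transpose_times_gradient_T}, and identify the terms $\frac{1}{2R^2}(\partial_u T)^2$ and $-\beta\varepsilon r^2\partial_\theta T$ as the ones giving the dominant $\kappa^{-2}$ rate. The only cosmetic difference is that you expand $\partial_x\tilde H_1$ monomial by monomial with Cauchy bounds, whereas the paper applies the $\mathcal{O}_5$ property of $\partial_x\tilde H_1$ wholesale (first checking that the argument stays in the domain of analyticity of $\tilde H_1$ via $|(D\gamma_0)^{-\top}\nabla T|^*_{2,2}\lesssim\kappa^{-2}$ — a point you leave implicit); and you place $1/R^2$ in $\mathcal{X}^*_{-4,-4,\sigma}$ where the paper uses the sharper $\mathcal{X}^*_{-4,-2,\sigma}$, though since the second index drops for free this does not affect the estimate.
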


\begin{proof}
We remark that, if $T\in \mathcal{\tilde X}_{n,m,\sigma}^*$, $* =\bu$, $\bs$, then, from the definition of the norm in~\eqref{norm3}
\[
|\nabla T|_{n+1,m,\sigma} \le (1+ \varepsilon) \lfloor T \rfloor_{n,m,\sigma}.
\]

By the definition of $\Fout$ in~\eqref{def:operator_F}, we write
\[
\Fout = \F_1+\F_2+\F_3+\F_4+\F_5,
\]
where
\begin{align}
\label{def:F1}
 \F_1(T) & = \varepsilon^{-4} \tilde H_1 (\varepsilon^2( \delta_0 +(D\, \gamma_0)^{-\top}\nabla T), \varepsilon \gamma_0; \varepsilon),\\
\label{def:F2}
\F_2(T) & = \frac{1}{2r^2}(\partial_\theta T)^2, \\
\label{def:F3}
\F_3(T) & = \alpha \varepsilon^2   (\partial_\theta T)^2, \\
\label{def:F4}
\F_4(T) & = - \beta \varepsilon r^2 \partial_\theta T, \\
\label{def:F5}
\F_5(T) & = \frac{1}{2R^2}(\partial_u T)^2.
\end{align}
We bound each term separately.

We start with $\F_1$. We first remark that, by Lemma~\ref{lem:d_gamma_0_inverse_transpose_times_gradient_T}, if $T\in B_{\Cout_1 \varepsilon^2} \subset \mathcal{\tilde X}_{5,6,\sigma}^*$, with $* = \bu,\bs$,
\[
| (D\, \gamma_0)^{-\top} \nabla T |_{2,2,\sigma}^* \le \frac{1}{\varepsilon^2 \kappa^2} | (D\, \gamma_0)^{-\top} \nabla T |_{4,5,\sigma}^* \lesssim \frac{1}{\kappa^2}.
\]
Hence, $\F_1(T)$ is well defined if $\kappa_0$ is large enough. Moreover, if $T\in B_{\Cout_1 \varepsilon^2} \subset \mathcal{\tilde X}_{5,6,\sigma}^*$, since $\partial_x \tilde H_1(x,y; \varepsilon) = \Oo_5(x,y)$ uniformly in $\varepsilon$,
\[
|\partial_x \tilde H_1(
\varepsilon^2 \delta_0 +\varepsilon^2(D\, \gamma_0)^{-\top}\nabla  T), \varepsilon \gamma_0; \varepsilon)|_{2,1,\sigma}^* \lesssim |(\varepsilon \gamma_0)^5|_{2,1,\sigma}^* \lesssim \frac{1}{(\varepsilon \kappa)^3} |(\varepsilon \gamma_0)^5|_{5,5,\sigma}^* \lesssim \frac{\varepsilon^2}{\kappa^3}.
\]
Then, for any $T, \tilde T \in B_{\Cout_1 \varepsilon^2} \subset \mathcal{\tilde X}_{5,6,\sigma}^*$, by Lemma~\ref{lem:d_gamma_0_inverse_transpose_times_gradient_T}
\begin{multline*}
|\F_1(T) - \F_1(\tilde T)|_{6,6,\sigma}^* \\
\begin{aligned}
= & \varepsilon^{-2} \Big|\int_0^1 \partial_x \tilde H_1(
\varepsilon^2 \delta_0 +\varepsilon^2(D\, \gamma_0)^{-\top}(\nabla \tilde T+ \tau(\nabla T- \nabla \tilde T)), \varepsilon \gamma_0; \varepsilon)\,d\tau \\
& \times (D\, \gamma_0)^{-\top}(\nabla T- \nabla \tilde T)\Big|_{6,6,\sigma}^* \\
& \lesssim \frac{1}{\kappa^3} |(D\, \gamma_0)^{-\top}(\nabla T- \nabla \tilde T)|_{4,5,\sigma}^* \\
& \lesssim \frac{1}{\kappa^3} \lfloor T-\tilde T\rfloor_{5,6,\sigma}^*.
\end{aligned}
\end{multline*}
This proves the claim for $\F_1$.

We deal with $\F_2$. For any $T, \tilde T \in B_{\Cout_1 \varepsilon^2} \subset \mathcal{\tilde X}_{5,6,\sigma}^*$,
\begin{align*}
|\F_2(T) - \F_2(\tilde T)|_{6,6,\sigma}^* = & \left|\frac{1}{r^2} (\partial_\theta T+\partial_\theta \tilde T) (\partial_\theta T-\partial_\theta\tilde T)  \right|_{6,6,\sigma}^* \\
& \le \varepsilon \left|\frac{1}{r^2} (\partial_\theta T+\partial_\theta \tilde T)
\right|_{0,0,\sigma}^* \lfloor T-\tilde T\rfloor_{5,6,\sigma}^* \\
& \lesssim \varepsilon \left|\frac{1}{r^2}\right|_{-2,-2,\sigma}^* \left|\partial_\theta T+\partial_\theta \tilde T
\right|_{2,2,\sigma}^* \lfloor T-\tilde T\rfloor_{5,6,\sigma}^* \\
& \lesssim \frac{1}{\varepsilon^3 \kappa^4} \left|\partial_\theta T+\partial_\theta \tilde T
\right|_{6,6,\sigma}^* \lfloor T-\tilde T\rfloor_{5,6,\sigma}^* \\
& \lesssim \frac{1}{\kappa^4}  \lfloor T-\tilde T\rfloor_{5,6,\sigma}^*,
\end{align*}
which proves the claim for $\F_2$.

In the case of $\F_3$, for any $T, \tilde T \in B_{\Cout_1 \varepsilon^2} \subset \mathcal{\tilde X}_{5,6,\sigma}^*$,
\begin{align*}
|\F_3(T) - \F_3(\tilde T)|_{6,6,\sigma}^* = & |\alpha|\varepsilon^2 | (\partial_\theta T+\partial_\theta \tilde T) (\partial_\theta T-\partial_\theta\tilde T)  |_{6,6,\sigma}^* \\
& \lesssim  \varepsilon^3 | \partial_\theta T+\partial_\theta \tilde T|_{0,0,\sigma}^* \lfloor T-\tilde T\rfloor_{5,6,\sigma}^* \\
& \lesssim \frac{1}{\varepsilon^3 \kappa^6}  | \partial_\theta T+\partial_\theta \tilde T|_{6,6,\sigma}^* \lfloor T-\tilde T\rfloor_{5,6,\sigma}^* \\
& \lesssim \frac{1}{\kappa^6}  \lfloor T-\tilde T\rfloor_{5,6,\sigma}^*,
\end{align*}
which proves the claim for $\F_3$.

In the case of $\F_4$, for any $T, \tilde T \in B_{\Cout_1 \varepsilon^2} \subset \mathcal{\tilde X}_{5,6,\sigma}^*$,
\begin{align*}
|\F_4(T) - \F_4(\tilde T)|_{6,6,\sigma}^* = & |\beta|\varepsilon | r^2 (\partial_\theta T-\partial_\theta\tilde T)  |_{6,6,\sigma}^* \\
& \lesssim  \varepsilon^2 |r^2|_{0,0,\sigma}^* \lfloor T-\tilde T\rfloor_{5,6,\sigma}^* \\
& \lesssim \frac{1}{\kappa^2} |r^{2}|_{2,2,\sigma}^* \lfloor T-\tilde T\rfloor_{5,6,\sigma}^* \\
& \lesssim \frac{1}{\kappa^2}  \lfloor T-\tilde T\rfloor_{5,6,\sigma}^*,
\end{align*}
which proves the claim for $\F_4$.

Finally, in the case of $\F_5$, for any $T, \tilde T \in B_{\Cout_1 \varepsilon^2} \subset \mathcal{\tilde X}_{5,6,\sigma}^*$,
\begin{align*}
|\F_5(T) - \F_5(\tilde T)|_{6,6,\sigma}^* = & \left|\frac{1}{R^2} (\partial_u T+\partial_u \tilde T) (\partial_u T-\partial_u \tilde T)  \right|_{6,6,\sigma}^* \\
& \le  \left|\frac{1}{R^2} (\partial_u T+\partial_u \tilde T)
\right|_{0,0,\sigma}^* \lfloor T-\tilde T\rfloor_{5,6,\sigma}^* \\
& \lesssim  \left|\frac{1}{R^2}\right|_{-4,-2,\sigma}^* \left|\partial_u T+\partial_u \tilde T
\right|_{4,2,\sigma}^* \lfloor T-\tilde T\rfloor_{5,6,\sigma}^* \\
& \lesssim \frac{1}{\varepsilon^2\kappa^2}\left|\partial_u T+\partial_u \tilde T
\right|_{6,6,\sigma}^* \lfloor T-\tilde T\rfloor_{5,6,\sigma}^* \\
& \lesssim \frac{1}{\kappa^2}  \lfloor T-\tilde T\rfloor_{5,6,\sigma}^*.
\end{align*}
This finishes the proof of the lemma, taking $\kappa_0$ large enough.
\end{proof}

Let $\Cout_0$ be the constant introduced in Lemma \ref{lem:F_at_0}. The following proposition summarizes the existence and properties of the solutions of the Hamilton-Jacobi equation~\eqref{HJpolarcoordinatesT1}.

\begin{proposition}
\label{prop:manifolds_in_the_outer_domain}
There exist $\kappa_0 \ge 1$, $0 <\varepsilon_0<1$ and a constant $\Cout_3>0$ such that,
for any  $\kappa \ge \kappa_0$ and $0<\varepsilon\le\varepsilon_0$, $\G^* \circ \Fout: B_{2\Cout_0 \varepsilon^2}\subset \mathcal{\tilde X}_{5,6,\sigma}^* \to B_{2\Cout_{0} \varepsilon^2}$ is a contraction with Lipschitz constant
$\mathrm{lip}\, \G^* \circ \Fout \le \Cout_3 / \kappa^2$, for $* = \bu$, $\bs$. Hence,
the Hamilton-Jacobi equation~\eqref{HJpolarcoordinatesT1} admits two solutions,
$T_1^* \in \mathcal{\tilde X}_{5,6,\sigma}^*$, $* = \bu$, $\bs$, such that
\[
\lfloor T_1^* \rfloor_{5,6,\sigma}^* \le 2\Cout_0{\varepsilon^2}.
\]
\end{proposition}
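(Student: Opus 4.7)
The plan is to invoke the Banach fixed point theorem for the operator $\mathcal{N}^* := \G^* \circ \Fout$ acting on the closed ball $B_{2\Cout_0 \varepsilon^2} \subset \tilde{\mathcal{X}}_{5,6,\sigma}^*$, for $*=\bu,\bs$. The two inputs to this argument are already isolated: Lemma \ref{lem:F_at_0} provides the seed estimate $\lfloor \mathcal{N}^*(0)\rfloor_{5,6,\sigma}^* \le \Cout_0 \varepsilon^2$ (upgrading the stated $|\cdot|_{5,6,\sigma}^*$ bound to the full $\lfloor\cdot\rfloor_{5,6,\sigma}^*$ bound is automatic from item (4) of Lemma \ref{PropG} applied to $\Fout(0) \in \mathcal{X}^*_{6,6,\sigma}$, which the proof of Lemma \ref{lem:F_at_0} produces along the way), while Lemma \ref{lem:Lipschitz_constant_of_Fout} supplies Lipschitz control of $\Fout : B_{\Cout_1 \varepsilon^2} \subset \tilde{\mathcal{X}}_{5,6,\sigma}^* \to \mathcal{X}_{6,6,\sigma}^*$ with constant $\Cout_2/\kappa^2$, for any preassigned radius parameter $\Cout_1 > 0$.

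First, I would fix $\Cout_1 = 2\Cout_0$ so that Lemma \ref{lem:Lipschitz_constant_of_Fout} applies on the ball where we intend to work. Composing with item (4) of Lemma \ref{PropG}, which yields $\lfloor \G^*(g) \rfloor_{5,6,\sigma}^* \lesssim |g|_{6,6,\sigma}^*$, one obtains a Lipschitz constant of the form $\Cout_3/\kappa^2$ for $\mathcal{N}^* : B_{2\Cout_0 \varepsilon^2} \to \tilde{\mathcal{X}}_{5,6,\sigma}^*$, with some $\Cout_3 > 0$ independent of $\kappa$ and $\varepsilon$. Second, the self-map property follows by the triangle inequality: for $T \in B_{2\Cout_0 \varepsilon^2}$,
\[
\lfloor \mathcal{N}^*(T)\rfloor_{5,6,\sigma}^* \le \lfloor \mathcal{N}^*(0) \rfloor_{5,6,\sigma}^* + \frac{\Cout_3}{\kappa^2} \lfloor T \rfloor_{5,6,\sigma}^* \le \Cout_0 \varepsilon^2 + \frac{2\Cout_0 \Cout_3}{\kappa^2}\varepsilon^2.
\]
Choosing $\kappa_0$ large enough so that $\Cout_3/\kappa_0^2 \le 1/2$ simultaneously secures invariance of the ball and contractivity of $\mathcal{N}^*$ with rate $1/2$. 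Banach's fixed point theorem in the complete metric space $B_{2\Cout_0 \varepsilon^2}$ then yields a unique fixed point $T_1^* \in \tilde{\mathcal{X}}_{5,6,\sigma}^*$ with $\lfloor T_1^* \rfloor_{5,6,\sigma}^* \le 2\Cout_0 \varepsilon^2$. Since $\G^*$ is by construction a right inverse of $\Lout$, the equation $\mathcal{N}^*(T_1^*)=T_1^*$ is precisely \eqref{HJpolarcoordinatesT1}, and the boundary conditions \eqref{boundaryHJpolarcoordinates} are built into the norm through the $\cosh^6 u$ decay as $\Re u \to \mp \infty$.

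There is no real obstacle remaining at this stage: the true technical effort has been absorbed into Lemma \ref{lem:Lipschitz_constant_of_Fout}, which balances the polynomial decay exponents around the singularities $\pm i \pi/2$ against the factors of $(\varepsilon \kappa)^{-1}$ coming from the distance of the outer domains to those poles, and into the smoothing property (4) of Lemma \ref{PropG}. I would only make sure to select $\varepsilon_0$ small enough that every preceding lemma applies simultaneously, and that the various absorbed constants remain uniform in $(\kappa,\varepsilon)$ throughout the stated range. Finally, analyticity of $T_1^*$ in $(u,\theta)$ is inherited from the fact that $\tilde{\mathcal{X}}_{5,6,\sigma}^*$ consists of analytic functions and is preserved by uniform limits, so the fixed point is analytic on $D^{\mathrm{out},*}_\kappa \times \T_\sigma$ as required.
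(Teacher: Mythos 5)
Your argument is correct and follows the same route the paper takes; the paper's own proof is one line, stating that the proposition is an immediate consequence of Lemmas~\ref{PropG}, \ref{lem:F_at_0} and~\ref{lem:Lipschitz_constant_of_Fout}, and your write-up is exactly the standard Banach fixed-point unpacking of that claim. The one point where you go beyond what the paper says explicitly --- upgrading the $|\cdot|^*_{5,6,\sigma}$ bound of Lemma~\ref{lem:F_at_0} to the $\lfloor\cdot\rfloor^*_{5,6,\sigma}$ bound via item~(4) of Lemma~\ref{PropG} applied to $\Fout(0)\in\mathcal{X}^*_{6,6,\sigma}$ --- is a genuine small gap in the paper's statement of Lemma~\ref{lem:F_at_0}, and you fill it in the right way.
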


\begin{proof}
It is an immediate consequence of Lemmas~\ref{PropG},~\ref{lem:F_at_0} and~\ref{lem:Lipschitz_constant_of_Fout}.
\end{proof}

\section{Extension of the invariant manifolds} \label{ext:proof} 

Along this section, we will use the notation and domains introduced in Section~\ref{sec:furtherextension} without explicit mention. We denote by $\diagonal_{\alpha, \beta}$ the diagonal $2\times2$ matrix having $\alpha$ and $\beta$ as entries. Moreover, for all $\theta \in \T$, we recall definition~\eqref{def:in:Rtheta} of $R_\theta$ 
\begin{equation}\label{def:R_theta}
    R_\theta =\begin{pmatrix} \cos \theta & -\sin \theta \\ \sin \theta & \cos \theta \end{pmatrix}.
\end{equation}
For a given $\varrho>0$, $B_\varrho$ stands for the ball of radius $\varrho$ centered at the origin. 

We assume that all the parameters in the statement of the Propositions~\eqref{prop:ext1}, \eqref{prop:ext2}, and $\eqref{prop:ext3}$ are fixed. Furthermore, we denote by $\varepsilon_0$ and $\kappa_0$ two threshold values, which will be assumed to be suitably small and large, respectively, throughout this section. Similarly to Section \ref{sec:proof:outer}, $u \lesssim v$ denotes $u \le C v$ for a constant $C>0$.

\subsection{From graph to flow parametrization. }\label{sec:ext1:proof}  
In this section, we prove Proposition~\ref{prop:ext1}. 
We observe that equation~\eqref{ext1:inv_eq} has four components, and the unstable invariant manifold $\Gamma^\bu$ is Lagrangian. Hence, thanks to the symplectic character of the vector field $X_\mathcal{H}$, if two of these equations are satisfied, then the other two are satisfied as well. We choose to analyze the equations corresponding to the third and fourth components. 

We first claim that $f$ satisfies equation~\eqref{ext1:inv_eq} if and only if
\begin{equation}\label{ext1:inv_eq2}
    \Lext f = \Fext(f)
\end{equation}
where, 
\begin{equation}\label{defLF_extension}
\begin{aligned}
    \Lext f &= \partial_v f + {\omega \over \varepsilon} \partial_\varphi f, \\ \Fext(f) &= -\left(\diagonal_{{1 \over \dot r^2},{1 \over r^2}} \nabla T_1^{\bu} + \diagonal_{{1 \over \dot r},{1 \over r}}R_{-\theta} \partial_x H_1 \circ \Gamma^{\bu}\right) \circ \left(\mathrm{id} + f\right)
    \end{aligned}
\end{equation}
with $H_1$ defined in~\eqref{def:H0H1}, $T_1^{\bu}$ given by Theorem~\ref{thm:outer} and $\Gamma^\bu$ defined in~\eqref{thm:def_Gamma}. From now on we dropped down the apex $\bu$ in $T_1^{\bu}, \Gamma^\bu$. Indeed, first, a straightforward computation shows that equation~\eqref{ext1:inv_eq} is equivalent to 
\begin{equation*}
    \left(R_\theta \diagonal_{\dot r, r}\right) \circ \left(\mathrm{id} + f\right) \left(\begin{pmatrix}
        1 \\ {\omega \over \varepsilon} \end{pmatrix} + \Lext f\right) = - \partial_x \mathcal H \circ \Gamma \circ \left(\mathrm{id} + f\right) 
\end{equation*}
with $\mathcal{H}$ the Hamiltonian defined by~\eqref{def:rescaledHamiltonian}. Multiplying both sides of the latter by the inverse of the matrix $R_\theta \diagonal_{\dot r, r}$ one has that 
\begin{equation}\label{ext1:proof_inv_eq}
\Lext f = - \begin{pmatrix} 1 \\ {\omega \over \varepsilon} \end{pmatrix} - \left(\diagonal_{{1 \over \dot r},{1 \over r}}R_{-\theta} \partial_x \mathcal H \circ \Gamma\right)\circ \left(\mathrm{id} + f\right).
\end{equation}
We can rewrite $\partial_x \mathcal H \circ \Gamma$ as
\begin{equation*}
    \partial_x \mathcal H \circ \Gamma = \partial_x H_0 \circ \Gamma_0 + \partial_x H_0 \circ \Gamma - \partial_x H_0 \circ \Gamma_0 +\partial_x H_1 \circ \Gamma
\end{equation*}
where we refer to Lemma \ref{lem:homoclinic} for the definition of $\Gamma_0 = (\delta_0,\gamma_0)^{\top}$. Using definition~\eqref{def:H0H1} of $H_0,H_1$, and Lemma~\ref{lem:homoclinic}, one can easily see that 
\begin{equation*}
\begin{aligned}
   &- \diagonal_{{1 \over \dot r},{1 \over r}}R_{-\theta} \partial_x H_0 \circ \Gamma_0 = \begin{pmatrix} 1 \\ {\omega \over \varepsilon} \end{pmatrix}, \\
   &\partial_x H_0 \circ \Gamma - \partial_x H_0 \circ \Gamma_0 +\partial_x H_1 \circ \Gamma = (D\, \gamma_0)^{-\top} \nabla T_1 + \partial_x H_1 \circ \Gamma.
   \end{aligned}
\end{equation*}
Observing that $(D\, \gamma_0)^{-\top}   = R_\theta \diagonal_{{1 \over \dot r},{1 \over r}}$, and replacing the latter into~\eqref{ext1:proof_inv_eq}, we have the claim. 

We find a solution of equation~\eqref{ext1:inv_eq2} by the fixed point theorem. We first rewrite equation~\eqref{ext1:inv_eq2} as a fixed point equation using a suitable right inverse $\mathcal{G}$ of the operator $\Lext$ (see Section \ref{sec:ext_prel}). Then, we analyze the functional $\Fext$ and we prove that $\mathcal{G} \circ \Fext$ is a contraction defined on a suitable closed subset of a special Banach space (see Section \ref{sec:ext_fixed_point}).

\subsubsection{Preliminaries and technical lemma} \label{sec:ext_prel}
We consider the following Banach space
\begin{equation}\label{def:ext1_BS}
\begin{aligned}
    \mathcal{Y}^{\mathrm{gf}} &= \Big\{f : \Dgf_{2\kappa} \times \T_{3\sigma \over 4} \to \C : \mbox{$f$ is analytic and}\\
    & \hspace{48mm} |f|_\infty = \sup_{(v, \varphi) \in \Dgf_{2\kappa} \times \T_{3\sigma \over 4}} |f(v, \varphi)| < \infty\Big\}.
\end{aligned}
\end{equation}
We will use the same notation for vector-valued functions and matrices. That is, in the case of functions $f=(f_1,...,f_a)^\top:\Dgf_{2\kappa} \times \T_{3\sigma \over 4} \to \C^a$, we set $|f|_\infty = \max_{1 \le i \le a}|f_i|_\infty$. On the other hand, when we consider matrices $M=\{M_{ij}\}_{1 \le i \le i, 1 \le j \le b }:\Dgf_{2\kappa} \times \T_{3\sigma \over 4} \to \mathcal{M}_{a \times b}(\C)$, we det $|M|_\infty = \max_{1 \le i \le a, 1 \le j \le b}|M_{ij}|_\infty$. We recall that $\mathcal{M}_{a \times b}(\C)$ denotes the space of $a \times b$ matrices with complex coefficients. 

Given $g : \Dgf_{2\kappa} \times \T_{3\sigma \over 4} \to \C$, we look for solutions of the following equation
\begin{equation*}
    \Lext(\phi) = g.
\end{equation*}
Writting $g = \sum_{\ell \in \Z} g^{[\ell]}(v) e^{i\ell\theta}$, with $g^{[\ell]}(v)$ the Fourier coefficient of $g$, a formal solution $\mathcal{G}(g) = \sum_{\ell \in \Z} \mathcal{G}^{[\ell]}(g)(v) e^{i\ell\theta}$ of $\Lext(\phi)=g$ is
\begin{equation}\label{def:right_inverse_ext1}
    \begin{aligned}
       &\mathcal{G}^{[\ell]}(g)(v) = \int_{v_0}^v e^{{\omega \over \varepsilon}i \ell (\tau-v)}g^{[\ell]}(\tau)d\tau \hspace{10mm} \mbox{if $\ell >0$}\\ 
       &\mathcal{G}^{[0]}(g)(v) = \int^{v}_\rho g^{[0]}(\tau)d\tau \hspace{24mm} \mbox{if $\ell =0$}\\ 
       &\mathcal{G}^{[\ell]}(g)(v) = \int_{\bar v_0}^v e^{{\omega \over \varepsilon}i \ell (\tau-v)}g^{[\ell]}(\tau)d\tau \hspace{10mm} \mbox{if $\ell <0$}
    \end{aligned}
\end{equation}
where $v_0$ and $\rho$ are the topmost and leftmost points of the domain $\Dgf_{2\kappa}$ (see Figure~\ref{fig:ext_domain_step1and2}). 
\begin{lemma}\label{lem:ext1_right_inv}
    For all $g \in  \mathcal{Y}^{\mathrm{gf}}$, there exists a positive constant $C$ such that 
    \begin{equation*}
        |\mathcal{G}(g)|_\infty \le C|g|_{\infty}. 
    \end{equation*}
\end{lemma}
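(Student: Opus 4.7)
The plan is to estimate $\mathcal{G}(g)$ Fourier mode by Fourier mode, choosing for each $\ell$ a deformed integration path in $\Dgf_{2\kappa}$ on which the oscillatory factor $e^{i(\omega/\varepsilon)\ell(\tau-v)}$ can be controlled, and then to reassemble the series.

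First I would dispose of the zero mode. Since the integral $\mathcal{G}^{[0]}(g)(v)=\int_\rho^v g^{[0]}(\tau)\,d\tau$ lies in a bounded planar domain $\Dgf_{2\kappa}$, one may choose an integration path from $\rho$ to $v$ whose length is bounded above by $\mathrm{diam}(\Dgf_{2\kappa})$, a quantity controlled uniformly in $\varepsilon,\kappa$ by the fixed parameters $d_2,d_3,\slope,\hslope$. This yields $|\mathcal{G}^{[0]}(g)(v)|\lesssim |g^{[0]}|_\infty\le |g|_\infty$.

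For $\ell>0$, I would deform the path from $v_0$ (the topmost corner of $\Dgf_{2\kappa}$) to $v$ into a piecewise linear path, first translating horizontally to the vertical line $\mathrm{Re}\,\tau=\mathrm{Re}\,v$ and then descending vertically to $v$. The geometry of $\Dgf_{2\kappa}$, together with the fact that $v_0$ realizes the maximum of $\mathrm{Im}$, guarantees that this path stays in the domain and that $\mathrm{Im}(\tau-v)\ge 0$ along it, so that $|e^{i(\omega/\varepsilon)\ell(\tau-v)}|\le 1$. Parameterizing the vertical segment as $\tau=v+is$ and integrating gives
\begin{equation*}
|\mathcal{G}^{[\ell]}(g)(v)| \;\le\; \int_{0}^{L} e^{-(\omega/\varepsilon)\ell s}\,|g^{[\ell]}(\tau(s))|\,ds \;+\; (\text{horizontal contribution}) \;\lesssim\; \frac{\varepsilon}{\omega\ell}\,\sup_{\Dgf_{2\kappa}}|g^{[\ell]}|,
\end{equation*}
where the horizontal contribution is also bounded by the same quantity because the exponential factor is uniformly small there (the segment is at imaginary part $\mathrm{Im}\,v_0 \ge \mathrm{Im}\,v + \mathrm{const}$, away from the upper portion of $\Dgf_{2\kappa}$). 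A symmetric argument handles $\ell<0$ using the bottom vertex $\bar v_0$.

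To reassemble, I would use the Cauchy estimate $\sup_{\Dgf_{2\kappa}}|g^{[\ell]}|\le |g|_\infty e^{-|\ell|\,3\sigma/4}$ obtained by shifting the $\varphi$-contour to the boundary of $\T_{3\sigma/4}$. Plugging this into the pointwise expansion of $\mathcal{G}(g)$ and summing, one controls the non-zero modes by a series of the form $\sum_{\ell\ne 0}\frac{\varepsilon}{|\ell|}e^{-|\ell|(3\sigma/4-|\mathrm{Im}\varphi|)}$, which is finite on any strip strictly inside $\T_{3\sigma/4}$. The main obstacle is precisely this borderline summation at $|\mathrm{Im}\,\varphi|=3\sigma/4$, where the naive absolute bound $\sum 1/|\ell|$ diverges logarithmically; I would handle it by decomposing $g=g^{[0]}+g^{(+)}+g^{(-)}$ into analytic projections and observing that for each sign of $\ell$ the path can be chosen so that the auxiliary phase $\psi(\tau)=\varphi+(\omega/\varepsilon)(\tau-v)$ stays in the half-plane where the corresponding projection $g^{(\pm)}(\tau,\psi)=\sum_{\pm\ell>0}g^{[\ell]}(\tau)e^{i\ell\psi}$ is uniformly bounded by $C_\sigma|g|_\infty$ (via the contour representation of the Poisson-type kernel). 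The identity $\sum_{\pm\ell>0}\mathcal{G}^{[\ell]}(g)(v)e^{i\ell\varphi}=\int g^{(\pm)}(\tau,\psi(\tau))\,d\tau$ then produces the desired uniform bound without Fourier-by-Fourier summation. Combining the three contributions yields $|\mathcal{G}(g)|_\infty\le C|g|_\infty$, with $C$ depending only on the fixed geometric parameters.
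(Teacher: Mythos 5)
Your proposal is essentially correct and fills in the details that the paper's proof (a single sentence invoking the boundedness of $\Dgf_{2\kappa}$ and its distance of order one from the poles $\pm i\pi/2$) leaves implicit. The two ingredients you identify are exactly the right ones: (i) all paths of integration have length $\mathcal{O}(1)$ because the domain is bounded, and (ii) the endpoint case $|\mathrm{Im}\,\varphi|=3\sigma/4$ cannot be handled by the naive Fourier-by-Fourier summation, so one must pass to the analytic projections $g^{(\pm)}$ and use that they are bounded on the appropriate closed half-planes. Your claim that $|g^{(\pm)}|\lesssim_\sigma|g|_\infty$ on $\mathrm{Im}\,\psi\ge -3\sigma/4$ (resp.\ $\le 3\sigma/4$) is correct: on the boundary $\mathrm{Im}\,\psi=-3\sigma/4$ one has $g^{(+)}=g-g^{[0]}-g^{(-)}$, and $|g^{(-)}|\le |g|_\infty/(e^{3\sigma/2}-1)$ there by the geometrically decaying Fourier estimate, so $g^{(+)}$ is bounded on the boundary line; the bound then propagates into the half-plane by a Phragm\'en--Lindel\"of argument since $g^{(+)}\to 0$ as $\mathrm{Im}\,\psi\to+\infty$.

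Two small imprecisions, neither fatal. First, the intermediate estimate $|\mathcal{G}^{[\ell]}(g)(v)|\lesssim (\varepsilon/\omega\ell)\sup|g^{[\ell]}|$ is not uniform in $v$: the horizontal leg of your path lies at height $\mathrm{Im}\,v_0$, and for $v$ near the top vertex $v_0$ there is no constant gap $\mathrm{Im}(v_0-v)$ on which the exponential can decay. This $\varepsilon$-gain is in any case unnecessary for the statement, which asks only for a uniform bound. Second, the concrete horizontal-then-vertical path need not stay inside $\Dgf_{2\kappa}$, which is a horseshoe-shaped region surrounding neither lobe separately; the correct choice is a path inside the domain along which $\mathrm{Im}\,\tau$ is monotone nonincreasing from $v_0$ to $v$, which exists because $v_0$ is the topmost point and the domain is simply connected, wrapping around its leftmost point $\rho$. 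Your final integral-representation argument via the analytic projections supersedes both of these issues, so the overall proof is sound.
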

\begin{proof}
    The proof of this lemma is a direct consequence of the definition of the operator $\mathcal{G}$ and the fact that the domain $\Dgf_{2\kappa}$ only contains points at distance of order $1$ from the singularities $\pm i{\pi \over 2}$.
\end{proof}

\subsubsection{The fixed point equation} \label{sec:ext_fixed_point}
Using the operator defined in~\eqref{def:right_inverse_ext1}, equation~\eqref{ext1:inv_eq2} can be reformulated as the fixed point equation 
\begin{equation}\label{ext1:inv_eq3}
f = \mathcal{G} \circ \Fext(f).
\end{equation}
To prove Proposition \ref{prop:ext1}, we show that the right-hand side of the above equation is a contraction in a suitable closed subset of $\mathcal{Y}^{\mathrm{gf}} \times \mathcal{Y}^{\mathrm{gf}}$ (see~\eqref{def:ext1_BS}). 
For this purpose, for all $f = (f_1, f_2)^\top \in \mathcal{Y}^{\mathrm{gf}} \times \mathcal{Y}^{\mathrm{gf}}$, we introduce the following norm
\begin{equation}\label{ext1: norm_infty_2}
 \|f\|_\infty = \|(f_1,f_2)\|_\infty = |f_1|_\infty + \varepsilon |f_2|_\infty.
\end{equation}
The proof is structured through several lemmas.
\begin{lemma}\label{lem:ext1_first_iter}
    There exist $\kappa_0\ge 1$ and $0<\varepsilon_0<1$ and a constant $c_0>0$ such that for any $\kappa \ge \kappa_0$ and $0 <\varepsilon \le \varepsilon_0$
    \begin{equation*}
        \left\|\mathcal{G}\circ \Fext(0)\right\|_\infty \le c_0 \varepsilon^2
    \end{equation*}
    where $\Fext$ is defined in~\eqref{defLF_extension}.
\end{lemma}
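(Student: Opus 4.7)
The strategy is to evaluate $\Fext(0)$ explicitly and bound its two components separately, exploiting that the norm $\|(f_1,f_2)\|_\infty = |f_1|_\infty + \varepsilon |f_2|_\infty$ asks for different sizes in the two slots. On the domain $\Dgf_{2\kappa}$ one has $d_2\pi/2 \le |\Im v| \le d_3\pi/2$ with $d<d_2<d_3<1/2$, so the real-valued quantities $r(v)$, $\dot r(v)$, $1/r(v)$ and $1/\dot r(v)$ are bounded between positive constants independent of $\varepsilon$, and the diagonal prefactors $\diagonal_{1/\dot r^2,1/r^2}$ and $\diagonal_{1/\dot r, 1/r}$ play no role in the sizes below.

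For the first summand $-\diagonal_{1/\dot r^2,1/r^2}\nabla T_1^{\bu}$, the bounds of Theorem~\ref{thm:outer} restrict on $\Dgf_{2\kappa}\subset D^{\mathrm{out},\bu}_{\kappa}$ (where the weight $(u^2+\pi^2/4)^6$ is bounded above and below) to $|\partial_u T_1^{\bu}|_\infty \lesssim \varepsilon^2$ and $|\partial_\theta T_1^{\bu}|_\infty \lesssim \varepsilon^3$. This produces an $\Oo(\varepsilon^2)$ contribution to the first component of $\Fext(0)$ and an $\Oo(\varepsilon^3)$ contribution to the second.

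The main step is the second summand $-\diagonal_{1/\dot r,1/r}R_{-\theta}\,\partial_x H_1\circ\Gamma^{\bu}$. Writing $H_1=\alpha\varepsilon^2 S^2+\beta\varepsilon QS + \hat H_1$, one has $\partial_x H_1 = (2\alpha\varepsilon^2 S + \beta\varepsilon Q)\partial_x S + \partial_x \hat H_1$ with $\partial_x S=(y_2,-y_1)^\top$. Since $Q$ and $\partial_x S$ depend only on $y$, they are evaluated on $\Gamma^{\bu}$ at $y=\gamma_0=(r\cos\theta,r\sin\theta)^\top$, giving $Q|_{\gamma_0}=r^2$ and $\partial_x S|_{\gamma_0}= r(\sin\theta,-\cos\theta)^\top$. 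A direct check shows the key cancellation $R_{-\theta}(\sin\theta,-\cos\theta)^\top = (0,-1)^\top$, so both contributions $\beta\varepsilon Q\partial_x S$ (of size $\varepsilon$) and $2\alpha\varepsilon^2 S\partial_x S$ (of size $\varepsilon^5$, since $S\circ\Gamma^{\bu}=-\partial_\theta T_1^{\bu}=\Oo(\varepsilon^3)$) produce, after applying $R_{-\theta}$, contributions only in the second component. What remains in the first component is $[R_{-\theta}\partial_x\hat H_1\circ\Gamma^{\bu}]_1$. The scaling $\hat H_1(x,y;\varepsilon)=\varepsilon^{-4}\tilde H_1(\varepsilon^2 x,\varepsilon y;\varepsilon)$ with $\tilde H_1=\Oo_6(x,y)$ uniformly in $\varepsilon$ yields $|\partial_x\hat H_1\circ\Gamma^{\bu}|_\infty\lesssim\varepsilon^3$. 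Thus this summand is $\Oo(\varepsilon^3)$ in the first slot and $\Oo(\varepsilon)$ in the second.

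Summing the two contributions, $|[\Fext(0)]_1|_\infty\lesssim\varepsilon^2$ and $|[\Fext(0)]_2|_\infty\lesssim\varepsilon$. Lemma~\ref{lem:ext1_right_inv} applied componentwise then yields the same bounds for $\mathcal{G}\circ\Fext(0)$, whence $\|\mathcal{G}\circ\Fext(0)\|_\infty\le c_0\varepsilon^2$ as claimed. The one non-routine ingredient is the rotation cancellation: without it, the $\Oo(\varepsilon)$ piece coming from $\beta\varepsilon Q\partial_x S$ would leak into the first component and destroy the $\varepsilon^2$ estimate. The anisotropic weight in the norm $\|\cdot\|_\infty$ is built precisely to accommodate this structure.
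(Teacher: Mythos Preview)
Your proof is correct and follows essentially the same approach as the paper's: both compute $\Fext(0)$ componentwise, observe that the $\alpha$ and $\beta$ contributions from $H_1$ land only in the second component (the paper writes this out directly as in \eqref{eq:ext1_F02}, you phrase it via the identity $R_{-\theta}(\sin\theta,-\cos\theta)^\top=(0,-1)^\top$), bound $\partial_x\hat H_1\circ\Gamma^{\bu}=\Oo(\varepsilon^3)$ from the scaling, and then invoke Lemma~\ref{lem:ext1_right_inv}. One small inaccuracy: your description of $\Dgf_{2\kappa}$ as satisfying $d_2\pi/2\le|\Im v|\le d_3\pi/2$ is not quite right (the bounds on $|\Im v|$ depend on $\Re v$, and the domain contains real points), but the conclusion you draw from it---that $r,\dot r,1/r,1/\dot r$ are uniformly bounded there---holds because the domain is bounded and stays at distance of order $1$ from both $u=0$ and $\pm i\pi/2$.
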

\begin{proof}
Along this proof, we skip the apex $-\mathrm{gf}-$. We first observe that 
    \begin{equation}
    \begin{aligned}\label{eq:ext1_F01}
        &\mathcal{F}_1(0) = - {1 \over \dot r^2} \partial_u T_1 - {\cos \theta \over \dot r} \partial_{x_1} H_1 \circ \Gamma - {\sin \theta \over \dot r} \partial_{x_2} H_1 \circ \Gamma\\
        &\mathcal{F}_2(0) = - {1 \over r^2} \partial_\theta T_1 - {\cos \theta \over r} \partial_{x_2} H_1 \circ \Gamma + {\sin \theta \over r} \partial_{x_1} H_1 \circ \Gamma.
    \end{aligned}
    \end{equation}
    Using Theorem \ref{thm:difference_between_manifolds} and Proposition \ref{prop:main_rescaling} 
    \begin{equation}\label{eq:ext1_F02}
    \begin{aligned}
        {\cos \theta \over \dot r} \partial_{x_1} H_1 \circ \Gamma + {\sin \theta \over \dot r} \partial_{x_2} H_1 \circ \Gamma &= {1 \over \dot r}\left(\cos \theta \partial_{x_1} \hat H_1 \circ \Gamma + \sin \theta  \partial_{x_2} \hat H_1 \circ \Gamma\right)\\
        {\cos \theta \over r} \partial_{x_2} H_1 \circ \Gamma - {\sin \theta \over r} \partial_{x_1} H_1 \circ \Gamma &= - \varepsilon \beta r^2 + \varepsilon^2 2 \alpha \partial_\theta T_1 \\
        &+ {1 \over r}\left(\cos \theta \partial_{x_2} \hat H_1 \circ \Gamma - \sin \theta  \partial_{x_1} \hat H_1 \circ \Gamma\right).
    \end{aligned}
    \end{equation}
By Proposition \ref{prop:main_rescaling}, we know that $\partial_x \hat H_1 (x,y;\varepsilon) = \varepsilon^{-2} \partial_x \tilde H_1(\varepsilon^2 x, \varepsilon y;\varepsilon)$ and $\partial_x \tilde H_1(x, y;\varepsilon) = \mathcal{O}_5(x,y)$ uniformly in $\varepsilon$, which implies
\begin{equation*}
    \left|\partial_{x} \hat H_1 \circ \Gamma\right|_\infty \lesssim \varepsilon^3.
\end{equation*}
Replacing~\eqref{eq:ext1_F02} into~\eqref{eq:ext1_F01}, thanks to the latter, Theorem \ref{thm:outer} and the fact that the domain $\Dgf_{2\kappa}$ only contains points at distance of order $1$ from the singularities $\pm i{\pi \over 2}$, we have that 
\begin{align*}
    &\left|\mathcal{F}_1(0)\right|_\infty \lesssim \left|\partial_u T_1\right|_\infty + \left|\partial_x \hat H_1 \circ \Gamma\right|_\infty \lesssim \varepsilon^2\\
     &\left|\mathcal{F}_2(0)\right|_\infty \lesssim \varepsilon |r|_\infty^2 + \varepsilon^2\left|\partial_\theta T_1\right|_\infty + \left|\partial_x \hat H_1 \circ \Gamma\right|_\infty \lesssim \varepsilon.
\end{align*}
Using these estimates, definition~\eqref{ext1: norm_infty_2}, and Lemma \ref{lem:ext1_right_inv}, the proof is complete.
\end{proof}

Let $\rho$ be a positive parameter. We denote by $B_\varrho^\times$ the closed ball in $\mathcal{Y}^{\mathrm{gf}} \times \mathcal{Y}^{\mathrm{gf}}$ of radius $\varrho$ centered at the origin with respect to the norm $\| \cdot \|_\infty$. 

\begin{lemma}\label{lem:ext1_contract} Given $c_1>0$, there exist $\kappa_0\ge 1$, $0 <\varepsilon_0<1$ and a constant $c_2>0$ such that for any $\kappa \ge \kappa_0$ and $0 < \varepsilon \le \varepsilon_0$, the functional $\Fext: B_{c_1\varepsilon^2}^\times\subset \mathcal{Y}^{\mathrm{gf}}\times \mathcal{Y}^{\mathrm{gf}} \to  \mathcal{Y}^{\mathrm{gf}}\times \mathcal{Y}^{\mathrm{gf}}$ is Lipschitz  with
$\mathrm{lip}\, \Fext \le c_2 / \kappa$.
\end{lemma}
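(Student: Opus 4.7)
The plan is to apply the fundamental theorem of calculus to $\Fext$, reducing the Lipschitz bound to uniform control of the first derivative of its defining inner function, and then to exploit the $\varepsilon$-weighted structure of the norm $\|\cdot\|_\infty$ together with the bounds on $T_1^{\bu}$ provided by Theorem~\ref{thm:outer} and the structural form of $H_1$ given in Proposition~\ref{prop:main_rescaling}. Setting
\[
G(u,\theta) := \diagonal_{1/\dot r^2,\,1/r^2}(u)\,\nabla T_1^{\bu}(u,\theta) + \diagonal_{1/\dot r,\,1/r}(u)\,R_{-\theta}\,\partial_x H_1(\Gamma^{\bu}(u,\theta)),
\]
so that $\Fext(f) = -G\circ(\mathrm{id}+f)$, I would first verify, using $\|f\|_\infty\le c_1\varepsilon^2$ and taking $\varepsilon_0$ small (resp. $\kappa_0$ large), that $\mathrm{id}+f$ sends $\Dgf_{2\kappa}\times\T_{3\sigma/4}$ into a slightly enlarged subset of $D^{\mathrm{out},\bu}_{\kappa,\varsigma}\times\T_\sigma$, where $T_1^{\bu}$ and $\Gamma^{\bu}$ are defined and analytic. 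Then, for $f,\tilde f\in B_{c_1\varepsilon^2}^\times$,
\[
\Fext(f)-\Fext(\tilde f) = -\int_0^1 DG\bigl(\mathrm{id}+\tilde f+s(f-\tilde f)\bigr)\cdot(f-\tilde f)\,ds.
\]

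The bounds on the entries of $DG$ would follow from three ingredients. First, since $\Dgf_{2\kappa}$ stays at distance $\Oo(1)$ from the singularities $\pm i\pi/2$, the factors $1/r$ and $1/\dot r$ are analytic and uniformly bounded together with their derivatives on $\Dgf_{2\kappa}$. Second, Theorem~\ref{thm:outer} yields $|\partial_u T_1^{\bu}|_{\infty}\lesssim \varepsilon^2$ and $|\partial_\theta T_1^{\bu}|_{\infty}\lesssim \varepsilon^3$ on the larger outer domain; a Cauchy estimate in $u$ and in $\theta$ (shrinking $\T_\sigma$ to $\T_{3\sigma/4}$ and retracting $\Dgf_{2\kappa}$ by a fixed margin) transfers the same orders to the second derivatives of $T_1^{\bu}$ on $\Dgf_{2\kappa}\times\T_{3\sigma/4}$. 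Third, using the decomposition $H_1=\alpha\varepsilon^2 S^2+\beta\varepsilon QS+\widehat H_1$ with $\widehat H_1(x,y;\varepsilon)=\varepsilon^{-4}\tilde H_1(\varepsilon^2 x,\varepsilon y;\varepsilon)$ and $\tilde H_1=\Oo_6$, exactly as in the computation leading to Lemma~\ref{lem:ext1_first_iter}, one obtains $\partial_x H_1\circ\Gamma^\bu$ and its first derivatives of order $\varepsilon$ on $\Dgf_{2\kappa}\times\T_{3\sigma/4}$. Together these give $|DG|_\infty\lesssim \varepsilon$.

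Combining $|DG|_\infty\lesssim \varepsilon$ with definition \eqref{ext1: norm_infty_2} of $\|\cdot\|_\infty$, one concludes that each component of $\Fext(f)-\Fext(\tilde f)$ is bounded by $\Oo(\varepsilon)\,\|f-\tilde f\|_\infty$, so $\|\Fext(f)-\Fext(\tilde f)\|_\infty\lesssim\varepsilon\,\|f-\tilde f\|_\infty$. Choosing $\varepsilon_0\le 1/\kappa_0$, so that $\varepsilon\le 1/\kappa$ throughout the admissible range, yields the Lipschitz bound $c_2/\kappa$.

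The main technical obstacle I anticipate is the careful tracking of the size of each partial derivative of $G$ while respecting the asymmetry encoded in $\|\cdot\|_\infty$: the $\varepsilon$-weight on the second component reflects the different scalings of the $v$ and $\varphi$ directions forced by $\Lext=\partial_v+(\omega/\varepsilon)\partial_\varphi$. The delicate point is that $\partial_\varphi$ naively produces a factor $1/\varepsilon$ in contexts where $\Lext$ appears, but in the inner function $G$ itself the variable $\theta$ enters only through the smooth factor $R_{-\theta}$ and through $T_1^{\bu}$, whose estimates in Theorem~\ref{thm:outer} are sharp enough (one full power of $\varepsilon$ stronger for $\partial_\theta T_1^{\bu}$ than for $\partial_u T_1^{\bu}$) to prevent any such loss and to keep the final Lipschitz constant genuinely small.
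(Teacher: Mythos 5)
Your overall strategy (mean-value theorem plus Cauchy estimates on $D G$, using the $\varepsilon$-weighted norm and the componentwise asymmetry of the bounds from Theorem~\ref{thm:outer}) is the same as the paper's, but there is a genuine gap in the Cauchy-estimate step. You claim that one can ``retract $\Dgf_{2\kappa}$ by a fixed margin'' and thereby transfer the bounds $|\partial_u T_1^{\bu}|\lesssim\varepsilon^2$, $|\partial_\theta T_1^{\bu}|\lesssim\varepsilon^3$ to the second derivatives \emph{at the same orders}. This is false: the margin between $\Dgf_{2\kappa}$ and $D^{\mathrm{out},\bu}_{\kappa}$ is \emph{not} $\mathcal{O}(1)$. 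Near the right edge of $\Dgf_{2\kappa}$, the binding constraint is $|\mathrm{Im}(u)|\le\frac{\pi}{2}-2\kappa\varepsilon-\tan\slope\,\mathrm{Re}(u)$, which is only at vertical distance $\kappa\varepsilon$ from the boundary $|\mathrm{Im}(u)|=\frac{\pi}{2}-\kappa\varepsilon-\tan\slope\,\mathrm{Re}(u)$ of $D^{\mathrm{out},\bu}_{\kappa}$. So the Cauchy ball may have radius at most $c\kappa\varepsilon$ with $c<1$ (this is exactly what the paper takes in defining the enlarged set $\widehat D_\kappa$), and the resulting loss factor is $(\kappa\varepsilon)^{-1}$, not $\mathcal{O}(1)$. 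With the correct radius the derivative of the $\nabla T_1^{\bu}$ part is $\lesssim\varepsilon/\kappa$ and the derivative of the $\partial_x H_1\circ\Gamma^{\bu}$ part (whose supremum is $\lesssim\varepsilon$) is $\lesssim 1/\kappa$, so you get $|DG|\lesssim 1/\kappa$ directly, \emph{not} $|DG|\lesssim\varepsilon$.

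Because the derivative bound you land on is wrong, the concluding move ``Choosing $\varepsilon_0\le 1/\kappa_0$, so that $\varepsilon\le 1/\kappa$ throughout the admissible range'' is an attempt to patch a conclusion that should already be $\lesssim 1/\kappa$; moreover, the patch itself is invalid: for $\kappa>\kappa_0$ one has $1/\kappa<1/\kappa_0$, so $\varepsilon\le\varepsilon_0\le 1/\kappa_0$ does not imply $\varepsilon\le 1/\kappa$. (The implicit constraint $\kappa\varepsilon<\frac{\pi}{4}(1-d_2)$ needed for $\Dgf_{2\kappa}\neq\emptyset$ would give $\varepsilon\lesssim 1/\kappa$, but you do not invoke it.) Finally, to convert the $1/\kappa$ bound on $DG$ into the Lipschitz bound with respect to $\|(f_1,f_2)\|_\infty=|f_1|_\infty+\varepsilon|f_2|_\infty$, you still need to use that the \emph{first component} of $G$ is one extra power of $\varepsilon$ smaller than the second (i.e., $|G_1|\lesssim\varepsilon^2$ versus $|G_2|\lesssim\varepsilon$): otherwise a factor $\varepsilon^{-1}$ sneaks in when converting $|f_2^1-f_2^2|_\infty$ to the weighted norm. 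You gesture at this asymmetry at the end, which is the right instinct, but the quantitative accounting has to be done with the corrected $\kappa\varepsilon$-radius Cauchy estimates, not the fixed-margin ones.
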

\begin{proof} We skip the apex $-\mathrm{gf}-$ along the proof. 
We first notice that, if $(v,\varphi) \in \Dgf_{2\kappa} \times \T_{3\sigma \over 4}$ and $f=(f_1,f_2)^\top \in B_{c_1\varepsilon^2} \times B_{c_1\varepsilon}$, then 
$(v,\varphi) + f(v,\varphi) \in {D}_{\kappa}^{\mathrm{out},\bu} \times \T_\sigma$, if $\varepsilon_0$ is small enough and $\kappa_0$ large enough, therefore the operator $\mathcal{F}$ is well defined on $B_{c_1\varepsilon^2} \times B_{c_1\varepsilon}$. 

We introduce 
\[
F^1(u,\theta)= -\diagonal_{{1 \over \dot r^2},{1 \over r^2}} \nabla T_1 (u,\theta), \qquad 
F^2(u,\theta)= -\diagonal_{{1 \over \dot r},{1 \over r}}R_{-\theta} \partial_x H_1 \circ \Gamma (u,\theta).
\]
By the definition of $\mathcal{F}$ in~\eqref{defLF_extension}, we can write 
    \begin{equation*}
        \mathcal{F} = F^1 \circ \left(\mathrm{id} + f\right)+ F^2\circ \left(\mathrm{id} + f\right).
    \end{equation*}

For any $f = (f^1, f^2)^\top \in B^\times_{c_1\varepsilon^2} \subset \mathcal{Y}^{\mathrm{gf}}\times \mathcal{Y}^{\mathrm{gf}}$, we denote by $f_\tau = f^2 + \tau(f^1-f^2)$. In addition we set $z=(v,\varphi)$.
By the mean value theorem
\begin{equation}\label{mean_ext_1}
\mathcal{F}(f^1)(z) - \mathcal{F}(f^2)(z) =  \sum_{i=1}^2 \int_{0}^1 D F^i (z+f_\tau(z)) (f^1(z)-f^2(z)) \, d\tau .
\end{equation}

Let $0 <c<1$ be a constant such that 
\begin{align*}
\widehat{D}_{\kappa} := \left \{ u\in D^{\mathrm{out},\bu}_{\kappa}\,:\, \exists v\in \Dgf_{2\kappa}, \; |u-v| \leq c \kappa \varepsilon\right \} \subset D_{\kappa}^{\mathrm{out},\bu}.
\end{align*}

We notice that if $u\in \widehat{D}_\kappa$, $\left | u^2 + \frac{\pi^2}{4}\right |^{-1} \lesssim 1$. Then, by Theorem~\ref{thm:outer}, taking if necessary $\varepsilon_0$ small enough, 
\begin{equation}\label{boundF1_ext}
\sup_{(u,\theta) \in \widehat{D} _\kappa \times \T_{\sigma}} \left | F^1 (u,\theta)\right |  \lesssim \varepsilon^2.
\end{equation}

In addition, from Theorem \ref{thm:difference_between_manifolds} and Proposition \ref{prop:main_rescaling}, 
\begin{align*}
F^2(u,\theta) = - \begin{pmatrix}    \displaystyle{   {\cos \theta \over \dot r} \partial_{x_1} \widehat H_1 \circ \Gamma + {\sin \theta \over \dot r}  \partial_{x_2} \widehat H_1 \circ \Gamma }\\ \\ \displaystyle{
     - \varepsilon \beta r^2 + \varepsilon^2 2 \alpha \partial_\theta T_1 + {\cos \theta \over r} \partial_{x_2} \widehat H_1 \circ \Gamma - {\sin \theta\over r}  \partial_{x_1} \widehat H_1 \circ \Gamma }\end{pmatrix} 
\end{align*}
where we recall that by Proposition \ref{prop:main_rescaling}, $\widehat H_1(x,y;\varepsilon) = \varepsilon^{-4} \tilde H_1(\varepsilon^2x, \varepsilon y;\varepsilon)$, and $\tilde H_1(x, y;\varepsilon) = \mathcal{O}_6(x,y)$ uniformly in $\varepsilon$. This implies that 
\begin{equation*}
  \sup_{(u,\theta) \in \widehat{D} _\kappa \times \T_{\sigma}}   \left|\partial_{x} \widehat H_1 \circ \Gamma (u,\theta)\right|  \lesssim \varepsilon^3 
\end{equation*}
and therefore
\begin{equation}\label{boundF2_ext}
\sup_{(u,\theta) \in \widehat{D} _\kappa \times \T_{\sigma}} \left | F^2 (u,\theta)\right |  \lesssim \varepsilon .
\end{equation} 
Taking $\kappa_0$ and $\varepsilon_0$ large and small enough, respectively, if $z=(v,\varphi) \in \Dgf_{2\kappa} \times \T_{\frac{3\sigma}{4}}$ then
\[
\Omega:=\left \{w \in \mathbb{C} \times \T_\sigma \,:\, |z+f_{\tau}(z) -w  |\leq \frac{c}{2} \kappa \varepsilon \right \} \subset \widehat{D}_\kappa \times \T_\sigma
\]
and therefore, using Cauchy estimates in $\Omega$ and bounds~\eqref{boundF1_ext} and~\eqref{boundF2_ext}
\[
\sup_{z\in \Dgf_{2\kappa} \times \T_{3 \sigma \over 4} } \big | DF^1 (z+f_\tau(z)) \big|  \lesssim \frac{\varepsilon}{\kappa} , \qquad \sup_{z\in \Dgf_{2\kappa}\times \T_{3\sigma \over 4}} \big | DF^2(z+f_\tau(z)) \big | \lesssim \frac{1}{\kappa} 
\]
Using expression~\eqref{mean_ext_1} for $\mathcal{F}(f^1) - \mathcal{F}(f^2)$, and~\eqref{ext1: norm_infty_2}, the proof of the lemma is finished. 
\end{proof}

 Let $c_0$ be the constant introduced in Lemma \ref{lem:ext1_first_iter}. The following proposition establishes the existence and describes the properties of the solutions of equation~\eqref{ext1:inv_eq3}.
\begin{proposition}\label{prop_ext1_final}
    There exist $\kappa_0\ge 1$, $0 <\varepsilon_0< 1$ and a constant $c_3>0$ such that, for any $\kappa \ge \kappa_0$ and $0 <\varepsilon \le \varepsilon_0$, $\mathcal{G} \circ \Fext : B_{2 c_0\varepsilon^2}^\times \subset \mathcal{Y}^{\mathrm{gf}} \times \mathcal{Y}^{\mathrm{gf}} \to B_{2 c_0\varepsilon^2}^\times$ is a contraction with Lipschitz constant $\mathrm{lip}\, \G \circ \Fext \le c_3 / \kappa$. Hence, the equation~\eqref{ext1:inv_eq3} admits a solution $f=(f_1, f_2)^\top \in  \mathcal{Y}^{\mathrm{gf}}\times \mathcal{Y}^{\mathrm{gf}}$ satisfying 
    \begin{equation*}
        \left\|f \right\|_\infty \le 2c_0\varepsilon^2.
    \end{equation*}
    Moreover, 
    \begin{equation}\label{prop:ext1_sub}
        \sup_{(v, \varphi) \in \Dgf_{2\kappa} \times \T_{3 \sigma \over 4}} |\Ggf(v, \varphi) - \Gamma_0(v, \varphi)|\le c  \varepsilon.
    \end{equation}
    where $\Ggf$ is the flow parameterization of the unstable invariant manifold defined by~\eqref{def:ext_tilde_Gamma:bis} and  $\Gamma_0$ is the unperturbed homoclinic defined by Lemma \ref{lem:homoclinic}. 
\end{proposition}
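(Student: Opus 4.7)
The plan is to prove the proposition in two stages: first establish the existence of the fixed point of $\mathcal{G}\circ\Fext$ via Banach's theorem, then derive the geometric estimate~\eqref{prop:ext1_sub} directly from the size of the fixed point and from Theorem~\ref{thm:outer}.

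For the first stage, I would simply combine the three preparatory lemmas. Lemma~\ref{lem:ext1_right_inv} gives that $\mathcal{G}$ is bounded on $\mathcal{Y}^{\mathrm{gf}}$ with a constant independent of $\varepsilon$ and $\kappa$. Applying Lemma~\ref{lem:ext1_contract} with $c_1 = 2 c_0$ shows that $\Fext$ is Lipschitz on $B^{\times}_{2 c_0 \varepsilon^2}$ with constant $c_2/\kappa$, so the composition $\mathcal{G}\circ\Fext$ is Lipschitz with some constant $c_3/\kappa$. Taking $\kappa_0$ large enough that $c_3/\kappa_0 \le 1/2$, Lemma~\ref{lem:ext1_first_iter} together with the triangle inequality
$$\left\|\mathcal{G}\circ\Fext(f)\right\|_\infty \le \left\|\mathcal{G}\circ\Fext(0)\right\|_\infty + \frac{c_3}{\kappa}\|f\|_\infty \le c_0 \varepsilon^2 + \tfrac{1}{2}\cdot 2 c_0 \varepsilon^2 = 2 c_0 \varepsilon^2$$
ensures that $\mathcal{G}\circ\Fext$ maps $B^{\times}_{2 c_0 \varepsilon^2}$ into itself. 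The Banach fixed point theorem then delivers a unique $f = (f_1,f_2)^\top$ in that ball solving~\eqref{ext1:inv_eq3}, i.e. equation~\eqref{ext1:inv_eq2}, which by the argument at the beginning of Section~\ref{sec:ext1:proof} is equivalent to the invariance equation~\eqref{ext1:inv_eq}.

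For the estimate~\eqref{prop:ext1_sub}, I would use the splitting
$$\Ggf(v,\varphi) - \Gamma_0(v,\varphi) = \bigl[\Gamma^{\bu} - \Gamma_0\bigr]\circ(\mathrm{id}+f)(v,\varphi) + \bigl[\Gamma_0\circ(\mathrm{id}+f) - \Gamma_0\bigr](v,\varphi).$$
For the first summand, note that $\Gamma^{\bu} - \Gamma_0 = \bigl((D\gamma_0)^{-\top}\nabla T_1^{\bu},\, 0\bigr)^{\top}$, and on $\Dgf_{2\kappa}$ (points at distance of order $1$ from $\pm i\pi/2$) both the weights $(u^2+\pi^2/4)^{-k}$ and the entries of $(D\gamma_0)^{-\top}$ are uniformly bounded. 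Thus Theorem~\ref{thm:outer} yields an $\mathcal{O}(\varepsilon^2)$ bound. For the second summand, the derivatives of $\Gamma_0 = (\delta_0,\gamma_0)^\top$ are uniformly bounded on $\Dgf_{2\kappa}\times \T_{3\sigma/4}$, so the mean value theorem gives a bound by a constant times $|f_1|_\infty + |f_2|_\infty$. The definition~\eqref{ext1: norm_infty_2} of the norm forces $|f_1|_\infty \le 2 c_0 \varepsilon^2$ and $|f_2|_\infty \le 2 c_0 \varepsilon$, so this term is $\mathcal{O}(\varepsilon)$, which dominates and gives~\eqref{prop:ext1_sub}.

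No serious obstacle is expected; all the technical work has been packaged into Lemmas~\ref{lem:ext1_right_inv},~\ref{lem:ext1_first_iter} and~\ref{lem:ext1_contract}, and what remains is a routine contraction argument. The only slightly delicate bookkeeping is the asymmetric weighting of the two components in the norm~\eqref{ext1: norm_infty_2}: because $|f_2|_\infty$ is only controlled by $\varepsilon$ (not $\varepsilon^2$), the shift in the angular variable $\varphi$ is $\mathcal{O}(\varepsilon)$ and this is what dictates the $\mathcal{O}(\varepsilon)$ size on the right-hand side of~\eqref{prop:ext1_sub}, rather than the $\mathcal{O}(\varepsilon^2)$ that the graph parametrization alone would suggest.
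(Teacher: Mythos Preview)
Your proposal is correct and follows essentially the same approach as the paper: the contraction argument combining Lemmas~\ref{lem:ext1_right_inv}, \ref{lem:ext1_first_iter}, and~\ref{lem:ext1_contract} is identical, and your splitting of $\Ggf-\Gamma_0$ into $(\Gamma^{\bu}-\Gamma_0)\circ(\mathrm{id}+f)$ plus $\Gamma_0\circ(\mathrm{id}+f)-\Gamma_0$ is the same decomposition the paper uses (written there via the Taylor integral $\int_0^1 D\Gamma_0\circ(\mathrm{id}+\tau f)\,d\tau\cdot f$). Your remark that the $\mathcal{O}(\varepsilon)$ bound in~\eqref{prop:ext1_sub} comes from the angular shift $|f_2|_\infty\le 2c_0\varepsilon$ is exactly the mechanism at work.
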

\begin{proof}
    The proof of this proposition is a direct consequence of the previous lemmas. Indeed, combining Lemma \ref{lem:ext1_right_inv} and Lemma \ref{lem:ext1_contract} for $\kappa_0$ and $\varepsilon_0$ large and small enough, respectively, one can prove that the operator $\mathcal{G}\circ \Fext$ is Lipschitz with $\mathrm{lip}\, \mathcal{G}\circ \Fext \leq Cc_2 \kappa^{-1}$ which is smaller than $1$ for $\kappa_0$ large enough. Furthermore, for all $f \in B_{2 c_0\varepsilon^2}^\times$, using Lemmas~\ref{lem:ext1_right_inv}, \ref{lem:ext1_first_iter} and~\ref{lem:ext1_contract}, we observe that 
    \begin{align*}
 \left \| \mathcal{G} \circ \Fext (f) \right \|_\infty & \leq \left \| \mathcal{G} \circ \Fext (0) \right \|_\infty + \left \| \mathcal{G} \circ \Fext (f) - \mathcal{G}\circ \Fext (0)\right \|_\infty \\ & \leq c_0  \varepsilon^2 + \frac{C c_2}{\kappa} \|f\|_\infty \leq  2c_0 \varepsilon^2 
 \end{align*}
 for $\kappa_0$ large enough. Thus
  $\mathcal{G} \circ \Fext : B_{2c_0 \varepsilon^2}^\times  \to B_{2c_0 \varepsilon^2}^\times $ is a contraction.  This concludes the proof of the first part of this proposition.

It remains to verify~\eqref{prop:ext1_sub}. Using the Taylor theorem, we observe that 
\begin{align*}
\Ggf(v, \phi) =& \Gamma^{\bu}\circ \left(\mathrm{id} + f\right)(v, \phi)\\
        =&\Gamma_0 (v, \phi) + \int_0^1D\Gamma_0 \circ (\mathrm{id} + \tau f)(v, \phi) d\tau \cdot f(v, \phi)\\
         \label{def:ext2_proof_TildeGamma}
        &+ \left(\Gamma^{\bu} -  \Gamma_0\right)\circ (\mathrm{id} + f)(v, \phi)
\end{align*}
for all $(v, \varphi) \in \Dgf_{2\kappa} \times \T_{3 \sigma \over 4}$.  
Hence, combining Theorem~\ref{thm:outer} with the estimate $\|f\|_\infty \lesssim \varepsilon^2$, we conclude the proof of~\eqref{prop:ext1_sub} and thus the proposition.
\end{proof}

The proof of Proposition~\ref{prop:ext1} is a direct consequence of Proposition \ref{prop_ext1_final}.

\subsection{Extension of the flow parameterization}\label{sec:ext2:proof}

The aim of this section is to prove Proposition~\ref{prop:ext2}, that is, we look for $\widehat{\Gamma}$, a solution of equation~\eqref{equation:flow}, such that $\widehat{\Gamma}(u,\theta) = \Ggf(u,\theta)$
when $u \in \Dgf_{2\kappa} \cap D^{\mathrm{fl}}_{3\kappa}$ (see~\eqref{def:ext_tilde_Gamma:bis}, \eqref{def:domains_ext_step1}, \eqref{def:domains_ext_step2} and Figure \ref{fig:ext_domain_step1and2} for the definition of $\Ggf, \Dgf_{2\kappa}$ and $D^{\mathrm{fl}}_{3\kappa}$ respectively). 
%
%


Let $\widehat \Gamma_1  = \widehat\Gamma  - \Gamma_0$. 
A straightforward computation ensures that equation~\eqref{equation:flow} can be rewritten as  
\begin{equation}\label{ext2:inv_eq}
    \widehat{\mathcal{L}} \, \Gamma  = \Ffl (\Gamma )
\end{equation}
where
\begin{align} \label{def:Lhat:Fhat}
    \widehat{\mathcal{L}} \,  \Gamma  &= \Lfl \,  \Gamma   - \left(DX_{H_0} \circ \Gamma_0\right) \Gamma  ,\\
    \Ffl(\Gamma  ) &= X_{H_0} \circ \big ( \Gamma_0 +  \Gamma \big )  - X_{H_0} \circ \Gamma_0 - \left(DX_{H_0} \circ \Gamma_0\right)  \Gamma + X_{H_1} \circ \big (\Gamma_0+ \Gamma \big )  . \notag
\end{align}

For brevity, we write $\Gamma$ in place of $\widehat{\Gamma}_1$. We also recall that $\Lfl = \partial_u + \frac{\omega}{\varepsilon} \partial_\theta$ (see~\eqref{equation:flow}) and $X_{H_0}, X_{H_1}$ denote the vector fields associated to the Hamiltonian $H_0, H_1$ defined in Proposition \ref{prop:main_rescaling}.  

The proof of Proposition~\ref{prop:ext2} relies on a fixed point argument. First, in Section~\ref{sec:ext2_prel}, we look a right inverse $\Ghat$ of the operator $\Lhat$. We need to define $\Ghat$ carefully in order to fix some initial conditions that guarantee that the solution of~\eqref{ext2:inv_eq} coincides with $\Ggf - \Gamma_0$ in a suitable open subset. In the second part, we verify that the operator $ \Ghat \circ \Ffl$ is a contraction defined on a suitable closed subset of a specific Banach space (see Section \ref{sec:ext2_fixed_point}).

\subsubsection{Preliminaries and technical lemmas}\label{sec:ext2_prel}

We define the Banach space
\begin{align}\label{def:ext2_BS}
    \mathcal{Y}^{\mathrm{fl}} &= \Big\{f : D^{\mathrm{fl}}_{3 \kappa} \times \T_{3\sigma \over 4} \to \C : \mbox{$f$ is analytic and} \nonumber\\
    &\hspace{44mm} |f|_\infty = \sup_{(u, \theta) \in D^{\mathrm{fl}}_{3 \kappa} \times \T_{3\sigma \over 4}} |f(u, \theta)| < \infty\Big\}.
\end{align}
We will use the same notation for vector-valued functions and matrices. We say that a vector-valued function or a matrix belongs to the space $\mathcal{Y}^{\mathrm{fl}}$ if it is the case for each component.  As in Section \ref{sec:ext_prel}, the norm $|\cdot|_\infty$ of a vector-valued function or a matrix is defined as the maximum of the norms $|\cdot|_\infty$ of its components.

First, we prove the following technical result.
\begin{proposition}\label{prop:ext2_RI1}
    There exists a matrix $M \in \mathcal{M}_4(\C)$ with $M\in \mathcal{Y}^{\mathrm{fl}}$ such that $\Lhat\, M=0$. Moreover, there exists a constant $C>0$ independent of $\varepsilon$ such that 
    \begin{equation}\label{prop:est_M}
        |M|_\infty \le C, \qquad \mbox{and} \qquad |M^{-1}|_\infty \le C.
    \end{equation}
\end{proposition}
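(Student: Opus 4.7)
The plan is to construct $M$ as a fundamental matrix for the linear PDE $\Lhat M=0$. Along the characteristics $(u(t),\theta(t))=(u_0+t,\theta_0+\omega t/\varepsilon)$ of $\Lfl$ the equation becomes the ODE
\[
\dot\eta = DX_{H_0}\bigl(\Gamma_0(u_0+t,\theta_0+\omega t/\varepsilon)\bigr)\eta,
\]
that is, the variational equation along the curve $t\mapsto\Gamma_0(u_0+t,\theta_0+\omega t/\varepsilon)$, which is itself a trajectory of $\dot z=X_{H_0}(z)$ by Lemma~\ref{lem:homoclinic}. So the task reduces to producing four independent, analytic-in-$(u,\theta)$ solutions of this variational equation on $D^{\mathrm{fl}}_{3\kappa}\times\T_{3\sigma/4}$, with uniform (in $\varepsilon$) bounds, and assembling them as columns of $M$.

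Two solutions come for free from the symmetries of $H_0$. Differentiating the identity $\Lfl\Gamma_0=X_{H_0}\circ\Gamma_0$ with respect to $u$ and to $\theta$ gives $\Lhat\partial_u\Gamma_0=0$ and $\Lhat\partial_\theta\Gamma_0=0$; these columns span the tangent plane to the unperturbed invariant manifold. For two transverse columns I would exploit the integrability of $H_0$: in the polar symplectic coordinates $(R,G,r,\theta)$ used in the proof of Lemma~\ref{lem:homoclinic}, $\theta$ is cyclic (conjugate momentum $G=-S$) and the separatrix is $\{G=0\}$, so the variational equation decouples. Linearizing in the $(R,r)$-block along $r(u)=1/\cosh u$ yields the Pöschl--Teller-type equation
\[
\ddot{\delta r}+(6r^2-1)\,\delta r=0,
\]
of which $\delta r_1=-\dot r=R$ is a tangent solution; reduction of order provides an independent solution $\delta r_2=\delta r_1\int \delta r_1^{-2}\,du$, giving the third column. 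A fourth, ``action'' column is the variation in the ignorable direction: $\delta G\equiv 1$, $\delta R=\delta r=0$, $\delta\theta(u)=-\int r^{-2}\,du=-\tfrac{1}{2}(u+\sinh u\cosh u)$. Pulling these back via the change of coordinates $\mathrm{Pol}$ gives $M$ in the Cartesian $(x,y)$ variables.

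Independence and the bounds $|M|_\infty,|M^{-1}|_\infty\lesssim 1$ are handled as follows. Since $X_{H_0}$ is Hamiltonian, $\operatorname{tr}DX_{H_0}=0$ and Liouville's formula shows that $\det M$ is constant along characteristics; evaluating at any convenient base point (e.g.\ $u=0$) gives a nonzero value. Each of the four column fields has explicit polynomial/meromorphic dependence on $\sinh u$, $\cosh u$, and $u$, with possible blow-up only at the singularities $\pm i\pi/2$ of $\Gamma_0$. On $D^{\mathrm{fl}}_{3\kappa}$ the distance to these singularities is bounded below (by the very definition of the domain), and since $\kappa\ge\kappa_0$ is fixed in this proposition, all singular factors are uniformly controlled. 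A column-wise rescaling by suitable constants (independent of $\varepsilon$) is then performed to balance the four columns so that $|M|_\infty$ and $|\det M|$ are simultaneously of order one; this is legal because $\Lhat M=0$ is linear in each column. The bound on $M^{-1}$ then follows from $M^{-1}=(\det M)^{-1}\operatorname{adj}(M)$.

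The main obstacle is the balancing in the last step: the four candidate columns have genuinely different growth rates as $u\to\pm i\pi/2$ (some behave like $1/\cosh u$ or $\sinh u/\cosh^2 u$, the ``action'' column like $u+\sinh u\cosh u$), and one must choose a normalization of each column so that the resulting matrix has uniformly bounded condition number on all of $D^{\mathrm{fl}}_{3\kappa}\times\T_{3\sigma/4}$, not merely on the bulk. Once this is in place, the statements $M\in\mathcal{Y}^{\mathrm{fl}}$, $|M|_\infty\le C$ and $|M^{-1}|_\infty\le C$ follow from direct inspection of the explicit columns and the constant (nonzero) value of the Wronskian.
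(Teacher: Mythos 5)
Your construction coincides in substance with the paper's: the columns $\partial_u\Gamma_0$, $\partial_\theta\Gamma_0$, the reduction-of-order companion $\varrho=\dot r\int\dot r^{-2}$ in the radial block, and the ``action'' variation $(\delta G,\delta\theta)=(1,-\int r^{-2})$ --- once pushed forward through $\mathrm{Pol}$ --- are precisely $\partial_u\Gamma_0$, $\partial_\theta\Gamma_0$, $\Gamma_1$, $\Gamma_2$ of the paper, with your action column realizing $\chi=r\int r^{-2}$; Liouville's formula for $\det M$ replaces the paper's direct $4\times4$ determinant computation and is an equivalent, arguably cleaner, route. Two remarks on the write-up. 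First, the ``balancing'' step you flag as the main obstacle is neither needed nor would it help: the four columns have genuinely different pole orders at $\pm i\pi/2$ (from order one up to order three), and no $\varepsilon$-independent constant rescaling of columns can equalize power-law singularities of different orders. The correct observation is the one you already make in passing: on $D^{\mathrm{fl}}_{3\kappa}$ (with $\kappa\ge\kappa_0$ fixed) the distance to $\pm i\pi/2$ and to $u=0$ is bounded below by a constant independent of $\varepsilon$, so each explicit column --- including the one carrying the secular factor $u+\sinh u\cosh u$ --- is uniformly $\mathcal{O}(1)$, the Wronskian is a nonzero constant (equal to $-4$ in the paper's normalization), and $|M^{-1}|_\infty\lesssim 1$ follows from the adjugate formula with no renormalization at all. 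Second, a point worth making explicit: the integrand of $\int_{\rho_2}^{u}\dot r^{-2}$ has a double pole at $u=0\notin D^{\mathrm{fl}}_{3\kappa}$, but its residue there vanishes (even integrand), so the integral is path-independent and defines a single-valued analytic function on $D^{\mathrm{fl}}_{3\kappa}$; the paper addresses this explicitly and you should too.
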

\begin{proof}
We first compute $DX_{H_0}(\Gamma_0(u,\theta))$. Letting  
\[
f(u,\theta)= -1 + 2r^2(u) (3\cos^2 \theta +  \sin^2 \theta), \qquad 
g(u,\theta)=4 r^2(u) \cos \theta \sin \theta
\]
and
\[
h(u,\theta) = -1 + 2 r^2(u)(\cos^2 \theta +  3\sin^2 \theta),
\]
a straightforward computation proves that 
\[
DX_{H_0}(\Gamma_0(u,\theta))=  \begin{pmatrix}
    \frac{\omega}{\varepsilon} J & A(u,\theta) \\
    -\mathrm{id} & \frac{\omega}{\varepsilon} J
\end{pmatrix}=
\begin{pmatrix}
    0 & - \frac{\omega}{\varepsilon} & f(u,\theta) & g(u,\theta) \\
    \frac{\omega}{\varepsilon} & 0 & g(u,\theta) & h(u,\theta) \\
    -1 & 0 & 0 &-\frac{\omega}{\varepsilon} \\ 
    0 & -1 & \frac{\omega}{\varepsilon} & 0
\end{pmatrix}.
\]
We note that $\partial_u \Gamma_0$ is a solution of $\Lhat \, \Gamma =0$ (see~\eqref{def:Lhat:Fhat}) and we recall that $\dot{r}(u)=-R(u)$. We mimmic the shape of 
\[
\partial_u \Gamma_0(u,\theta)= \big (\dot{R}(u) \cos \theta, \dot{R}(u) \sin \theta, - R(u) \cos \theta, -R(u) \sin \theta)^\top, 
\]
to find another solution of $\Lhat \, \Gamma =0$. Indeed, let
\begin{equation}\label{def:zeta:1}
\begin{aligned}
\Gamma_1 (u,\theta ) & = (\xi (u,\theta), \eta(u,\theta)) \\ & = \big (-\dot{\varrho}(u) \cos \theta, -\dot{\varrho}(u) \sin \theta, \varrho (u) \cos \theta , \varrho(u) \sin \theta)^\top.
\end{aligned}
\end{equation}
It is straightforward to check that 
\[
\Lfl\eta = - \xi + \frac{\omega}{\varepsilon} J \eta   
\]
independently of the choice of $\varrho$. 

Then imposing that 
\[
\Lfl\xi = \frac{\omega}{\varepsilon} J \xi + A(u,\theta) \eta
\]
one obtains that $\varrho$ has to satisfy
\begin{equation}\label{eq:variational:polars}
\ddot{\varrho}  = (1- 6r^2(u)) \varrho.
\end{equation}
Since trivially $\dot{r}$ satisfy~\eqref{eq:variational:polars}, see for instance Remark~\ref{rmk:homoclinic} (in fact, we recover $\partial_u \Gamma_0$ considering $\varrho=\dot{r}$), 
using the standard reduction of order method, a new solution of~\eqref{eq:variational:polars} is
\begin{equation}\label{def:varrho:1}
\varrho(u)= \dot{r}(u) \int_{\rho_2}^u \frac{1}{\dot{r}^2(s)} ds  
\end{equation}
with $\rho_2\in \mathbb{R} \cap D^{\mathrm{fl}}_{3 \kappa}$ the leftmost point of $D^{\mathrm{fl}}_{3 \kappa}$. Notice that $\dot{r}(u)=-\sinh u (\cosh u)^{-2}$ only vanishes on $D^{\mathrm{fl}}_{3 \kappa}$ when $u=0$, therefore, in the definition of $\varrho$, we take an integration path avoiding $u=0$. Since $u=0$ is a simple zero of $\dot{r}(u)$, namely, $\dot{r}(u)=-u + \mathcal{O}(u^2)$, the definition of $\varrho(u)$ does not depend on the integration path (there is no residue) and we conclude that it is analytic in $D^{\mathrm{fl}}_{3 \kappa}$.  
Notice that $\lim_{u\to 0} \varrho(u)= 1$ and 
\begin{equation}\label{det:1}
\left | \begin{array}{cc} \dot{r}(u) & \varrho(u) \\ \ddot{r}(u) & \dot{\varrho}(u) \end{array} \right | = \left | \begin{array}{cc} -R(u) & \varrho(u) \\ -\dot{R}(u) & \dot{\varrho}(u) \end{array} \right | = 1. 
\end{equation}
We conclude that $\Gamma_1$ defined as~\eqref{def:zeta:1} with $\varrho$ in~\eqref{def:varrho:1} is an analytic solution of $\Lhat \, \Gamma =0$. 

We proceed analogously taking into account that 
\[
\partial_\theta \Gamma_0(u,\theta)= \big (\dot{r} \sin \theta, -\dot{r} \cos \theta, - r \sin \theta, r \cos \theta \big )^\top
\]
is also a solution of $\Lhat \, \Gamma =0$. Indeed, it is easily checked that the function
\begin{equation} \label{def:zeta:2}
\Gamma_2 (u,\theta ) =   \big (- \dot{\chi}(u) \sin \theta, \dot{\chi}(u) \cos \theta, \chi (u) \sin \theta , -\chi(u) \cos \theta \big )^\top
\end{equation}
is a solution of $\Lhat \, \Gamma =0$ if and only if $\chi$ satisfies 
\[
\ddot{\chi} = (1- 2 r^2(u)) \chi. 
\]
From Remark~\ref{rmk:homoclinic}, $\chi (u)= r(u)$ is a solution and then another solution is given by 
\begin{equation}\label{def:chi:var}
\chi(u)=r(u) \int_{\rho_2}^u \frac{1}{r^2(s)} ds
\end{equation}
which is clearly analytic provided $r(u) \neq 0$ for $u\in D^{\mathrm{fl}}_{3 \kappa}$ (recall that $\rho_2$ is the leftmost point of this region). As before (see~\eqref{det:1})
\begin{equation}\label{det:2}
\left | \begin{array}{cc} {r}(u) & \chi(u) \\ \dot{r}(u) & \dot{\chi}(u) \end{array} \right | =1. 
\end{equation}

We define
\begin{equation*}
   M(u,\theta)= \begin{pmatrix} \partial_u \Gamma_0(u,\theta), \partial_\theta \Gamma_0(u,\theta), \Gamma_1(u,\theta), \Gamma_2(u,\theta) \end{pmatrix}
\end{equation*}
with $\Gamma_1, \Gamma_2$ defined in~\eqref{def:zeta:1} and~\eqref{def:zeta:2} with 
$\varrho,\chi$ in~\eqref{def:varrho:1} and~\eqref{def:chi:var}. It is clear that  
$|M |_\infty \lesssim 1$ because for $u \in D^{\mathrm{fl}}_{3 \kappa}$, $\left |u^2 + \frac{\pi^2}{4}\right| \gtrsim 1$. Recalling that $R= -\dot{r}$ and using~\eqref{det:1} and~\eqref{det:2}, 
\begin{align*}
    \mathrm{det} \big (M(u,\theta) \big ) = & e^{i\theta} e^{-i\theta} e^{i\theta} e^{-i\theta} \left |\begin{array}{cccc}
\dot{R}  &  \dot{r}  & - \dot{\varrho}  & -\dot{\chi} \\
\dot{R} & - \dot{r} & - \dot{\varrho} & \dot{\chi} \\
-R & -r & \varrho & \chi \\
-R & r & \varrho & -\chi
 \end{array}
    \right | \\ = & 4 (\dot{R} \varrho - R \dot{\varrho} ) ( \dot{r} \chi - \dot{\chi} r)  =-4.
\end{align*}
Therefore $M $ is a fundamental matrix of $\Lhat \,\Gamma = 0$ and $|M|_\infty, |M^{-1}|_\infty \lesssim 1$.  
\end{proof}

Let $\mathcal{G}$ be the right inverse of $\Lfl$, $\Lfl \circ \mathcal{G}(g)=g$, defined as in~\eqref{def:right_inverse_ext1} but replacing $v_0$ and $\rho$ by $u_0$ and $\rho_2$ that are the topmost and leftmost points of  $D^{\mathrm{fl}}_{3 \kappa}$.  
\begin{lemma}\label{lem:ext2_right_inv}
    There exists a positive constant $C$ such that for all $g \in  \mathcal{Y}^{\mathrm{fl}}$, 
    \begin{equation*}
        |\mathcal{G}(g)|_\infty \le C|g|_{\infty}. 
    \end{equation*}
\end{lemma}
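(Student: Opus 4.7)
The plan is to follow the same scheme used for Lemma~\ref{lem:ext1_right_inv}, decomposing $g$ into its Fourier series in $\theta$ and bounding each mode via the appropriate integral representation in~\eqref{def:right_inverse_ext1} (suitably adapted by replacing $v_0,\rho$ by the topmost, leftmost, and bottommost points $u_0$, $\rho_2$, $\bar u_0$ of $D^{\mathrm{fl}}_{3\kappa}$). The key difference with the previous proof is that $D^{\mathrm{fl}}_{3\kappa}$ approaches the singularities $\pm i\pi/2$ at distance of order $\kappa\varepsilon$, so uniformity of the constant $C$ must be obtained from careful path choices rather than from points having order-$1$ distance to the singularities.

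First I would treat the $\ell=0$ mode: since $D^{\mathrm{fl}}_{3\kappa}$ is contained in a fixed bounded region of $\C$ (independent of $\varepsilon$ and $\kappa$), one can join $\rho_2$ to any $u\in D^{\mathrm{fl}}_{3\kappa}$ by a path of uniformly bounded length $L$ lying in $D^{\mathrm{fl}}_{3\kappa}$, giving $|\mathcal{G}^{[0]}(g)(u)|\le L|g^{[0]}|_\infty\le L|g|_\infty$. For $\ell\neq 0$, because $D^{\mathrm{fl}}_{3\kappa}$ has two connected components (one with $\Im u\ge d_4\pi/2$, one with $\Im u\le -d_4\pi/2$), I would integrate from $u_0$ to $u$ (for $\ell>0$, $u$ in the upper component, and analogously from $\bar u_0$ for the lower component) along a path $\tau\mapsto \tau(s)$ in $D^{\mathrm{fl}}_{3\kappa}$ with $\Im(\tau-u)\ge 0$ monotonically; such a path exists of uniformly bounded length since both $u_0$ and $u$ have $\Im u>0$. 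Along this path,
\begin{equation*}
\bigl|e^{i(\omega/\varepsilon)\ell(\tau-u)}\bigr|=e^{-(\omega/\varepsilon)\ell\,\Im(\tau-u)}\le 1,
\end{equation*}
yielding $|\mathcal{G}^{[\ell]}(g)(u)|\le L|g^{[\ell]}|_\infty$. The analogous statement for $\ell<0$ is obtained by reversing both the starting point and the orientation of the path.

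To combine the mode-by-mode bounds into an estimate for $|\mathcal{G}(g)|_\infty$ on $\T_{3\sigma/4}$, I would resum the Fourier series inside the integral: writing $g_\pm(u,\theta)=\sum_{\pm\ell>0}g^{[\ell]}(u)e^{i\ell\theta}$, one has
\begin{equation*}
\sum_{\ell>0}\mathcal{G}^{[\ell]}(g)(u)e^{i\ell\theta}=\int_{\text{path}}g_+\!\left(\tau,\theta+\tfrac{\omega}{\varepsilon}(\tau-u)\right)d\tau,
\end{equation*}
and similarly for the negative-mode part. Since $g_+$ is analytic and bounded by $C|g|_\infty$ on the half-strip $\{\Im\theta'\ge -3\sigma/4\}$ (Paley--Wiener), and our path choice guarantees $\Im(\theta+\tfrac{\omega}{\varepsilon}(\tau-u))\ge \Im\theta\ge -3\sigma/4$, the integrand is uniformly bounded by $C|g|_\infty$; the symmetric bound holds for the negative-mode integral. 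Adding the three contributions yields $|\mathcal{G}(g)(u,\theta)|\le C'L|g|_\infty$ with $C'$ independent of $\varepsilon,\kappa$.

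The main obstacle is precisely the last step: a naive bound summing $|\mathcal{G}^{[\ell]}(g)(u)|e^{|\ell|\cdot 3\sigma/4}$ diverges, and one has to exploit the fact that along the chosen path the shifted angle $\theta+(\omega/\varepsilon)(\tau-u)$ stays in a half-plane where $g_+$ (resp. $g_-$) is controlled. Handling the two connected components of $D^{\mathrm{fl}}_{3\kappa}$ (and hence pairing each component with the correct reference endpoint $u_0$ or $\bar u_0$ according to the sign of $\ell$) is the delicate bookkeeping step; the remaining estimates are routine once this geometric setup is in place.
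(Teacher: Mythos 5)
You are right that the paper's one-line proof glosses over the main issue: since $\mathcal{Y}^{\mathrm{fl}}$ carries the \emph{sup} norm rather than an exponentially weighted Fourier norm, the naive estimate $|\mathcal{G}(g)|_\infty\le\sum_\ell|\mathcal{G}^{[\ell]}(g)|e^{3|\ell|\sigma/4}$ does not close, and some resummation into an integral of $g_\pm$ is needed. The paper also attributes the bound to ``$D^{\mathrm{fl}}_{3\kappa}$ only contains points away from $\pm i\tfrac{\pi}{2}$'', which --- as you note --- is the wrong reason for this lemma (unlike $\Dgf_{2\kappa}$, the domain $D^{\mathrm{fl}}_{3\kappa}$ does reach within $O(\kappa\varepsilon)$ of $\pm i\tfrac{\pi}{2}$; what is actually relevant is bounded path length, the sign of $\ell\,\Im(\tau-u)$ along the path, and the angular summability). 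So your instinct to fill in these details is the right one.

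However, two of your steps are off. First, $D^{\mathrm{fl}}_{3\kappa}$ is \emph{connected}, not two-component: for $\Re u$ sufficiently negative the constraint $|\Im u|\ge d_4\tfrac{\pi}{2}+\tan\slopee\,\Re u$ becomes vacuous, so the two arms meet near the negative real axis (see Figure~\ref{fig:ext_domain_step1and2}). This is not a cosmetic point. The correct convention for the operator in~\eqref{def:right_inverse_ext1} is to take the base point $u_0$ for \emph{all} $\ell>0$ (and $\bar u_0$ for \emph{all} $\ell<0$) regardless of which arm $u$ lies in; if instead you use $\bar u_0$ whenever $u$ is in the lower arm, then for $\ell>0$ you get $\Im(\tau-u)\le 0$ along the path and $\bigl|e^{i(\omega/\varepsilon)\ell(\tau-u)}\bigr|$ blows up like $e^{\omega\ell|\Im(\tau-u)|/\varepsilon}$. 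Second, and this is the genuine gap, your Paley--Wiener step is too quick: the Riesz projection $g\mapsto g_+$ is \emph{not} bounded on $L^\infty$ of the closed strip. From $|g^{[\ell]}(\tau)|\le|g|_\infty e^{-3|\ell|\sigma/4}$ one only obtains
\[
|g_+(\tau,\theta')|\ \le\ |g|_\infty\sum_{\ell>0}e^{-\ell(\Im\theta'+3\sigma/4)}\ \lesssim\ \frac{|g|_\infty}{\Im\theta'+3\sigma/4},
\]
which is not uniform up to $\Im\theta'=-3\sigma/4$. Feeding this into your integral representation, the contribution near the endpoint $\tau=u$ of the path produces a logarithmic divergence as $\Im\theta\to -3\sigma/4$, so the uniform bound $|\mathcal{G}(g)|_\infty\le C|g|_\infty$ does not follow from the resummation as written. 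The standard remedies are either to measure the output on a slightly smaller angular strip or to work with the exponentially weighted Fourier norm $\sum_\ell|\cdot^{[\ell]}|e^{|\ell|\sigma}$ as in Sections~\ref{sec:proof:outer} and~\ref{sec:inner:proof} (for which $\mathcal{G}$ is trivially bounded mode by mode); as stated, neither your proposal nor the paper's one-line argument resolves this.
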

\begin{proof}
    As in the proof of Lemma \ref{lem:ext1_right_inv}, the proof of this result is straightforward from the fact that the domain $D^{\mathrm{fl}}_{3 \kappa}$ only contains points away from $\pm i \frac{\pi}{2}$.
\end{proof}
 It is immediate to verify that, by Proposition \ref{prop:ext2_RI1} and Lemma~\ref{lem:ext2_right_inv},  
 \begin{equation*}
     \Ghat_0 (\Gamma) = M \mathcal{G}\left(M^{-1}\Gamma\right) 
 \end{equation*}
is a right inverse of the operator $\Lhat$. However, we can not guarantee that a solution of $\Gamma = \Ghat_0 \circ \Ffl \Gamma$ is the analytic continuation of $\Ggf - \Gamma_0$. To overcome this difficulty, we observe that if $\phi $ satisfies that $\Lfl \phi =0$, then   
$
\Lfl (M \phi) = 0
$
and therefore $M \phi + \Ghat_0 (g)$ is also a right inverse of $\Lhat$. We set
$\phi (u,\theta) = \sum_{\ell \in \mathbb{Z}} \phi^{[\ell]}(u) e^{i\ell \theta}$ with 
\begin{align}\label{def:ext2_g}
    \phi^{[\ell]}(u) &= e^{i{\omega \over \varepsilon}\ell(u_0 - u)}\left(M^{-1}\left(\Ggf - \Gamma_0\right)\right)^{[\ell]}(u_0) \quad \mbox{if $\ell >0$,}\nonumber\\
    \phi^{[0]}(u) &= \left(M^{-1}\left(\Ggf - \Gamma_0\right)\right)^{[0]}(\rho_2), \\
    \phi^{[\ell]}(u) &= e^{i{\omega \over \varepsilon}\ell(\bar u_0 - u)}\left(M^{-1}\left(\Ggf - \Gamma_0\right)\right)^{[\ell]}(\bar u_0) \quad \mbox{if $\ell <0$}\nonumber
\end{align}
where $u_0$ and $\rho_2$ that are the topmost and leftmost points of the domain $D^{\mathrm{fl}}_{3 \kappa}$ (see Figure~\ref{fig:ext_domain_step1and2}. It is important to emphasize that 
\begin{equation}\label{u0rho2flow}
u_0,\rho_2, \bar{u}_0 \in  \Dgf_{2\kappa}. 
\end{equation}
We define the following right inverse of $\Lhat$:
\begin{equation}\label{def:ext2_RightInverseHatL}
    \Ghat (g) = M \phi + \Ghat_0 (g).
\end{equation} 
\begin{corollary}\label{cor:ext2}
There exists a constant $C$ such that for $g \in  \mathcal{Y}^{\mathrm{fl}}$,
\[ |M \phi|_\infty \le C \varepsilon, \qquad 
|\Ghat_0(g)|_\infty \le C |g|_{\infty}  \]  
\end{corollary}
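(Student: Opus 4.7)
The proof splits into two bounds, both leveraging Proposition~\ref{prop:ext2_RI1} (namely $|M|_\infty, |M^{-1}|_\infty \le C$), Lemma~\ref{lem:ext2_right_inv} (the bound on $\mathcal{G}$), and Proposition~\ref{prop:ext1} (which gives $|\Ggf - \Gamma_0|_\infty \le c\varepsilon$ on $\Dgf_{2\kappa} \times \T_{3\sigma/4}$). The plan is to establish each bound separately, with the second one requiring a short Fourier-analytic argument.

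The bound $|\Ghat_0(g)|_\infty \le C|g|_\infty$ is essentially immediate. Since $\Ghat_0(g) = M \mathcal{G}(M^{-1} g)$, the chain of inequalities
\[
|\Ghat_0(g)|_\infty \le |M|_\infty \, |\mathcal{G}(M^{-1} g)|_\infty \le C \, |M^{-1} g|_\infty \le C^2 |M^{-1}|_\infty \, |g|_\infty
\]
closes it, with the middle inequality provided by Lemma~\ref{lem:ext2_right_inv} and the outer ones by Proposition~\ref{prop:ext2_RI1}.

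The bound $|M\phi|_\infty \le C\varepsilon$ is slightly more delicate. The key observation, recorded in~\eqref{u0rho2flow}, is that the sampling points $u_0$, $\rho_2$, $\bar u_0$ all lie in $\Dgf_{2\kappa}$, so $\Ggf - \Gamma_0$ is defined and $\Oo(\varepsilon)$-small there by Proposition~\ref{prop:ext1}. Combined with $|M^{-1}|_\infty \le C$, this yields a uniform $\Oo(\varepsilon)$ bound for $M^{-1}(\Ggf - \Gamma_0)$ at these three points, with $\theta$ ranging over $\T_{3\sigma/4}$. By analyticity of $M^{-1}(\Ggf-\Gamma_0)$ on an open neighborhood of $\Dgf_{2\kappa} \times \T_{3\sigma/4}$, standard Cauchy estimates provide exponential decay
\[
\bigl| (M^{-1}(\Ggf - \Gamma_0))^{[\ell]}(u_\ast) \bigr| \lesssim \varepsilon \, e^{-|\ell|(3\sigma/4 + \eta)}
\]
for some $\eta>0$ and $u_\ast \in \{u_0, \rho_2, \bar u_0\}$.

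The final step exploits the construction of $\phi$ in~\eqref{def:ext2_g}: for $\ell > 0$ the factor $e^{i(\omega/\varepsilon)\ell(u_0 - u)}$ has modulus $e^{-(\omega/\varepsilon)\ell \Im(u_0 - u)} \le 1$ because $u_0$ is the topmost point of $D^{\mathrm{fl}}_{3\kappa}$ (so $\Im(u_0 - u) \ge 0$ for every $u \in D^{\mathrm{fl}}_{3\kappa}$), and symmetrically the factor involving $\bar u_0$ is controlled for $\ell < 0$; the case $\ell = 0$ is a mere constant. Putting these together, for $(u,\theta) \in D^{\mathrm{fl}}_{3\kappa} \times \T_{3\sigma/4}$,
\[
|\phi(u,\theta)| \le \sum_{\ell \in \Z} |\phi^{[\ell]}(u)| \, e^{|\ell|\cdot 3\sigma/4} \lesssim \varepsilon \sum_{\ell \in \Z} e^{-|\ell|\eta} \lesssim \varepsilon,
\]
and multiplying by $|M|_\infty \le C$ closes the bound. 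The only minor subtlety is the Fourier decay, i.e.\ ensuring a slightly thicker strip of analyticity so that summing against $e^{|\ell|\cdot 3\sigma/4}$ converges; this is the one technical point to handle carefully, whereas everything else is bookkeeping built on previously established estimates.
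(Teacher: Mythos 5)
Your proposal follows essentially the same decomposition as the paper: the bound on $\Ghat_0(g)$ is a one-line chaining of Lemma~\ref{lem:ext2_right_inv} and Proposition~\ref{prop:ext2_RI1}, and the bound on $M\phi$ proceeds from~\eqref{def:ext2_g} and claim~\eqref{u0rho2flow}, with the crucial observation that $\Im(u_0 - u) \ge 0$ for $u \in D^{\mathrm{fl}}_{3\kappa}$, $\ell>0$ (resp.\ $\Im(\bar u_0 - u)\le 0$, $\ell<0$) makes the dilated exponentials contractive. You are in fact \emph{more} careful than the paper's one-line argument, because you flag the genuine subtlety: bounding the Fourier coefficients at $u_0$ only by $\varepsilon e^{-|\ell|\cdot 3\sigma/4}$ (what Proposition~\ref{prop:ext1} literally gives on $\T_{3\sigma/4}$) leaves the series $\sum_{\ell>0} e^{-(\omega/\varepsilon)\ell\,(\Im u_0 - \Im u)}$, which is not uniformly summable as $u\to u_0$ within $D^{\mathrm{fl}}_{3\kappa}$ — exactly the sum the paper asserts is $\lesssim 1$ without further comment. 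Your fix (an extra $e^{-|\ell|\eta}$ Fourier margin) is the right one.

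One refinement of the justification: "analyticity on an open neighborhood of $\Dgf_{2\kappa}\times\T_{3\sigma/4}$" alone is not the source of the quantitative bound $|(M^{-1}(\Ggf-\Gamma_0))^{[\ell]}(u_*)| \lesssim \varepsilon\,e^{-|\ell|(3\sigma/4+\eta)}$ — an analytic extension beyond a closed set does not automatically carry the $\Oo(\varepsilon)$ smallness with it. The margin really comes from the scale hierarchy already in place: Theorem~\ref{thm:outer} produces $T_1^\bu$ (hence $\Gamma^\bu$) on the full strip $\T_\sigma$, and the reparametrizing shift $f_2$ in Proposition~\ref{prop:ext1} is $\Oo(\varepsilon)$-small, so the fixed-point argument of Proposition~\ref{prop:ext1} can be carried out on $\Dgf_{2\kappa}\times\T_{\sigma'}$ for any $3\sigma/4 \le \sigma' < \sigma - c\varepsilon$ with the same $\Oo(\varepsilon)$ conclusion. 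Taking $\sigma' = 3\sigma/4 + \eta$ for some fixed $0<\eta<\sigma/4$ then yields the decay you use. With that substitution your argument is complete and, given the paper's terseness here, is the cleaner record of why the sum converges.
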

\begin{proof}
    The proof of this result follows from Lemma~\ref{lem:ext2_right_inv}, Proposition~\ref{prop:ext2_RI1}, claim~\eqref{u0rho2flow} and the fact that, by Proposition~\ref{prop:ext1}, $|\Ggf (u,\theta)- \Gamma_0(u,\theta) | \lesssim \varepsilon$ for $u\in \tilde D^{\mathrm{out},\bu}_{2\kappa}$. Indeed, recalling that $u_0$ and $\rho_2$ are the topmost and leftmost points of the domain $D^{\mathrm{fl}}_{3 \kappa}$ we obtain that 
   \begin{equation*}
       |\phi|_\infty \lesssim \varepsilon \left(1 + \sum_{\ell >0}e^{-{\omega \over \varepsilon}\ell \left(\mathrm{Im}(u_0) - \mathrm{Im}(u)\right)} + \sum_{\ell <0}e^{-{\omega \over \varepsilon}\ell \left(\mathrm{Im}(\bar u_0) - \mathrm{Im}(u)\right)}\right) \lesssim \varepsilon.
   \end{equation*}
\end{proof}
\subsubsection{The fixed point equation} \label{sec:ext2_fixed_point}

We rewrite equation~\eqref{ext2:inv_eq} as the fixed point equation
\begin{equation}\label{ext2:inv_eq_FixedPoint}
  \Gamma   = \Ghat \circ \Ffl( \Gamma )
\end{equation}
with $\Ghat$ defined by~\eqref{def:ext2_RightInverseHatL}

\begin{lemma}\label{lem:ext2_first_iter}
    There exist $\kappa_0\ge 1$ and $0 <\varepsilon_0<1$ and a constant $c_0>0$ such that for any $\kappa \ge \kappa_0$ and $0 < \varepsilon \le \varepsilon_0$
    \begin{equation*}
        \left|\Ghat  \circ \Ffl(0)\right|_{\infty} \le c_0\varepsilon.
    \end{equation*}
\end{lemma}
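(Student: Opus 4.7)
The plan is to evaluate the right-hand side at $\Gamma=0$ and use the linear decomposition $\Ghat = M\phi + \Ghat_0$ from~\eqref{def:ext2_RightInverseHatL} together with the bounds in Corollary~\ref{cor:ext2}. From the definition of $\Ffl$ in~\eqref{def:Lhat:Fhat}, the first three terms cancel at $\Gamma=0$, so
\[
\Ffl(0) = X_{H_1}\circ\Gamma_0,
\]
and thus
\[
\Ghat\circ\Ffl(0) = M\phi + \Ghat_0(X_{H_1}\circ\Gamma_0).
\]
By Corollary~\ref{cor:ext2}, the first summand is already $\mathcal{O}(\varepsilon)$ in $|\cdot|_\infty$, and the second summand is controlled by $C|X_{H_1}\circ\Gamma_0|_\infty$. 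Hence, everything reduces to showing
\[
|X_{H_1}\circ \Gamma_0|_\infty \lesssim \varepsilon.
\]

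The step I would carry out carefully is this last estimate, exploiting the explicit shape of $H_1$ given by~\eqref{def:H0H1}, namely $H_1 = \alpha\varepsilon^2 S^2 + \beta\varepsilon QS + \widehat{H}_1$. The key observation is that $\Gamma_0=(\delta_0,\gamma_0)$ lies on the level set $\{S=0\}$ (see Lemma~\ref{lem:homoclinic}). Consequently, since $\partial_x S^2 = 2S\,\partial_x S$ and $\partial_y S^2 = 2S\,\partial_y S$, the quadratic-in-$S$ term contributes zero to $X_{H_1}\circ\Gamma_0$. For the term $\beta\varepsilon QS$, we use that $\partial_x(QS) = Q\,\partial_x S$ and $\partial_y(QS) = Q\,\partial_y S$ on $\{S=0\}$, and both expressions are bounded by a constant on $D^{\mathrm{fl}}_{3\kappa}$ (away from the singularities $\pm i\pi/2$), yielding an $\mathcal{O}(\varepsilon)$ contribution. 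For $\widehat{H}_1$, we recall that $\widehat{H}_1(x,y;\varepsilon)=\varepsilon^{-4}\tilde H_1(\varepsilon^2 x,\varepsilon y;\varepsilon)$ with $\tilde H_1(x,y;\varepsilon)=\mathcal{O}_6(x,y)$, so that $\partial_x\tilde H_1, \partial_y\tilde H_1 = \mathcal{O}_5(x,y)$. Since $|\delta_0|,|\gamma_0|\lesssim 1$ on $D^{\mathrm{fl}}_{3\kappa}$, the argument $(\varepsilon^2\delta_0,\varepsilon\gamma_0)$ is of size $\mathcal{O}(\varepsilon)$, giving
\[
|\partial_x \widehat{H}_1\circ\Gamma_0|_\infty \lesssim \varepsilon^{-2}\cdot\varepsilon^5 = \varepsilon^3, \qquad |\partial_y\widehat{H}_1\circ\Gamma_0|_\infty \lesssim \varepsilon^{-3}\cdot\varepsilon^5 = \varepsilon^2,
\]
which is well within the required $\mathcal{O}(\varepsilon)$.

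Combining the three contributions gives $|X_{H_1}\circ\Gamma_0|_\infty\lesssim\varepsilon$, and Corollary~\ref{cor:ext2} then yields $|\Ghat\circ\Ffl(0)|_\infty \le c_0\varepsilon$ for a suitable $c_0>0$, provided $\kappa_0$ is chosen large and $\varepsilon_0$ small enough so that the argument of $\tilde H_1$ at $\Gamma_0$ remains inside its domain of analyticity. The only potentially subtle point is bookkeeping the $\varepsilon$-powers from the rescaling in Proposition~\ref{prop:main_rescaling}; the crucial cancellation comes from the vanishing of $S$ on the unperturbed homoclinic, which prevents the naively dangerous $\varepsilon^2 S^2$ term from producing anything larger than the $\beta\varepsilon QS$ term, and the estimate is then uniform in $(u,\theta)\in D^{\mathrm{fl}}_{3\kappa}\times\T_{3\sigma/4}$ since we stay uniformly away from the singularities.
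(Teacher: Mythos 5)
Your proposal is correct and follows essentially the same route as the paper's proof: reduce via Corollary~\ref{cor:ext2} to bounding $|\Ffl(0)|_\infty = |X_{H_1}\circ\Gamma_0|_\infty$, and then estimate term by term using the explicit form of $H_1 = \alpha\varepsilon^2 S^2 + \beta\varepsilon QS + \widehat H_1$, the fact that $D^{\mathrm{fl}}_{3\kappa}$ stays away from $\pm i\pi/2$, and the $\mathcal{O}_6$ vanishing of $\tilde H_1$. You spell out explicitly the cancellation of the $S^2$ contribution owing to $S\circ\Gamma_0\equiv 0$, which the paper encodes implicitly by writing out $X_{H_1}\circ\Gamma_0$ in components.
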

\begin{proof}
    We note that, by Corollary~\ref{cor:ext2} it suffices to check that $|\Ffl(0)|_\infty \lesssim \varepsilon$. By definition~\eqref{def:Lhat:Fhat} of $\Ffl$ and using definition~\eqref{def:H0H1} for $H_1$ in Proposition~\ref{prop:main_rescaling}, 
    \[  
        \Ffl(0)(u, \theta) = X_{H_1} \circ \Gamma_0(u, \theta) =\begin{pmatrix} - \varepsilon \beta r^2R\sin \theta + \partial_{y_1} \hat H_1\circ \Gamma_0 \\ \varepsilon \beta r^2R\cos \theta + \partial_{y_2} \hat H_1\circ \Gamma_0\\
       -\varepsilon \beta r^3 \sin \theta - \partial_{x_1}\hat H_1 \circ \Gamma_0 \\\varepsilon \beta r^3 \cos \theta - \partial_{x_2}\hat H_1 \circ \Gamma_0\end{pmatrix}      
    \]
    where $\widehat{H}_1(x,y;\varepsilon) = \varepsilon^{-4} \tilde{H}_1 (\varepsilon^2 x, \varepsilon y; \varepsilon)$ and $\tilde{H}_1 (x,y;\varepsilon)=\mathcal{O}_6(x,y)$ uniformly in $\varepsilon$. 
  Therefore, a straightforward computation proves that $\left|\Ffl(0)\right|_\infty \lesssim\varepsilon$. 
\end{proof}

\begin{lemma}\label{lem:ext2_contract} Given $c_1>0$, there exist $\kappa_0\ge 1$ and $0 <\varepsilon_0<1$ and a constant $c_2>0$ such that for any $\kappa \ge \kappa_0$ and $0 < \varepsilon \le \varepsilon_0$, the functional $\Ffl : B_{c_1\varepsilon} \subset  \mathcal{Y}^{\mathrm{fl}} \to   \mathcal{Y}^{\mathrm{fl}} $ is Lipschitz\footnote{Notice that, here $g\in B_{c_1\varepsilon} \subset \mathcal{Y}^{\mathrm{fl}}$ is a vector-value function of $4$ components (see~\eqref{def:ext2_BS}).} with $\mathrm{lip}\, \Ffl \le c_2 \varepsilon$.
\end{lemma}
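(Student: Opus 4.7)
The plan is to split $\Ffl$ according to its definition~\eqref{def:Lhat:Fhat} into the $H_0$-part (the second-order Taylor remainder of $X_{H_0}$ at $\Gamma_0$) and the $H_1$-part ($X_{H_1}\circ(\Gamma_0+\Gamma)$), and to show separately that each piece is Lipschitz with constant of order~$\varepsilon$ on $B_{c_1\varepsilon}$. Throughout we exploit that, because $\Gamma_0$ and any $\Gamma_a,\Gamma_b\in B_{c_1\varepsilon}$ are uniformly bounded on $D^{\mathrm{fl}}_{3\kappa}\times\T_{3\sigma/4}$ (the domain stays away from the singularities $\pm i\pi/2$), there is a fixed complex neighborhood of $\Gamma_0$ of radius $O(1)$ in which all the evaluations take place, so Cauchy estimates on matrix-valued maps can be applied.

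For the $H_0$-part, write, for $\Gamma_a,\Gamma_b\in B_{c_1\varepsilon}$,
\[
\begin{aligned}
&X_{H_0}(\Gamma_0+\Gamma_a)-X_{H_0}(\Gamma_0+\Gamma_b)-DX_{H_0}(\Gamma_0)(\Gamma_a-\Gamma_b)\\
&\quad =\int_0^1\!\bigl[DX_{H_0}(\Gamma_0+\Gamma_b+t(\Gamma_a-\Gamma_b))-DX_{H_0}(\Gamma_0)\bigr]\,dt\cdot(\Gamma_a-\Gamma_b).
\end{aligned}
\]
Since $H_0={\omega\over\varepsilon}S+\tfrac12(N-Q+Q^2)$ is polynomial of degree $4$, the only source of nonlinearity in $X_{H_0}$ is the $Q^2$ term, so $D^2X_{H_0}$ is a linear polynomial in $y$, uniformly bounded on the aforementioned neighborhood. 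Another application of the mean value theorem to the inner bracket gives a factor $|\Gamma_b+t(\Gamma_a-\Gamma_b)|_\infty\le c_1\varepsilon$, whence the $H_0$-contribution to $|\Ffl(\Gamma_a)-\Ffl(\Gamma_b)|_\infty$ is bounded by $C\varepsilon\,|\Gamma_a-\Gamma_b|_\infty$. Note that the dangerous $\omega/\varepsilon$ factor sits in $DX_{H_0}$ but is eliminated by the subtraction $DX_{H_0}(\Gamma_0+\Delta)-DX_{H_0}(\Gamma_0)$.

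For the $H_1$-part, apply the mean value theorem:
\[
X_{H_1}(\Gamma_0+\Gamma_a)-X_{H_1}(\Gamma_0+\Gamma_b)=\int_0^1 DX_{H_1}(\Gamma_0+\Gamma_b+t(\Gamma_a-\Gamma_b))\,dt\cdot(\Gamma_a-\Gamma_b),
\]
and show that $|DX_{H_1}|_\infty\le C\varepsilon$ uniformly on a fixed neighborhood of $\Gamma_0$. This is where the scaling bookkeeping is essential: recalling $H_1=\alpha\varepsilon^2 S^2+\beta\varepsilon QS+\widehat H_1$ with $\widehat H_1(x,y;\varepsilon)=\varepsilon^{-4}\tilde H_1(\varepsilon^2 x,\varepsilon y;\varepsilon)$ and $\tilde H_1=\Oo_6(x,y)$, one checks that $D^2(\alpha\varepsilon^2 S^2)$ is $O(\varepsilon^2)$, $D^2(\beta\varepsilon QS)$ evaluated on a bounded set is $O(\varepsilon)$, and the worst block of $D^2\widehat H_1$ (which is $\partial_{y_iy_j}\widehat H_1=\varepsilon^{-2}\partial_{y_iy_j}\tilde H_1(\varepsilon^2 x,\varepsilon y;\varepsilon)=O(\varepsilon^{2})$ since $\partial^2_{yy}\tilde H_1=\Oo_4$) is $O(\varepsilon^2)$. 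Altogether $D^2H_1=O(\varepsilon)$, hence $DX_{H_1}=J\,D^2H_1=O(\varepsilon)$, giving the desired bound.

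Combining the two pieces yields $|\Ffl(\Gamma_a)-\Ffl(\Gamma_b)|_\infty\le c_2\varepsilon\,|\Gamma_a-\Gamma_b|_\infty$, for suitable $\kappa_0$ large and $\varepsilon_0$ small enough, as announced. The main (though elementary) obstacle is the careful scaling analysis of the mixed and pure $y$-blocks of $D^2\widehat H_1$ to certify that the worst coefficient really is $O(\varepsilon)$ and not $O(1)$; everything else is standard Taylor remainder manipulation.
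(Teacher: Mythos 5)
Your proposal is correct and follows essentially the same route as the paper: split $\Ffl$ into the $H_0$-Taylor-remainder and the $X_{H_1}$ pieces, control the first by a double mean-value argument (with $|D^2X_{H_0}|_\infty\lesssim 1$ and the $\omega/\varepsilon$ factor killed by the subtraction of $DX_{H_0}(\Gamma_0)$), and the second by the bound $|DX_{H_1}|_\infty\lesssim\varepsilon$. Your extra bookkeeping on the blocks of $D^2\widehat H_1$ (yy-block $=O(\varepsilon^2)$, cross term $O(\varepsilon^3)$, xx-block $O(\varepsilon^4)$, with the $\beta\varepsilon QS$ term giving the dominant $O(\varepsilon)$) is accurate and just makes explicit what the paper delegates to Proposition~\ref{prop:main_rescaling}.
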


\begin{proof}
Let $\Gamma^1, \Gamma^2 \in B_{c_1\varepsilon}$. 
    Thanks to several Taylor extensions, we obtain that 
    \begin{align*}
         \left|\Ffl\left(\Gamma^{1}\right) - \Ffl\left(\Gamma^{2}\right)\right|_\infty \lesssim &\int_0^1 \int_0^1\left|D^2X_{H_0} \circ \left(\Gamma_0 + s\left(\Gamma^{2} +\tau\left(\Gamma^{1} - \Gamma^{2}\right)\right)\right)\right|_\infty ds \\
         &\times \left|\Gamma^{2} +\tau\left(\Gamma^{1} - \Gamma^{2}\right)\right|_\infty d\tau \left|\Gamma^{1} - \Gamma^{2}\right|_\infty\\
         &+ \int_0^1 \left|DX_{H_1} \circ \left(\Gamma_0 +\Gamma^{2} + \tau \left(\Gamma^{1} - \Gamma^{2}\right)\right)\right|_\infty d\tau\left|\Gamma^{1} - \Gamma^{2}\right|_\infty,
    \end{align*}
    where $\times$ stands for the usual multiplication and $D^2X_{H_0}$ is the second covariant derivative of the Hamiltonian vector field $X_{H_0}$. We observe that for all $\Gamma^{i}_1 \in B_{c_1\varepsilon} \subset \mathcal{Y}^{\mathrm{fl}}$ with $i=1,2$, we have that 
    \begin{equation}\label{proof:ext2_contr1}
        \left|\Gamma^{2} +\tau\left(\Gamma^{1} - \Gamma^{2}\right)\right|_\infty \le c_1\varepsilon
    \end{equation}
    and thanks to Proposition \ref{prop:main_rescaling}, one easily verifies that
    \begin{equation}
    \begin{aligned}\label{proof:ext2_contr2}
        &\left|D^2X_{H_0} \circ \left(\Gamma_0 + s\left(\Gamma^{2} +\tau\left(\Gamma^{1} - \Gamma^{2}\right)\right)\right)\right|_\infty \lesssim 1\\
        &\left|DX_{H_1} \circ \left(\Gamma_0 +\Gamma^{2} + \tau \left(\Gamma^{1} - \Gamma^{2}\right)\right)\right|_\infty \lesssim\varepsilon
        \end{aligned}
    \end{equation}
    for all $\tau$, $s \in [0,1]$. 
    Using the estimates~\eqref{proof:ext2_contr1} and~\eqref{proof:ext2_contr2}, the proof of the Lemma is finished.
\end{proof}

Let $c_0$ be the constant introduced in Lemma \ref{lem:ext2_first_iter}. The following proposition summarizes the existence and properties of the solution of equation~\eqref{ext2:inv_eq_FixedPoint}.
\begin{proposition}
    There exists $\kappa_0\ge 1$, $0 <\varepsilon_0<1$ and a constant $c_3>0$ such that, for any $\kappa \ge \kappa_0$ and $0 < \varepsilon \le\varepsilon_0$, the functional $\Ghat \circ \Ffl : B_{2c_0\varepsilon}\subset \mathcal{Y}^{\mathrm{fl}}  \to B_{2c_0\varepsilon}$ is a contraction with $\mathrm{lip}\, \Ghat \circ\Ffl \le c_3 \varepsilon$. Hence, equation~\eqref{ext2:inv_eq_FixedPoint} admits a solution such that
    \begin{equation*}
        |\Gamma|_\infty \le 2 c_0 \varepsilon.
    \end{equation*}
\end{proposition}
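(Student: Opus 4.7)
The plan is to verify the hypotheses of the Banach fixed point theorem for the operator $\Ghat\circ\Ffl$ on the closed ball $B_{2c_0\varepsilon}\subset \mathcal{Y}^{\mathrm{fl}}$. The three required ingredients are already available: Lemma~\ref{lem:ext2_first_iter} controls the image of $0$, Lemma~\ref{lem:ext2_contract} gives the Lipschitz estimate for $\Ffl$, and Corollary~\ref{cor:ext2} bounds the linear operator $\Ghat_0$. The structure of $\Ghat=M\phi + \Ghat_0$ in~\eqref{def:ext2_RightInverseHatL} is crucial: the term $M\phi$ is defined through~\eqref{def:ext2_g} in terms of $\Ggf-\Gamma_0$ (which is fixed data coming from Proposition~\ref{prop:ext1}), so $M\phi$ does \emph{not} depend on the argument $g=\Ffl(\Gamma)$ of $\Ghat$.

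First I would establish the contraction property. For $\Gamma^1,\Gamma^2\in B_{2c_0\varepsilon}$, since $M\phi$ is independent of the argument,
\[
\Ghat\circ\Ffl(\Gamma^1)-\Ghat\circ\Ffl(\Gamma^2) \;=\; \Ghat_0\big(\Ffl(\Gamma^1)-\Ffl(\Gamma^2)\big).
\]
Applying Corollary~\ref{cor:ext2} and Lemma~\ref{lem:ext2_contract} (with $c_1=2c_0$), and choosing $\kappa_0,\varepsilon_0$ accordingly, one obtains
\[
\big|\Ghat\circ\Ffl(\Gamma^1)-\Ghat\circ\Ffl(\Gamma^2)\big|_\infty \;\le\; C\,c_2\,\varepsilon\,|\Gamma^1-\Gamma^2|_\infty \;=:\; c_3\,\varepsilon\,|\Gamma^1-\Gamma^2|_\infty,
\]
which, for $0<\varepsilon\le \varepsilon_0$ sufficiently small, is strictly less than $1/2$.

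Next I would verify that the ball is invariant. For any $\Gamma\in B_{2c_0\varepsilon}$, using the triangle inequality together with Lemma~\ref{lem:ext2_first_iter} and the contraction bound just established,
\[
\big|\Ghat\circ\Ffl(\Gamma)\big|_\infty \;\le\; \big|\Ghat\circ\Ffl(0)\big|_\infty + c_3\varepsilon\,|\Gamma|_\infty \;\le\; c_0\varepsilon + 2 c_0 c_3\,\varepsilon^{2} \;\le\; 2c_0\varepsilon,
\]
provided $\varepsilon_0$ is taken small enough that $2c_3\varepsilon_0\le 1$. The Banach fixed point theorem in the complete metric space $B_{2c_0\varepsilon}\subset\mathcal{Y}^{\mathrm{fl}}$ then yields a unique fixed point $\Gamma$ of~\eqref{ext2:inv_eq_FixedPoint} with $|\Gamma|_\infty\le 2c_0\varepsilon$. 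Because $\Ghat$ is a right inverse of $\Lhat$, this $\Gamma$ is a solution of~\eqref{ext2:inv_eq}, and hence $\widehat\Gamma = \Gamma_0+\Gamma$ is a solution of~\eqref{equation:flow} on $D^{\mathrm{fl}}_{3\kappa}\times\T_{3\sigma/4}$.

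There is essentially no analytic obstacle here, since all the hard work has been done in the preceding lemmas. The only point that requires a moment's care is the non-linearity of $\Ghat$ in its argument: one must observe that the $M\phi$ contribution in~\eqref{def:ext2_RightInverseHatL} is fixed (it encodes the matching condition with $\Ggf$ on $\Dgf_{2\kappa}\cap D^{\mathrm{fl}}_{3\kappa}$ via the initial values at $u_0,\rho_2,\bar u_0$), so it affects the bound $|\Ghat\circ\Ffl(0)|_\infty\le c_0\varepsilon$ but cancels in the Lipschitz estimate, leaving an effective Lipschitz constant $c_3\varepsilon$ coming solely from $\Ghat_0\circ\Ffl$. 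Once the fixed point is obtained, the bound~\eqref{pro:ext2_TildeGamma1Est} of Proposition~\ref{prop:ext2} follows because $\widehat\Gamma-\Gamma_0=\Gamma$ and $|\Gamma|_\infty\le 2c_0\varepsilon$; the coincidence $\widehat\Gamma=\Ggf$ on $\Dgf_{2\kappa}\cap D^{\mathrm{fl}}_{3\kappa}$ is guaranteed by the very choice of initial values in~\eqref{def:ext2_g}, completing the analytic extension of $\Ggf$ to $D^{\mathrm{fl}}_{3\kappa}\times\T_{3\sigma/4}$.
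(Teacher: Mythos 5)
Your proof is correct and follows essentially the same route as the paper, which simply declares the result "an immediate consequence of Corollary~\ref{cor:ext2} and Lemmas~\ref{lem:ext2_first_iter} and~\ref{lem:ext2_contract}." You have filled in the standard Banach fixed point details, and in particular you have correctly identified the one subtlety the paper leaves implicit: $\Ghat = M\phi + \Ghat_0$ is affine rather than linear, with $M\phi$ a fixed term independent of the argument, so it cancels in the Lipschitz estimate and only contributes (via Lemma~\ref{lem:ext2_first_iter}) to the bound on $\Ghat\circ\Ffl(0)$. That, together with $\mathrm{lip}\,\Ghat_0\circ\Ffl \le Cc_2\varepsilon$ from Corollary~\ref{cor:ext2} and Lemma~\ref{lem:ext2_contract} with $c_1=2c_0$, and the triangle inequality for ball invariance, is exactly what makes the paper's citation-only proof legitimate.
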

\begin{proof}
    It is an immediate consequence of Corollary \ref{cor:ext2} and Lemmas \ref{lem:ext2_first_iter} and \ref{lem:ext2_contract}.
\end{proof}

\subsection{From flow to graph parametrization}\label{sec:ext3:proof}
In this section we prove Proposition~\ref{prop:ext3}, namely we look for $h : \tilde D_{4 \kappa, d_5} \times \T_{\sigma \over 2} \to \C^2$ satisfying~\eqref{eq:ext3_est_g}, in such a way that 
\begin{equation}\label{eq:ext3_invEq1}
    \pi_y\left(\Ggf\circ \left(\mathrm{id} + h\right)\right)(u, \theta) = \gamma_0(u, \theta), \qquad 
    (u, \theta) \in \tilde D_{4 \kappa, d_5} \times \T_{\sigma \over 2},
\end{equation}
(see~\eqref{cond:ext:3}) where we recall that $\pi_y$ stands for the projection on the $y$ components and $\Ggf$ is the analytic extension of the parameterization of the unstable manifold provided by Proposition~\ref{prop:ext2}. 

First, we need to introduce and recall some notations. We denote by $\Ggf_1  = \Ggf - \Gamma_0$, by $\gamma_0 = \pi_y\Gamma_0$ (see Lemma~\ref{lem:homoclinic}) and by $\Ggf_{1,y} = \pi_y\Ggf_1$. We recall that $\diagonal_{\alpha, \beta}$ stands for the diagonal $2\times2$ matrix having $\alpha$ and $\beta$ as entries and $R_\theta$ is defined by~\eqref{def:R_theta}. For a given symmetric bilinear form $A$, $A\cdot (v,v)$ stands for the vector $v$ given twice as an argument to $A$.

We claim that $h$ is a solution of~\eqref{eq:ext3_invEq1} if and only if
\begin{equation}\label{eq:ext3_FixedPointEq}
   h = \mathcal{F}^{\mathrm{gr}}(h)
\end{equation}
with
\begin{equation*}
   \mathcal{F}^{\mathrm{gr}}(h) = -\diagonal_{{1 \over \dot r},{1 \over r}}R_\theta \left(\int_0^1(1 -\tau) D^2 \gamma_{0} \circ \left(\mathrm{id} + \tau h\right)d\tau \cdot (h, h) + \Ggf_{1,y}\circ \left(\mathrm{id}+h\right)\right).
\end{equation*}
In order to prove the claim, we observe that 
\begin{align*}
     \pi_y\left(\Ggf\circ \left(\mathrm{id} + h\right)\right) &= \gamma_{0}\circ \left(\mathrm{id} + h\right) + \Ggf_{1,y}  \circ \left(\mathrm{id} + h\right)\\
     \gamma_{0}\circ \left(\mathrm{id} + h\right) &=  \gamma_{0} + D \gamma_{0}h + \int_0^1(1 -\tau)D^2 \gamma_{0} \circ \left(\mathrm{id} + \tau h\right)d\tau \cdot (h, h)
\end{align*}
where the latter is obtained after a Taylor extension. Hence, we rewrite~\eqref{eq:ext3_invEq1} as
\begin{equation*}
     D \gamma_{0} \,h =  - \int_0^1(1 -\tau)D^2 \gamma_{0} \circ \left(\mathrm{id} + \tau h\right)d\tau \cdot (h, h) - \Ggf_{1,y}\circ \left(\mathrm{id} + h\right).
\end{equation*}
    A straightforward computation shows that $D \gamma_{0,y} = R_{-\theta}\diagonal_{\dot r, r}$. Thus, by multiplying both sides of the latter  by $\diagonal_{{1 \over \dot r},{1 \over r}}R_\theta$ we obtain~\eqref{eq:ext3_FixedPointEq}.
%

\subsubsection{The fixed point equation}
The aim of this section is to find a solution to equation~\eqref{eq:ext3_FixedPointEq}. To this end, we define the following Banach space
\begin{align}\label{def:ext3_BS}
    \mathcal{Y}^{\mathrm{gr2}} &= \Big\{f : \tilde D_{4\kappa, d_5} \times \T_{\sigma \over 2} \to \C : \mbox{$f$ is analytic and} \nonumber\\
    &\hspace{44mm} |f|_\infty = \sup_{(u, \theta) \in \tilde D_{4 \kappa , d_5} \times \T_{\sigma \over 2}} |f(u, \theta)| < \infty\Big\}.
\end{align}
We will use the same notation for vector-valued functions and matrices. As usual, the proof is divided into two steps, stated in Lemmas~\ref{lem:ext3_first_iter} and~\ref{lem:ext3_contract}. 

\begin{lemma}\label{lem:ext3_first_iter}
    There exist $\kappa_0\ge 1$ and $0<\varepsilon_0<1$ and a constant $c_0>0$ such that for any $\kappa \ge \kappa_0$ and $0 < \varepsilon \le \varepsilon_0$
    \begin{equation*}
        \left|\mathcal{F}^{\mathrm{gr}}(0)\right|_\infty \le c_0\varepsilon.
    \end{equation*}
\end{lemma}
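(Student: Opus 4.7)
The plan is to note that at $h=0$ the bilinear term in $\mathcal{F}^{\mathrm{gr}}$ vanishes (since it is evaluated at $(h,h)=(0,0)$ and the outer factor is independent of $h$), so
\[
\mathcal{F}^{\mathrm{gr}}(0)(u,\theta) \;=\; -\diagonal_{\,1/\dot r(u),\,1/r(u)}\,R_\theta\,\Ggf_{1,y}(u,\theta),
\]
and then to bound the matrix prefactor and the remainder $\Ggf_{1,y}$ separately on $\tilde D_{4\kappa, d_5}\times \T_{\sigma/2}$.

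For the remainder $\Ggf_{1,y}=\pi_y(\Ggf-\Gamma_0)$, I would first verify the geometric inclusion $\tilde D_{4\kappa, d_5}\subset \Dgf_{2\kappa}\cup D^{\mathrm{fl}}_{3\kappa}$. Since $d_2<d_5<d_4<d_3$ and $4\kappa\varepsilon>3\kappa\varepsilon>2\kappa\varepsilon$, the ceiling $|\Im u|\le \pi/2-4\kappa\varepsilon-\tan\slope\,\Re u$ of $\tilde D_{4\kappa,d_5}$ is strictly tighter than the corresponding ceilings in the two target domains; the part of $\tilde D_{4\kappa, d_5}$ with $|\Im u|\ge d_4\pi/2+\tan\hslope\,\Re u$ lies in $D^{\mathrm{fl}}_{3\kappa}$, while the remaining part (where $|\Im u|$ is closer to $d_5\pi/2$) lies in the strip $\Dgf_{2\kappa}$ provided $\slopee$ is chosen small enough relative to the gap $d_5-d_2$. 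Once this inclusion is in place, Propositions~\ref{prop:ext1} and~\ref{prop:ext2} immediately yield $|\Ggf-\Gamma_0|\lesssim \varepsilon$ on this region, and hence $|\Ggf_{1,y}|_\infty\lesssim \varepsilon$ on $\tilde D_{4\kappa, d_5}\times \T_{\sigma/2}$ (using $\sigma/2<3\sigma/4$).

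For the prefactor $\diagonal_{1/\dot r,\,1/r}R_\theta$, I would use the explicit expressions $r(u)=1/\cosh u$ and $\dot r(u)=-\sinh u/\cosh^2 u$ together with the geometry of $\tilde D_{4\kappa, d_5}$. The ceiling constraint keeps the domain inside a bounded region of $\mathbb{C}$, so $|1/r(u)|=|\cosh u|\lesssim 1$; the floor constraint $|\Im u|\ge d_5\pi/2 - \tan\slopee\,\Re u$ with $d_5>1/4$ keeps $u$ at distance $\Omega(1)$ from the zero of $\sinh u$ at $0$, yielding $|1/\dot r(u)|=|\cosh^2 u|/|\sinh u|\lesssim 1$; and $R_\theta$ is uniformly bounded for $\theta\in \T_{\sigma/2}$. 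Multiplying these uniform bounds gives the desired estimate $|\mathcal{F}^{\mathrm{gr}}(0)|_\infty\le c_0\,\varepsilon$.

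The only non-routine point is the geometric verification of the inclusion $\tilde D_{4\kappa, d_5}\subset \Dgf_{2\kappa}\cup D^{\mathrm{fl}}_{3\kappa}$, which is a bookkeeping exercise on the wedges defining the three domains and fixes compatibility conditions on $\slope,\hslope,\slopee$ and the $d_i$'s. Everything else reduces to supremum bounds on explicit analytic functions on a bounded region, insensitive to $\kappa$ and $\varepsilon$.
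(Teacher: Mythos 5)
Your overall approach coincides with the paper's: decompose $\mathcal{F}^{\mathrm{gr}}(0) = -\diagonal_{1/\dot r,1/r}R_\theta\,\Ggf_{1,y}$, bound the matrix prefactor by a constant using the explicit formulas for $r$ and $\dot r$ together with the fact that $\tilde D_{4\kappa,d_5}$ is bounded away from both $u=0$ and $u=\pm i\pi/2$, and bound $\Ggf_{1,y}$ by $\Oo(\varepsilon)$ via the earlier extension propositions. Your treatment of the prefactor is actually more explicit than the paper, which dismisses it with a bare $\lesssim 1$; this is a useful elaboration.

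However, the geometric sub-argument supporting the domain inclusion contains an error. You split $\tilde D_{4\kappa,d_5}$ into the part with $|\Im u|\ge d_4\pi/2+\tan\hslope\,\Re u$ (claimed to lie in $D^{\mathrm{fl}}_{3\kappa}$) and a remainder (claimed to lie in $\Dgf_{2\kappa}$). But the defining constraints of $\tilde D_{4\kappa,d_5}$ include $|\Im u|\le d_5\pi/2+\tan\hslope\,\Re u$, and since $d_5<d_4$ this ceiling lies \emph{strictly below} the floor $|\Im u|\ge d_4\pi/2+\tan\hslope\,\Re u$ of $D^{\mathrm{fl}}_{3\kappa}$ (both are lines of the same slope $\tan\hslope$, offset by the constant gap $(d_4-d_5)\pi/2>0$). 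Hence the first piece of your decomposition is empty, and indeed $\tilde D_{4\kappa,d_5}\cap D^{\mathrm{fl}}_{3\kappa}=\emptyset$ as these domains are written. The correct way to close the gap is to show $\tilde D_{4\kappa,d_5}\subset\Dgf_{2\kappa}$ outright, which does hold once $\slopee$ is small enough relative to $\slope$ so that the triangle $\tilde D_{4\kappa,d_5}$ closes up before its floor $d_5\pi/2-\tan\slopee\,\Re u$ drops below the $\Dgf$-floor $d_2\pi/2+\tan\hslope\,\Re u$; then Proposition~\ref{prop:ext1}, not Proposition~\ref{prop:ext2}, supplies the estimate on $\Ggf-\Gamma_0$. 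Note that the paper's own proof is not clean on this point either: it cites a domain $D^{\mathrm{fl}}_{2\kappa}$ that is never defined, and the earlier assertion that $\tilde D_{4\kappa,d_5}\cap D^{\mathrm{fl}}_{3\kappa}\neq\emptyset$ seems inconsistent with the stated order $d_5<d_4$, so there is likely a typographical slip in the source. Your conclusion $|\Ggf_{1,y}|_\infty\lesssim\varepsilon$ on $\tilde D_{4\kappa,d_5}\times\T_{\sigma/2}$ and hence the lemma nonetheless stands once the inclusion is repaired along the lines above.
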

\begin{proof}
    We observe that 
    $\mathcal{F}^{\mathrm{gr}}(0) = - \diagonal_{{1 \over \dot r},{1 \over r}}R_\theta \Ggf_{1, y}$. 
    On the one hand,  
    \begin{equation}\label{eq:ext3_StimeF01}
        \left|\diagonal_{{1 \over \dot r},{1 \over r}}R_\theta\right|_\infty \lesssim 1.
    \end{equation}
    On the other hand, since $\tilde D_{4\kappa , d_5} \subset D^{\mathrm{fl}}_{2\kappa}$, it follows from~\eqref{pro:ext2_TildeGamma1Est} in Proposition~\ref{prop:ext2} that
    \begin{equation}\label{eq:ext3_StimeF02}
        \left|\Ggf_{1, y} \right|_\infty\lesssim\varepsilon.
    \end{equation}
    Using~\eqref{eq:ext3_StimeF01} and~\eqref{eq:ext3_StimeF02}, one can conclude the proof of this lemma. 
\end{proof}
 
\begin{lemma}\label{lem:ext3_contract}
Given $c_1>0$, there exist $\kappa_0\ge 1$, $0 < \varepsilon_0<1$ and a constant $c_2>0$ such that for any $\kappa \ge \kappa_0$ and $0 < \varepsilon \le \varepsilon_0$ the functional $\mathcal{F}^{\mathrm{gr}} : B_{c_1\varepsilon} \subset \mathcal{Y}^{\mathrm{gr2}}\times \mathcal{Y}^{\mathrm{gr2}} \to  \mathcal{Y}^{\mathrm{gr2}}\times \mathcal{Y}^{\mathrm{gr2}}$ is Lipschitz with
$\mathrm{lip}\, \mathcal{F}^{\mathrm{gr}} \le c_2 / \kappa$.
\end{lemma}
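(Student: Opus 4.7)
The plan is to estimate the Lipschitz constants of the two natural summands of
\begin{equation*}
   \mathcal{F}^{\mathrm{gr}}(h) = -\diagonal_{1/\dot r,\,1/r}\,R_\theta\bigl(Q(h) + \Ggf_{1,y}\circ(\mathrm{id}+h)\bigr),
\end{equation*}
separately, where $Q(h) := \int_0^1 (1-\tau)\,D^2\gamma_0\circ(\mathrm{id}+\tau h)\,d\tau\cdot(h,h)$ is the second-order Taylor remainder of $\gamma_0$ at $\mathrm{id}$. On $\tilde D_{4\kappa,d_5}\times\T_{\sigma/2}$, the three constraints in~\eqref{def:domains_ext_final} keep the domain bounded and away from both the poles $\pm i\pi/2$ of $r$ (via $d_5<1/2$, embodied in the upper bound involving $\hslope$) and the zero $u=0$ of $\dot r$ (via the lower bound involving $\slopee$), so the prefactor $\diagonal_{1/\dot r,\,1/r}R_\theta$ and $|D^k\gamma_0|_\infty$ for $k=1,2,3$ are all bounded uniformly in $\kappa$ and $\varepsilon$.

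Next, for any $h\in B_{c_1\varepsilon}\subset\mathcal{Y}^{\mathrm{gr2}}\times\mathcal{Y}^{\mathrm{gr2}}$, I would verify that $\mathrm{id}+h$ maps $\tilde D_{4\kappa,d_5}\times\T_{\sigma/2}$ into $D^{\mathrm{fl}}_{3\kappa}\times\T_{3\sigma/4}$, so that $\Ggf_{1,y}\circ(\mathrm{id}+h)$ and the composition inside $Q$ make sense: this follows from $|h|_\infty\le c_1\varepsilon$ together with the buffer of width $\sim\kappa\varepsilon$ between the two nested domains (embodied in the $4\kappa\varepsilon$ versus $3\kappa\varepsilon$ margins), which dominates $c_1\varepsilon$ as soon as $\kappa\ge\kappa_0$ is large enough.

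For the quadratic remainder, writing $h_\tau := h^2+\tau(h^1-h^2)$, a double application of the mean value theorem gives
\begin{equation*}
   Q(h^1) - Q(h^2) = \int_0^1\!\!\int_0^1 D^2\gamma_0\circ(\mathrm{id}+s h_\tau)\cdot\bigl(h_\tau,\, h^1-h^2\bigr)\,ds\,d\tau,
\end{equation*}
so that $|Q(h^1)-Q(h^2)|_\infty\lesssim\varepsilon\,|h^1-h^2|_\infty$. For the composition term, the mean value theorem combined with a Cauchy estimate yields
\begin{equation*}
   \bigl|\Ggf_{1,y}\circ(\mathrm{id}+h^1)-\Ggf_{1,y}\circ(\mathrm{id}+h^2)\bigr|_\infty \lesssim |D\Ggf_{1,y}|_{\infty,\tilde D_{4\kappa,d_5}}\,|h^1-h^2|_\infty \lesssim \frac{1}{\kappa}\,|h^1-h^2|_\infty,
\end{equation*}
where the last step uses Proposition~\ref{prop:ext2} (which gives $|\Ggf_{1,y}|_{\infty,D^{\mathrm{fl}}_{3\kappa}}\lesssim\varepsilon$) together with the Cauchy estimate over the $\sim\kappa\varepsilon$-wide buffer between the two domains. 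Collecting both contributions produces $\mathrm{lip}\,\mathcal{F}^{\mathrm{gr}}\lesssim\varepsilon + 1/\kappa \le c_2/\kappa$ after shrinking $\varepsilon_0$ so that $\varepsilon_0\kappa_0\le 1$.

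The main obstacle to overcome is the interplay between the two smallness parameters: the quadratic term only produces a factor $\varepsilon$, not $1/\kappa$, which forces one to choose $\varepsilon_0$ as a function of $\kappa_0$; meanwhile, the $1/\kappa$ improvement on the composition term arises solely from the carefully engineered $\kappa\varepsilon$-wide nesting of the domains, which is precisely the reason the earlier definitions use sharply decreasing $2\kappa\varepsilon$, $3\kappa\varepsilon$, $4\kappa\varepsilon$ margins in $\Dgf_{2\kappa}$, $D^{\mathrm{fl}}_{3\kappa}$ and $\tilde D_{4\kappa,d_5}$. Once this bookkeeping is carried out, the statement follows.
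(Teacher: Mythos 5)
Your proposal is correct and follows essentially the same route as the paper: both isolate the quadratic Taylor remainder $Q(h)$ of $\gamma_0$ from the composition term $\Ggf_{1,y}\circ(\mathrm{id}+h)$, bound the first via the mean value theorem and the uniform bounds on $D^k\gamma_0$, and bound the second via $\bigl|D\Ggf_{1,y}\bigr|_\infty\lesssim (\varepsilon\kappa)^{-1}\bigl|\Ggf_{1,y}\bigr|_\infty\lesssim\kappa^{-1}$ obtained from a Cauchy estimate across the $\Oo(\kappa\varepsilon)$-wide buffer between $\tilde D_{4\kappa,d_5}$ and $D^{\mathrm{fl}}_{3\kappa}$, exactly as in the paper's~\eqref{eq:ext3_Contr_DiffF}. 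Your algebraic treatment of the quadratic remainder is in fact marginally cleaner: by writing $Q(h^1)-Q(h^2)$ as a double-MVT integral involving only $D^2\gamma_0$, you avoid the $D^3\gamma_0$ term that appears in the paper's three-term decomposition (the third summand of~\eqref{eq:ext3_Contr_DiffF}); both ultimately yield the same bound $\lesssim\varepsilon\,|h^1-h^2|_\infty$ for this contribution. One small imprecision in your last step: requiring only $\varepsilon_0\kappa_0\le1$ does not make $\varepsilon\le c_2/\kappa$ for \emph{all} $\kappa\ge\kappa_0$ at a fixed $\varepsilon\le\varepsilon_0$. The correct justification --- which the paper also uses silently when it writes the final $\lesssim 1/\kappa$ --- is that $\kappa\varepsilon$ is automatically bounded on the range of parameters for which $\tilde D_{4\kappa,d_5}$ is nonempty (cf.\ the explicit constraints ${\pi\over 2}-2\kappa\varepsilon>0$, ${\pi\over 2}-3\kappa\varepsilon>0$ in~\eqref{def:domains_ext_step1} and~\eqref{def:domains_ext_step2}), so that $\varepsilon\lesssim1/\kappa$ holds wherever the lemma has content and the $\varepsilon$ contribution can be absorbed into the $1/\kappa$ term.
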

\begin{proof}
    For any $h^1,h^2 \in B_{c_1\varepsilon} \subset \mathcal{Y}^{\mathrm{gr2}}\times \mathcal{Y}^{\mathrm{gr2}}$, we denote by $h_\tau = h^2 + \tau \left(h^1 - h^2\right)$. We have that 
    \begin{align}\label{eq:ext3_Contr_DiffF}
        &\mathcal{F}^{\mathrm{gr}}\left(h^1\right) - \mathcal{F}^{\mathrm{gr}}\left(h^2\right) = - \diagonal_{{1 \over \dot r},{1 \over r}}R_\theta \Bigg(\int_0^1 \left(1 -\tau \right) D^2 \gamma_{0}\circ \left(\mathrm{id} + \tau h^1\right)d\tau \cdot \left(h^1, h^1-h^2\right)\nonumber\\
        &\hspace{10mm}+ \int_0^1 \left(1 -\tau \right) D^2 \gamma_{0}\circ \left(\mathrm{id} + \tau h^1\right)d\tau \cdot \left(h^1-h^2, h^2\right)\nonumber \\
        &\hspace{10mm}+ \int_0^1 \left(1 -\tau \right)\int_0^1 D^3 \gamma_{0}\circ \left(\mathrm{id} + \tau h^2 + s\tau \left(h^1-h^2\right)\right)ds \left(h^1-h^2\right)\tau d\tau\cdot \left(h^2, h^2\right)\nonumber \\
        &\hspace{10mm}+ \int_0^1 D \Ggf_{1,y} \circ \left(\mathrm{id} +  h^2 + \tau \left(h^1 - h^2\right)\right)d\tau \left(h^1-h^2\right)\Bigg).
    \end{align}
    By Lemma~\ref{lem:homoclinic}, $\gamma_{0}$ is analytic on a neighborhood of $\tilde D_{4\kappa, d_5} \times \T_{\sigma \over 2}$. This implies that 
    \begin{equation}\label{eq:ext3_StimaGamma0}
        \max_{i=1,2,3}\left|D^i\gamma_{0}\right|_\infty \lesssim 1.
    \end{equation}
    In addition, using the same arguments as the ones in the proof of Lemma \ref{lem:ext1_contract}, Cauchy estimates and~\eqref{eq:ext3_StimeF02}, we prove that
    \[
    \left |D \Ggf_{1,y}\right |_{\infty} \lesssim {1 \over \varepsilon \kappa}  \left |\Ggf_{1,y}\right |_{\infty}\lesssim \frac{1}{\kappa}.
    \]
    Thanks to the latter,~\eqref{eq:ext3_StimeF01} and~\eqref{eq:ext3_Contr_DiffF}, we obtain that 
    \begin{align*}
        \left|\mathcal{F}^{\mathrm{gr}}\left(h^1\right) - \mathcal{F}^{\mathrm{gr}}\left(h^2\right)\right|_\infty &\lesssim  \left|\diagonal_{{1 \over \dot r},{1 \over r}}R_\theta\right|_\infty \Bigg( \left|D^2 \gamma_{0}\right|_\infty \left|h^1\right|_\infty + \left|D^2 \gamma_{0}\right|_\infty \left|h^2\right|_\infty\\
        &+  \left|D^3 \gamma_{0}\right|_\infty \left(\left|h^2\right|_\infty\right)^2 + \left|D\Ggf_{1,y}\right|_\infty \Bigg)\left|h^1- h^2\right|_\infty\\
        &\lesssim {1 \over \kappa} \left|h^1- h^2\right|_\infty.
    \end{align*} 
  This concludes the proof of this lemma.
\end{proof}

Let $c_0$ and $c_2$ be the constants introduced in Lemmas \ref{lem:ext3_first_iter} and \ref{lem:ext3_contract}, respectively. The proposition below ensures the existence of the solution of equation~\eqref{ext2:inv_eq_FixedPoint} and describes its properties. 
\begin{proposition}
    There exist $\kappa_0\ge 1$ and $0 <\varepsilon_0<1$ such that, for any $\kappa \ge \kappa_0$ and $0<\varepsilon\le\varepsilon_0$, the functional $\mathcal{F}^{\mathrm{gr}} : B_{2c_0\varepsilon} \subset \mathcal{Y}^{\mathrm{gr2}}\times \mathcal{Y}^{\mathrm{gr2}} \to  B_{2c_0\varepsilon}$ is a contraction with Lipschitz constant $\mathrm{lip}\, \mathcal{F}^{\mathrm{gr}} \le c_2 / \kappa$. Hence, equation~\eqref{eq:ext3_FixedPointEq} admits a solution such that 
    \begin{equation*}
        |h|_\infty \le 2 c_0 \varepsilon.
    \end{equation*}
\end{proposition}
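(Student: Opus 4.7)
The plan is to combine the two preceding lemmas (Lemma~\ref{lem:ext3_first_iter} and Lemma~\ref{lem:ext3_contract}) via the Banach fixed point theorem. Since $\mathcal{Y}^{\mathrm{gr2}} \times \mathcal{Y}^{\mathrm{gr2}}$, equipped with the norm $|\cdot|_\infty$, is a Banach space, and the closed ball $B_{2c_0\varepsilon}$ is a complete metric subspace, it suffices to verify two things: that $\mathcal{F}^{\mathrm{gr}}$ maps $B_{2c_0\varepsilon}$ into itself, and that it is a contraction there.

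First, I would apply Lemma~\ref{lem:ext3_contract} with the specific choice $c_1 = 2c_0$. This yields a threshold pair $(\kappa_0, \varepsilon_0)$ and a constant $c_2 > 0$ so that $\mathrm{lip}\, \mathcal{F}^{\mathrm{gr}}|_{B_{2c_0\varepsilon}} \le c_2/\kappa$ for all $\kappa \ge \kappa_0$ and $0 < \varepsilon \le \varepsilon_0$. By enlarging $\kappa_0$ if necessary so that $c_2/\kappa_0 \le 1/2$, the Lipschitz constant is bounded by $1/2 < 1$, giving the contraction property.

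Next, I would establish that $\mathcal{F}^{\mathrm{gr}}(B_{2c_0\varepsilon}) \subset B_{2c_0\varepsilon}$. For any $h \in B_{2c_0\varepsilon}$, by the triangle inequality and Lemmas~\ref{lem:ext3_first_iter}--\ref{lem:ext3_contract},
\begin{equation*}
|\mathcal{F}^{\mathrm{gr}}(h)|_\infty \le |\mathcal{F}^{\mathrm{gr}}(0)|_\infty + |\mathcal{F}^{\mathrm{gr}}(h) - \mathcal{F}^{\mathrm{gr}}(0)|_\infty \le c_0 \varepsilon + \frac{c_2}{\kappa}\,|h|_\infty \le c_0 \varepsilon + \frac{1}{2}\cdot 2 c_0\varepsilon = 2 c_0\varepsilon,
\end{equation*}
so the inclusion holds. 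The Banach fixed point theorem then produces a unique $h \in B_{2c_0\varepsilon}$ solving~\eqref{eq:ext3_FixedPointEq}, automatically satisfying $|h|_\infty \le 2c_0 \varepsilon$, which gives the estimate~\eqref{eq:ext3_est_g} with $c_3 = 2c_0$.

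There is essentially no obstacle here: all the analytical work has already been carried out in Lemmas~\ref{lem:ext3_first_iter} and~\ref{lem:ext3_contract}, and the argument is a textbook application of the contraction mapping principle. The only subtlety is bookkeeping the thresholds: one must take the final $\kappa_0$ as the maximum of the $\kappa_0$ from Lemma~\ref{lem:ext3_contract} (applied with $c_1 = 2c_0$) and the value $2 c_2$, and the final $\varepsilon_0$ as the minimum of the $\varepsilon_0$ from the two lemmas.
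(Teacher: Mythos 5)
Your proposal is correct and follows essentially the same approach as the paper, which simply states that the proposition is an immediate consequence of Lemmas~\ref{lem:ext3_first_iter} and~\ref{lem:ext3_contract}. You have spelled out the standard contraction-mapping details (Lipschitz bound via Lemma~\ref{lem:ext3_contract} with $c_1 = 2c_0$, invariance of the ball via the triangle inequality together with Lemma~\ref{lem:ext3_first_iter}, then Banach fixed point) that the paper leaves implicit.
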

\begin{proof}
    The proof follows immediately from Lemmas \ref{lem:ext3_first_iter} and \ref{lem:ext3_contract}.
\end{proof}


\subsection{Refinement of the analytic extension of $T^{\bu}$}\label{sec:ext4:proof}

In this section, we conclude the proof of Theorem~\ref{thm:ext} by proving the estimates~\eqref{est:thm_ext}. 
We have already proven, see~\eqref{firstbound:T1:ext}, that 
\[
\sup_{(u,\theta) \in \big (D^{\mathrm{ext}}_{\kappa} \backslash D^{\mathrm{out},\bu}_{\kappa, \varsigma} \big ) \times \T_{\sigma \over 2}} |\nabla T_1^\bu (u,\theta)| \leq c \varepsilon.
\]
In addition, $T_0+T_1^\bu$ is a solution of~\eqref{HJpolarcoordinates} and therefore, as proved in Section~\ref{sec:hamiltonJacobi}, $T_1$ is a solution of
$\Lout T_1 = \Fout (T_1)$ where $\Lout T = \partial_u T + \frac{\omega}{\varepsilon} \partial_\theta T$ is defined in~\eqref{def:operator_L} and $\Fout$ (see~\eqref{def:operator_F}) is given by
\begin{equation}
    \label{def:operator_F:ext}
    \begin{aligned}
   \Fout( T)  = &\frac{1}{2R^2}(\partial_u T)^2 +  \frac{1}{2r^2}(\partial_\theta T)^2 + \alpha \varepsilon^2   (\partial_\theta T)^2 - \beta \varepsilon r^2 \partial_\theta T \\ &+ \varepsilon^{-4} \tilde H_1 (\varepsilon^2( \delta_0 +(D\, \gamma_0)^{-\top}\nabla T), \varepsilon \gamma_0; \varepsilon).
\end{aligned}
\end{equation}
We take advantage of the fact that $T_1^{\bu}$ is a known function, so that it is a solution of the non-homogeneous partial linear equation 
\[
\Lout T =  \Fout (T_1^\bu)
\]
where we are using $T$ as a variable.
We observe that $g(u,\theta) = \Fout (T_1^{\bu})(u,\theta ) = \sum_{\ell\in \mathbb{Z}} g^{[\ell]} (u) e^{i\ell \theta}$ with 
\begin{equation}\label{boundsFout:ext}
|g^{[\ell]}(u)|\lesssim \varepsilon^2,  \qquad \mbox{for all $u \in D^{\mathrm{ext}}_{\kappa} \backslash D^{\mathrm{out},\bu}_{\kappa,\varsigma}$}. 
\end{equation}
In the latter, we used that $\tilde{H}_1(x,y;\varepsilon)$ is of order $\mathcal{O}_6(x,y)$ uniformly in $\varepsilon$.

Let $u_1 \in D^{\mathrm{ext}}_{\kappa} \backslash D^{\mathrm{out},\bu}_{\kappa,\varsigma} $ (see Figure~\eqref{fig:ext_domain}) be the topmost point. By Theorem~\ref{thm:outer} (notice that $u_1$ is away from $\pm i\frac{\pi}{2}$), 
\begin{equation}\label{initialcondition:ext:ref}
|T_1^{\bu}(u_1)| \lesssim \varepsilon^2, \qquad |\partial_u T_1^{\bu}(u_1)| \lesssim \varepsilon^2, \qquad |\partial_\theta T_1^{\bu}(u_1)| \lesssim \varepsilon^3.
\end{equation}
This implies that $T_1^{\bu}(u, \theta) = \sum_{k\in \mathbb{Z}} (T_1^{\bu})^{[\ell]} (u) e^{i\ell \theta}$ with 
\begin{align*}
    (T_1^{\bu})^{[\ell]} (u) & = e^{-\frac{\omega}{\varepsilon} i\ell (u-u_1)} (T_1^{\bu})^{[\ell]} (u_1) + 
    \int_{u_1}^u e^{\frac{\omega}{\varepsilon} i\ell (s-u)} g^{[\ell]}(s)ds \quad \mbox{if $\ell >0$} \\
    (T_1^{\bu})^{[0]} (u) & = (T_1^{\bu})^{[0]}(u_1) + \int_{u_1}^u g^{[0]}(s) ds \\ 
    (T_1^{\bu})^{[\ell]} (u) & = e^{-\frac{\omega}{\varepsilon} i\ell (u-\bar{u}_1)} (T_1^{\bu})^{[\ell]} (\bar{u}_1) + 
    \int_{\bar{u}_1}^u e^{\frac{\omega}{\varepsilon} i\ell (s-u)} g^{[\ell]}(s)ds \quad \mbox{if $\ell <0$}.
\end{align*}
for all $(u,\theta) \in \big (D^{\mathrm{ext}}_{\kappa} \backslash D^{\mathrm{out},\bu}_{\kappa,\varsigma} \big ) \times \T_{\sigma \over 2}$. Hence, using bounds~\eqref{boundsFout:ext} and~\eqref{initialcondition:ext:ref}, it is straightforward to check that 
$|T_1^{\bu}(u,\theta)| \lesssim \varepsilon^2$ for all $(u,\theta) \in \big (D^{\mathrm{ext}}_{\kappa} \backslash D^{\mathrm{out},\bu}_{\kappa,\varsigma} \big ) \times \T_{\sigma \over 2}$. 

Finally, we note that $\partial_u T_1^\bu$ is a solution of  $\Lout T =  \partial_u \Fout (T_1^\bu)$. Therefore
\begin{align*}
    (\partial_u T_1^{\bu})^{[\ell]} (u) & = e^{-\frac{\omega}{\varepsilon} i\ell (u-u_1)} (\partial_{u} T_1^{\bu})^{[\ell]} (u_1) + 
    \int_{u_1}^u e^{\frac{\omega}{\varepsilon} i\ell (s-u)} \partial_s g^{[\ell]}(s)ds \quad \mbox{if $\ell >0$} \\
    (\partial_u T_1^{\bu})^{[0]} (u) & = (\partial_u T_1^{\bu})^{[0]}(u_1) + \int_{u_1}^u \partial_s g^{[0]}(s) ds \\ 
    (\partial_u T_1^{\bu})^{[\ell]} (u) & = e^{-\frac{\omega}{\varepsilon} i\ell (u-\bar{u}_1)} (\partial_u T_1^{\bu})^{[\ell]} (\bar{u}_1) + 
    \int_{\bar{u}_1}^u e^{\frac{\omega}{\varepsilon} i\ell (s-u)} \partial_s g^{[\ell]}(s)ds \quad \mbox{if $\ell <0$}
\end{align*}
for all $(u,\theta) \in \big (D^{\mathrm{ext}}_{\kappa} \backslash D^{\mathrm{out},\bu}_{\kappa,\varsigma} \big ) \times \T_{\sigma \over 2}$.
Using that $ \ell\, \Im (s-u)<0$, doing parts in the integrals defining $(\partial_u T_1^{\bu})^{[\ell]}$ and using again bounds~\eqref{boundsFout:ext} and~\eqref{initialcondition:ext:ref} it is elementary to prove that $|(\partial_u T_1^{\bu})^{[\ell]} (u)| \lesssim \varepsilon^2$.
As a consequence 
\[
|\partial_u T_1^\bu(u,\theta) |\lesssim \varepsilon^2, \qquad |\partial_\theta T_1^\bu(u,\theta) |\lesssim \varepsilon^3, 
\]
for all $(u,\theta) \in \big (D^{\mathrm{ext}}_{\kappa} \backslash D^{\mathrm{out},\bu}_{\kappa,\varsigma} \big ) \times \T_{\sigma \over 2}$. This concludes the proof of Theorem~\ref{thm:ext}.

\section{The inner scale}\label{sec:inner:proof}

In this section, we look for two solutions $K_0^*$, with $*=\bu, \bs$, of the following inner equation 
\begin{equation}\label{InnerEq_proof}  
\Lin K = \Fin(K),
\end{equation}
where $\Lin$ and $\Fin$ are defined by~\eqref{defLin} and~\eqref{def:Fin} respectively. We find these solutions by rewriting the latter as a fixed-point equation by inverting the linear operator $\Lin$ on suitable Banach spaces. Afterwards, we analyze the difference $K^{\bu}_0-K^{\bs}_0$.

For the rest of this section, we fix $\sigma>0$ and $\vartheta_0 \in \left (0,\frac{\pi}{2}\right )$. We used these parameters in the definitions of the domains $\T_\sigma$, $D^{\mathrm{in}, \bu}_{\kappa}$ and $D^{\mathrm{in}, \bs}_{\kappa}$ (see~\eqref{def:complex_torus} and~\eqref{DomainInner}).

We recall that $B_\varrho$ is the ball of radius $\varrho$ centered at the origin, that $\kappa_0$ is a threshold parameter that will be assumed sufficiently large and that $u \lesssim v$ means that$u \le C v$ for some $C>0$.

The rest of this section is divided into four parts. First, in Section \ref{proof:Thm_inner_BS}, we define some Banach spaces we used in the analysis of the inner equation~\eqref{InnerEq_proof}. Secondly, in Section \ref{sec:in:Banach}, we find a suitable right-inverse $\G$ of the linear operator $\Lin$ and we analyze its properties. Thirdly, in Section \ref{sec:in:contraction}, we write the equation~\eqref{InnerEq_proof} as a fixed point equation and prove the existence of solutions. Finally, in Section~\ref{sec:in:difference}, we deal with the analysis of the difference $K^{\bu}_0-K^{\bs}_0$.

\subsection{Banach spaces}\label{proof:Thm_inner_BS}
We introduce the Banach spaces we will use in order to find a solution to equation~\eqref{InnerEq}.  To this end, let $\kappa>0$ and $\phi : D^{\mathrm{in}, *}_{  \kappa} \times \T_\sigma \to \C$,  $*=\bu, \bs$, be an analytic function with Fourier series 
\[ 
\phi(z,\theta)=
\sum_{\ell \in\Z} \phi^{[\ell]}(z) e^{i\ell \theta}.
\]
Given $n \ge 0$,  we define the following norms
\begin{eqnarray*}
|\phi^{[\ell]}|^*_{n} &=& \sup_{u \in D^{\mathrm{in}, *}_{  \kappa}} \left| \phi^{[\ell]}(z) z^n\right|, \qquad |\phi|^*_{n, \sigma} = \sum_{\ell \in \Z} |\phi^{[\ell]}|^*_{n} e^{|\ell|\sigma},  \\
\lfloor \phi \rfloor_{n, \sigma}^* &=& |\phi|^*_{n, \sigma} + |\partial_z \phi|^*_{n+1, \sigma} + |\partial_\theta \phi|^*_{n+1, \sigma}
\end{eqnarray*}  
for $*=\bu, \bs$. 
Finally, we introduce the Banach spaces
\begin{align*}
\mathcal{X}_n^* =& \left\{ \phi : D^{\mathrm{in}, *}_{ \kappa}   \to \C : \mbox{$\phi$ is analytic and $|\phi|^*_n <\infty$ }\right\}\\
\mathcal{X}_{n, \sigma}^* =& \left\{ \phi : D^{\mathrm{in}, *}_{  \kappa}  \times \T_\sigma \to \C : \mbox{$\phi$ is analytic and $|\phi|^*_{n, \sigma} <\infty$ }\right\}\\
\mathcal{\widetilde X}_{n, \sigma}^* =& \left\{ \phi : D^{\mathrm{in}, *}_{ \kappa}  \times \T_\sigma \to \C : \mbox{$\phi$ is analytic and $\lfloor\phi\rfloor^*_{n, \sigma} <\infty$ }\right\},
\end{align*}
for $*= \bu,\bs$. Similarly to Section \ref{sec:domains_and_Banach_spaces}, we will use the same notation for vector-valued functions and matrices. We say that a vector-valued function or a matrix belongs to the above spaces  if it is the case for each component.  The norms of a vector-valued function or a matrix are defined as the maximum of those norms of its components.

The following lemma, whose proof is in~\cite{Cast15}, contains some properties that the spaces $\mathcal{X}_{n, \sigma}^*$ and $\widetilde{\mathcal{X}}_{n,\sigma}^*$ satisfy.
\begin{lemma}
\label{PropNormInner} 
We fix $n_1,n_2 \in \N$ and $*=\bu, \bs$. Then, there exists $\kappa_0\ge 1$ and a constant $C>0$ such that for any $\kappa \ge \kappa_0$
\begin{enumerate}
\item If $n_1 \le n_2$, then $\mathcal{X}_{n_2, \sigma}^*\subset \mathcal{X}_{n_1, \sigma}^*$ and for all $\phi \in \mathcal{X}_{n_2, \sigma}^*$
\begin{equation*}
|\phi|_{n_1, \sigma}^* \le {C \over  \kappa^{n_2 -n_1}} |\phi|^*_{n_2, \sigma}.
\end{equation*}
\item If $\phi_1 \in \mathcal{X}^*_{n_1, \sigma}$ and $\phi_2 \in \mathcal{X}^*_{n_2, \sigma}$, then $\phi_1\phi_2 \in \mathcal{X}^*_{n_1+n_2, \sigma}$ and  
\begin{equation*}
|\phi_1 \phi_2|^*_{n_1 + n_2, \sigma} \le C |\phi_1|^*_{n_1, \sigma}|\phi_2|^*_{n_2, \sigma}.
\end{equation*}
\end{enumerate}
\end{lemma}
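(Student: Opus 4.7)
The plan is to verify both claims by direct computation from the definitions of the weighted norms, exploiting the geometric fact that on the inner domain the modulus $|z|$ is bounded below by a multiple of $\kappa$.

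For claim (1), I would first observe that the definition of $D^{\mathrm{in},*}_\kappa$ in \eqref{DomainInner} implies there exists a constant $c_0 = c_0(\vartheta_0)>0$ such that $|z|\ge c_0 \kappa$ for every $z\in D^{\mathrm{in},*}_\kappa$; indeed, since $|\mathrm{Im}\,z|\ge \tan\vartheta_0\,\mathrm{Re}\,z + \kappa$ in the unstable case (and symmetrically in the stable case), the distance from any point of the domain to the origin is at least of order $\kappa$, with constant depending only on $\vartheta_0$. Given $\phi\in\mathcal{X}^*_{n_2,\sigma}$ and a Fourier index $\ell$, one writes
\[
|\phi^{[\ell]}(z)\, z^{n_1}| = |\phi^{[\ell]}(z)\, z^{n_2}|\cdot |z|^{-(n_2-n_1)}\le |\phi^{[\ell]}|^*_{n_2}\,(c_0\kappa)^{-(n_2-n_1)},
\]
so that $|\phi^{[\ell]}|^*_{n_1}\le (c_0\kappa)^{-(n_2-n_1)}|\phi^{[\ell]}|^*_{n_2}$. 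Multiplying by $e^{|\ell|\sigma}$ and summing over $\ell\in\Z$ yields the claimed bound with $C = c_0^{-(n_2-n_1)}$.

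For claim (2), I would use the convolution formula for Fourier coefficients of a product, $(\phi_1\phi_2)^{[\ell]}(z) = \sum_{m\in\Z}\phi_1^{[m]}(z)\,\phi_2^{[\ell-m]}(z)$. Bounding pointwise,
\[
|(\phi_1\phi_2)^{[\ell]}(z)\,z^{n_1+n_2}|\le \sum_{m\in\Z}|\phi_1^{[m]}(z)\,z^{n_1}|\,|\phi_2^{[\ell-m]}(z)\,z^{n_2}|\le \sum_{m\in\Z}|\phi_1^{[m]}|^*_{n_1}\,|\phi_2^{[\ell-m]}|^*_{n_2},
\]
so $|(\phi_1\phi_2)^{[\ell]}|^*_{n_1+n_2}\le \sum_m |\phi_1^{[m]}|^*_{n_1}\,|\phi_2^{[\ell-m]}|^*_{n_2}$. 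The elementary inequality $|\ell|\le |m|+|\ell-m|$ gives $e^{|\ell|\sigma}\le e^{|m|\sigma}e^{|\ell-m|\sigma}$, and a standard Young-type rearrangement of the resulting double sum produces
\[
|\phi_1\phi_2|^*_{n_1+n_2,\sigma}\le \Bigl(\sum_{m\in\Z}|\phi_1^{[m]}|^*_{n_1}e^{|m|\sigma}\Bigr)\Bigl(\sum_{k\in\Z}|\phi_2^{[k]}|^*_{n_2}e^{|k|\sigma}\Bigr) = |\phi_1|^*_{n_1,\sigma}|\phi_2|^*_{n_2,\sigma},
\]
giving (2) with $C=1$ (and thus with any fixed $C\ge 1$).

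There is no conceptual obstacle; the only thing to watch is the geometric lower bound $|z|\ge c_0\kappa$ on the domain, which is what lets the weight in claim (1) produce the gain $\kappa^{-(n_2-n_1)}$. Both statements are standard properties of weighted-Fourier Banach spaces in this kind of singular-perturbation analysis, and the argument extends componentwise to vector- and matrix-valued functions since the corresponding norms are taken as the maximum over components.
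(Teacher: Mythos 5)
Your proof is correct, and it is the standard argument for these weighted Fourier norms; the paper itself omits the proof and simply cites~\cite{Cast15} for it. Your geometric observation $|z|\ge \kappa\cos\vartheta_0$ on $D^{\mathrm{in},*}_\kappa$ (minimizing $|z|^2=(\mathrm{Re}\,z)^2+|\mathrm{Im}\,z|^2$ along the boundary $|\mathrm{Im}\,z|=\tan\vartheta_0\,\mathrm{Re}\,z+\kappa$) correctly delivers the $\kappa^{-(n_2-n_1)}$ gain in (1), and the Young-convolution estimate with the triangle inequality $|\ell|\le|m|+|\ell-m|$ is exactly what gives (2) with $C=1$.
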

We will extensively use the properties in the previous lemma without explicitly referring to them.  
\subsection{Inverting the operator $\Lin$}
\label{sec:in:Banach}
In order to rewrite equation~\eqref{InnerEq_proof} as a fixed point equation, we need to find a right inverse of the operator $\Lin$ acting in the spaces introduced in Section \ref{proof:Thm_inner_BS}.

We point out that the linear operator $\Lin (K)=\partial_z K +  \omega\partial_\theta K$ has been extensively studied in the literature~\cite{B06,B12,Cast15}. Let us recall here the main features of $\Lin$ and, more concretely, of its right inverse. 



For a given function $g^*: D^{\mathrm{in}, *}_{ \kappa} \times \T_\sigma \to \C$, we are interested in the solutions $K^*_0   :D^{\mathrm{in},*}_{ \kappa}\times \T_\sigma \to \mathbb{C}$ of the linear equation
\begin{equation*}
\Lin(K^*_0) = g^{*}, \qquad \mbox{with $*=\bu,\bs$,} 
\end{equation*}
satisfying the asymptotic condition
\begin{equation*}
\lim_{z \to +\infty}K_0^\bs(z, \theta)=0 \quad \mbox{and}  \quad \lim_{z \to -\infty}K_0^\bu(z, \theta)=0.
\end{equation*}
It is clear that formal right inverses $\G^{\bu},\G^{\bs}$ of $\Lin$ exist and can be expressed as 
\begin{equation*}
    \G^{\bu} (g) (z,\theta) = \int^{0}_{+\infty} g(z+\tau,\theta+\omega \tau)d\tau, \qquad \G^{\bs} (g) (z,\theta) = \int^{0}_{-\infty} g(z+\tau,\theta+\omega \tau)d\tau
\end{equation*}
or equivalently,  
\begin{equation}\label{proof:inner:def_G}
\G^* (g) (z, \theta) = \sum_{\ell \in \Z} \G^{*,[\ell]} (g) (z) e^{i \ell \theta}, \qquad \mbox{with $*=\bu,\bs$} 
\end{equation}
with 
\begin{equation*}
\G^{\bs,[\ell]} (g) (u) =   \int^z_{+\infty} e^{i  \omega\ell(\tau - z)} g^{[\ell]}(\tau)d\tau, \quad \G^{\bu,[\ell]} (g) (u) =  \int^z_{-\infty} e^{i\omega \ell(\tau - z)} g^{[\ell]}(\tau)d\tau
\end{equation*}
where, for $*=\bu, \bs$, $g^{ [\ell]}$ are the Fourier coefficients associated with $g$.  The following lemma, proved in~\cite{Cast15} (see also~\cite{B06}) provides some properties satisfied by $\G^*$.
\begin{lemma}
\label{PropGInner}
Let $n \in \Z$ such that $n\ge 1$ and $*=\bu, \bs$. There exists $\kappa_0 \ge 1$ a constant $C>0$ such that for all 
$g \in \mathcal{X}_{n,\sigma}^*$, $\mathcal{G}^* (g) \in \widetilde{\mathcal{X}}_{n-1,\sigma}^*$ and $\kappa \ge \kappa_0$
\begin{equation*}
\lfloor \G^*(g) \rfloor^*_{n-1, \sigma} \le C |g|^*_{n, \sigma}.
\end{equation*}
Moreover, letting $g^{[\ell]}$ be the Fourier coefficients associated with $g$, if $g^{[0]}=0$, then $\mathcal{G}^*(g) \in \mathcal{X}_{n,\sigma}^*$ and 
\begin{equation*}
\left| \G^*(g)\right|^*_{n, \sigma} \le C  |g|^*_{n, \sigma}.
\end{equation*}
for all $\kappa \ge \kappa_0$.
\end{lemma}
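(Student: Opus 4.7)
The plan is to work Fourier mode by Fourier mode: for each $\ell \in \Z$, estimate $\G^{*,[\ell]}(g)(z)$ separately, then reassemble with the exponential weights $e^{|\ell|\sigma}$. Recall that
\[
\G^{\bs,[\ell]}(g)(z) = \int^z_{+\infty} e^{i\omega\ell(\tau-z)} g^{[\ell]}(\tau)\,d\tau, \qquad \G^{\bu,[\ell]}(g)(z) = \int^z_{-\infty} e^{i\omega\ell(\tau-z)} g^{[\ell]}(\tau)\,d\tau.
\]
In each case we choose an integration path from $\pm\infty$ to $z$ lying inside $D^{\mathrm{in},*}_\kappa$; by the geometry of the wedge-shaped inner domains~\eqref{DomainInner}, for any $z \in D^{\mathrm{in},*}_\kappa$ we can always take a ray $\tau = z + s\hat v$ with $s \in [0,\infty)$ and an appropriately chosen $\hat v$ (its direction depending on $\mathrm{sgn}(\Im z)$) which stays in the domain and satisfies $|z + s\hat v| \gtrsim |z| + s$.

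For the zero mode $\ell = 0$ we use the pointwise bound $|g^{[0]}(\tau)| \le |g^{[0]}|^*_n\, |\tau|^{-n}$ and estimate directly
\[
|\G^{*,[0]}(g)(z)| \le |g^{[0]}|^*_n \int_0^{+\infty} \frac{ds}{(|z|+s)^n} \le \frac{C\, |g^{[0]}|^*_n}{|z|^{n-1}},
\]
which gives $|\G^{*,[0]}(g)|^*_{n-1} \le C|g^{[0]}|^*_n$ (with one power of $|z|$ lost). For nonzero modes $\ell \ne 0$ we exploit the oscillatory kernel to gain a factor $1/|\ell|$ via one integration by parts. Deforming the contour (for $\ell > 0$ into the upper half plane, for $\ell < 0$ into the lower one) makes the kernel decay exponentially, so the boundary term at $\pm\infty$ vanishes and one obtains
\[
\G^{*,[\ell]}(g)(z) = \frac{1}{i\omega\ell}g^{[\ell]}(z) - \frac{1}{i\omega\ell}\int_\gamma e^{i\omega\ell(\tau-z)} (g^{[\ell]})'(\tau)\,d\tau.
\]
Bounding $(g^{[\ell]})'$ by Cauchy estimates inside $D^{\mathrm{in},*}_{\kappa/2}$ and estimating the integral along the deformed contour yields the key inequality
\[
|\G^{*,[\ell]}(g)|^*_n \le \frac{C}{|\ell|}\, |g^{[\ell]}|^*_n \qquad (\ell \ne 0)
\]
with $C$ independent of $\ell$ and $\kappa$.

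Assembling the bounds, for the $|\cdot|^*_{n-1,\sigma}$ norm the $\ell = 0$ term contributes $C|g^{[0]}|^*_n$ directly, while for $\ell \ne 0$ an application of Lemma~\ref{PropNormInner}(1) gives $|\G^{*,[\ell]}(g)|^*_{n-1} \le C\kappa^{-1}|\G^{*,[\ell]}(g)|^*_n \le C|\ell|^{-1}|g^{[\ell]}|^*_n$; summing with the weights $e^{|\ell|\sigma}$ yields $|\G^*(g)|^*_{n-1,\sigma} \le C|g|^*_{n,\sigma}$. If $g^{[0]} = 0$ there is no loss of powers, and summing the $\ell \ne 0$ bounds at level $n$ gives $|\G^*(g)|^*_{n,\sigma} \le C|g|^*_{n,\sigma}$, which is the second (improved) claim. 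For the derivative pieces of the $\lfloor\cdot\rfloor^*_{n-1,\sigma}$ norm, I will use that $\partial_\theta \G^*(g)$ is obtained by multiplying each Fourier mode by $i\ell$, so that the $1/|\ell|$ factor from integration by parts exactly cancels the $|\ell|$ and yields $|\partial_\theta \G^*(g)|^*_{n,\sigma} \le C|g|^*_{n,\sigma}$; the bound on $\partial_z \G^*(g)$ then follows for free from the identity $\partial_z \G^*(g) = g - \omega\partial_\theta \G^*(g)$ guaranteed by $\Lin \G^*(g) = g$.

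The main obstacle is the path-choice bookkeeping: for each $\ell$ and each $z$, the integration contour must simultaneously stay inside the non-convex inner domain $D^{\mathrm{in},*}_\kappa$, ensure the kernel decays exponentially (so the boundary term at infinity in the integration by parts vanishes), and produce the sharp power $|z|^{-n}$. The geometry of the wedge~\eqref{DomainInner} makes this manageable: once the angle of the ray $\hat v$ is chosen appropriately relative to $\vartheta_0$ and to $\mathrm{sgn}(\ell)\cdot\mathrm{sgn}(\Im z)$, all three requirements can be satisfied, and the constants can be tracked uniformly in $\ell$ and $\kappa \ge \kappa_0$. Detailed verification of these estimates follows closely the arguments developed in~\cite{B06,Cast15}.
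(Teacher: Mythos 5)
Your overall plan---decompose mode by mode, treat $\ell=0$ by direct integration, gain a factor $|\ell|^{-1}$ for $\ell\ne 0$ from the oscillatory kernel, and recover $\partial_z\G^*(g)$ from the identity $\Lin\G^*(g)=g$---is the standard strategy and in spirit matches the proofs in \cite{B06,Cast15} to which the paper defers. However, the integration-by-parts step for $\ell\ne 0$ has a genuine gap. You invoke Cauchy estimates for $(g^{[\ell]})'$, but $g$ is only analytic on $D^{\mathrm{in},*}_{\kappa}$, so the admissible Cauchy radius at a point $\tau$ is at most $\mathrm{dist}(\tau,\partial D^{\mathrm{in},*}_{\kappa})$ (and note $D^{\mathrm{in},*}_{\kappa/2}\supset D^{\mathrm{in},*}_{\kappa}$, so ``Cauchy estimates inside $D^{\mathrm{in},*}_{\kappa/2}$'' is not available---one would need analyticity on a \emph{larger} domain than the one where the norm lives). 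Since the deformed contour must start at $z$, which may lie arbitrarily close to $\partial D^{\mathrm{in},*}_{\kappa}$, the weight $\mathrm{dist}(\tau,\partial D^{\mathrm{in},*}_{\kappa})^{-1}$ that arises when bounding $(g^{[\ell]})'(\tau)$ is not integrable along the initial segment of the contour, and the resulting estimate is not uniform over $z\in D^{\mathrm{in},*}_{\kappa}$.

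The fix is to drop the integration by parts and estimate $\G^{*,[\ell]}(g)$ directly: once the ray is chosen so that $\bigl|e^{i\omega\ell(\tau(s)-z)}\bigr|\le e^{-c\,\omega|\ell| s}$ for some $c>0$ depending only on $\vartheta_0$, and $|\tau(s)|\gtrsim|z|$ along the ray, one has
\[
\left|\G^{*,[\ell]}(g)(z)\right|\;\le\;|g^{[\ell]}|^*_n\int_0^{\infty}\frac{e^{-c\,\omega|\ell| s}}{|z|^n}\,ds \;=\; \frac{|g^{[\ell]}|^*_n}{c\,\omega\,|\ell|\,|z|^n},
\]
which already produces the $|\ell|^{-1}$ gain with no reference to $g'$ whatsoever. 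With this replacement the rest of your assembly (summation in $\ell$ with weights $e^{|\ell|\sigma}$, the case $g^{[0]}=0$, and the derivative bounds via $\partial_\theta$ and $\Lin\G^*(g)=g$) goes through unchanged. A minor further point: your $\ell=0$ estimate uses $\int_0^{\infty}(|z|+s)^{-n}\,ds$, which converges only for $n>1$; for $n=1$ the zero-mode integral need not converge, so the first inequality of the lemma really requires $n>1$, while the second (with $g^{[0]}=0$) is the one that covers $n=1$.
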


\subsection{The solutions $K^{\bs,\bu}_0$ of the inner equation}
\label{sec:in:contraction}

With the operator $\mathcal{G}^*$ defined by~\eqref{proof:inner:def_G}, we can rewrite the inner equation~\eqref{InnerEq_proof} as the fixed point equation
\begin{equation}\label{eqIn_fixedpoint}
K= \G^* \circ \Fin (K)
\end{equation}
with $*=\bu, \bs$ and $\Fin$ defined in~\eqref{def:Fin}. We point out that if $K^*_0\in \widetilde{\mathcal{X}}^{*}_{5,\sigma}$, with $*=\bu, \bs$, are solutions of the above fixed point equation then $K^*_0$ are also solutions of the inner equation~\eqref{InnerEq_proof} and, in addition, they satisfy the asymptotic conditions
\[
\lim_{\Re z \to \infty} K_0^{\bs}(z,\theta)=0, \qquad \lim_{\Re z\to -\infty} K_0^{\bu}(z,\theta)=0.
\]
In this section, we verify that the right-hand side of equation~\eqref{eqIn_fixedpoint} is a contraction on a suitable closed ball of $\mathcal{X}^*_{5,\sigma}$. The proof is divided into several lemmas. 
\begin{lemma} \label{InnerFirstIter}
There exists $\kappa_0 \ge 1$ and a constant $c_0>0$ such that for any $\kappa \ge \kappa_0$
\[
\left |\G \circ \Fin(0) \right |_{5,\sigma}^* \leq c_0, \qquad 
\]
with $*=\bu,\bs$. 
\end{lemma}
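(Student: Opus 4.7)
The plan is to unpack $\Fin(0)$ directly from definition~\eqref{def:Fin} and observe that all the $K$-dependent terms vanish, leaving only
\[
\Fin(0)(z,\theta) = \tilde H_1\bigl(-iz^{-2} R_\theta e_1,\, -iz^{-1} R_\theta e_1;\, 0\bigr).
\]
So the bound reduces to estimating this single term in the weighted norm $|\cdot|^*_{5,\sigma}$ on $D^{\mathrm{in},*}_{\kappa} \times \T_\sigma$.

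First I would show $\Fin(0) \in \mathcal{X}^*_{6,\sigma}$ with a uniform (in $\kappa$) bound. The two arguments of $\tilde H_1$ are, respectively, of size $|z|^{-2}$ and $|z|^{-1}$, and $R_\theta$ is analytic and uniformly bounded on $\T_\sigma$ (each entry lies in $\mathcal{X}^*_{0,\sigma}$ with a constant norm). Since $\tilde H_1(x,y;0) = \mathcal{O}_6(x,y)$ uniformly, expanding $\tilde H_1$ as a convergent series in $(x,y)$ and tracking the worst decay rate among monomials $x^a y^b$ with $a+b \ge 6$ (the minimum of $2a + b$ subject to $a+b\ge 6$ is $6$, attained at $a=0$, $b=6$) gives
\[
\bigl|\Fin(0)(z,\theta)\bigr| \lesssim \frac{1}{|z|^6},
\]
uniformly on $D^{\mathrm{in},*}_{\kappa} \times \T_\sigma$. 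Combined with the exponential decay of Fourier coefficients in $\theta$ coming from analyticity on $\T_\sigma$ (which will be inherited from that of $\tilde H_1$ and of $R_\theta$), this yields $|\Fin(0)|^*_{6,\sigma} \le C$ for a constant $C$ depending only on $\tilde H_1$ and $\sigma$, and in particular independent of $\kappa$.

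The conclusion is then immediate from Lemma~\ref{PropGInner} with $n=6$: we get $\G^*(\Fin(0)) \in \widetilde{\mathcal{X}}^*_{5,\sigma}$ with $\lfloor \G^*(\Fin(0))\rfloor^*_{5,\sigma} \le C' |\Fin(0)|^*_{6,\sigma} \le c_0$, and since $|\cdot|^*_{5,\sigma} \le \lfloor \cdot \rfloor^*_{5,\sigma}$ this gives the claimed bound.

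There is no real obstacle here: this is a clean ``first-iterate'' estimate whose only content is tracking orders of decay in $z$ and invoking the already-proved smoothing properties of $\G^*$. The mildly delicate point — which must nevertheless be mentioned explicitly — is the uniformity of the constant $c_0$ in $\kappa$; it is ensured because the bound on $\Fin(0)$ depends only on the fixed analytic data $\tilde H_1$, $\sigma$ and $\vartheta_0$, and not on the choice of $\kappa \ge \kappa_0$.
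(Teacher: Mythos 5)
Your proof is correct and follows the same route as the paper: reduce $\Fin(0)$ to the single term $\tilde H_1\bigl(-iz^{-2}R_\theta e_1, -iz^{-1}R_\theta e_1; 0\bigr)$, use $\tilde H_1(x,y;0)=\mathcal{O}_6(x,y)$ to get $\Fin(0)\in\mathcal{X}^*_{6,\sigma}$ uniformly in $\kappa$, and then invoke Lemma~\ref{PropGInner}. The only difference is that you spell out the minimization of $2a+b$ over $a+b\ge 6$ to justify the $|z|^{-6}$ decay, a step the paper leaves implicit; this is a welcome clarification but not a different argument.
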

\begin{proof}
By the definition of $\Fin$ (see~\eqref{def:Fin}), we have 
\[
\Fin(0)(z,\theta)=\tilde{H}_1\big ( (-iz^{-2} \cos \theta , -iz^{-2} \sin \theta, -iz^{-1} \cos \theta,-iz^{-1} \sin \theta ); 0 \big ).
\]
Using that $\tilde{H}_1(x,y;0) = \mathcal{O}_6(x,y)$, and Lemma \ref{PropGInner} the claim follows immediately. 
\end{proof}

\begin{lemma} \label{ExSolInner1}
Given $c_1>0$ and $*=\bu, \bs$, there exist $\kappa_0\ge 1$ and a constant $c_2>0$ such that, for any $\kappa \ge \kappa_0$, the functional $\Fin : B_{c_1} \subset \mathcal{\widetilde X}_{5, \sigma}^* \to \mathcal{\widetilde X}_{6, \sigma}^*$ is Lipschitz with $\mathrm{lip}\, \Fin \le c_2 / \kappa^2$.
\end{lemma}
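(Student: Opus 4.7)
The plan is to decompose $\Fin$ into the five terms appearing in~\eqref{def:Fin} and bound the Lipschitz constant of each one separately, exploiting the powers of $z$ present in each summand and the inclusion estimate $|\phi|^*_{n_1,\sigma}\le C \kappa^{-(n_2-n_1)}|\phi|^*_{n_2,\sigma}$ from Lemma~\ref{PropNormInner}, which is precisely the mechanism producing the gain $1/\kappa^2$. First observe that if $K\in \widetilde{\mathcal{X}}^*_{5,\sigma}$ then, by definition of $\lfloor \cdot\rfloor^*_{5,\sigma}$, the functions $\partial_z K$ and $\partial_\theta K$ lie in $\mathcal{X}^*_{6,\sigma}$ with norm bounded by $\lfloor K\rfloor^*_{5,\sigma}$. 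Therefore the multiplications by $z^2$ and by $z$ placed in front of the partial derivatives in the first two terms of~\eqref{def:Fin} simply lower the exponent in the norm:
\[
|z^2\partial_z K|^*_{4,\sigma},\ |z\partial_\theta K|^*_{5,\sigma},\ |\partial_\theta K|^*_{6,\sigma}\ \lesssim\ \lfloor K\rfloor^*_{5,\sigma}\le c_1.
\]

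For the four polynomial summands, the telescoping identity $AB-A'B'=(A-A')B+A'(B-B')$ (and its linear analogue for the term involving $\beta$) gives, thanks to Lemma~\ref{PropNormInner}(2),
\begin{align*}
|(z^2\partial_z K_1)^2-(z^2\partial_z K_2)^2|^*_{8,\sigma}
&\lesssim c_1\,\lfloor K_1-K_2\rfloor^*_{5,\sigma},\\
|(z\partial_\theta K_1)^2-(z\partial_\theta K_2)^2|^*_{10,\sigma}
&\lesssim c_1\,\lfloor K_1-K_2\rfloor^*_{5,\sigma},\\
|\alpha\bigl((\partial_\theta K_1)^2-(\partial_\theta K_2)^2\bigr)|^*_{12,\sigma}
&\lesssim c_1\,\lfloor K_1-K_2\rfloor^*_{5,\sigma},\\
|\beta z^{-2}\,\partial_\theta(K_1-K_2)|^*_{8,\sigma}
&\lesssim \lfloor K_1-K_2\rfloor^*_{5,\sigma}.
\end{align*}
An application of Lemma~\ref{PropNormInner}(1) then brings each of the first four contributions down to the target space $\mathcal{X}^*_{6,\sigma}$ with an additional factor $\kappa^{-(n-6)}$, $n\in\{8,10,12,8\}$. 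Since $n-6\ge 2$ in all cases, every contribution is bounded by $C\kappa^{-2}\lfloor K_1-K_2\rfloor^*_{5,\sigma}$, which is already the right shape.

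It remains to handle the nonlinear term $\tilde H_1\bigl(\hat\delta_0+(D\hat\gamma_0)^{-\top}\nabla K,\hat\gamma_0;0\bigr)$, and this is the only step requiring some care. Using the mean value theorem,
\[
\tilde H_1(\cdot,K_1)-\tilde H_1(\cdot,K_2)=\Bigl(\int_0^1 \partial_x\tilde H_1\bigl(\hat\delta_0+(D\hat\gamma_0)^{-\top}(\nabla K_2+\tau\nabla(K_1-K_2)),\hat\gamma_0;0\bigr)\,d\tau\Bigr)\cdot (D\hat\gamma_0)^{-\top}\nabla(K_1-K_2).
\]
Because $\tilde H_1(x,y;0)=\Oo_6(x,y)$, we have $\partial_x\tilde H_1=\Oo_5(x,y)$. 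Using the expressions~\eqref{def:gamma:delta:hat:inner} and~\eqref{dhatgamma0inner}, each component of $\hat\gamma_0$, $\hat\delta_0$ and each entry of $(D\hat\gamma_0)^{-\top}\nabla K$ belongs to $\mathcal{X}^*_{1,\sigma}$ (the last one since $z^2\partial_z K\in\mathcal{X}^*_{4,\sigma}$ and $z\partial_\theta K\in\mathcal{X}^*_{5,\sigma}$, both of which embed into $\mathcal{X}^*_{1,\sigma}$ for $\kappa$ large with small norm by Lemma~\ref{PropNormInner}(1)); hence, for $\kappa$ large enough, the argument of $\tilde H_1$ stays in a fixed neighborhood of the origin and the integrand belongs to $\mathcal{X}^*_{5,\sigma}$ with norm $\Oo(1)$. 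Multiplying by $(D\hat\gamma_0)^{-\top}\nabla(K_1-K_2)\in \mathcal{X}^*_{4,\sigma}$ (norm bounded by $\lfloor K_1-K_2\rfloor^*_{5,\sigma}$) produces an element of $\mathcal{X}^*_{9,\sigma}$, and Lemma~\ref{PropNormInner}(1) again yields the factor $\kappa^{-3}\le \kappa^{-2}$ needed to land in $\mathcal{X}^*_{6,\sigma}$. Summing the five contributions concludes the proof. The main obstacle is simply to keep track of the exponents: one must verify that the polynomial prefactors $z^2,z,1,z^{-2}$ in the first four terms and the $\mathcal{O}_6$ vanishing of $\tilde H_1$ in the fifth one together guarantee that each term lands in $\mathcal{X}^*_{n,\sigma}$ with $n\ge 8$, so that Lemma~\ref{PropNormInner}(1) supplies at least the required $\kappa^{-2}$.
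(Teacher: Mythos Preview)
Your proof is correct and follows essentially the same approach as the paper's: decompose $\Fin(K_1)-\Fin(K_2)$ into its constituent terms, use the bilinear identity for the quadratic pieces and the mean value theorem for the $\tilde H_1$ piece, and gain the factor $\kappa^{-2}$ from Lemma~\ref{PropNormInner}(1) by first landing each term in $\mathcal{X}^*_{n,\sigma}$ with $n\ge 8$. The only cosmetic difference is that the paper groups the four polynomial terms into two pieces $\mathcal{I}_1,\mathcal{I}_2$ and works with pointwise bounds $|z^6\mathcal{I}_j(z,\theta)|$ rather than invoking the product property of the norm explicitly, but the underlying computation is identical.
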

\begin{proof}
Let $K_1,K_2 \in B_{c_1}$. We note that 
\begin{equation}\label{bound:inner:help}
|\partial_z K_1 (z,\theta)|,\, |\partial_z K_2 (z,\theta)|, \, |\partial_\theta K_1 (z,\theta)|, \,|\partial_\theta K_1 (z,\theta)| \leq \frac{c_1}{|z|^6}. 
\end{equation}
From~\eqref{def:Fin}, and denoting $\mathbf{v}_\theta = R_\theta e_1 = (\cos \theta, \sin \theta)^\top$ we can write 
\begin{equation}\label{proof:in_contr_est}
    \Fin(K_1) - \Fin (K_2) = \mathcal{I}_1 + \mathcal{I}_2 + \mathcal{I}_3
\end{equation}
with
\begin{align*}
  \mathcal{I}_1 &=-z^4 (\partial_z K_1 + \partial_z K_2) (\partial_z K_1- \partial_z K_2) \\
  \mathcal{I}_2 &=-  \left [ (z^2 - \alpha) (\partial_\theta K_1 + \partial_\theta  K_2) - \frac{\beta}{z^2} \right ] (\partial_\theta  K_1- \partial_\theta  K_2) \\ 
  \mathcal{I}_3 &=\int_{0}^1 \left [\partial_{x} \tilde{H}_1 \big (-iz^{-2} \mathbf{v}_\theta+  K_\lambda, -iz^{-1} \mathbf{v}_\theta ;0\big ) \right ] R_\theta D_{-iz^2, iz} (\nabla K_1 - \nabla K_2)^\top \, d\lambda
\end{align*}
where $\partial_z \tilde{H}_1$ is the differential with respect to $x=(x_1,x_2)$, 
\[
K_\lambda = R_\theta D_{-iz^2, iz} (\nabla K_2 + \lambda (\nabla K_1 -\nabla K_2))
\]
and we recall that (see~\eqref{def:in:Rtheta})
\[
R_\theta = \left (\begin{array}{cc} \cos \theta & -\sin \theta \\ \sin \theta & \cos \theta \end{array} \right ), \qquad D_{-iz^2, iz} = 
\left (\begin{array}{cc} -iz^2  & 0 \\ 0 & iz \end{array} \right ).
\]
We estimate each term on the right-hand side of~\eqref{proof:in_contr_est} separately. For this purpose, using~\eqref{bound:inner:help}, we deduce that 
\begin{equation}\label{boundI1:inner}
|z^6 \mathcal{I}_1(z,\theta ) | \lesssim \frac{c_1}{|z|^2} |\partial_z K_1 - \partial_z K_2|_{6,\sigma}^* \lesssim \frac{1}{\kappa^2} \lfloor K_1 - K_2 \rfloor_{5,\sigma}^* 
\end{equation}
for $\kappa_0$ large enough.
Concerning the term $\mathcal{I}_2$ in~\eqref{proof:in_contr_est}, we observe that  
\begin{equation}\label{boundI2:inner}
|z^6 \mathcal{I}_2(z,\theta ) |\lesssim \left (\frac{1}{|z|^4} + \frac{1}{|z|^6} + \frac{1}{|z|^2} \right ) |\partial_\theta K_1 - \partial_\theta K_2|_{6,\sigma}^* \lesssim \frac{1}{\kappa^2} \lfloor K_1 - K_2 \rfloor_{5,\sigma}^*
\end{equation}
for $\kappa_0$ large enough.
Finally, we deal with $\mathcal{I}_3$. For $\varrho>0$, we first introduce $\mathbb{B}_{\varrho}:= \{(x,y) \in \mathbb{C}^4\,:\, |(x,y)|\leq \varrho \}$. Here $|(x,y)| = \max\{|x_1|, |x_2|, |y_1|, |y_2|\}$. Let $\varrho_0>0$ be such that $\tilde{H}_1$ is analytic on 
$\mathbb{B}_{\varrho_0}$. Then, for all, 
\[
|\partial_x \tilde{H}_1 (x,y;0)| \lesssim |(x,y)|^5, \qquad \mbox{for all $(x,y) \in \mathbb{B}_{\varrho_0/2}$}.
\]
Indeed, taking $\kappa_0$ large enough, the claim is straightforward from Cauchy's estimates and the fact that, for $(x,y) \in \mathbb{B}_{\varrho_0/2}$, 
the open ball centered at $(x,y)$ of radius $|(x,y)|$ is contained in $\mathbb{B}_{\varrho_0}$. In addition, it is clear that 
\[
K_\lambda \in \mathcal{X}_{4,\sigma}^* \times \mathcal{X}_{4,\sigma}^*.
\]

Therefore, there exists $\kappa_0$ large enough such that 
\[
\big (-iz^{-2} + K_\lambda , -iz^{-1} \big )\in \mathbb{B}_{\varrho_0/2} 
\]
and hence
$| z^5 \partial_x \tilde{H}_1 (-iz^{-2}\mathbf{v}_\theta + K_\lambda, -iz^{-1} \mathbf{v}_\theta;0)| \lesssim 1$. Then
\[
|z^6 \mathcal{I}_3 (z,\theta)| \lesssim \frac{1}{|z|^3} \left ( | \partial_z K_1 - \partial_z K_2 |_{6,\sigma}^* + 
| \partial_\theta K_1 - \partial_\theta  K_2 |_{6,\sigma}^* \right ) \lesssim \frac{1}{\kappa^3} \lfloor K_1 - K_2 \rfloor_{5,\sigma}^*.
\]
Combining the above estimate with~\eqref{boundI1:inner} and~\eqref{boundI2:inner}, we conclude the proof of this lemma.
\end{proof}

Let $c_0$ be the constant introduced in Lemma \ref{InnerFirstIter}. The following proposition provides the existence and main properties of the solutions of the inner equation~\eqref{eqIn_fixedpoint}.
\begin{proposition}\label{proof:inner_sol}
    There exist $\kappa_0 \ge 1$ and a constant $c_3>0$ such that, for any $\kappa \ge \kappa_0$, the functional $\G \circ \Fin: B_{2c_0}\subset \mathcal{\widetilde X}_{5, \sigma}^* \to B_{2c_0}$ is a contraction with Lipschitz constant $\mathrm{lip} \G \circ \Fin \le c_3 / \kappa^2$. Hence, equation~\eqref{eqIn_fixedpoint} admits two solutions $K_0^* \in \mathcal{\widetilde X}_{5, \sigma}^*$, with $*=\bu, \bs$, such that 
    \begin{equation*}
        \lfloor K_0^*\rfloor^*_{5,\sigma} \le 2c_0.
    \end{equation*}
\end{proposition}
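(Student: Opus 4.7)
The strategy is a straightforward application of the Banach fixed point theorem, combining the three preliminary results: the mapping properties of the right inverse $\mathcal{G}^*$ from Lemma~\ref{PropGInner}, the bound on the first iterate $\mathcal{G}^* \circ \Fin(0)$ from Lemma~\ref{InnerFirstIter}, and the Lipschitz estimate for $\Fin$ in the ball of radius $2c_0$ from Lemma~\ref{ExSolInner1}. All the real work has been done in those preliminary lemmas, so the present proposition follows by chaining the estimates.

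First, I would fix $c_1 = 2c_0$ (with $c_0$ the constant from Lemma~\ref{InnerFirstIter}) and apply Lemma~\ref{ExSolInner1} to obtain a constant $c_2>0$ and $\kappa_0 \geq 1$ such that, for all $\kappa \geq \kappa_0$, the operator $\Fin : B_{2c_0} \subset \mathcal{\widetilde X}_{5,\sigma}^* \to \mathcal{\widetilde X}_{6,\sigma}^*$ is Lipschitz with constant bounded by $c_2/\kappa^2$. Next, Lemma~\ref{PropGInner} applied with $n=6$ shows that $\mathcal{G}^* : \mathcal{X}_{6,\sigma}^* \to \mathcal{\widetilde X}_{5,\sigma}^*$ is bounded linear with $\lfloor \mathcal{G}^*(g) \rfloor_{5,\sigma}^* \leq C |g|_{6,\sigma}^*$ for some constant $C>0$. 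Composing, I obtain that $\mathcal{G}^* \circ \Fin : B_{2c_0} \to \mathcal{\widetilde X}_{5,\sigma}^*$ is Lipschitz with constant bounded by $c_3/\kappa^2$, where $c_3 = C c_2$.

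The only remaining step is to verify that $\mathcal{G}^* \circ \Fin$ maps the closed ball $B_{2c_0}$ into itself. Given any $K \in B_{2c_0}$, the triangle inequality and the Lipschitz bound yield
\[
\lfloor \mathcal{G}^* \circ \Fin(K) \rfloor_{5,\sigma}^* \leq \lfloor \mathcal{G}^* \circ \Fin(0) \rfloor_{5,\sigma}^* + \frac{c_3}{\kappa^2}\, \lfloor K \rfloor_{5,\sigma}^* \leq c_0 + \frac{2 c_0 c_3}{\kappa^2},
\]
where I used Lemma~\ref{InnerFirstIter} to bound the first term by $c_0$ (noting that the conclusion of that lemma, together with an application of Lemma~\ref{PropGInner}, controls the full $\lfloor \cdot \rfloor$ norm of $\mathcal{G}^* \circ \Fin(0)$). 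Enlarging $\kappa_0$ if necessary so that $c_3/\kappa^2 \leq 1/2$ for all $\kappa \geq \kappa_0$ ensures both that $\mathcal{G}^* \circ \Fin$ is a contraction on $B_{2c_0}$ and that it maps $B_{2c_0}$ into itself.

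I expect no real obstacle here: the analytical heavy lifting sits entirely in Lemmas~\ref{InnerFirstIter} and~\ref{ExSolInner1}, and the inversion properties of $\Lin$ were handled in Lemma~\ref{PropGInner}. The only minor point of attention is the consistent use of the two norms $|\cdot|_{n,\sigma}^*$ and $\lfloor \cdot \rfloor_{n,\sigma}^*$: since Lemma~\ref{PropGInner} gains one power of decay in $z$ while providing control of the first derivatives in the $\lfloor \cdot \rfloor$ norm, the Lipschitz estimate from Lemma~\ref{ExSolInner1} in the norm $|\cdot|_{6,\sigma}^*$ is sufficient to close the argument. Once the Banach fixed point theorem is applied on the complete metric space $B_{2c_0} \subset \mathcal{\widetilde X}_{5,\sigma}^*$, I obtain unique fixed points $K_0^{\bu}, K_0^{\bs}$ satisfying $\lfloor K_0^* \rfloor_{5,\sigma}^* \leq 2c_0$. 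These are solutions of the fixed-point equation~\eqref{eqIn_fixedpoint}, hence of the inner equation~\eqref{InnerEq_proof}, and, by construction of the operators $\G^{\bu,\bs}$, they satisfy the asymptotic conditions in~\eqref{AsymInner}, which yields the bounds on $K_0^*$, $\partial_z K_0^*$ and $\partial_\theta K_0^*$ claimed in the first part of Theorem~\ref{thm:inner}.
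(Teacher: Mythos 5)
Your proposal is correct and takes essentially the same route as the paper, whose own proof consists of a single sentence citing Lemmas~\ref{PropGInner}, \ref{InnerFirstIter}, and~\ref{ExSolInner1}; you simply spell out the chaining of estimates and the verification that the ball is mapped into itself. Your parenthetical remark about upgrading the bound on $\mathcal{G}^* \circ \Fin(0)$ from the $|\cdot|_{5,\sigma}^*$ norm (as literally stated in Lemma~\ref{InnerFirstIter}) to the $\lfloor \cdot \rfloor_{5,\sigma}^*$ norm via Lemma~\ref{PropGInner} is a fair point of care that the paper leaves implicit.
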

\begin{proof}
    The proof is an immediate consequence of Lemmas \ref{PropGInner}, \ref{InnerFirstIter}, and \ref{ExSolInner1}.
\end{proof}

\subsection{The difference of the inner solutions}  
\label{sec:in:difference}
Consider now 
\[
\Delin  = K^{\bu}_0 - K^{\bs}_0
\]
where $K^{*}_0 \in \widetilde{\mathcal{X}}_{5,\sigma}^*$, with $*=\bu, \bs$, are the solutions of the inner equation $\Lin K = \Fin (K)$ (see~~\eqref{defLin} and~\eqref{def:Fin}) provided by Proposition \ref{proof:inner_sol}. By construction, $\Delin$ is analytic on the domain $E_{\kappa}$ (see~\eqref{DomainDiffInner}), namely
\begin{equation}\label{DomainDiffInner:proof}
E_{\kappa} := D_{\kappa}^{\mathrm{in},\bu} \cap D_{\kappa}^{\mathrm{in},\bs} \cap \{w \in \mathbb{C}\,:\, \Im z <0\}
\end{equation}
(see also Figure~\ref{fig:inner_domain}) and it satisfies the homogeneous linear equation defined by
\begin{equation}\label{EqDiffIn}
(1+\a(z,\theta)) \partial_z \Delin +( \omega +\b(z,\theta)) \partial_\theta \Delin =0
\end{equation}
with, denoting $\mathbf{v}_\theta = R_\theta e_1=(\cos \theta,\sin \theta )^\top$, $\mathbf{w}_\theta =R_\theta e_2= (-\sin \theta, \cos \theta )^{\top}$,
\begin{equation}\label{defabdiffin}
\begin{aligned}
\a(z,\theta) = & -z^4 (\partial_zK^{\bu} + \partial_z K^{\bs}) \\ & -iz^2  \int_{0}^1 \big (\partial_x \tilde{H}_1 (-iz^2 \mathbf{v}_\theta + K_\lambda, -iz \mathbf{v}_\theta ; 0) \big )^{\top} \mathbf{v}_\theta \, d\lambda  
\\
\b(z,\theta) = & - (z^2 -\alpha) (\partial_\theta K^{\bu} + \partial_\theta K^{\bs}) \\ & +  \frac{\beta}{z^2} + 
iz \int_{0}^1 \big (\partial_x \tilde{H}_1 (-iz^2 \mathbf{v}_\theta + K_\lambda,  -iz \mathbf{v}_\theta; 0) \big )^{\top}\mathbf{w}_\theta \, d\lambda
\end{aligned}
\end{equation}
and $K_\lambda$ stands for the known function
\begin{equation}\label{defKlambdadiff}
K_\lambda= R_\theta D_{-iz^2, iz}\left(\nabla K^{\bs} + \lambda (\nabla K^{\bu} - \nabla K^{\bs})\right).
\end{equation}

Now, equation~\eqref{EqDiffIn} is our starting point.  In the second part of this section, we introduce the Banach spaces we use to study equation~\eqref{EqDiffIn} and provide estimates for the terms $\a$ and $\b$.

For $\phi : E_{\kappa} \times \T_\sigma \to \C$ analytic with Fourier series $ \sum_{\ell \in\Z} \phi^{[\ell]}(z) e^{i\ell \theta}$ we define the following norms
\begin{equation}
\label{NormsDiffInner}
\begin{aligned}
|\phi|_{n, \sigma} & = \sum_{\ell \in \Z} |\phi^{[\ell]}|_{n} e^{|\ell|\sigma}, \qquad 
|\phi^{[\ell]}|_{n} = \sup_{z \in E_{\kappa} } \left| \phi^{[\ell]}(z) z^n\right|,  \\
\lfloor \phi \rfloor_{n, \sigma} &= |\phi|_{n, \sigma} + |\partial_z \phi|_{n+1, \sigma} + |\partial_\theta \phi|_{n+1, \sigma}
\end{aligned}
\end{equation}
where $n \in \Z$ with $n \ge 0$, and $E_{\kappa}$ is defined by~\eqref{DomainDiffInner:proof}.  These norms satisfy the same properties stated in Lemma \ref{PropNormInner}; we will use them without mentioning them explicitly.  
Furthermore, we consider the following Banach spaces
\begin{eqnarray*}
\mathcal{Y}_{n, \sigma} &=& \left\{ \phi : E_{\kappa}  \times \T_\sigma \to \C : \mbox{$\phi$ is analytic and $|\phi|_{n, \sigma} <\infty$ }\right\}\\
\mathcal{\widetilde Y}_{n, \sigma} &=& \left\{ \phi : E_{\kappa} \times \T_\sigma \to \C : \mbox{$\phi$ is analytic and $\lfloor\phi\rfloor_{n, \sigma} <\infty$ }\right\} 
\end{eqnarray*}

\begin{lemma}\label{lem:abdif}
Let $\a,\b$ be as in~\eqref{defabdiffin} and $\kappa_0$ be such that Proposition~\ref{proof:inner_sol} (and hence Theorem~\ref{thm:inner}) holds. Then, there exists a constant $c_3>0$ such that for all $\kappa\geq \kappa_0$,
\[
|\a|_{2,\sigma} \leq c_3, \qquad |\b|_{2,\sigma} \leq c_3. 
\]
\end{lemma}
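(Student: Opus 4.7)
The plan is to estimate each summand in the definitions of $\mathbf{a}$ and $\mathbf{b}$ separately, using the bounds for $K^{\bu}_0$ and $K^{\bs}_0$ provided by Proposition~\ref{proof:inner_sol}, namely $\lfloor K^*_0 \rfloor^*_{5,\sigma} \le 2c_0$ for $*=\bu,\bs$. Since $E_{\kappa} \subset D^{\mathrm{in},\bu}_{\kappa} \cap D^{\mathrm{in},\bs}_{\kappa}$, the norms defined in~\eqref{NormsDiffInner} on $E_{\kappa}$ are majorized by the corresponding norms on $D^{\mathrm{in},*}_{\kappa}$, so in particular $|\partial_z K^{*}|_{6,\sigma} \le 2c_0$ and $|\partial_\theta K^{*}|_{6,\sigma} \le 2c_0$. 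Because the $|\cdot|_{n,\sigma}$ norms enjoy the product-type and monotonicity properties of Lemma~\ref{PropNormInner}, and because $\kappa_0$ can be taken large enough that $|z|\ge \kappa_0 \ge 1$ on $E_{\kappa}$, decreasing the exponent $n$ is harmless.

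For the first summand of $\mathbf{a}$, I will write $|{-}z^4(\partial_z K^{\bu}+\partial_z K^{\bs})|_{2,\sigma} \lesssim |\partial_z K^{\bu}+\partial_z K^{\bs}|_{6,\sigma} \lesssim 1$. For the integral term, the first step is to observe that, because $\tilde H_1(x,y;\varepsilon)=\mathcal{O}_6(x,y)$ uniformly in $\varepsilon$, Cauchy estimates give $\partial_x \tilde H_1(x,y;0)=\mathcal{O}_5(x,y)$ on a ball around the origin. The arguments $-iz^{-2}\mathbf{v}_\theta + K_\lambda$ and $-iz^{-1}\mathbf{v}_\theta$ have sizes of order $|z|^{-2}$ and $|z|^{-1}$ respectively; indeed, from~\eqref{defKlambdadiff} and Proposition~\ref{proof:inner_sol},
\[
|K_\lambda(z,\theta)| \lesssim |z^2(\partial_z K^{\bs}+\lambda(\partial_z K^{\bu}{-}\partial_z K^{\bs}))| + |z(\partial_\theta K^{\bs}+\lambda(\partial_\theta K^{\bu}{-}\partial_\theta K^{\bs}))| \lesssim |z|^{-4}.
\]
Hence, for $\kappa_0$ large enough, the arguments stay inside the analyticity ball of $\tilde H_1$ and $|\partial_x \tilde H_1(\cdots;0)| \lesssim |z|^{-5}$, so the integral contributes a quantity of order $|z|^{-3}$, which is $(2,\sigma)$-bounded on $E_{\kappa}$.

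For $\mathbf{b}$, the three summands are dealt with analogously:
\begin{itemize}
\item $-(z^2-\alpha)(\partial_\theta K^{\bu}+\partial_\theta K^{\bs})$ is controlled by $|z^2(\partial_\theta K^{*})|_{4,\sigma} + |\alpha\,\partial_\theta K^{*}|_{6,\sigma}\lesssim 1$, which is stronger than what we need in $|\cdot|_{2,\sigma}$;
\item $\beta/z^2$ has only the zero Fourier mode and equals $\beta/z^2$, so $|\beta/z^2|_{2,\sigma}=|\beta|$;
\item the integral term is handled exactly as in $\mathbf{a}$, only now multiplied by $iz$ instead of $iz^2$, giving a term of order $|z|^{-4}$.
\end{itemize}

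The only mildly non-routine point in the whole argument is ensuring that the arguments of $\partial_x \tilde H_1$ stay inside its analyticity domain uniformly on $E_{\kappa}\times\T_\sigma$; this is secured by taking $\kappa_0$ sufficiently large so that both $|z|^{-1}$ and $|z|^{-2}$, and hence $K_\lambda$, are small. Everything else reduces to routine applications of the product and monotonicity properties of the inner norms together with Proposition~\ref{proof:inner_sol}.
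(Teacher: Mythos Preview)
Your proof is correct and follows essentially the same approach as the paper's: both use the bounds $\lfloor K^*_0 \rfloor_{5,\sigma}^* \le 2c_0$ from Proposition~\ref{proof:inner_sol} to control each summand of $\mathbf{a}$ and $\mathbf{b}$, together with $\partial_x\tilde H_1(x,y;0)=\mathcal O_5(x,y)$ for the integral terms. The paper compresses the argument into four lines (observing $K^\bu,K^\bs\in\widetilde{\mathcal Y}_{5,\sigma}$, hence $K_\lambda$ decays, hence $\partial_x\tilde H_1(\cdots)\in\mathcal Y_{5,\sigma}$, hence $\mathbf a,\mathbf b\in\mathcal Y_{2,\sigma}$), whereas you spell out each term explicitly and are more careful about the analyticity-domain issue for $\tilde H_1$; the substance is the same.
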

\begin{proof}
We observe that $K^{\bu}, K^{\bs} \in \widetilde{\mathcal{Y}}_{5,\sigma}$ and, as a consequence, $K_\lambda$ in~\eqref{defKlambdadiff} belongs to $\mathcal{Y}_{6,\sigma}$. It follows that 
\[
\partial_x \tilde{H}_1 (-iz^2 \mathbf{v}_\theta + K_\lambda, -iz\mathbf{v}_\theta; 0) \in \mathcal{Y}_{5,\sigma}.
\]
Consequently, we have $\a,\b \in \mathcal{Y}_{2,\sigma}$. 
\end{proof}

Notice that, as a trivial consequence of Lemma~\ref{lem:abdif}, there exists $\kappa_1\ge \kappa_0$ such that $\omega +\b \ge \omega /2$ if $(z,\theta) \in E_{\kappa}$ with $\kappa \geq \kappa_1$. Then, equation~\ref{EqDiffIn} can be rewritten as 
\[  
    (1+ \tilde{\a}) \partial_z \Delin + \omega \partial_\theta \Delin=0, \qquad  \tilde{\a} =  \omega \frac{1+\a}{ \omega +\b} -1 \in \mathcal{Y}_{2,\sigma}
\]
and rescaling $\bar{z} = \omega z$ we obtain that $\overline{\Delin} (\bar{z},\theta) = \Delin ( \omega^{-1} \bar{z},\theta)$ 
satisfies
\begin{equation}
    \label{eq:Diff:inn}
    (1+\bar{\a}) \partial_{\bar z} \overline{\Delin} + \partial_\theta \overline{\Delin}=0, \qquad \bar{\a}(\bar{z},\theta) = \tilde{\a} (\omega^{-1} \bar z, \theta).                            
\end{equation}
The solutions of equation~\eqref{eq:Diff:inn} belonging to $\mathcal{Y}_{n,\sigma}$ for some $n\geq 0$ were studied  in~\cite{B06}, Section 5. Following the results in that work one can prove that all the solutions of~\eqref{eq:Diff:inn} are of the form $\bar{\chi} (\bar{z}-\theta + \bar g(\bar{z},\omega))$ with 
\[
\bar g\in \mathcal{Y}_{1,\sigma}, \qquad \bar{\chi} (\xi ) = \sum_{k<0} \chi^{[k]} e^{ik\xi}.  
\]
Undoing the rescaling $z=\omega^{-1} \bar{z}$, we obtain formula~\eqref{formula:diff:in} in Theorem~\ref{thm:inner}.

\section{Matching the inner and the outer scale} \label{sec:proof:matching}

In this section, we prove that the solutions $K^{\bu}_0$ and $K^{\bs}_0$ of the inner equation~\eqref{InnerEq} from Theorem~\ref{thm:inner} provide approximations of the parameterizations of the invariant manifolds $T_1^{\bu}$ and $T_1^{\bs}$ in complex subdomains of the domains $D^{\mathrm{in},\bu}_{\kappa}$ and $D^{\mathrm{in},\bs}_{\kappa}$ where the solutions of the inner equation are defined. 

Throughout this section, we fix $\sigma>0$, and $0 < \vartheta_1 < \vartheta_0 < \vartheta_2 < {\pi \over 2}$. These parameters are those used in the definition of the domains $D^{\mathrm{mch}, \bu}_{\kappa}$ and $D^{\mathrm{mch}, \bs}_{\kappa}$ (see~\eqref{Dmhcu} and~\eqref{Dmhcs}). Furthermore, we choose them so that the inclusions~\eqref{inclusion_domains} hold.

We also use $\kappa_0$ and $\varepsilon_0$ as threshold values and the notation $u\lesssim v$ if there exists a positive constant $C$ such that $u \le Cv$.

We prove Theorem~\ref{thm:matching} only for the $-\bu-$ case, being the $-\bs-$ case analogous. 
We introduce the domain
\[\mathcal{D}^{\mathrm{mch},\bu}_{\kappa} = \left \{ z \in \mathbb{C}\,:\, u(z) =  \varepsilon z+ i\frac{\pi}{2}  \in D^{\mathrm{mch,\bu}}_\kappa\right \} 
\]
the matching domain in the inner variables (see~\eqref{uz}) and the corresponding to $u_1,u_2$ (see~\eqref{defu1u2}) points
\begin{equation}\label{defz1z2}
z_1 = \frac{1}{\varepsilon} \left ( u_1 - i\frac{\pi}{2}\right ), \qquad z_2 = \frac{1}{\varepsilon} \left ( u_2 - i\frac{\pi}{2}\right )
\end{equation}
and we recall that 
\begin{equation}\label{propz1z2}
\frac{1}{\varepsilon^{1-\gamma}}  \lesssim |z_1|, |z_2| \lesssim \frac{1}{\varepsilon^{1-\gamma}} , \qquad \Im z_2 \leq \Im z \leq \Im z_1, \quad z\in \mathcal{D}_\kappa^{\mathrm{mch}, \bu}.
\end{equation}

Consider now the difference 
\begin{equation}\label{def:matching} 
K_1^\bu := K^\bu- K_0^\bu,
\end{equation}
where $K^\bu (z,\theta) = \varepsilon^3 T_1^\bu \left (\varepsilon z + i \frac{\pi}{2} ,\theta \right )$ is defined in~\eqref{phieta12} and $K^\bu_0$ is the solution of the inner equation provided by Theorem~\ref{thm:inner}. 
The functions, $K^\bu, K^\bu_0$ are analytic functions on $\mathcal{D}^{\mathrm{mch},\bu}_\kappa$ and we observe that by Theorems~\ref{thm:outer} and~\ref{thm:inner}, $K^\bu$ and $K^\bu_0$ satisfy the equations
\begin{equation}\label{proof:mch_eq_for_K_K0}
\Lin K^\bu= \varepsilon^4 \Fout (\varepsilon^3 K^\bu), \qquad \Lin K_0^\bu =\Fin (K_0^\bu)
\end{equation}
(recall the definitions~\eqref{defFouter_innervariables} and~\eqref{def:Fin} of $\varepsilon^4 \Fout$ and $\Fin$). 

The strategy to prove Theorem~\ref{thm:matching} is to analyze the nonhomogeneous linear equation satisfied by $K_1^{\bu}$.  
We note that Theorems~\ref{thm:outer} and~\ref{thm:inner} already provide a preliminary (non-sharp) estimate for $K_1^{\bu}$.  
Here, we show that in a smaller domain $\mathcal{D}^{\mathrm{mch},\bu}_{\kappa}$ a sharper upper bound can be obtained.  

In Section~\ref{sec:Mhc_eq_K1u}, we derive the explicit form of the equation satisfied by $K_1^{\bu}$.  
Then, in Section~\ref{sec:Mhc_bound_K1u}, we introduce suitable Banach spaces and establish the improved estimate for $K_1^{\bu}$.

\subsection{The equation for $K_1^\bu$}\label{sec:Mhc_eq_K1u}
In order to write the nonhomogeneous linear equation that $K_1^\bu$ satisfies, we introduce 
\begin{equation}\label{defP:matching}
\begin{aligned}
\mathcal{P}^{\mathrm{mch}} (\phi,\psi) =& -(z^2 \phi)^2 - (z \psi)^2 + \alpha (\psi)^2 + \beta \frac{1}{z^2} \psi \\ &+ \tilde{H}_1 \big (-iz^{-2} R_\theta e_1 + R_\theta  D_{-iz^2 , iz} (\phi,\psi)^{\top}, -iz^{-1} R_{\theta} e_1; 0\big ) 
\end{aligned}
\end{equation}
and we notice that, using definition~\eqref{def:Fin} of $\Fin$, 
\[
\Fin (K) = \mathcal{P}^{\mathrm{mch}} (\partial_z K, \partial_\theta K).
\]
Combining~\eqref{def:matching} and~\eqref{proof:mch_eq_for_K_K0} with the above identity, one can see that $K_1^\bu$  is a solution of the nonhomogeneous linear equation
\begin{equation}\label{eq:matching}
\Lin K_1^\bu = \mathcal{A} \nabla K_1^\bu + \mathcal{B}
\end{equation}
with 
\begin{equation}\label{defABmatching}
    \begin{aligned}
        \mathcal{A}(z,\theta) = &  \int_{0}^1 D\mathcal{P}^{\mathrm{mch}} (\nabla K_0^\bu  + \lambda \nabla (K^\bu - K^\bu_0))  (z,\theta) \, d\lambda  \\
        \mathcal{B}(z,\theta) = &  \varepsilon^4 \Fout (\varepsilon^{-3} K^\bu)(z,\theta) - \Fin (K^\bu)(z,\theta).
    \end{aligned}
\end{equation}
Here $D\mathcal{P}^{\mathrm{mch}}$ denotes $(\partial_\phi\mathcal{P}^{\mathrm{mch}}, \partial_\psi\mathcal{P}^{\mathrm{mch}})$ and we are using that $K^\bu, K_0^\bu$ are known functions. 

We observe that, since $K_1^\bu$ is a solution of~\eqref{eq:matching} defined on $\mathcal{D}^{\mathrm{mch},\bu}_\kappa \times \mathbb{T}_\sigma$, then 
\[
K_1^\bu (z,\theta ) = \sum_{k\in \mathbb{Z}} K^{\bu,[k]}_{1} (z) e^{ik\theta}
\]
with 
\[
K^{\bu,[k]}_1 (z) =  e^{i\omega k (z_{[k]}-z)} K_1^{\bu,[k]}(z_{[k]}) + \int_{z_{[k]}} ^z e^{i\omega k (\tau-z) } (\mathcal{A} \nabla K_1^\bu + \mathcal{B} )^{[k]} (\tau) \, d\tau
\]
for any initial conditions (that can depend on $k$) $z_{[k]} \in \mathcal{D}^{\mathrm{mch},\bu}_\kappa$. 
We choose 
\begin{equation}\label{defzkmatching}
z_{[k]}= z_1, \quad k> 0, \qquad z_{[k]} = z_2 , \quad k\le 0
\end{equation}
with $z_1,z_2$ defined in~\eqref{defz1z2} (see also~\eqref{defu1u2}). We then consider the following right inverse $\G$ of the linear operator $\Lin$, given by
\[
\mathcal{G}(g) (z,\theta) = \sum_{k\in \mathbb{Z}} \mathcal{G}^{[k]} (g) (z) e^{ik\theta}, \qquad 
\mathcal{G}^{[k]} (g) (z) = \int_{z_{[k]}} ^z e^{i\omega k (\tau-z) } g^{[k]}(\tau) \, d\tau.
\]
In addition, we define the following initial condition
\begin{equation}\label{def:K0:matching}
K^0(z,\theta)= \sum_{k\in \mathbb{Z}} K^{\bu, [k]}_1(z_{[k]}) e^{ik(z_{[k]} -z)}e^{ik\theta}.
\end{equation}
Hence, we can rewrite equation~\eqref{eq:matching} as
\begin{equation}\label{eqK1matching_1}
K_1^{\bu} = K^0  + \mathcal{G} (\mathcal{B}) + \mathcal{G} \circ \mathcal{A} (\nabla K_1^{\bu})
\end{equation}
where $\mathcal{A},\mathcal{B}$ defined in~\eqref{defABmatching}. 

\subsection{A posteriori bound for $K_1^u$}\label{sec:Mhc_bound_K1u}

We introduce suitable Banach spaces to work with. For $\phi : \mathcal{D}^{\mathrm{mch}, \bu}_{\kappa} \times \T_\sigma \to \C$ analytic with Fourier series $ \sum_{\ell \in\Z} \phi^{[\ell]}(z) e^{i\ell \theta}$ we define the following norms
\begin{eqnarray*}
|\phi|_{n, \sigma}^{\bu} &=& \sum_{\ell \in \Z} |\phi^{[\ell]}|^\bu_{n} e^{|\ell|\sigma}, \qquad 
|\phi^{[\ell]}|^\bu_{n} = \sup_{z \in \mathcal{D}^{\mathrm{mch}, \bu}_{\kappa} } \left| \phi^{[\ell]}(z) z^n\right| ,\\
\lfloor \phi \rfloor^\bu_{n, \sigma} &=& |\phi|^\bu_{n, \sigma} + |\partial_z \phi|^\bu_{n+1, \sigma} + |\partial_\theta \phi|^\bu_{n+1, \sigma}
\end{eqnarray*}
where $n \in \Z$ with $n \ge 0$.  Moreover, we consider the following Banach spaces
\begin{eqnarray*}
\mathcal{Z}^\bu_{n, \sigma} &=& \left\{ \phi : \mathcal{D}^{\mathrm{mch}, \bu}_{\kappa}   \times \T_\sigma \to \C : \mbox{$\phi$ is analytic and $|\phi|^\bu_{n, \sigma} <\infty$}\right\}\\
\mathcal{\widetilde Z}^\bu_{n, \sigma} &=& \left\{ \phi : \mathcal{D}^{\mathrm{mch}, \bu}_{\kappa}  \times \T_\sigma \to \C : \mbox{$\phi$ is analytic and $\lfloor\phi\rfloor^\bu_{n, \sigma} <\infty$ }\right\}.
\end{eqnarray*}
Similarly to Section \ref{proof:Thm_inner_BS}, we will use the same notation for vector-valued functions and matrices. 

An important remark is that, by Theorems~\ref{thm:outer} and \ref{thm:inner}, together with the definition~\eqref{phieta12} of $K^\bu$, we obtain
\begin{equation} \label{Kumatching}
    \lfloor K^{\bu} \rfloor_{5,\sigma}^\bu, \,\lfloor K^{\bu}_0 \rfloor_{5,\sigma}^\bu, \, \lfloor K^{\bu}_1 \rfloor_{5,\sigma}^\bu\lesssim 1.
\end{equation}
 Moreover, for any $z \in \mathcal{D}^{\mathrm{mch},\bu}_\kappa$, one has
\[
\kappa \cos \vartheta_2 \lesssim |z| \lesssim \frac{1}{\varepsilon^{1-\gamma}}.
\]
This estimate will be used repeatedly in what follows without an explicit mention. 
As a consequence, we can state the following result.
\begin{lemma}\label{lem:banach:matching}
We fix $n_1,n_2 \in \mathbb{N}$. There exists $\kappa_0\geq 1$ and a constant $C>0$ such that for all $\kappa\geq \kappa_0$
\begin{enumerate}
    \item If $\phi \in \mathcal{Z}_{n_1,\sigma}^{\bu}$, then $\phi\in \mathcal{Z}_{n_2,\sigma}^\bu$, for all $n_2\in \mathbb{Z}$ and when $n_1<n_2$ 
    $$
    |\phi|_{n_1,\sigma} \leq \frac{C}{\kappa^{n_2-n_1}} | \phi |_{n_2,\sigma}, \qquad 
    |\phi|_{n_2,\sigma}\leq \frac{C}{\varepsilon^{(n_2-n_1)(1-\gamma)}} |\phi|_{n_1,\sigma}.
    $$
    \item If $\phi_1\in \mathcal{Z}_{n_1,\sigma}^\bu$ and $\phi_2\in \mathcal{Z}_{n_2,\sigma}^\bu$, then 
    $\|\phi_1 \phi_2\|_{n_1 + n_2,\sigma} \leq C \|\phi_1\|_{n_1}\|\phi_2\|_{n_2,\sigma}$.
\end{enumerate}
\end{lemma}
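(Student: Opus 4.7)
The proof plan rests on the two-sided control of $|z|$ on the matching domain recorded just before the lemma: there is $c_0>0$, depending only on the angles defining $\mathcal{D}^{\mathrm{mch},\bu}_\kappa$, such that
\[
c_0 \kappa \leq |z| \leq \frac{1}{c_0\, \varepsilon^{1-\gamma}}, \qquad z\in \mathcal{D}^{\mathrm{mch},\bu}_\kappa.
\]
The first inequality comes from the definition of the triangular domain and $\kappa\geq \kappa_0$; the second from~\eqref{propz1z2}. Everything else is elementary bookkeeping on the Fourier side.

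For item (1) with $n_1<n_2$, for each harmonic I would write $\phi^{[\ell]}(z)\,z^{n_1}=z^{-(n_2-n_1)}\,\phi^{[\ell]}(z)\,z^{n_2}$ and use the lower bound $|z|\geq c_0\kappa$ to get $|\phi^{[\ell]}|^\bu_{n_1}\leq (c_0\kappa)^{-(n_2-n_1)} |\phi^{[\ell]}|^\bu_{n_2}$. Multiplying by $e^{|\ell|\sigma}$ and summing over $\ell\in\mathbb{Z}$ yields the first inequality in (1) with $C=c_0^{-(n_2-n_1)}$. For the reverse direction, I would use the identity $\phi^{[\ell]}(z)\,z^{n_2}=z^{n_2-n_1}\,\phi^{[\ell]}(z)\,z^{n_1}$ together with the upper bound $|z|\leq c_0^{-1}\varepsilon^{-(1-\gamma)}$, obtaining $|\phi^{[\ell]}|^\bu_{n_2}\leq c_0^{-(n_2-n_1)}\,\varepsilon^{-(n_2-n_1)(1-\gamma)}\,|\phi^{[\ell]}|^\bu_{n_1}$, and again summing in $\ell$. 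The inclusion $\mathcal{Z}^\bu_{n_1,\sigma}\subset \mathcal{Z}^\bu_{n_2,\sigma}$ for arbitrary $n_2\in\mathbb{Z}$ is an immediate consequence (applying one or the other of the two inequalities depending on whether $n_2>n_1$ or $n_2\leq n_1$).

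For item (2) I would argue by convolution on the Fourier coefficients. Since $(\phi_1\phi_2)^{[\ell]}(z)=\sum_{k\in\mathbb{Z}} \phi_1^{[\ell-k]}(z)\,\phi_2^{[k]}(z)$, multiplying by $z^{n_1+n_2}$ and taking supremum over $z\in \mathcal{D}^{\mathrm{mch},\bu}_\kappa$ gives
\[
|(\phi_1\phi_2)^{[\ell]}|^\bu_{n_1+n_2}\;\leq\; \sum_{k\in\mathbb{Z}} |\phi_1^{[\ell-k]}|^\bu_{n_1}\,|\phi_2^{[k]}|^\bu_{n_2}.
\]
The triangle inequality $|\ell|\leq |\ell-k|+|k|$ yields $e^{|\ell|\sigma}\leq e^{|\ell-k|\sigma}e^{|k|\sigma}$, and summing in $\ell$ and $k$ factorises the double sum into $|\phi_1|^\bu_{n_1,\sigma}\,|\phi_2|^\bu_{n_2,\sigma}$, which proves the claim with $C=1$.

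There is no genuine obstacle here: both parts are direct consequences of the sharp two-sided control of $|z|$ on $\mathcal{D}^{\mathrm{mch},\bu}_\kappa$ and of the standard convolution estimate for weighted $\ell^1$ Fourier norms. The only point requiring minor care is checking that the constants are uniform in $\kappa\geq \kappa_0$ and $0<\varepsilon\leq \varepsilon_0$, which is automatic since $c_0$ depends solely on the angles fixed at the beginning of this section.
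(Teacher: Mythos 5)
Your proof is correct and matches what the paper intends: the paper gives no explicit argument for this lemma, deriving it directly from the two-sided bound $\kappa\lesssim |z|\lesssim \varepsilon^{-(1-\gamma)}$ on $\mathcal D^{\mathrm{mch},\bu}_\kappa$ stated just above, and your elementary weight-comparison plus the standard $\ell^1$-weighted convolution estimate is exactly the intended justification.
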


The following result deal with $\mathcal{G}$ and $\mathcal{G} \circ \mathcal{A}$. 

\begin{lemma}\label{lem:propGAmatching} 
  We fix $n \in \N$. There exist $\kappa_0 \ge 1$, $0<\varepsilon_0<1$ and a constant $C>0$ such that for any $\kappa \ge \kappa_0$ and $0<\varepsilon \le \varepsilon_0$
  \begin{enumerate}
      \item \label{item1}If $n>1$, for all $g\in \mathcal{Z}_{n,\sigma}^\bu$, 
      $\lfloor \mathcal{G}(g) \rfloor_{n-1,\sigma}^\bu \leq C |g|_{n,\sigma}^\bu$. 
      \item For all $g\in \mathcal{Z}_{n,\sigma}^\bu$ such that 
      $g^{[0]}=0$, $|\mathcal{G}(g)|_{n,\sigma}^\bu \leq C |g|_{n,\sigma}^\bu$.
      \item \label{item3} For all $g_1,g_2\in  {\mathcal{Z}}_{n,\sigma}^\bu$, 
      $|\mathcal{A} (g_1,g_2)^\top |_{n+1,\sigma}^\bu \leq C \kappa^{-1} (|g_1|_{n,\sigma}^\bu + |g_2|_{n,\sigma}^\bu)$.
  \end{enumerate}
As a consequence, the operator 
$
\widetilde{\mathcal{G}}(g) := \mathcal{G} \circ \mathcal{A}(\nabla g)
$
satisfies that 
$
\widetilde{\mathcal{G}} : \widetilde{\mathcal{Z}}_{n,\sigma}^\bu \to \widetilde{\mathcal{Z}}_{n,\sigma}^\bu$ and 
\[
\lfloor \widetilde{\mathcal{G}}(g) \rfloor_{n,\sigma}^{\bu} \lesssim \frac{C}{\kappa} |g|_{n,\sigma}^{\bu}, \qquad \big |\big (\mathrm{Id} - \widetilde{\mathcal{G}}\big )^{-1} (g) \big |_{n,\sigma}^{\bu} \lesssim C .
\]
\end{lemma}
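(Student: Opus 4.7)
The proof splits naturally into establishing each of items~(1)--(3) by Fourier mode analysis of the explicit integral operator, and then assembling them to obtain the consequences for $\widetilde{\mathcal{G}}$. Items~(1) and~(2) are closely parallel to the right-inverse estimates of Lemmas~\ref{PropG} and~\ref{PropGInner}, while item~(3) is pure bookkeeping on $D\mathcal{P}^{\mathrm{mch}}$.

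First I would tackle items~(1) and~(2). For fixed $k\in\Z$, $\mathcal{G}^{[k]}(g)(z) = \int_{z_{[k]}}^{z} e^{i\omega k(\tau-z)} g^{[k]}(\tau)\,d\tau$ is estimated along a path from $z_{[k]}$ to $z$ inside $\mathcal{D}^{\mathrm{mch},\bu}_\kappa$. The choice $z_{[k]}=z_1$ for $k>0$ and $z_{[k]}=z_2$ for $k\le 0$ is exactly what guarantees $|e^{i\omega k(\tau-z)}|\le 1$ along such a path, since $\mathrm{Im}(\tau-z)$ has the sign making the exponential decay away from the integration endpoint $z$. For $k=0$, bounding the integrand by $|g^{[0]}|_n^{\bu}/|\tau|^n$ and integrating gives $|\mathcal{G}^{[0]}(g)(z)|\lesssim |g^{[0]}|_n^{\bu}/|z|^{n-1}$ whenever $n>1$. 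For $k\ne 0$ an integration by parts transfers a factor $1/(i\omega k)$ from the exponential and yields $|\mathcal{G}^{[k]}(g)(z)|\lesssim |g^{[k]}|_n^{\bu}/(|k|\,|z|^n)$. Summing with the weights $e^{|k|\sigma}$ produces both item~(1) (the $1/|z|$ loss is only incurred through the zero mode) and item~(2) (no such loss when $g^{[0]}=0$). The derivatives $\partial_z\mathcal{G}(g)$ and $\partial_\theta\mathcal{G}(g)$ appearing in $\lfloor\cdot\rfloor_{n-1,\sigma}^{\bu}$ are controlled through the identity $\Lin\circ\mathcal{G}=\mathrm{Id}$, which reads $\partial_z\mathcal{G}(g)=g-\omega\partial_\theta\mathcal{G}(g)$, combined with direct differentiation under the integral to bound $\partial_\theta\mathcal{G}(g)$.

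For item~(3), I would inspect $D\mathcal{P}^{\mathrm{mch}}=(\partial_\phi\mathcal{P}^{\mathrm{mch}},\partial_\psi\mathcal{P}^{\mathrm{mch}})$ directly from~\eqref{defP:matching}: the explicit parts are monomials in $z^{\pm 2},z^{\pm 4},\phi,\psi$, and the remainder comes from $\partial_x\tilde H_1$ evaluated at arguments of size $|z|^{-1},|z|^{-2}$, which since $\tilde H_1(x,y;0)=\Oo_6(x,y)$ is $\Oo_5$ in those arguments. Evaluated at the $\lambda$-interpolation of $\nabla K_0^\bu$ and $\nabla K^\bu$---both of which sit in $\mathcal{Z}^{\bu}_{6,\sigma}$ with bounded norm by~\eqref{Kumatching}---a straightforward term count places each entry of $\mathcal{A}=(\mathcal{A}_1,\mathcal{A}_2)$ in $\mathcal{Z}^{\bu}_{2,\sigma}$ with norm $\lesssim 1$. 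Then for $g_i\in\mathcal{Z}^{\bu}_{n,\sigma}$, the product estimate of Lemma~\ref{lem:banach:matching}(2) yields $\mathcal{A}_i g_i\in\mathcal{Z}^{\bu}_{n+2,\sigma}$, and the weight-trading inequality of Lemma~\ref{lem:banach:matching}(1) converts this into $|\mathcal{A}_i g_i|_{n+1,\sigma}^{\bu}\lesssim\kappa^{-1}|g_i|_{n,\sigma}^{\bu}$, proving item~(3).

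The consequence for $\widetilde{\mathcal{G}}$ is pure chaining: for $g\in\widetilde{\mathcal{Z}}^{\bu}_{n,\sigma}$, $\nabla g\in\mathcal{Z}^{\bu}_{n+1,\sigma}$ with norm at most $\lfloor g\rfloor_{n,\sigma}^{\bu}$; item~(3) places $\mathcal{A}\nabla g$ in $\mathcal{Z}^{\bu}_{n+2,\sigma}$ with gain $\kappa^{-1}$; item~(1) places $\mathcal{G}(\mathcal{A}\nabla g)$ in $\widetilde{\mathcal{Z}}^{\bu}_{n+1,\sigma}$ with no further loss; a final weight trade brings it back into $\widetilde{\mathcal{Z}}^{\bu}_{n,\sigma}$ with a second $\kappa^{-1}$, so in fact $\lfloor\widetilde{\mathcal{G}}(g)\rfloor_{n,\sigma}^{\bu}\lesssim\kappa^{-2}\lfloor g\rfloor_{n,\sigma}^{\bu}$, comfortably inside the stated rate. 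Choosing $\kappa_0$ large enough that this Lipschitz constant is below $1/2$, the Neumann series $(\mathrm{Id}-\widetilde{\mathcal{G}})^{-1}=\sum_{j\ge 0}\widetilde{\mathcal{G}}^{\,j}$ converges absolutely with operator norm $\lesssim 1$ uniformly in $\kappa$. The main obstacle lies in item~(1): although $\mathcal{D}^{\mathrm{mch},\bu}_\kappa$ is a triangle, its aspect ratio is extreme---the base spans $|\mathrm{Re}\,u|\sim\varepsilon^{-(1-\gamma)}$ while the apex sits at imaginary part $-i\kappa$ in the $z$-variable---so for certain $z$ the straight segment to $z_{[k]}$ may exit the domain, and one must work with a broken path on each leg of which both the monotonicity of $|e^{i\omega k(\tau-z)}|$ and the weight $|\tau|^{-n}$ must cooperate. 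Once this geometric step is secured, the rest of the argument is essentially algebraic.
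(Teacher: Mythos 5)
Your approach for item~(3) and the Neumann-series consequence is essentially identical to the paper's: you evaluate $D\mathcal{P}^{\mathrm{mch}}$ on the $\lambda$-interpolation $K_\lambda$, use $\partial_x\tilde H_1 = \Oo_5$ together with~\eqref{Kumatching} to place the two entries of $\mathcal{A}$ in $\mathcal{Z}^{\bu}_{2,\sigma}$ with $\lesssim 1$ norm, then invoke the product estimate and the weight-trading inequality of Lemma~\ref{lem:banach:matching}. Your index bookkeeping is in fact a touch more careful than the paper's (you note the extra $\kappa^{-1}$ gained in the chain, yielding an effective $\kappa^{-2}$ Lipschitz constant; the paper's displayed chain has a minor index inconsistency at $|\mathcal{A}\nabla g|_{n+1,\sigma}^{\bu}$ but lands on the same stated $\kappa^{-1}$ bound). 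The divergence is in items~(1) and~(2), where the paper simply cites Lemma~20 of~\cite{BCS13} and the proof of Lemma~6.2 of~\cite{GGSZ25}, whereas you sketch a direct Fourier-mode argument: segment integration to $z_{[k]}$, sign choice of $z_{[k]}$ to control $|e^{i\omega k(\tau-z)}|$, integration by parts for $k\ne 0$, and recovery of $\partial_z\mathcal{G}(g)$ from $\Lin\circ\mathcal{G}=\mathrm{Id}$. That sketch is structurally sound and what the cited lemmas ultimately do, so your proposal is more self-contained but outsources exactly the same technical work.

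One small correction to the ``main obstacle'' you flag: since $\mathcal{D}^{\mathrm{mch},\bu}_\kappa$ is a triangle and $z_1,z_2$ are vertices, the straight segment from $z_{[k]}$ to any $z$ in the domain does remain inside the domain; no broken path is forced by the geometry. The real technical content for the zero mode is not leaving the domain but showing $\int_{z_{[0]}}^{z}|\tau|^{-n}\,|d\tau|\lesssim |z|^{-(n-1)}$, which requires a cone-type lower bound on $|\tau|$ in terms of arclength from $z$ along the segment; the monotonicity of $\Im(\tau-z)$ along that same segment is automatic (it is affine). This is precisely what the cited lemmas establish for these wedge/triangle domains, so your acknowledgment that the step is outstanding is honest, but the reason you offer for the difficulty is slightly off.
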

\begin{proof}
    The first two items, those corresponding to $\mathcal{G}$, are straightforwardly deduced from Lemma~20 of~\cite{BCS13} and from the proof of Lemma~6.2 in~\cite{GGSZ25}. We now deal with the third one. Observe that by~\eqref{Kumatching}, for all $\lambda\in [0,1] $ 
    \begin{equation}\label{Kumatchinglambda}
    K_\lambda:= K_0^\bu + \lambda  (K^\bu-K^\bu_0) \in \widetilde{\mathcal{Z}}_{5,\sigma}^\bu \quad \mbox{and \hspace{2mm}}\lfloor K_\lambda\rfloor_{5,\sigma}^\bu \lesssim 1.
    \end{equation}
Furthermore, from the definition~\eqref{defP:matching} of $\mathcal{P}^{\mathrm{mch}}$, we have
\begin{align*}
\partial_\phi \mathcal{P}^{\mathrm{mch}} ( \nabla K_\lambda) =&-2z^4 \partial_z K_\lambda \\ & +\partial_x \tilde{H}_1 (-iz^{-2} R_\theta e_1 + R_\theta \diagonal_{-iz^2 , iz} \nabla K_\lambda , -iz^{-1} R_\theta e_1;0) R_\theta e_1 (-iz^2) \\ 
\partial_{\psi} \mathcal{P}^{\mathrm{mch}} (\nabla K_\lambda) =& 2(-z^2 + \alpha) \partial_\theta K  + \beta \frac{1}{z^2}   \\&+\partial_x \tilde{H}_1 (-iz^{-2} R_\theta e_1 + R_\theta \diagonal_{-iz^2 , iz} \nabla K_\lambda, iz^{-1} R_\theta e_1;0) R_\theta e_2 (iz )
\end{align*}
where we recall that $\diagonal_{-iz^2,iz}$ stands for the diagonal $2\times 2$ matrix with entries $-iz^2, iz$, $e_1=(1,0)^{\top}$ and $e_2= (0,1)^{\top}$. Since $\partial_x \widetilde{H}_1(x,y;0) = \mathcal{O}(|(x,y)|^5)$, it follows that 
\[
|\partial_\phi \mathcal{P}^{\mathrm{mch}} (\nabla K_\lambda) |_{2,\sigma}^{\bu} \lesssim 1, 
\qquad 
|\partial_\psi \mathcal{P}^{\mathrm{mch}} (\nabla K_\lambda) |_{2,\sigma}^{\bu}\lesssim 1.
\]
Consequently, by definition~\eqref{defABmatching}
\[
|\mathcal{A} (g_1,g_2)^\top |_{n+1,\sigma}^{\bu} \lesssim \frac{1}{\kappa} (|g_1|_{n,\sigma}^\bu + |g_2|_{n,\sigma}^\bu ).
\]
Let now $g\in \widetilde{Z}_{n,\sigma}^\bu$. Then $\nabla g \in Z_{n+1,\sigma}^\bu \times Z_{n+1, \sigma}^\bu$. 
By item~\ref{item3}, $|\mathcal{A} \nabla g |_{n+1,\sigma}^\bu \lesssim \kappa^{-1} (|\partial_z g|_{n+1,\sigma}^\bu+|\partial_\theta g|_{n+1,\sigma}^\bu) \leq \kappa^{-1} \lfloor g \rfloor_{n,\sigma}^\bu$. Hence, by item~\ref{item1}, 
\[
\lfloor \widetilde{\mathcal{G}} (g)\rfloor_{n,\sigma}^\bu = \lfloor \mathcal{G}(\mathcal{A}(\nabla g)) \rfloor_{n,\sigma}^{\bu} \lesssim
|\mathcal{A}(\nabla g)|_{n+1,\sigma}^\bu \lesssim \frac{1}{\kappa} |g|_{n,\sigma}^{\bu}.
\]
This concludes the proof of this lemma.  
\end{proof}

We notice that from~\eqref{eqK1matching_1} we can write
\[
\big (\mathrm{Id} - \widetilde{\mathcal{G}}\big ) K_1^\bu = K^0 + \mathcal{G}(\mathcal{B}).
\]
Therefore, since by Lemma~\ref{lem:propGAmatching}, taking $\kappa\geq \kappa_0$ large enough, the map $\mathrm{Id} - \widetilde{\mathcal{G}}$ is invertible, 
\begin{equation}\label{exp:matching:end}
K_1^{\bu} = \big (\mathrm{Id} - \widetilde{\mathcal{G}}\big )^{-1} \big (K^0 + \mathcal{G}(\mathcal{B})).
\end{equation}
The following result analyzes $\mathcal{B}$ and $K^0$. 
\begin{lemma}\label{lem:independentterm:matching} 
Let $\kappa_0$ be such that Theorems~\ref{thm:outer} and~\ref{thm:inner} hold. 
There exists $\kappa\geq \kappa_0$ and a constant $c$ such that 
\[ 
|K^0|_{4,\sigma}^{\bu} \leq c \varepsilon^{1-\gamma} , \qquad 
|\mathcal{B} |_{5,\sigma}^{\bu} \leq c \varepsilon 
\]
with $K^0,\mathcal{B}$ defined in~\eqref{def:K0:matching} and~\eqref{defABmatching}, respectively. 
\end{lemma}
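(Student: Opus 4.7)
The proof breaks into two essentially independent estimates. I would handle the bound on $K^0$ first, as it is almost bookkeeping once the geometry of the matching domain is used, and reserve most of the effort for $\mathcal{B}$, where the algebraic cancellation of the leading singular parts must be tracked term by term.

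For $K^0$, the starting point is that $K_1^{\bu}=K^{\bu}-K_0^{\bu}$ is uniformly bounded in $\widetilde{\mathcal{Z}}_{5,\sigma}^{\bu}$: Theorem~\ref{thm:outer} transferred to inner variables through the scaling~\eqref{phieta12} gives $\lfloor K^{\bu}\rfloor_{5,\sigma}^{\bu}\lesssim 1$, while Theorem~\ref{thm:inner} combined with the inclusion $\mathcal{D}^{\mathrm{mch},\bu}_{\kappa}\subset D^{\mathrm{in},\bu}_{\bar\kappa}$ provided by~\eqref{inclusion_domains} gives $\lfloor K_0^{\bu}\rfloor_{5,\sigma}^{\bu}\lesssim 1$. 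Starting from the explicit formula~\eqref{def:K0:matching} one reads $K^{0,[k]}(z)=K_1^{\bu,[k]}(z_{[k]})\,e^{ik(z_{[k]}-z)}$, and the key geometric observation is that, by the choice $z_{[k]}=z_1$ for $k>0$ and $z_{[k]}=z_2$ for $k\leq 0$ together with the chain $\mathrm{Im}\,z_2\leq\mathrm{Im}\,z\leq\mathrm{Im}\,z_1$ from~\eqref{propz1z2}, one has $|e^{ik(z_{[k]}-z)}|\leq 1$ uniformly on $\mathcal{D}^{\mathrm{mch},\bu}_{\kappa}$. Combining $|K_1^{\bu,[k]}(z_{[k]})|\leq|K_1^{\bu,[k]}|_{5}^{\bu}\,|z_{[k]}|^{-5}$, the size $|z_{[k]}|\sim\varepsilon^{\gamma-1}$ given by~\eqref{propz1z2}, and $\sup_{z}|z|^{4}\lesssim\varepsilon^{-4(1-\gamma)}$, and then summing the Fourier series against the weight $e^{|k|\sigma}$, produces $|K^{0}|_{4,\sigma}^{\bu}\lesssim\varepsilon^{1-\gamma}\lfloor K_1^{\bu}\rfloor_{5,\sigma}^{\bu}\lesssim\varepsilon^{1-\gamma}$.

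For $\mathcal{B}$, I would compare the explicit formulas~\eqref{defFouter_innervariables} and~\eqref{def:Fin} term by term. The algebraic piece $\alpha(\partial_{\theta}K^{\bu})^{2}$ cancels exactly, and $\mathcal{B}$ decomposes into three ``separatrix'' contributions plus a Hamiltonian remainder:
\begin{equation*}
\mathcal{B}=(\partial_{z}K^{\bu})^{2}\!\Bigl[\tfrac{1}{\varepsilon^{4}R^{2}(u(z))}+z^{4}\Bigr]+(\partial_{\theta}K^{\bu})^{2}\!\Bigl[\tfrac{1}{\varepsilon^{2}r^{2}(u(z))}+z^{2}\Bigr]+\partial_{\theta}K^{\bu}\!\Bigl[-\beta\varepsilon^{2}r^{2}(u(z))-\tfrac{\beta}{z^{2}}\Bigr]+A_{H}.
\end{equation*}
Each bracketed factor is, by the Laurent expansions~\eqref{hom:poles} of $r$ and $R$ at $u=i\pi/2$, of the form $\varepsilon$ times an explicit polynomial in $z$. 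Combining these with the a posteriori information $\partial_{z}K^{\bu},\partial_{\theta}K^{\bu}\in\mathcal{Z}_{6,\sigma}^{\bu}$ and the product-rule properties of Lemma~\ref{lem:banach:matching} yields an $\Oo(\varepsilon)$ contribution in $|\cdot|_{5,\sigma}^{\bu}$ for each of the three algebraic pieces.

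The main obstacle is the Hamiltonian remainder
\begin{equation*}
A_{H}=\tilde{H}_{1}\!\bigl(\hat\delta_{0}+(D\hat\gamma_{0})^{-\top}\nabla K^{\bu},\hat\gamma_{0};\varepsilon\bigr)-\tilde{H}_{1}\!\bigl(-iz^{-2}R_{\theta}e_{1}+R_{\theta}\diagonal_{-iz^{2},iz}\nabla K^{\bu},-iz^{-1}R_{\theta}e_{1};0\bigr),
\end{equation*}
which I would split as $A_{H}^{(1)}+A_{H}^{(2)}$, where $A_{H}^{(1)}$ compares $\tilde H_{1}(\cdot,\cdot;\varepsilon)$ with $\tilde H_{1}(\cdot,\cdot;0)$ at frozen arguments and $A_{H}^{(2)}$ freezes $\varepsilon=0$ and compares the arguments. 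The first piece is $\Oo(\varepsilon)$ by analyticity in $\varepsilon$ together with the $\Oo_{6}$ vanishing at the origin. The second is controlled by a mean-value argument: the differences $\hat\delta_{0}-(-iz^{-2}R_{\theta}e_{1})$, $\hat\gamma_{0}-(-iz^{-1}R_{\theta}e_{1})$, and the correction matrix $P(z,\theta;\varepsilon)$ entering $(D\hat\gamma_{0})^{-\top}$ are all $\Oo(\varepsilon)$ by~\eqref{def:gamma:delta:hat:inner}, \eqref{dhatgamma0inner} and~\eqref{def:in:Rtheta}, while $\partial_{x}\tilde H_{1},\partial_{y}\tilde H_{1}=\Oo_{5}$ ensures that the five negative powers of $z$ carried by the arguments produce exactly the $z^{5}$ weight demanded by $|\cdot|_{5,\sigma}^{\bu}$. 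The subtle point to check is that the arguments of $\tilde H_{1}$ are only $\Oo(|z|^{-1})$, not $\Oo(\kappa^{-1})$, on the whole of $\mathcal{D}^{\mathrm{mch},\bu}_{\kappa}$, so the $\Oo_{6}$ vanishing must be used carefully (exactly as in the proof of Lemma~\ref{ExSolInner1}) in order not to lose powers of $\varepsilon^{\gamma-1}$. Assembling the four pieces gives $|\mathcal{B}|_{5,\sigma}^{\bu}\lesssim\varepsilon$, which closes the proof.
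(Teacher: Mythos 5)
Your proposal is correct and follows essentially the same route as the paper's proof. The $K^0$ estimate is identical: you use the monotonicity of $\mathrm{Im}\,z$ to kill the exponential factor, the $|z_{[k]}|^{-5}$ decay of $K_1^{\bu,[k]}$ at the endpoint, and the growth $|z|^4\lesssim\varepsilon^{-4(1-\gamma)}$ to land on $\varepsilon^{1-\gamma}$. For $\mathcal{B}$ you also produce the same decomposition into the three "separatrix" pieces plus the Hamiltonian remainder, which is precisely the paper's $\mathcal{B}_1+\mathcal{B}_2$. The only structural variation is in the remainder: you compare the two $\tilde H_1$-terms in two successive mean-value steps (first in $\varepsilon$ at fixed arguments, then in the arguments at $\varepsilon=0$), whereas the paper interpolates all arguments and the $\varepsilon$-slot simultaneously along a single $\lambda$-parametrization, producing the three integrals against $\partial_x\tilde H_1$, $\partial_y\tilde H_1$, $\partial_\varepsilon\tilde H_1$ in one go. Both are valid; yours is slightly more elementary at the cost of one extra intermediate expression, the paper's is more compact. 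Your "subtle point" about keeping the $|z|^{-n}$ decay of the arguments, rather than crudely bounding them by $\kappa^{-1}$, is exactly the reason the weighted norms $|\cdot|^{\bu}_{n,\sigma}$ are used, and it is well taken.

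One claim you should repair before executing the estimate: you assert that the correction matrix $P(z,\theta;\varepsilon)$ is $\Oo(\varepsilon)$. This is false on $\mathcal{D}^{\mathrm{mch},\bu}_{\kappa}$; from~\eqref{def:in:Rtheta} and~\eqref{hom:poles} the diagonal entries of $P$ are of order $\varepsilon z^3$ and $\varepsilon z^2$, which are large since $|z|$ reaches $\varepsilon^{-(1-\gamma)}$. What is actually small, uniformly in the $|\cdot|^{\bu}_{n,\sigma}$ scale, is the product $R_\theta P\,\nabla K^{\bu}$, because the $z^{-6}$ decay of $\nabla K^{\bu}$ absorbs the $z$-growth of $P$. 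This is implicitly encoded in the paper's bound $|K_\lambda^x|^{\bu}_{2,\sigma}\lesssim 1$. Similarly, $\hat\delta_0^{\mathrm{mch}}$ is $\Oo(\varepsilon/z)$ rather than $\Oo(\varepsilon)$, which is fine (it is even smaller), and $\hat\gamma_0^{\mathrm{mch}}$ is genuinely $\Oo(\varepsilon)$ as you state. If you carry your argument through with these weighted bounds in place of the crude "all $\Oo(\varepsilon)$" statement, it closes exactly as the paper's does.
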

\begin{proof}
We first recall that, by definition~\eqref{defzkmatching} of $z_{[k]}$ and using also~\eqref{propz1z2}, 
\[
-k \Im (z_{[k]}-z) \leq 0, \qquad k\in \mathbb{Z}.
\]
Then,  using also Theorems~\eqref{thm:outer} and~\eqref{thm:inner}, $| K^0 (z , \theta) | \lesssim   |z_1|^{-5} \lesssim \varepsilon^{5(1-\gamma)}$ and from Lemma~\ref{lem:banach:matching}, $|K^0|_{4,\sigma}^\bu \lesssim \varepsilon^{1-\gamma}$.

Now we deal with $\mathcal{B}$. 
We introduce 
\begin{equation*}
    R^{\mathrm{mch}}(z;\varepsilon) = \varepsilon^2 R(u(z)) + \frac{i}{z^2} , \qquad 
    r^{\mathrm{mch}} (z;\varepsilon) = \varepsilon r(u(z)) + \frac{i}{z}
\end{equation*}
and 
\begin{equation*}
  \hat{\gamma}_0^{\mathrm{mch}} (z,\theta;\varepsilon) =  \hat{\gamma}_0 +  \frac{i}{z} R_\theta e_1, \qquad 
   \hat{\delta}_0^{\mathrm{mch}} (z,\theta;\varepsilon) =  \hat{\delta }_0 +  \frac{i}{z^2} R_\theta e_1, 
\end{equation*}
where $e_1=(1,0)^\top$ and $R_\theta$ and  $\hat{\gamma}_0, \hat{\delta}_0$ were defined in~\eqref{def:in:Rtheta} and~\eqref{def:gamma:delta:hat:inner}, respectively. It follows from~\eqref{hom:poles} and~\eqref{def:gamma:delta:hat:inner} that
\begin{equation}\label{boundsRrmatch}
|z^2 R^{\mathrm{mch}} (z;\varepsilon)|, \, |z r^{\mathrm{mch}} (z;\varepsilon)|, \,|z\hat{\gamma}_0^{\mathrm{mch}} (z,\theta;\varepsilon)| , \,|z^2 \hat{\delta}_0^{\mathrm{mch}} (z,\theta;\varepsilon) |\lesssim |\varepsilon z| \lesssim \varepsilon^{\gamma}.
\end{equation}
In addition, using~\eqref{dhatgamma0inner} and the definition~\eqref{defABmatching} of $\mathcal{B}$, we can write $\mathcal{B}= \mathcal{B}_1+ \mathcal{B}_2$, where
\begin{align*}
        \mathcal{B}_1(z,\theta)=& (\partial_z K^\bu)^2 \left ( \frac{1}{\varepsilon^4 R^2(u(z))} +z^4 \right ) + 
        (\partial_\theta K^\bu)^2 \left ( \frac{1}{\varepsilon^2 r^2(u(z))} +z^2 \right ) 
        \\ & -\beta \partial_\theta K^\bu \left (\varepsilon^2 r^2(u(z)) + \frac{1}{z^2} \right ) \\ 
        \mathcal{B}_2(z,\theta)=  &\int_{0}^1  \partial_{x} \tilde{H}_1 ( K_\lambda ; \lambda\varepsilon) \big [\hat \delta_0^{\mathrm{mch}} 
        + R_\theta P  \nabla K^\bu \big ]\, d\lambda 
        + \int_{0}^1  \partial_{y} \tilde{H}_1 (  K_\lambda ; \lambda \varepsilon )   \big 
        [\hat \gamma_0^{\mathrm{mch}}   \big ]\, d\lambda \\
        &+ \varepsilon \int_{0}^1 \partial_\varepsilon \tilde{H}_1 (  K_\lambda ; \lambda \varepsilon ) \, d\lambda
\end{align*}
with $K_\lambda = (K_\lambda^x, K_\lambda^y)^\top$ given by
\begin{equation}\label{defKlambdamatching}
\begin{aligned}
K_\lambda^x(z,\theta)=&  -iz^{-2} R_\theta e_1 + R_\theta D_{-iz^2 ,iz } 
        \nabla K^\bu +\lambda (\hat{\delta}_0^{\mathrm{mch}} + R_\theta P  \nabla K^\bu ) 
        \\ 
K_\lambda^y (z,\theta) = & -iz^{-1} R_\theta e_1 + \lambda \hat \gamma_0^{\mathrm{mch}}
\end{aligned}
\end{equation}
and $P$ defined in~\eqref{def:in:Rtheta}. From~\eqref{boundsRrmatch}, we obtain
\[
 \left |\frac{1}{\varepsilon^4 R^2(u(z))} +z^4 \right | =  \left |\frac{-z^4}{1-z^4 (R^{\mathrm{mch}}  (z))^2 + 2i z^2 R^{\mathrm{mch}} (z) } +z^4 \right | \lesssim |z|^5 \varepsilon
\]
and analogously
\[
\left | \frac{1}{\varepsilon^2 r^2(u(z))} +z^2 \right |\lesssim |z|^3 \varepsilon, \qquad 
\left |\varepsilon^2 r^2(u(z)) + \frac{1}{z^2} \right |\lesssim |z|^{-1} \varepsilon .
\]
Combining these estimates with the bound~\eqref{Kumatching} of $\lfloor K_1^\bu\rfloor_{5,\sigma}^\bu$, we have that 
$\mathcal{B}_1 \in \mathcal{Z}_{7,\sigma}^{\bu} \subset \mathcal{Z}_{5,\sigma}^{\bu}$ and 
\begin{equation}\label{boundB1matching}
|\mathcal{B}_1|_{5,\sigma}^\bu \lesssim \frac{\varepsilon}{\kappa^2} .
\end{equation}

To deal with $\mathcal{B}_2$, using~\eqref{boundsRrmatch}, \eqref{Kumatching} and definition~\eqref{def:in:Rtheta} of $P$, we first observe that $|K_\lambda^x|_{2,\sigma}^\bu \lesssim 1$ and $|K_\lambda^y|_{1,\sigma}^\bu \lesssim 1$ (see~\eqref{defKlambdamatching}). Then
since $ \partial_{x} \tilde{H}_1(x,y; \epsilon), \partial_y \tilde{H}_{1}(x,y;\epsilon)=  \mathcal{O}(|(x,y)|^5)$ and $\partial_\varepsilon \tilde{H}_1(x,y;\epsilon)= \mathcal{O}(|(x,y)|^6)$ uniformly in $\epsilon$, for $\lambda\in [0,1]$
\[
|\partial_{x} \tilde{H}_1 ( K_\lambda;\lambda\varepsilon )|_{5,\sigma}^\bu,\, |\partial_{y} \tilde{H}_1 ( K_\lambda;\lambda \varepsilon )|_{5,\sigma}^\bu,\, |\partial_{\varepsilon} \tilde{H}_1 ( K_\lambda;\lambda\varepsilon )|_{6,\sigma}^\bu  \lesssim 1.
\]
Therefore, again using~\eqref{boundsRrmatch} and~\eqref{def:in:Rtheta}, along with 
the estimate $|\partial_{\varepsilon} \tilde{H}_1 ( K_\lambda;\lambda\varepsilon )|_{5,\sigma}^\bu \lesssim \kappa^{-1} |\partial_{\varepsilon} \tilde{H}_1 ( K_\lambda;\lambda\varepsilon )|_{6,\sigma}^\bu$ (see Lemma~~\ref{lem:banach:matching}), we obtain that
\[
|\mathcal{B}_2 |_{5,\sigma}^{\bu} \lesssim \varepsilon .
\]
Then, using also~\eqref{boundB1matching}, the lemma is proved. 
\end{proof}

We now complete the proof of Theorem~\ref{thm:matching}. Indeed, recalling the expression~\eqref{exp:matching:end} of $K_1^\bu$ and applying Lemma~\ref{lem:propGAmatching}, we obtain
\[
|K_1^\bu|_{4,\sigma}^{\bu} \lesssim |K^0|_{4,\sigma}^{\bu} + |\mathcal{G} (\mathcal{B})|_{4,\sigma}^{\bu}. 
\]
Moreover, by Lemma~\ref{lem:independentterm:matching} and using also Lemma~\ref{lem:propGAmatching}, we deduce that 
\[
|K_1^\bu|_{4,\sigma}^{\bu} \lesssim \varepsilon^{1-\gamma}.
\]
This concludes the proof of Theorem~\ref{thm:matching} just recalling that, by definition~\eqref{def:matching}, $K_1^{\bu} = K^{\bu}-K_0^{\bu}$ with $K^{\bu}(z,\beta) = \varepsilon^3 T_1\left (\varepsilon z + i\frac{\pi}{2},\theta \right )$, and then 
\begin{align*}
\left |\left (u-\frac{i\pi}{2} \right )^4 \left (T_1^{\bu}(u,\theta) - \frac{1}{\varepsilon^3} K^{\bu}_0\left (\frac{u- i\frac{\pi}{2}}{\varepsilon}  ,\theta \right ) \right  ) \right | & =
\left |\left (u-\frac{i\pi}{2} \right )^4   \varepsilon^{-3} K_1^{\bu}\left (\frac{u- i\frac{\pi}{2}}{\varepsilon}  ,\theta \right ) \right  | \\ & \leq \varepsilon |K_1^\bu|_{4,\sigma}^{\bu} \lesssim \varepsilon^{2-\gamma}.
\end{align*}

\section{Straightening a variational equation} \label{sec:proof:difference}   

Consider the partial differential equation~\eqref{eq:diff:out:ab}, 
$$
(1+ \mathbf{a}) \partial_u \Delta + \left (\frac{\omega}{\varepsilon} + \mathbf{b}\right ) \partial_{\theta} \Delta =0, \qquad (u,\theta) \in E \times \mathbb{T}_{\sigma} 
$$
with $E$ defined in~\eqref{def:E:final} and $\mathbf{a}, \mathbf{b}$ defined in~\eqref{def:A:diff}. Using  Theorems~\ref{thm:ext} and~\ref{thm:outer} and that $\partial_x \tilde{H_1}(x,y; \varepsilon) = \mathcal{O}_5(x,y)$ uniformly in $\varepsilon$, one can easly check that   
\[  
    \sup_{(u,\theta) \in \E \times \T_\sigma} \left | \left (u^2 + \frac{\pi^2}{4}\right )^2\mathbf{a}(u,\theta) \right | \lesssim  \varepsilon^2, \quad \sup_{(u,\theta) \in \E \times \T_\sigma} \left | \left (u^2 + \frac{\pi^2}{4}\right )^2\mathbf{b}(u,\theta) \right | \lesssim  \varepsilon.
\]
Therefore, $|1+ \varepsilon \omega^{-1} \mathbf{b}| \geq 1/2$, taking $\kappa=s|\log \varepsilon| $ large enough, and equation~\eqref{eq:diff:out:ab} is equivalent to 
$$
\frac{\omega}{\varepsilon} \partial_{\theta } \Delta +   (1+ \overline{\mathbf{a}}) \partial_u \Delta = 0  
$$
with 
$$
\overline{\mathbf{a}} = \frac{1+\mathbf{a}}{ 1 + \varepsilon \omega^{-1} \mathbf{b}} -1 = \frac{\mathbf{a} - \varepsilon \omega^{-1} \mathbf{b}}{ 1 + \varepsilon \omega^{-1} \mathbf{b}}.
$$
Considering $\widetilde{\Delta} (u,\theta) = \Delta (u, \omega^{-1} \theta)$, we obtain
\begin{equation}\label{eqCdifference}
 \frac{1}{\varepsilon} \partial_\theta \widetilde {\Delta}   + (1+
  \widetilde{\mathbf{a}} )\partial_u\widetilde{\Delta} =0, \qquad 
  \widetilde{\mathbf{a}}(u,\theta) = \overline{\mathbf{a}} (u, \omega^{-1} \theta)
\end{equation}
and it is clear that 
\begin{equation}\label{bound:atilde}
    \sup_{(u,\theta) \in \E \times \T_\sigma} \left | \left (u^2 + \frac{\pi^2}{4}\right )^2\widetilde{\mathbf{a}}(u,\theta) \right | \lesssim  \varepsilon^2.
\end{equation}

Similar equations to~\eqref{eqCdifference} were also studied in~\cite{B12}. The first claim is that if $\xi(u,\theta)$ is a solution of 
\begin{equation}\label{eq:diff:psi}
\frac{1}{\varepsilon} \partial_\theta \psi + (1+\widetilde{\mathbf{a}})\partial_u \psi =0
\end{equation}
satisfying that $(\xi(u,\theta), \theta)$ is injective, then all the solutions of~\eqref{eqCdifference} are of the form $\chi(\xi(u,\theta))$. Indeed, if such solution $\xi$ exists, consider the change of variables 
$$
(v, \tau) = h(u,\theta):=(\xi(u,\theta), \theta), \qquad \varphi(v,\tau)=\psi (h^{-1}(v,\tau)).
$$
Then, $\psi(u,\theta) = \varphi(h(u,\theta))$ is a solution of~\eqref{eq:diff:psi} if and only if
$$
\frac{1}{\varepsilon} \big (\partial_{\tau} \varphi + \partial_{v} \varphi \partial_{\theta} \xi \big ) + (1+ \widetilde{\mathbf{a}}) \partial_v \varphi \partial_u \xi =0
$$
and, using that $\xi$ is a particular solution, we obtain that $\partial_{\tau} \varphi=0$. Therefore $\varphi(v,\tau) = \chi (v)$ and as a consequence
$$
\psi(u,\theta) = \varphi(h(u,\theta))=\varphi(\xi(u,\theta),\theta) = \chi(\xi(u,\theta)).
$$

We focus on solutions $\xi$ of the form $ \xi(u,\theta)= \varepsilon^{-1} (u + \mathcal{C}(u,\theta)) - \theta$ with $\mathcal{C}(u,\theta)$ being $2\pi$-periodic. 
We emphasize that, if such a solution exists, then 
$$
\Delta (u,\theta) = \Upsilon (\varepsilon^{-1} (u + \mathcal{C}(u,\theta)) - \theta) 
$$
with $\Upsilon$ a $2\pi-$ periodic function in its variable. In addition, it is clear that $\mathcal{C}$ has to satisfy 
\begin{equation}\label{eq:mathcalC:diff}
\frac{1}{\varepsilon} \partial_{\theta} \mathcal{C} + \partial_u \mathcal{C} =- \widetilde{\mathbf{a}} (1+ \partial_u \mathcal{C}).
\end{equation}

As a result of this preliminary analysis, Proposition~\ref{prop:C} is an straightforward consequence of the following lemma.
\begin{lemma}
Equation~\eqref{eq:mathcalC:diff} has a real analytic solution $\mathcal{C}:E \times \mathbb{T}_{\sigma} \to \mathbb{C}$. Moreover, defining
\begin{equation}\label{def:norm:diff:C}
| \mathcal{C} |_{1,\sigma}:=\sum_{k\in \mathbb{Z}} |\mathcal{C}^{[\ell]}|_1 e^{|\ell| \sigma}, \qquad |\mathcal{C}^{[\ell]}|_{1}:= \sup_{u \in \E} \left |\left (u^2 + \frac{\pi^2}{4}\right ) \mathcal{C}^{[\ell]}(u)\right |
\end{equation}
we have that $|\mathcal{C}|_{1,\sigma} \lesssim \varepsilon ^{2}$. 
\end{lemma}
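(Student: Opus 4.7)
The plan is to rewrite~\eqref{eq:mathcalC:diff} as a fixed-point problem and apply the Banach fixed-point theorem in a weighted analytic function space, closely mirroring the strategy used in Sections~\ref{sec:proof:outer} and~\ref{sec:inner:proof}. I would first construct a right inverse $\mathcal{G}$ of the linear operator $L\phi:=\varepsilon^{-1}\partial_\theta\phi+\partial_u\phi$ working Fourier-wise: writing $\phi=\sum_\ell\phi^{[\ell]}(u)e^{i\ell\theta}$, the equation $L\phi^{[\ell]}=g^{[\ell]}$ reduces to the scalar ODE $\partial_u\phi^{[\ell]}+i\ell\varepsilon^{-1}\phi^{[\ell]}=g^{[\ell]}$, solved by
\begin{equation*}
\mathcal{G}^{[\ell]}(g)(u)=\int_{u_\ell}^{u}e^{-i\ell(u-s)/\varepsilon}\,g^{[\ell]}(s)\,ds,
\end{equation*}
with $u_\ell$ taken at the topmost point of $\E$ for $\ell>0$, at the bottommost one for $\ell<0$, and at a fixed real base point for $\ell=0$. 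The integration paths inside $\E$ are chosen so that $\ell\,\Im(u-s)\le 0$ throughout, which guarantees $|e^{-i\ell(u-s)/\varepsilon}|\le 1$. The symmetric choice $u_{-\ell}=\bar u_\ell$, together with the real analyticity of $\widetilde{\mathbf{a}}$ (inherited from~\eqref{def:A:diff} and the real analyticity of $T_1^{\bu,\bs}$), will yield the desired real analyticity of the solution.

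Next, together with the norm~\eqref{def:norm:diff:C} I would introduce the weighted norms $|\phi|_{n,\sigma}=\sum_\ell|\phi^{[\ell]}|_n\,e^{|\ell|\sigma}$ with $|\phi^{[\ell]}|_n=\sup_{u\in\E}|(u^2+\pi^2/4)^n\phi^{[\ell]}(u)|$, and the graph-type norm $\lfloor\phi\rfloor_{n,\sigma}:=|\phi|_{n,\sigma}+|\partial_u\phi|_{n+1,\sigma}+\varepsilon^{-1}|\partial_\theta\phi|_{n+1,\sigma}$. The key estimate to establish is
\begin{equation*}
\lfloor\mathcal{G}(g)\rfloor_{n-1,\sigma}\lesssim|g|_{n,\sigma}\qquad(n\ge 2),
\end{equation*}
which I would prove by direct integration for $\ell=0$, and by one integration by parts in $s$ for $\ell\neq 0$ (gaining the factor $\varepsilon/|\ell|$ that exactly compensates the $\varepsilon^{-1}$ in the $\partial_\theta$ norm). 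Here the choice $\kappa=s|\log\varepsilon|$ large enters through the bound $|u^2+\pi^2/4|\gtrsim s\varepsilon|\log\varepsilon|$ on $\E$, which controls the loss in weight when the integration path approaches $\pm i\pi/2$; the length of each path is $O(1)$.

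Equation~\eqref{eq:mathcalC:diff} is then equivalent to the fixed-point equation
\begin{equation*}
\mathcal{C}=\mathcal{F}(\mathcal{C}):=-\mathcal{G}(\widetilde{\mathbf{a}})-\mathcal{G}\bigl(\widetilde{\mathbf{a}}\,\partial_u\mathcal{C}\bigr).
\end{equation*}
By~\eqref{bound:atilde}, $|\widetilde{\mathbf{a}}|_{2,\sigma}\lesssim\varepsilon^2$, whence $\lfloor\mathcal{G}(\widetilde{\mathbf{a}})\rfloor_{1,\sigma}\lesssim\varepsilon^2$. For the non-linear correction, the product estimate $|\widetilde{\mathbf{a}}\,\partial_u\mathcal{C}|_{4,\sigma}\lesssim|\widetilde{\mathbf{a}}|_{2,\sigma}|\partial_u\mathcal{C}|_{2,\sigma}\lesssim\varepsilon^2\lfloor\mathcal{C}\rfloor_{1,\sigma}$ and the right-inverse bound give $\lfloor\mathcal{G}(\widetilde{\mathbf{a}}\,\partial_u\mathcal{C})\rfloor_{3,\sigma}\lesssim\varepsilon^2\lfloor\mathcal{C}\rfloor_{1,\sigma}$, so $\mathcal{F}$ is a contraction on the ball $\{\lfloor\mathcal{C}\rfloor_{1,\sigma}\le 2c\varepsilon^2\}$ with Lipschitz constant of order $\varepsilon^2$, for a suitable $c>0$ and $\varepsilon$ small. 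The Banach fixed-point theorem then produces the desired analytic $\mathcal{C}$ with $|\mathcal{C}|_{1,\sigma}\le\lfloor\mathcal{C}\rfloor_{1,\sigma}\lesssim\varepsilon^2$.

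The main technical obstacle I anticipate is the geometric step of selecting integration paths inside $\E$ that stay uniformly away from $\pm i\pi/2$ and from $u=0$ (which is excluded from $\E$) while respecting $\ell\,\Im(u-s)\le 0$: if $\E$ fails to be simply connected in an essential way one must fix a specific family of paths (for instance by splitting $\E$ into two overlapping simply connected subregions anchored at the top and bottom corners, and gluing the two resulting right inverses by uniqueness of analytic continuation) and verify that the length of the paths does not deteriorate the weighted bounds. Once this is settled, the Fourier-level inequalities and the contraction argument reduce to a direct adaptation of the techniques already used in Sections~\ref{sec:proof:outer} and~\ref{sec:inner:proof}.
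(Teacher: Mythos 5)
Your overall approach coincides with the paper's: Fourier‑wise right inverse $\mathcal{G}_\varepsilon$ with base points at the topmost/bottommost/rightmost corners of $\E$, the fixed‑point formulation $\mathcal{C}=\mathcal{G}_\varepsilon(-\widetilde{\mathbf{a}}(1+\partial_u\mathcal{C}))$, the weighted Fourier norms, and the input $|\widetilde{\mathbf{a}}|_{2,\sigma}\lesssim\varepsilon^2$ from~\eqref{bound:atilde}. However, there is a quantitative error in the contraction step that, while not fatal, hides where the choice $\kappa=s|\log\varepsilon|$ actually enters.

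You bound $|\widetilde{\mathbf{a}}\,\partial_u\mathcal{C}|_{4,\sigma}\lesssim\varepsilon^2\lfloor\mathcal{C}\rfloor_{1,\sigma}$ and conclude $\lfloor\mathcal{G}(\widetilde{\mathbf{a}}\,\partial_u\mathcal{C})\rfloor_{3,\sigma}\lesssim\varepsilon^2\lfloor\mathcal{C}\rfloor_{1,\sigma}$, and from this you claim a Lipschitz constant of order $\varepsilon^2$ in the $\lfloor\cdot\rfloor_{1,\sigma}$ norm. This step is wrong in direction: near $\pm i\pi/2$ the weight $|u^2+\pi^2/4|^n$ \emph{decreases} with $n$, so $\lfloor\cdot\rfloor_{3,\sigma}$ is the \emph{weaker} norm, not the stronger one. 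Passing from weight $3$ to weight $1$ costs a factor $\sup_{u\in\E}|u^2+\pi^2/4|^{-2}\sim(\varepsilon\kappa)^{-2}$, so in fact
\begin{equation*}
\lfloor\mathcal{G}(\widetilde{\mathbf{a}}\,\partial_u\mathcal{C}_1)-\mathcal{G}(\widetilde{\mathbf{a}}\,\partial_u\mathcal{C}_2)\rfloor_{1,\sigma}
\;\lesssim\;(\varepsilon\kappa)^{-2}\,\varepsilon^2\,\lfloor\mathcal{C}_1-\mathcal{C}_2\rfloor_{1,\sigma}
\;=\;\kappa^{-2}\,\lfloor\mathcal{C}_1-\mathcal{C}_2\rfloor_{1,\sigma},
\end{equation*}
exactly as in the paper (there written as $c\hat{c}\,s^{-2}|\log\varepsilon|^{-2}$, having applied $|\widetilde{\mathbf{a}}|_{2,\sigma}|\partial_u g|_{2,\sigma}\to|\widetilde{\mathbf{a}}\,\partial_u g|_{2,\sigma}$ and paid the weight reduction from $4$ to $2$). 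The $\varepsilon^2$ gain from $\widetilde{\mathbf{a}}$ is therefore \emph{fully consumed} by the weight loss; the contraction rate is only $|\log\varepsilon|^{-2}$, which is why the parameter $s$ and the choice $\kappa=s|\log\varepsilon|$ are essential here and not merely cosmetic. You should replace "Lipschitz constant of order $\varepsilon^2$" by "of order $s^{-2}|\log\varepsilon|^{-2}$" and make the weight bookkeeping explicit; the conclusion $|\mathcal{C}|_{1,\sigma}\lesssim\varepsilon^2$ then follows because the \emph{first} iterate $\lfloor\mathcal{G}(\widetilde{\mathbf{a}})\rfloor_{1,\sigma}\lesssim\varepsilon^2$ already sets the size of the invariant ball, while the contraction constant only needs to be less than $1$.

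Finally, the concern you raise about $\E$ failing to be simply connected is unfounded: $D^{\mathrm{ext}}_\kappa$ defined in~\eqref{def:domains_ext} is a single simply connected region (see Figure~\ref{fig:ext_domain}), merely nonconvex; the path selection with $\ell\,\Im(u-s)\le 0$ along the boundary-following paths from the top/bottom vertices poses no topological obstruction, only the metric bookkeeping of the weights, which is handled as above.
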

\begin{proof}
Let $\kappa = s |\log \varepsilon|$ and consider
$u_{[k]}=\frac{\pi}{2} - \kappa\varepsilon$ for $k>0$, $u_{[k]}=-\frac{\pi}{2} + \kappa\varepsilon$ for $k<0$ 
and $u_{[0]}\in \mathbb{R}$ the element of $\E$ with the larger real part. We define the operators 
\begin{align*}
\mathcal{L}_{\varepsilon}(g)(u,\theta) &=  \frac{1}{\varepsilon} \partial_{\theta} g (u,\theta) + \partial_u g (u,\theta)\\
    \mathcal{G}_{\varepsilon}(g)(u,\theta) &= \sum_{k\in \mathbb{Z}} \mathcal{G}^{[k]}(g)(u) e^{ik\theta}, \qquad \mathcal{G}^{[k]}(u)=\int_{u_{[k]}}^{u} e^{ik\varepsilon^{-1}(v-u)} g^{[k]}(v) dv \\
    \mathcal{R}(g) (u,\theta) &= -\widetilde{\mathbf{a}}(u,\theta) (1+ \partial_u g (u,\theta)).
\end{align*}
The operator $\mathcal{G}_\varepsilon$ was also defined in~\cite{B12} (see Section 9.1). 
From its definition, it is immediate to check that $\mathcal{L}_{\varepsilon} \circ \mathcal{G}_{\varepsilon}(g) = g$ and therefore, if $\mathcal{C}$ is a solution of the fixed point equation
$$
\mathcal{C} = \mathcal{G}_\varepsilon \circ \mathcal{R}(\mathcal{C}),
$$
it is also a solution of equation~\eqref{eq:mathcalC:diff}. We introduce the norms
\begin{equation*}
| \mathcal{C} |_{n,\sigma}:=\sum_{k\in \mathbb{Z}} |\mathcal{C}^{[\ell]}|_n e^{|\ell| \sigma}, \qquad |\mathcal{C}^{[\ell]}|_{n}:= \sup_{u \in \E} \left |\left (u^2 + \frac{\pi^2}{4}\right )^n \mathcal{C}^{[\ell]}(u)\right |
\end{equation*}
and the Banach spaces
\[
\mathcal{Z}_{n, \sigma} = \left\{ \phi : \E\times \T_\sigma \to \C : \mbox{$\phi$ is real analytic and $\lfloor\phi\rfloor_{n,\sigma}:= |\phi|_{n, \sigma} + |\partial_u \phi|_{n+1,\sigma}<\infty$}\right\}.
\]
In~\cite{B12} (see Lemmas 9.1 and 9.2) was proven that, there exists a constant $c$ such that for all $g\in  \mathcal{Z}_{n ,\sigma}$ with $n>1$ and $g_1\in \mathcal{Z}_{n_1,\sigma}, g_2\in \mathcal{Z}_{n_2}$ with $n_1,n_2\geq 0$, 
$$
\lfloor\mathcal{G}_\varepsilon (g )\rfloor_{n-1,\sigma} \leq c |g|_{n,\sigma}, \qquad | g_1 g_2|_{n_1+n_2,\sigma} \leq c |g_1 |_{n_1,\sigma} |g_2 |_{n_2,\sigma}  .
$$

From~\eqref{bound:atilde} 
we deduce that $|\widetilde{\mathbf{a}}|_{2,\sigma} \leq \hat c \varepsilon^2$ for a positive constant $\hat c$. 
Let 
\[
\varrho=2 \lfloor \mathcal{G}_\varepsilon \circ \mathcal{R}(0) \rfloor_{1,\sigma}\lesssim |\mathcal{R}(0)|_{2,\sigma} \lesssim \varepsilon^2
\]
and $B_\varrho\subset \mathcal{Z}_{1,\sigma}$ be the closed ball of radius $\varrho$ centered at $0$. We notice that if $u\in \E$, 
\[
\left | u^2 + \frac{\pi^2}{4}\right | \gtrsim \varepsilon  \kappa  = \varepsilon  s  |\log \varepsilon| ,
\]

Then, for $g_1, g_2\in B_\varrho$
\begin{align*}
\lfloor \mathcal{G}_{\varepsilon} \circ \mathcal{R}(g_1) - \mathcal{G}_\varepsilon \circ \mathcal{R}(g_2)\rfloor_{1,\sigma}  
&\leq c |\mathcal{R}(g_1) - \mathcal{R}(g_2)|_{2,\sigma} \leq c \hat{c} s^{-2} |\log \varepsilon|^{-2} |\partial_u g_1- \partial_u g_2|_{2,\sigma} \\ &\leq c \hat{c}  s^{-2}|\log \varepsilon|^{-2}  \lfloor g_1-g_2\rfloor_{1,\sigma} \leq \frac{1}{2}\lfloor g_1-g_2\rfloor_{1,\sigma}
\end{align*}
if $\varepsilon$ is small enough. Moreover, for $g\in B_\varrho$, 
$$
\lfloor \mathcal{G}_{\varepsilon} \circ \mathcal{R}(g) \rfloor_{1,\sigma} \leq \lfloor \mathcal{G}_{\varepsilon} \circ \mathcal{R}(0) \rfloor_{1,\sigma} + \lfloor \mathcal{G}_{\varepsilon} \circ \mathcal{R}(g)  - \mathcal{G}_{\varepsilon} \circ \mathcal{R}(0)\rfloor_{1,\sigma} \leq \frac{\varrho}{2} + \frac{1}{2} \lfloor g\rfloor_{1,\sigma} \leq \varrho.
$$
As a consequence $\mathcal{G}_\varepsilon \circ \mathcal{R} : B_\varrho \to B_\varrho$ is a contraction and therefore the fixed point theorem assures the existence of a (unique) solution $\mathcal{C} \in B_\varrho$ of the fixed point equation $\mathcal{G}_\varepsilon \circ \mathcal{R}(\mathcal{C}) = \mathcal{C}$. This concludes the proof of this lemma.  
\end{proof}
 
\appendix

\section{Versal normal forms}\label{HVNF}
This section is divided into two parts. First, we discuss some ideas of the proof of Theorem \ref{Thm:NormalForm}, concerning the versal normal form associated with generic unfoldings of the Hamiltonian Hopf bifurcation; This is the content of Section \ref{app:NF}. In Section \ref{app:NF_L4}, we present some ideas of the proof of Theorem \ref{Thm:NormalForm:L4}, which concerns the versal normal form associated with the Hamiltonian of the RPC3BP. Obviously, in this case, the strategy used to bring the higher-order (greater than two) terms of the Hamiltonian of the RPC3BP close to $L_4$ into normal form is the same as that of the generic unfoldings of the Hamiltonian Hopf bifurcation. Therefore, in this section, we mainly provide an idea of how to put the quadratic terms into a versal normal form. We recall that the Hamiltonian of the RPC3BP close to $L_4$ is a particular unfolding of the Hamiltonian Hopf bifurcation.

\subsection{Versal normal form of the generic unfoldings}\label{app:NF}

Let us recall the notation. We consider a generic unfolding of the Hamiltonian Hopf bifurcation $\mathbf{H}_{\mu} : \R^2 \times \R^2 \to \R$ with the standard symplectic form $dx_1 \wedge dy_1 + dx_2 \wedge dy_2$. Furthermore, we assume that the quadratic terms $\mathbf{H}^2_\mu$ associated with $\mathbf{H}_\mu$ are
\begin{equation}\label{app:H_mu_2}
   \mathbf{H}^2_\mu (x_1, x_2, y_1, y_2) = \varpi S + {1 \over 2} N + {1 \over 2}\nu_0(\mu) Q, \quad \varpi>0
\end{equation}
with $S=x_1y_2 - x_2y_1$, $N=y_1^2 + y_2^2$, and $Q=y_1^2 + y_2^2$. Throughout this section, we adopt the compact notation $x = (x_1, x_2)^\top$ and $y= (y_1, y_2)^\top$. 

Following the proof contained in Section $3.5$ of~\cite{Meer85}, see also Section 5 of~\cite{Sch94} for the particular case where $\mathbf{H}_\mu$ is the Hamiltonian of the RPC3BP close to $L_4$, we aim to look for a Hamiltonian 
\begin{equation}\label{def:Wnu}
    W_\mu(x,y) =  W^1_\mu(x,y) +  W^2_\mu(x,y) + W^3_\mu(x,y)
\end{equation}
where $W^n_\mu$, for $n =1,2,3$, are homogeneous polynomials of degree $i+2$ in the four variables $(x,y)$, depending on $\mu$, such that the time $1$-map $\phi^1_{W_\mu}$ associated with $W_\mu$ is a symplectic change of coordinates satifying $\mathbf{H}_\mu\circ \phi^1_{W_\mu}=\check H_0 + \check H_1$ where $\check H_0$ and $\check H_1$ are as in Theorem \ref{Thm:NormalForm}. 

For this purpose, we introduce the following notation
\begin{equation}\label{app:def_H_mu}
    \mathbf{H_\mu}(x,y) = \sum_{n \ge 2} \mathbf{H}^n_\mu(x,y)
\end{equation}
where $\mathbf{H}^2_\mu$ is as in~\eqref{app:H_mu_2} and, for all $n \ge 3$,  $\mathbf{H}^n_\mu$ is an homogeneous polynomial of degree $n$ in the four variables $(x,y)$ with coefficients which are analytic functions in the parameter $\mu$ for $\mu$ sufficiently close to $0$. Furthermore,  we denote by $\{\cdot, \cdot\}$ the Poisson bracket associated with the standard symplectic form $dx_1 \wedge dy_1 + dx_2 \wedge dy_2$, and by $\mathcal{P}_n$ the space of the homogeneous polynomials of degree $n$ in the variables $(x,y)$.  

In Section 3.5 of~\cite{Meer85}, the author look for $W^n_\mu$ as a solution of the following equation
\begin{equation*}
    \mathbf{\tilde H}^{n+2}_\mu = K + \left\{\mathbf{H}^2_\mu, W^n_\mu\right\}
\end{equation*}
where $K$ contains only known terms, namely, those depending exclusively on $W^j_\mu$ with $j<n$, and $\mathbf{\tilde H}^{n+2}_\mu$ stands for the term of order $n+2$ of the new Hamiltonian expressed in the new variables. We have omitted the dependence on the variables $(x,y)$ for brevity, and we will do so throughout the rest of this section whenever no confusion may arise. Hence, deriving a normal form for $\mathbf{H}_\mu$ is equivalent to studying the following linear operator
\begin{equation}\label{app:lin_op_{H2mu}}
    \left\{\mathbf{H}_\mu^2, \cdot\right\}: \mathcal{P}_n \longrightarrow \mathcal{P}_n.
\end{equation}
We have the following decomposition
\begin{equation}\label{app:decomPn}
    \mathcal{P}_n = \mathrm{Im}\left(\left\{\mathbf{H}_\mu^2, \cdot\right\}\right) \oplus \mathcal{N}(\mathbf{H}_\mu^2)
\end{equation}
where $\mathcal{N}(\mathbf{H}_\mu^2) = \mathrm{Ker}\left(\{S, \cdot\}\right) \cap \mathrm{Ker}\left(\{Q, \cdot\}\right)$. We point out that $\mathrm{Ker}(\{S, \cdot\})$ and $\mathrm{Ker}(\{Q, \cdot\})$ denote for the kernel of the linear operators $\{S, \cdot\}$ and $\{Q, \cdot\}$, where $S$ and $Q$ are defined in~\eqref{app:H_mu_2}. In Section 3.5 of~\cite{Meer85}, it is proven that 
\begin{equation}\label{app:NH2mu}
    \mathcal{N}(\mathbf{H}_\mu^2) = \begin{cases} 0 \hspace{45.5mm} \mbox{if $n$ is odd},\\
    \mathrm{span}\left\{S^iQ^j: i+j = {n\over 2}\right\} \quad \mbox{if $n$ is even}.\end{cases}
\end{equation}
This means that the linear operator $\left\{\mathbf{H}_\mu^2, \cdot\right\}$ is invertible if $n$ is odd, otherwise its kernel is given by the homogeneous polynomials of degree ${n \over 2}$ in the variables $S$ and $Q$. This means that
\begin{equation}\label{app:tildeH345}
    \mathbf{\tilde H}^{3}_\mu = \mathbf{\tilde H}^{5}_\mu = 0, \quad \mbox{whereas} \hspace{3mm} \mathbf{\tilde H}^{4}_\mu = {1 \over 4} \check{\gamma}(\mu)   Q^2+ {1 \over 4} \check{\alpha}(\mu)    S^2 + \frac{1}{2}\check{\beta}(\mu)    Q  \,  S  
\end{equation}
for suitable functions $\check{\gamma}$, $\check{\alpha}$ and $\check{\beta}$ depending on $\mu$. It remains to verify that the regularity with respect to $\mu$. 

We point out that, in~\cite{Meer85}, the author works with the linear operator $\left\{\mathbf{H}_0^2, \cdot\right\}: \mathcal{P}_n \longrightarrow \mathcal{P}_n$. The proof is essentially the same and we obtain the decomposition $\mathcal{P}_n = \mathrm{Im}\left(\left\{\mathbf{H}_0^2, \cdot\right\}\right) \oplus \mathcal{N}(\mathbf{H}_0^2)$ with $\mathcal{N}(\mathbf{H}_0^2) = \mathrm{Ker}\left(\{S, \cdot\}\right) \cap \mathrm{Ker}\left(\{Q, \cdot\}\right)$. In this work, we preferred the approach used in~\cite{Sch94}.

In~\cite{Meer85}, the author works with $C^\infty$ functions, thus he proved that the time $1$-map $\phi^1_{W_\mu}$ is only $C^\infty$ with respect to the parameter $\mu$, at least near $0$. Here, we aim to verify that $\phi^1_{W_\mu}$ is, in fact, analytic in $\mu$.
\begin{lemma}\label{NFLemmaNonLinearTermsReg}
The change of coordinates $\phi^1_{W_\mu}$ is analytic with respect to $\mu$ for $\mu$ sufficiently close to zero.
\end{lemma}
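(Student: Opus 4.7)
The plan is to establish analyticity of each $W^n_\mu$ in $\mu$ by induction on the order $n=1,2,3$, and then conclude by standard ODE theory. At each step of the construction described above, $W^n_\mu\in\mathcal{P}_{n+2}$ is characterized by a linear equation of the form
\begin{equation*}
L_\mu(W^n_\mu) + \mathrm{NF}^{n+2}_\mu = g^{n+2}_\mu,
\end{equation*}
where $L_\mu:=\{\mathbf{H}^2_\mu,\cdot\}$, the unknown normal form contribution $\mathrm{NF}^{n+2}_\mu$ lies in the $\mu$-independent subspace $\mathcal{N}(\mathbf{H}^2_\mu)\cap\mathcal{P}_{n+2}$ described in~\eqref{app:NH2mu}, and $g^{n+2}_\mu\in\mathcal{P}_{n+2}$ is assembled via the Baker–Campbell–Hausdorff formula from $\mathbf{H}^{\le n+2}_\mu$ and $W^{<n}_\mu$. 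Since the Hamiltonian~\eqref{app:def_H_mu} depends analytically on $\mu$, and by the inductive hypothesis $W^{<n}_\mu$ is analytic in $\mu$, so is $g^{n+2}_\mu$.

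The key step is to show that $(W^n_\mu,\mathrm{NF}^{n+2}_\mu)$ depends analytically on $\mu$. Observe that $L_\mu$ depends \emph{polynomially} (in fact affinely) on $\mu$ through the analytic coefficient $\nu_0(\mu)$, and that the resonant subspace $\mathcal{N}(\mathbf{H}^2_\mu)$ does \emph{not} depend on $\mu$. Fix once and for all a ($\mu$-independent) linear complement $\mathcal{V}_n$ of $\mathrm{Ker}(L_0)$ in $\mathcal{P}_n$, and consider the linear map
\begin{equation*}
\Phi_\mu:\mathcal{V}_n\times\bigl(\mathcal{N}(\mathbf{H}^2_\mu)\cap\mathcal{P}_{n+2}\bigr)\longrightarrow\mathcal{P}_{n+2},\qquad (W,\mathrm{NF})\longmapsto L_\mu(W)+\mathrm{NF}.
\end{equation*}
At $\mu=0$, the direct sum decomposition~\eqref{app:decomPn} together with the very definition of $\mathcal{V}_n$ shows that $\Phi_0$ is a linear isomorphism between finite-dimensional spaces. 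Since the matrix of $\Phi_\mu$ in any fixed pair of bases is polynomial in $\mu$, its determinant is an analytic function of $\mu$ that is nonzero at $\mu=0$, hence nonzero in a whole neighborhood of $0$. By Cramer's rule, $\Phi_\mu^{-1}$ exists and is analytic in $\mu$ there, and therefore $(W^n_\mu,\mathrm{NF}^{n+2}_\mu)=\Phi_\mu^{-1}(g^{n+2}_\mu)$ is analytic in $\mu$. This closes the induction.

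Finally, $W_\mu=W^1_\mu+W^2_\mu+W^3_\mu$ is a polynomial in $(x,y)$ of bounded degree whose coefficients are analytic in $\mu$, so the Hamiltonian vector field $X_{W_\mu}$ is jointly analytic in $(x,y,\mu)$ in a neighborhood of $(0,0,0)$. By analytic dependence of solutions of ODEs on parameters, the time-$1$ flow $\phi^1_{W_\mu}$ is analytic in $\mu$ near $0$, proving the lemma. The only place where something nontrivial happens is the inversion of $\Phi_\mu$, and this is precisely where the $\mu$-independence of $\mathcal{N}(\mathbf{H}^2_\mu)$ is used in a crucial way: it permits setting up an analytic family of linear isomorphisms between \emph{fixed} finite-dimensional spaces, reducing the question to an elementary application of Cramer's rule.
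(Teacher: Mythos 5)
Your proof is correct, and it takes a genuinely different (and more elementary) route than the paper's. The paper's proof proceeds separately for $W^1_\mu$, $W^2_\mu$, $W^3_\mu$, defining in each case a functional $\mathcal{F}(\mu, W)$ on $B\times\mathcal{P}_{n+2}$ (for $W^2_\mu$ taking values in the quotient $\mathcal{P}_4/\widehat{\mathcal{P}}_4$, where $\widehat{\mathcal{P}}_4 = \mathcal{N}(\mathbf{H}^2_\mu)\cap\mathcal{P}_4$), verifying that $\partial_{W}\mathcal{F}(0,0)$ has a right inverse, and invoking the implicit function theorem. You instead recognize the equation for $W^n_\mu$ (and the normal-form contribution) as a \emph{linear} system in the pair $(W^n_\mu, \mathrm{NF}^{n+2}_\mu)$, parametrize it by an analytic family $\Phi_\mu$ of linear maps between \emph{fixed} finite-dimensional spaces, observe $\Phi_0$ is an isomorphism by~\eqref{app:decomPn}, and conclude analyticity of $\Phi_\mu^{-1}$ via Cramer's rule. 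This buys you two things: you never need to pass to quotient spaces (the normal-form term is just an extra unknown), and you bypass the implicit function theorem in favor of plain finite-dimensional linear algebra, which makes visibly explicit where the $\mu$-independence of $\mathcal{N}(\mathbf{H}^2_\mu)$ is used. One small imprecision: you say the matrix of $\Phi_\mu$ is ``polynomial in $\mu$,'' but it is affine in $\nu_0(\mu)$ and hence merely \emph{analytic} in $\mu$ (since $\nu_0$ is analytic, not polynomial); this does not affect the argument, as analyticity of the determinant and nonvanishing at $\mu=0$ are all that Cramer's rule requires. Also, the complement $\mathcal{V}_n$ should of course live in $\mathcal{P}_{n+2}$, not $\mathcal{P}_n$.
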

\begin{proof}
The proof of this lemma is divided into three parts, each dedicated to analyzing a different component $W_\mu^n$ of $W_\mu$ in~\eqref{def:Wnu} for $n=1,2,3$.  It suffices to verify that each $W^n_\mu$ is analytic with respect to $\mu$, for $\mu$ small enough.  First, we recall the notation introduced in~\eqref{app:def_H_mu}, we denote by $B \subset \R$ a sufficiently small interval centered at the origin, and we point out that the space $\mathcal{P}_n$ of the homogeneous polynomials of degree $n$ in the variables $(x,y)$ is a Banach space endowed with a suitable norm. 

\vspace{2mm}
\textit{Analysis of $W^1_\mu$.} For all $\mu$ in a sufficiently small interval centered at the origin,  the first component $W^1_\mu$ is obtained as a solution of 
\begin{equation*}
\mathbf{H}^3_\mu + \left\{\mathbf{H}^2_\mu , W^1_\mu\right\}=0,
\end{equation*}
(see Section $3.2$ of~\cite{Meer85}). We define the following functional 
\begin{equation*}
\mathcal{F} : B \times \mathcal{P}_3 \longrightarrow \mathcal{P}_3, \quad \mathcal{F}(\mu,  W^1) = \mathbf{H}^3_\mu + \left\{\mathbf{H}^2_\mu , W^1\right\}.
\end{equation*}
For all $\mu$ sufficiently small, we already know the existence of a function $W^1_\mu$ such that $\mathcal{F}(\mu,  W^1_\mu)=0$.  Moreover,  
\begin{equation*}
\partial_{W^1} \mathcal{F}(0,  0): \mathcal{P}_3 \to \mathcal{P}_3,  \quad \partial_{W^1} \mathcal{F}(0,  0) \widehat W^1= \left\{\mathbf{H}^2_\mu, \widehat W^1\right\}
\end{equation*}
and by the previous analysis (see~\eqref{app:lin_op_{H2mu}} and also~\cite{Meer85}), it admits a right inverse.  Then, the implicit function theorem assures that for all $\mu$ in a small neighborhood of the origin,  $W^1_\mu$ is analytic with respect to $\mu$. 

\vspace{2mm}
\textit{Analysis of $W^2_\mu$.} For all $\mu$ close enough to zero, $W^2_\mu$ is obtained as a solution of the following equation 
\begin{equation*}
\mathbf{H}_\mu^4 + \left\{\mathbf{H}_\mu^3, W^1_\mu\right\} + {1 \over 2}\left\{\left\{\mathbf{H}^2_\mu, W^1_\mu\right\}, W^1_\mu\right\} + \left\{\mathbf{H}^2_\mu, W^2_\mu\right\} = {1 \over 4} \check{\gamma}(\mu)   Q^2+ {1 \over 4} \check{\alpha}(\mu)    S^2 + \frac{1}{2}\check{\beta}(\mu)    Q  \,  S  
\end{equation*}
see Sections 3.5 of~\cite{Meer85}. We point out that,  $W^1_\mu$ is the solution of the previous step, whereas $\check\alpha(\mu)$, $\check\beta(\mu)$ and $\check\gamma(\mu)$ are the ones in~\eqref{app:tildeH345}.  In this case, it is crucial to analyze the following linear operator
\begin{equation}
\label{H20P4}
\left\{\mathbf{H}^2_\mu, \cdot \right\}: \mathcal{P}_4 \longrightarrow\mathcal{P}_4. 
\end{equation}
Let
\begin{eqnarray*}
 \mathcal{\widehat P}_4 &=& \mathcal{N}(\mathbf{H}^2_\mu) = \mathrm{span}\left\{S^iQ^j: i+j = 2\right\} \subset \mathcal{P}_4.
\end{eqnarray*}
Thanks to~\eqref{app:decomPn} and~\eqref{app:NH2mu}, we can conclude that the image of $\{\mathbf{H}^2_0, \cdot \}$ is isomorphic to the quotient space $\mathcal{P}_4 / \widehat{\mathcal{P}}_4$, which, with a suitable norm, is a Banach space.

We define the following functional 
\begin{align*}
&\mathcal{F} : B \times \mathcal{P}_4 \longrightarrow \mathcal{P}_4 /  \mathcal{\widehat P}_4, \\
&\mathcal{F}(\mu,  W^2) = \mathbf{H}_\mu^4 + \left\{\mathbf{H}_\mu^3, W^1_\mu\right\} + {1 \over 2}\left\{\left\{\mathbf{H}^2_\mu, W^1_\mu\right\}, W^1_\mu\right\} + \left\{\mathbf{H}^2_\mu, W^2\right\} - \check\alpha(\mu) S^2\\
&\hspace{17mm}- \check\beta(\mu)SQ - \check\gamma(\mu) Q^2. 
\end{align*}
It is well-defined because we proved that the image of the operator~\eqref{H20P4} is isomorphic to $ \mathcal{P}_4 /  \mathcal{\widehat P}_4$ and the homogeneous polynomial $\check\alpha(\mu) S^2 + \check\beta(\mu)SQ + \check\gamma(\mu) Q^2$ is chosen in~\eqref{app:tildeH345}, see also~\cite{Meer85}, in such a way that $\mathbf{H}_\mu^4 + \left\{\mathbf{H}_\mu^3, W^1_\mu\right\} + {1 \over 2}\left\{\left\{\mathbf{H}^2_\mu, W^1_\mu\right\}, W^1_\mu\right\} - \check\alpha(\mu) S^2-\check\beta(\mu)SQ - \check\gamma(\mu) Q^2 \in  \mathcal{P}_4 /  \mathcal{\widehat P}_4$.

For all $\mu$ small enough, we know the existence of a function $W^2_\mu$ satisfying $\mathcal{F}(\mu,  W^2_\mu)=0$. Moreover, thanks to the previous analysis (see~\eqref{app:lin_op_{H2mu}}) of the operator defined by~\eqref{H20P4}, we have that 
\begin{equation*}
\partial_{W^2} \mathcal{F}(0,  0): \mathcal{P}_4 \to \mathcal{P}_4/ \mathcal{\widehat P}_4,  \quad \partial_{W^2} \mathcal{F}(0,  0) \widehat W^2= \left\{\mathbf{H}^2_\mu, \widehat W^2\right\}
\end{equation*}
admits a right inverse. Then, with the same argument of the previous case,  $W^2_\mu$ is analytic with respect to $\mu$.   

\vspace{2mm}
\textit{Analysis of $W^3_\mu$.} In this case,  for all $\mu$ small enough,  $W^3_\mu$ is the solution of 
\begin{align*}
\mathbf{H}^5_\mu &+ + \left\{\mathbf{H}^3_\mu, W^2_\mu\right\} + \left\{\mathbf{H}^4_\mu, W^1_\mu\right\} + {1 \over 2}\left\{\left\{\mathbf{H}^2_\mu, W^1_\mu\right\}, W^2_\mu\right\} + {1 \over 2}\left\{\left\{\mathbf{H}^2_\mu, W^2_\mu\right\}, W^1_\mu\right\} \\
&+ {1 \over 2}\left\{\left\{\mathbf{H}^3_\mu, W^1_\mu\right\}, W^1_\mu\right\} + \left\{\mathbf{H}^2_\mu, W^3_\mu\right\} = 0. 
\end{align*}
As in previous cases, we need to study the following operator 
\begin{equation*}
\left\{\mathbf{H}^2_\mu, \cdot \right\}: \mathcal{P}_5 \longrightarrow\mathcal{P}_5.
\end{equation*}
which is surjective.  Then, similarly to the case of $W^1_\mu$, one can prove that $W^3_\mu$ is analytic with respect to the parameter $\mu$ for $\mu$ small enough.   
\end{proof}

\subsection{Versal normal form of the RPC3BP around $L4$}\label{app:NF_L4}

In this section, we provide some ideas about the proof of Theorem \ref{Thm:NormalForm:L4}, which is essentially given in~\cite{Sch94}. We aim to study the versal normal form associated with the Hamiltonian $H$ given by~\eqref{H2} of the RPC3BP close to the Lagrangian point $L_4$. The proof in~\cite{Sch94} is divided into two parts. First, the author analyzes the quadratic terms of the Hamiltonian $H$, then the higher-order terms. 

To derive a versal normal form for the quadratic part of the Hamiltonian $H$, we introduce the following notation.  Let $\mathcal{M}_n(\C)$ be the set of the square matrices $n\times n$ with complex coefficients.  For each $A \in \mathcal{M}_n(\C)$, we denote by $A^j$ the $j$-th column of $A$ for all $j=1,...,n$ and by $\overline{A}^j$ the complex conjugate of $A^j$:
\[
A=\left (A^1, A^2, A^3, A^4\right ), \qquad A^j\in \mathbb{C}^4. 
\]
We have the following
\begin{lemma} There exists $\mu_0>0$ such that for all $\mu \in (-\mu_0,\mu_0)$, there exists an invertible symplectic matrix $A(\mu)\in\mathcal{M}_4(\C)$ in such a way that in the new complex position coordinates $\xi = (\xi_1, \xi_2)$ and complex momenta $\eta= (\eta_1, \eta_2)$ with 
\begin{equation*}
\begin{pmatrix} Q_1 \\ Q_2 \\ P_1 \\ P_2\end{pmatrix} = A(\mu) \begin{pmatrix} \xi_1 \\ \xi_2 \\ \eta_1 \\ \eta_2\end{pmatrix},
\end{equation*}
the Hamiltonian~\eqref{H2} takes the following form
\begin{equation}
\label{HafterT}
H(\xi, \eta; \mu)= i \hat\omega (\xi_1 \eta_1 - \xi_2 \eta_2) + \xi_1 \xi_2 + \mu \eta_1 \eta_2 + \mathcal{O}_3(\xi, \eta; \mu).
\end{equation}
Here $\mathcal{O}_3$ stands for terms,  depending uniformly on $\mu$,  of order at least $3$ in the new variables $(\xi,\eta)$.

Moreover $A(\mu)$ is analytic with respect to $\mu \in (-\mu_0,\mu_0)$ and it satisfies  $A^2(\mu) = \overline{A}^1(\mu)$ and $A^3(\mu) =  \overline{A}^4(\mu)$.
\end{lemma}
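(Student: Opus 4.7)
The plan is to construct $A(\mu)$ explicitly as an analytic, $\mu$-dependent symplectic basis in which the quadratic part of $H$ acquires the stated form. I would begin at the critical value $\mu=\mu_1$ (equivalently $\nu=0$), where the matrix~\eqref{A} has the pair of purely imaginary eigenvalues $\pm i\hat\omega(0)=\pm i/\sqrt{2}$ of geometric multiplicity one and algebraic multiplicity two, forming a non-semisimple $1{:}-1$ resonance. First I would compute an eigenvector $v_1$ associated with $i\hat\omega(0)$ and a generalized eigenvector $w_1$ satisfying $(DX_H(0)-i\hat\omega(0)I)w_1=v_1$, and then set $v_2=\overline{v_1}$, $w_2=\overline{w_1}$, which are the corresponding Jordan chain at $-i\hat\omega(0)$ by reality of the linearization. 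By the standard symplectic normalization lemma for Hamiltonian $1{:}-1$ resonances (see e.g.\ \cite{Meer85}), these four vectors can be rescaled so that $\{v_1,v_2,w_1,w_2\}$ is symplectic and the quadratic Hamiltonian reads $i\hat\omega(\xi_1\eta_1-\xi_2\eta_2)+\xi_1\xi_2$ in the associated coordinates.

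The next step, and the main content of the lemma, is to extend this construction analytically through the bifurcation value. Here I would invoke versal deformation theory à la Arnold--Galin: the family $\mu\mapsto DX_{H(\cdot;\mu)}(0)$ is a transverse unfolding of the critical matrix at $\mu_1$, because the characteristic polynomial depends on $\mu$ through $\nu_0(\mu)$ with $\partial_\mu\nu_0(\mu_1)\neq 0$. In the symplectic category, the versal unfolding of the critical $1{:}-1$ nilpotent block is obtained by adding a one-parameter term $\nu\,\eta_1\eta_2$ to the normal form, which after relabeling the parameter yields the term $\mu\,\eta_1\eta_2$ in the statement. Concretely, I would deform the basis $\{v_1,v_2,w_1,w_2\}$ to an analytic family $\{v_1(\mu),v_2(\mu),w_1(\mu),w_2(\mu)\}$ by solving, order by order in $(\mu-\mu_1)$, the system of linear equations that forces the matrix of $DX_H(0)$ in this basis to have the prescribed versal shape. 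The linearization at $\mu_1$ of this algebraic system is exactly the operator whose cokernel corresponds to the versal term $\eta_1\eta_2$; hence it is surjective onto the complement of the versal direction, and the analytic implicit function theorem yields the desired analytic deformation of the basis on some interval $(-\mu_0,\mu_0)$.

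The reality conditions $A^2(\mu)=\overline{A}^1(\mu)$ and $A^3(\mu)=\overline{A}^4(\mu)$ are built into the construction: since $DX_H(0)$ is real, the equations defining $v_1(\mu),w_1(\mu)$ have complex conjugate counterparts defining $v_2(\mu),w_2(\mu)$, and the normalization can be performed respecting this symmetry. Similarly, the symplectic character of $A(\mu)$ is preserved because the deformation equations are precisely the symplectic normalization conditions, and symplecticity is an open condition in $GL_4(\mathbb{C})$ that holds at $\mu_1$ by construction.

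The main obstacle will be the second step: the eigenvectors of $DX_H(0)$ themselves are \emph{not} analytic across $\mu=\mu_1$, since two eigenvalues collide there, so a naive diagonalization approach fails. The versal normal form is precisely what repairs this obstruction, but making the deformation argument work while simultaneously preserving the symplectic structure and the reality conditions requires a careful, and somewhat lengthy, finite-dimensional algebraic computation of the basis vectors as power series in $(\mu-\mu_1)$, which is the bulk of the work carried out in \cite{Sch94}.
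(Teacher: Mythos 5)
Your proposal follows a genuinely different route from the paper's. The paper at this point defers to Schmidt~\cite{Sch94}, whose Section~4 exhibits $A(\mu)$ as an explicit closed-form expression and verifies directly that it is symplectic, analytic, and conjugates the linearization of~\eqref{H2} into the versal block. You instead argue abstractly, via versal deformation theory (Arnold--Galin--van~der~Meer) and the implicit function theorem, starting from a Jordan chain at the collision. Your route is cleaner conceptually and correctly identifies the obstruction (the diagonalizing transformation is not analytic through the $1{:}{-}1$ collision) and its repair; but it does not produce the explicit entries of $A(\mu)$, and the paper actually needs them downstream: the closed formulas for $\hat\alpha,\hat\beta,\hat\gamma$ in Theorem~\ref{Thm:NormalForm:L4} --- in particular the sign $\hat\gamma(0)>0$ on which the entire homoclinic argument hinges --- are computed by pushing the higher-order terms of~\eqref{H2} through Schmidt's explicit $A(\mu)$.

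Two smaller soft spots. First, an IFT argument of the type you sketch would produce a normal form $i\hat\omega(\xi_1\eta_1-\xi_2\eta_2)+\xi_1\xi_2+c\,\eta_1\eta_2$ with $c$ analytic and vanishing at the critical value, whereas the lemma asserts the coefficient equals \emph{exactly} the parameter in its statement. That identification is not free; it rests on having already set $\nu=\nu_0(\mu)=\tfrac14\bigl(1-\sqrt{27\mu(1-\mu)}\bigr)$ and $\hat\omega^2=\tfrac12-\nu$, under which the characteristic polynomial $\lambda^4+\lambda^2+\tfrac{27}{4}\mu(1-\mu)$ of~\eqref{A} factors exactly as $\bigl((\lambda-i\hat\omega)^2+\nu\bigr)\bigl((\lambda+i\hat\omega)^2+\nu\bigr)$. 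The symbol ``$\mu$'' in this lemma is that $\nu$, not the mass ratio; your expansions ``in $(\mu-\mu_1)$'' suggest you are still carrying the mass ratio, for which the stated normal form would be off by a nonconstant analytic reparametrization of the versal coefficient. Second, the claim that ``symplecticity is an open condition in $GL_4(\mathbb{C})$'' is false --- $Sp(4,\mathbb{C})$ is a proper closed subgroup of positive codimension. Your preceding clause, that the deformation equations are posed within the symplectic group from the outset, is the correct argument; the openness remark should be deleted, since only invertibility of $A(\mu)$ persists by openness.
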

\begin{proof}
The proof of this lemma can be found in Section $4$ of~\cite{Sch94} where the author also provides an explicit formula for $A(\mu)$.  
\end{proof}
Using the following symplectic transformation with multiplier $2$
\begin{equation}
\label{XiEtaXY}
\xi_1 = x_1 + i x_2, \quad \eta_1 = y_1 - i y_2, \quad \xi_2 = x_1 - i x_2, \quad \eta_2=y_1 + i y_2,
\end{equation} 
we can rewrite the Hamiltonian~\eqref{HafterT} in the following real versal normal form 
\begin{equation}
\label{HafterT2}
H(x,y;\nu)= \hat \omega \left(x_1 y_2 - x_2 y_1\right) + {1 \over 2} \left(x_1^2 + x_2^2\right) + {1 \over 2}\nu \left(y_1^2 + y_2^2 \right)+ \mathcal{O}_3(x,y; \nu).
\end{equation}
This concludes the analysis of the quadratic terms associated with $H$. Now, following the lines of Section \ref{app:NF}, one can conclude the proof of Theorem \ref{Thm:NormalForm:L4}. We also point out that the proof of Theorem \ref{Thm:NormalForm:L4} is contained in Sections 5-8 of~\cite{Sch94}.

\bibliographystyle{amsalpha}
\bibliography{ref}
\end{document}